\documentclass[10pt]{article}
\usepackage{color}
\usepackage{amsmath}
\usepackage{amssymb}
\usepackage{mathrsfs}
\usepackage[utf8]{inputenc}
\usepackage[english]{babel}
\usepackage{amsthm}
\usepackage{genyoungtabtikz}
\usepackage{tikz}

\usepackage{graphicx}
\graphicspath{{./images/}}


\usepackage{epsfig}



\usepackage[scaled=1]{helvet}
\usepackage{titlesec}
\usepackage{multido}
\usepackage{multirow}
\usepackage{blindtext}
\usepackage{color}
\usepackage{lipsum}
\usepackage{mathtools}
\usepackage[vcentermath]{youngtab}
\usepackage{young}
\usepackage[all,ps,dvips,graph]{xy}
\usepackage[misc,geometry]{ifsym}


\usepackage[left=3cm,top=2cm,right=3cm,bottom=2cm]{geometry}
\usepackage{graphicx}
\numberwithin{equation}{subsection}

\let\OLDthebibliography\thebibliography
\renewcommand\thebibliography[1]{
  \OLDthebibliography{#1}
  \setlength{\parskip}{0pt}
  \setlength{\itemsep}{0pt plus 0.3ex}
}

\newtheorem{theorem}{Theorem}

\newtheorem{corollary}[theorem]{Corollary}
\newtheorem{lemma}[theorem]{Lemma}
\newtheorem{remark}[theorem]{Remark}
\newtheorem{example}[theorem]{Example}
\newtheorem{definition}[theorem]{Definition}

\numberwithin{theorem}{section}

\newcommand{\A}{\mathcal{A}}
\newcommand{\aaa}{\mathfrak{a}}

\newcommand{\Basis}{\mathcal{C}_n}
\newcommand{\BasisOne}{\mathcal{C}^{1,\K}_n}
\newcommand{\B}{\mathcal{B}_m}
\newcommand{\BB}{\mathcal{B}^{\F}_{l,m}(\kappan,\een)}
\newcommand{\BBnu}{\mathcal{B}^{\F}_{l,m}(\bnu,\een)}
\newcommand{\Bb}{\pmb{\mathfrak{b}}}
\newcommand{\BBB}{\mathcal{B}_{n+1}}
\newcommand{\bbb}{\mathfrak{b}}
\newcommand{\BBk}{\mathcal{B}_k}
\newcommand{\bbkap}{\boldsymbol{\kappa}}
\newcommand\bbn{\mathbf{b}}
\newcommand\bbs{\mathsf{s}}
\newcommand\bbt{\boldsymbol{\mathsf{t}}}
\newcommand\bbu{\mathsf{u}}
\newcommand\bbv{\mathsf{v}}
\newcommand{\Bchico}{\mathcal{B}_{n-1}}
\newcommand\be{\mathbb{E}}
\newcommand\ben{\boldsymbol{e}}
\newcommand{\belongs}{ \in }
\newcommand{\Belongs}{ \ni}
\newcommand{\Bg}{\pmb{\mathfrak{g}}}
\newcommand\bi{\boldsymbol{i}}
\newcommand{\bif}{{\underline{\boldsymbol{i}}}}
\newcommand{\bjf}{{\underline{\boldsymbol{j}}}}
\newcommand\bj{\boldsymbol{j}}
\newcommand\blambda{{\boldsymbol\lambda}}
\newcommand\bmu{{\boldsymbol\mu}}
\newcommand\brho{\boldsymbol{\rho}}
\newcommand\bzeta{{\boldsymbol\zeta}}
\newcommand\bn{\boldsymbol{n}}
\newcommand\bnu{{\boldsymbol\nu}}
\newcommand\boxbluek{\color{blue}\boldsymbol{[k]}}
\newcommand\boxbluej{\color{blue}\boldsymbol{[j]}}
\newcommand\boxredk{\color{red}\boldsymbol{[k]}}
\newcommand\boxredj{\color{blue}\boldsymbol{[j]}}
\newcommand{\Bs}{\pmb{\mathfrak{s}}}
\newcommand\bS{\Sigma}
\newcommand\bs{\mathbf{s}}
\newcommand{\bT}{\pmb{\mathfrak{t}}}
\newcommand\bt{\mathbf{t}}
\newcommand{\bTI}{ \bT^{-1} (\bT_{\theta}^{\blambda}(1))}
\newcommand{\bTII}{ \bT^{-1} (\bT_{\theta}^{\blambda}(2))}
\newcommand{\bTj}{ \bT^{-1} (\bT_{\theta}^{\blambda}(j))}
\newcommand{\bTn}{ \bT^{-1} (\bT_{\theta}^{\blambda}(n))}
\newcommand\btau{{\boldsymbol\tau}}
\newcommand{\Bu}{\pmb{\mathfrak{u}}}
\newcommand{\Bv}{\pmb{\mathfrak{v}}}
\newcommand\bu{\mathbf{u}}
\newcommand\bv{\mathbf{v}}

\newcommand{\C}{\mathcal{C}_{\rm{YH}}}
\newcommand\calL{\mathcal{L}}
\newcommand\calD{\mathcal{D}}
\newcommand\calF{\mathcal{F}}
\newcommand\calP{\mathcal{P}}
\newcommand\calQ{\mathcal{Q}}
\newcommand\bcalL{\boldsymbol{\mathcal{L}}}
\newcommand\bcalD{\boldsymbol{\mathcal{D}}}
\newcommand\bcalY{\boldsymbol{\mathcal{Y}}}
\newcommand\bcalW{\boldsymbol{\mathcal{W}}}
\newcommand{\catorce}{ 14}
\newcommand{\catorceB}{\color{red} 14}
\newcommand{\CC}{ \mathbb C }
\newcommand{\ccc}{\mathfrak{c}}
\newcommand{\ch}{{\rm char}}
\newcommand{\cincuentacinco}{55}
\newcommand{\cincuentacincoR}{\color{red}55}
\newcommand{\Comp}{{\mathcal Comp}_n}
\newcommand{\cuarentacuatro}{ 44}
\newcommand{\cupdot}{\mathbin{\mathaccent\cdot\cup}}
\newcommand{\Cs}{\overleftarrow{C}}
\newcommand{\Cd}{\overrightarrow{C}}

\newcommand{\Der}{{\rm Der}}
\newcommand{\diez}{10}
\newcommand{\dieciseis}{16}
\newcommand{\diezR}{\color{red}10}
\newcommand{\doce}{12}
\newcommand{\doceB}{\color{blue} 12}
\newcommand{\doceR}{\color{red} 12}

\newcommand{\E}{ {\mathcal E}_n(q)}
\newcommand{\e}{\mathfrak{e}}
\newcommand{\EE}{ {\mathcal E}_n}
\newcommand\een{\mathbf{e}}
\newcommand\es{\mathbbm{s}}
\newcommand{\End}{{\rm End}}
\newcommand\et{\mathbbm{t}}
\newcommand\eu{\mathbbm{u}}
\newcommand\ev{\mathbbm{v}}
\newcommand{\Exp}{ {\rm \bf exp} }

\newcommand{\F}{ { \mathbb F}}
\newcommand{\FF}{ {\mathcal F}_n}

\newcommand{\g}{  \mathfrak{g}}
\newcommand{\gl}{\mathfrak{gl}}

\newcommand{\h}{{h}}
\newcommand{\HH}{ \mathcal{H}_n}
\newcommand{\HHO}{ \mathcal{H}^{\OO}_n}
\newcommand{\HHK}{ \mathcal{H}^{\K}_n}
\newcommand{\HHtwo}{ \mathcal{H}_2}
\newcommand{\HHKtwo}{ \mathcal{H}^{\K}_2}
\newcommand{\HHOtwo}{ \mathcal{H}^{\OO}_2}
\newcommand{\HHKOne}{ \mathcal{H}^{1,\K}_n}

\newcommand{\II}{I_{\mathbf{e}}}
\newcommand{\IIa}{I_{e}}
\newcommand{\id}{{\rm id}}
\newcommand{\ind}{{\rm ind}}
\newcommand{\inv}{{\rm inv}}

\newcommand{\JM}{ \mathcal L }

\newcommand{\K}{\mathcal{K}}
\newcommand{\kk}{\mathcal{K}}
\newcommand{\kkk}{k-1}
\newcommand{\kappan}{\boldsymbol{\kappa}}

\newcommand{\Li}{\mathcal{L}}
\newcommand{\LL}{\mathbb{L}}

\newcommand{\m}{\mathfrak{m}}
\newcommand{\MC}{{ {\rm Comp}}_{l,n}}
\newcommand{\MCm}{{ {\rm Comp}}_{l,m}}
\newcommand{\MP}{{\rm Par }_{l,n}}
\newcommand{\mfra}{{\mathfrak{a}}}
\newcommand{\mfrb}{{\mathfrak{b}}}
\newcommand{\mfrc}{{\mathfrak{c}}}
\newcommand{\mfrt}{{\mathfrak{t}}}
\newcommand{\mfrs}{{\mathfrak{s}}}
\newcommand{\mfru}{{\mathfrak{u}}}
\newcommand{\mfrv}{{\mathfrak{v}}}
\newcommand{\mfrw}{{\mathfrak{w}}}
\newcommand{\mfrx}{{\mathfrak{x}}}
\newcommand{\mfry}{{\mathfrak{y}}}
\newcommand{\mfrz}{{\mathfrak{z}}}

\newcommand{\N}{ { \mathbb N}}
\newcommand{\No}{ { \mathbb N}_0}
\newcommand{\nstd}{{\rm NStd}}
\newcommand{\NB}{\mathbb{N}\mathcal{B}_k}

\newcommand{\once}{11}
\newcommand{\onceB}{\color{blue}11}
\newcommand{\OnePar}{{ \rm Par}^1_{l,m}}
\newcommand{\OneParn}{{ \rm Par}^1_{l,n}}
\newcommand{\OO}{\mathcal{O}}
\newcommand{\op}{\otimes}

\newcommand{\Par}{{\rm Par}_{l,m}}

\newcommand{\q}{\hat{q}}
\newcommand{\quince}{15}
\newcommand{\quinceB}{\color{red} 15}

\newcommand{\R}{ \mathcal{R}_m}
\newcommand{\Rad}{{\rm Rad}}
\newcommand{\res}{ \textrm{res} }
\newcommand\rrn{\mathbf{r}}

\newcommand{\s}{\mathfrak{s}}
\newcommand{\seq}{{\rm seq}_n}
\newcommand{\shape}{\textsf{shape}}
\newcommand{\Si}{\mathfrak{S}}
\newcommand{\sign}{-}
\newcommand{\sixteenB}{\color{red} 16}
\newcommand{\Snake}{{\rm Snake}}
\newcommand{\spa}{{\rm span}}
\newcommand{\std}{{\rm Std}}

\newcommand{\T}{  \mathfrak{t}}
\newcommand{\tab}{{\rm Tab}}
\newcommand{\tr}{{\rm \textbf{tr}}}
\newcommand{\tR}{ \overline{\R}}
\newcommand{\Tr}{{\rm Tr}}
\newcommand{\trece}{13}
\newcommand{\treintatres}{33}
\newcommand{\treintatresR}{\color{red}33}
\newcommand{\trunc}{ {\B} (\blambda ) }
\newcommand{\truncPrime}{ {\mathbb B}_n^{\prime} (\blambda ) }
\newcommand{\truncSing}{ {\mathbb B}_{\bar n} (\overline{\blambda} ) }
\newcommand{\TT}{{\mathfrak T}}
\newcommand{\TTc}{  \mathcal{T}}

\newcommand{\U}{\mathfrak{u}}
\newcommand{\UU}{\mathbb{U}}
\newcommand{\UUc}{\mathcal{U}}

\newcommand{\V}{\mathfrak{v}}
\newcommand{\veintidos}{22}
\newcommand{\VV}{\mathbb{V}}

\newcommand{\Y}{\mathcal{Y}}
\newcommand{\Yy}{\mathcal{Y}_{r,n}}
\newcommand{\yvc}{\Yvcentermath1}
\newcommand{\YY}{\mathcal{Y}_{r,n}(q)}

\newcommand{\Z}{\mathbb{Z}}

\begin{document}
  \Yvcentermath1
\title{On generalized blob algebras: Vertical idempotent truncations and Gelfand-Tsetlin subalgebras.}
\author{\sc  Diego Lobos }

\maketitle

\pagenumbering{roman}

\begin{abstract}
  We study the structure of the Gelfand-Tsetlin subalgebras of a family of idempotent truncations of the generalized blob algebras. We obtain optimal presentations in terms of generators and relations, monomial basis and dimension formula.
\end{abstract}

Keywords: Generalized blob algebras, Idempotent truncations, Gelfand-Tsetlin subalgebras.
\pagenumbering{arabic}

\section{Introduction}

Given a cellular algebra $A$ provided with a set $\mathcal{J}$ of \emph{Jucys-Murphy} elements, the \emph{Gelfand-Tsetlin} subalgebra of $A$ is defied as the algebra generated by $\mathcal{J}$ \cite{OkunVershik1}. This is a commutative subalgebra of $A,$ fundamental in the study of representation theory of $A,$ as one can see, for example, in the work of Murphy \cite{Murphy1},\cite{Murphy2}, Mathas \cite{Mat-So},\cite{MatCoef},\cite{hu-mathas}, Okounkov and Vershik \cite{OkunVershik1},\cite{OkunVershik2} among other. In particular, due to the work of Mathas \cite{Mat-So}, we know that for a cellular algebra $A,$ equipped with a family of \emph{Jucys-Murphy} elements satisfying certain \emph{separation} conditions, one can construct, from them, seminormal forms and explicitly compute Gram determinant of the irreducible modules. For not separated cases, we can obtain a decomposition of $A,$ into a direct sum of cellular subalgebras, we can construct 'orthogonal by blocks' basis for the different cell modules of $A$ and then to obtain certain control on the set of the irreducible $A-$modules. Therefore, to have an optimal description of the \emph{Gelfand-Tsetlin} subalgebra of $A,$ is an important element, in order to understand how it interacts with the rest of the algebra and how we can use it to obtain relevant information on the representation theory of $A.$

Since their definition due to Martin and Saleur \cite{Mat-Sal} and Martin and Woodcock \cite{martin-wood1} respectively, blob algebras and generalized blob algebras have received considerable amount of interest. It is known that their representation theory is strongly connected with Kazhdan-Lusztig polynomials in characteristic $0$ and with $p-$Kazhdan-Lusztig polynomials in characteristic $p$ (see \cite{bcs}, \cite{martin-wood1}, \cite{Plaza13}, \cite{LiPl}, \cite{bow-cox-hazi}), currently one of the central object of study in Representation Theory.  Few years ago Ryom-Hansen and the author have explicitly built graded cellular basis for generalized blob algebras $\B$, provided with a set of \emph{Jucys-Murphy} elements \cite{Lobos-Ryom-Hansen}. What we are looking now, is a complete description of the \emph{Gelfand-Tsetlin} subalgebras of the many idempotent truncations of the generalized blob algebra $\B.$ This is the first of a series of articles, where we progressively aboard this problem. In this case we work on a particular family of idempotent truncations $\B(\underline{\bi})$ of $\B,$ that we call \emph{vertical idempotent truncations}.


The main results of this article are: Theorem \ref{theo-monomial-basis}, Corollary \ref{coro-dimD},  where we describe a monomial basis and we provide an explicit formula for the dimension of the Gelfand-Tsetlin subalgebra $\mathcal{D}$ of $\B(\underline{\bi}).$ Theorem \ref{theo-isomorphims-calD-bcalD} and Corollary \ref{coro-alternative-optimal-presentation}, where we describe optimal presentations for $\mathcal{D},$ that is, presentations, where the numbers of generators and relations are minimal.

This article is inspired by the work of Espinoza and Plaza \cite{Esp-Pl} where, among other results, the authors developed an equivalent study for the case of level $l=2,$ that is the blob algebra instead of generalized blob algebras. This article can
be seen as a partial generalization of their work.



The structure of this paper is given as follows: In section \ref{ch-generalities} we make a review on the general language that we use in this article (see section \ref{sec-fixing-notation}), particularly we recall the combinatorial elements related with the algebras of our interest (see section \ref{sec-general-combinatorics}). In the last part of section \ref{ch-generalities} we introduce the concept of \emph{blob-possible} (resp. \emph{blob-impossible}) residue sequences, and we obtain a series of small results, that will be relevant for the rest of the article (see section \ref{sec-impossible-res-sec}).

Section \ref{ch-gen-blob-alg}  is devoted to the generalized blob algebras $\B.$ We review the well known diagrammatic presentation of this family of algebras (see section \ref{sec-definition-gen-blob}). We recall the graded cellular basis developed by Ryom-Hansen and the author in \cite{Lobos-Ryom-Hansen} (see section \ref{sec-graded-cellular-basis-of-B}). Finally we recall the definition of \emph{idempotent truncations} of generalized blob algebras (see section  \ref{sec-idemp-trunc-of-B}).

In section \ref{ch-fundamental-idemp-trunc} we introduce the \emph{fundamental vertical truncation} $\B(\bif)$ of $\B$ one of the main object of study of this article. We define particular combinatorial objects related as \emph{fundamental vertical sequence} (section \ref{sec-fund-resid-sec}) and \emph{tableaux defined by blocks} (section \ref{sec-tab-def-blocks}). At the end of section \ref{ch-fundamental-idemp-trunc}, we explicitly describe a set of relevant members of the cellular basis of this idempotent truncation (see section \ref{sec-fund-idemp-and-cellular-basis}).

Finally in section \ref{ch-gelfand} we deeply study the Gelfand-Tsetlin subalgebra $\calD,$ of $\B(\bif).$ We start with a natural definition, that will be optimized throughout section \ref{ch-gelfand}. We first reduce the set of generators (see section \ref{sec-optimized-generators}) and then we find a series of \emph{fundamental relations} that allow a better understanding of this algebra (see section \ref{sec-fundamental-relations-for-D}). Later we study the vector space structure of $\calD$ providing a monomial basis and a dimension formula for this algebra (see section \ref{sec-vector-space-structure}). We also provide an optimal presentation of $\calD,$ where the number of generators and relations is strongly reduced (see section \ref{sec-optimal-presentation}). Finally we obtain a series of direct generalizations of our results to a larger family of \emph{vertical} and \emph{quasi-vertical} idempotent truncations (see section \ref{sec-direct-general}).

\subsection{Acknowledgment}
\medskip

 This work was supported by \emph{Proyecto DI Emergente PUCV 2020}, 039.475/2020. Pontificia Universidad Cat\'olica de Valpara\'iso, Chile.
\medskip

It is a pleasure to thank Steen Ryom-Hansen for his encouragement in this project and multiple suggestions that improve this article.

\medskip

I especially want to thank go to Camilo Martinez Estay, bachelor student in Mathematics at PUCV, for his excellent work as Assistant Researcher in the first part of this project.

\section{Generalities}\label{ch-generalities}
\subsection{Fixing parameters and notations}\label{sec-fixing-notation}

For the whole article we fix positive integers $l,\een>2,$ $m>1$ and a prime number $p>0$ such that  $ \mbox{gcd}(\een,p) =1 $. Let $ \F $ be a field of characteristic $ p $ and suppose that $ q \in \F \setminus \{ 1 \} $ is a primitive $ \een$'th root of unity. Let $\II:=\mathbb{Z}/\een\mathbb{Z}$.

We call \emph{residue sequences modulo} $\een$ of \emph{ length} $m$ to any element $\bi = (i_1, \ldots, i_m) $ of $ \II^m.$ Given two residue sequences (not necessarily with the same length) $\bi=(i_1,\dots,i_m)$ and $\bj=(j_1,\dots,j_n)$ we define the concatenation $\bi\otimes\bj\in\II^{m+n}$ as $\bi\otimes\bj=(i_1,\dots,i_m,j_1,\dots,j_n).$ Particularly if $j\in \II,$ we write:
$\bi\otimes j=(i_1,\dots,i_m,j).$

Let us consider the well known Coxeter group structure of the Symmetric group $\Si_m,$ that is, the pair $ (\Si_m,S) ,$ where $S=\{s_1,\dots,s_{m-1}\},$ and $s_j=(j,j+1),$ is the transposition between $j$ and $j+1.$ Let $\ell:\Si_m\rightarrow \mathbb{Z}_{\geq 0}$ be the length function on $(\Si_m,S)$.

It will be useful to use the notation $[a:b]=\{a,a+1,\dots,b\},$ for intervals of integral numbers $a<b.$ We also denote by $s_{[a,b]}=s_as_{a+1}\cdots s_b.$

More generally, for $a<b$ and $g\in\mathbb{Z}$ such that $b=a+ng$ for certain $n\in\mathbb{N},$ we define
 $$[a:g:b]=\{a,a+g,a+2g\dots,b\},\quad \textrm{and}\quad s_{[a:g:b]}=\prod_{j=0}^{n}s_{a+jg}$$

 We also consider the free monoid $\Si_{m}^{\ast}$ generated by $S$ (the set of words in the alphabet $S$ with concatenation as a product). Let $\ell^{\ast}:\Si_{m}^{\ast}\rightarrow \mathbb{Z}_{\geq 0}$  be the (natural) length function on $\Si_{m}^{\ast}$ and let $\Im:\Si_{m}^{\ast}\rightarrow \Si_{m}$ the natural projection of $\Si_{m}^{\ast}$ on $\Si_{m}$. For each element $x\in\Si_{m}$  let us define the set of \emph{reduced words} of $x$ as:
$\Re(x)=\{w\in\Si_{m}^{\ast}:\Im(w)=x, \ell^{\ast}(w)=\ell(x)\}.$
For the rest of the article we fix for each $x\in\Si_m$ an element $\boldsymbol{x}\in \Re(x).$ We call the element $\boldsymbol{x}$ the \emph{official reduced word} of $x.$

The Symmetric group acts by the left on $ \II^m$ via permutation of
the coordinates, that is  $s_j \cdot \bi   :=(i_1, \ldots,  i_{j+1}, i_j, \ldots, i_m).$

We call \emph{multicharge} of level $l$ to any element $ \boldsymbol{\kappa} = (\boldsymbol{\kappa}_1, \ldots, \boldsymbol{\kappa}_l )$ of ${\mathbb Z}^l$ and we call \emph{multicharge modulo} $\een$ to its natural projection $ \kappa := (\kappa_1, \ldots , \kappa_l)  \in \II^l$.
We shall throughout choose an interval of real numbers $\boldsymbol{J}$ of length equal to $\een,$ and for each $ \kappa_i$, a representative $ \hat{\kappa}_i\in \boldsymbol{J}.$

\begin{definition}\label{def-new-strong-adj-free}
We say that $ \boldsymbol{\kappa} $ is \emph{strongly adjacency-free} if it satisfies
 \begin{itemize}
 \setlength\itemsep{-1.1em}
  \item[i)] $\boldsymbol{\kappa}_{i+1} - \boldsymbol{\kappa}_{i} \geq m $  \\
  \item[ii)] $ \kappa_i-\kappa_j \neq 0, \pm 1  \, \,\mbox{ mod }\,  \een \quad  \mbox{ for all } i\neq j $  \\
  \item[iii)] $ \kappa_1 \neq \kappa_l +2 \,\, \mbox{ mod }\,  \een$ \\
  \item[iv)] $ \hat{\kappa}_1 < \hat{\kappa}_2 < \ldots < \hat{\kappa}_l.$
 \end{itemize}
\end{definition}

We shall in the following always assume that $ \boldsymbol{\kappa} $ is strongly adjacency-free. (in particular we have to assume that  $\een > 2l$).

\begin{remark}\label{remark-new-strong-adj-free}
  The definition of  strongly adjacency-freeness used in this article (definition \ref{def-new-strong-adj-free}) is a modification of the definition used in \cite{Lobos-Ryom-Hansen}, where the interval $\boldsymbol{J}$ was always assumed to be equal to $[0,e-1].$ In \cite{Lobos-Ryom-Hansen} there were crucial steps where the original strongly adjacency-freeness was used, but in all of them we can change it by this new version without affecting the conclusions obtained there.
  This new definition allows us to obtain more general results than with the original one.
\end{remark}

Along this article, we understand \emph{algebra} or $\F-$\emph{algebra}, as an associative and unital algebra over the field $\F.$ If $A$ is an algebra, and $B\subset A$ is also an algebra, then we say that $B$ is a \emph{subalgebra} of $A$ if the identity element of $B$ is  also the identity element of $A$. In other case we only say that $B$ is an algebra contained in $A.$

\subsection{General combinatorics}\label{sec-general-combinatorics}

An $l-$\emph{multipartition} of $m$ is an $l-$ tuple $\blambda=(\lambda^{(1)},\dots,\lambda^{(l)})$ of partitions $\lambda^{(j)}=(\lambda^{(j)}_{1},\dots,\lambda^{(j)}_{{r_j}})$ such that
$\sum_{j=1}^{l}|\lambda^{(j)}|=m,$ where $|\lambda^{(j)}|=\sum_{i=1}^{{r_{j}}}\lambda^{(j)}_i$ and $\lambda^{(j)}_1\geq \dots \geq \lambda^{(j)}_{r_{j}}.$ Let $\Par$ the set of all $l-$multipartitions of $m.$

 The diagram of an $l-$multipartition $\blambda\in \Par$ is the set:
 $$[\blambda]=\left\{(r,c,h):1\leq r\leq r_{j},1\leq c\leq \lambda^{(j)}_r, 1\leq h\leq l \right\}$$
 Each element of $[\blambda]$ is called node of $[\blambda].$ More specifically if $\gamma=(r,c,h)$ is a node of $[\blambda],$ then we say that $\gamma$ is the node in the row $r,$ column $c$ and component $h$ of $[\blambda].$ More generally we call node to each element $\gamma=(r,c,h)\in \mathbb{N}\times \mathbb{N}\times \{1,2,\dots,l\}.$

A \emph{one-column} partition is a partition $\lambda=(\lambda_1,\dots,\lambda_r)$ such that $\lambda_j=1$ for each $j=1,\dots,r.$ We denote a one-column partition by $\lambda=1^{(r)}$

A \emph{one-column} $l-$multipartition of $m$ is an $l-$multipartition such that, each of its components is a one-column partition. We denote by $\OnePar$ the set of all one column $l-$multipartitions of $m.$

A tableau $\bT$ of \emph{shape} $\blambda$ (denoted by $\shape(\bT)=\blambda$) is a bijection $\bT:\{1,\dots,k\}\rightarrow [\blambda].$ A tableau can be seen as a labeling of the boxes of the diagram $[\blambda].$ We denote by $\tab(\blambda)$ the set of all tableau of shape $\blambda.$ Sometimes we will be working with tableaux corresponding to multipartitions of different numbers $m,n,$ etc. Then we will refer to $m-$tableau and $n-$tableau respectively, to make a difference between them.

In particular we say that a tableau is a \emph{one-column} tableau, if its shape is a one-column $l-$multipartition of $m.$

For a one-column $l-$multipartition $\blambda=(1^{(a_1)},\dots,1^{(a_l)})$ we call \emph{blob-addable} node of $\blambda,$ to any of the following nodes:
$(a_1+1,1,1),(a_2+1,1,2),\dots,(a_l+1,1,l).$ If $\bT$ is a one-column tableau with shape $\blambda,$ then we call \emph{blob-addable} node of $\bT$ to any blob-addable node of $\blambda.$

An standard tableau is a tableau in which, in each component, the entries increase along each row and down each column. We denote by $\std(\blambda
)$ the set of all standard tableau of shape $\blambda.$

We define for each node $\gamma=(r,c,h)$ the \emph{residue of the node} by
\begin{equation*}
  \res(\gamma)=\kappa_h+c-r\quad (\mod \een).
\end{equation*}
A node of residue $i$ is called $i-$node. We say that two nodes are \emph{sisters} if they have the same residue. We say that two nodes $\beta,\gamma$ are \emph{cousins} if $\res(\beta)=\res(\gamma)\pm 1.$ Two nodes are \emph{not relatives} if they are not \emph{sisters} nor \emph{cousins}.

For each tableau $\bT$ we define the residue sequence $\bi^{\bT}=(i_1,\dots,i_k)\in \II^{m},$ where $i_j=\res(\bT(j)).$

Let us consider the following orders on nodes, multipartitions and tableaux respectively:

Given two nodes $\gamma_1=(r_1,c_1,h_1)$ and $\gamma_2=(r_2,c_2,h_2)$ we denote $\gamma_1\rhd \gamma_2$ if either $c_1-r_1>c_2-r_2$ or if $c_1-r_1=c_2-r_2$ and $h_1<h_2.$

Given two $l-$multipartitions $\blambda, \bmu$ we denote $\blambda \rhd \bmu$ if for each node $\gamma_0$ we have:

$$|\{\gamma\in [\blambda]:\gamma\rhd \gamma_0\}|\geq|\{\gamma\in[\bmu]:\gamma\rhd \gamma_0\}|,$$ and at least for one node $\gamma_1$ we have
$$|\{\gamma\in [\blambda]:\gamma\rhd \gamma_1\}|>|\{\gamma\in[\bmu]:\gamma\rhd \gamma_1\}|.$$

Finally given two tableaux $\bT$ and $\Bs$ we denote $\bT \rhd \Bs$ if for each $1\leq k \leq m$ we have $\shape(\bT|_k)\trianglerighteq \shape(\Bs|_k),$ and at least for some $k_0$ we have
$\shape(\bT|_{k_0})\rhd \shape(\Bs|_{k_0}).$
Here $\bT|_k$ and $\Bs|_k$ are the corresponding restriction of the function $\bT$ and $\Bs$ to the set $\{1,\dots,k\}.$

\begin{example}
  Let
   $$\Bs=\, \gyoung(;1,;3,;6,;8),\gyoung(;4,:,:,:),\gyoung(;2,;5,;7,:) \,\quad\textrm{and}\quad\bT=\, \gyoung(;1,;3,;4,;8),\gyoung(;5,:,:,:),\gyoung(;2,;6,;7,:) \,. $$

   then $\Bs\rhd\bT.$
\end{example}



Let us concentrate in the case that we are interested in this article. For each $\blambda\in \OnePar$ the set $\std(\blambda)$ has a unique maximal element denoted by $\bT^{\blambda}.$ This element can be explicitly constructed by filling the nodes of $[\blambda]$ with the numbers $1,\dots,m$ decreasingly with respect to the order $\rhd.$ (Details in \cite{Lobos-Ryom-Hansen}).




For each $\blambda,$ there is a transitive action of the symmetric group  $ \Si_m $ on the right over $\tab(\blambda)$  via permutation of entries of each tableau. Particularly for each tableau $\bT\in \std(\blambda)$ we can associate a well defined element $d(\bT)\in \Si_m, $ by $\bT^{\blambda}d(\bT)=\bT.$





\begin{definition}\label{def-liftable-nodes}
  Given an $l-$multipartition $\blambda$ and let $\bT:\{1,\dots,m\}\rightarrow[\blambda]$ an standard tableaux with shape $\blambda.$ We say that a number $a\in\{1,\dots,m-1\}$ is liftable in $\bT$ if the tableau $\Bs:=\bT s_a$ satisfies

  $  \Bs\rhd \bT.$  We denote by $\mathbb{L}(\bT)$ the set of all liftable numbers in $\bT.$
\end{definition}



  Note that $\bT=\bT^{\blambda}$ if and only if $\mathbb{L}(\bT)=\emptyset$ (see \cite{Lobos-Ryom-Hansen} for details).

Fix $\blambda\in \OnePar.$ We define an element $W(\bT)\in\mathfrak{S}_m^{\ast}$ (notation in section \ref{sec-fixing-notation}) for any tableaux $\bT\in \std(\blambda)$  by reverse induction on the order $\rhd:$

\begin{definition}\label{def-Word-bT}

Let $\bT\in \std(\blambda),$ then

\begin{enumerate}
  \item If $\bT=\bT^{\blambda},$ we define
  \begin{equation}\label{eq-def-Word-bT-1}
    W(\bT)=1.
  \end{equation}

  \item If $\bT\neq\bT^{\blambda}.$ Assume that we have already defined the element $W(\Bs)$ for any $\Bs\rhd \bT.$
  Let $a=\max(\mathbb{L}(\bT)),$ the greatest liftable number in $\bT.$ Let $\Bs:=\bT s_a.$
  We define
  \begin{equation}\label{eq-def-Word-bT-2}
    W(\bT)=W(\Bs)s_a\in\mathfrak{S}_m^{\ast}.
  \end{equation}
\end{enumerate}
\end{definition}

\begin{example}
  Let $$\bT=\, \gyoung(;1,;2,;3,;4,;5,;6,;7),\gyoung(;8,;9,;\diez,;\once,;\doce,:,:),\gyoung(;\trece,:,:,:,:,:,:) \,.$$
  Then we can check that:
  \begin{equation*}
    W(\bT)=s_3s_2s_{[5:-1:3]}s_{[7:-1:4]}s_{[9:-1:5]}s_{[11:-1:6]}s_{[12:-1:7]}s_{[9:12]}.
  \end{equation*}
\end{example}

\begin{lemma}
  Let $\bT$ be a one-column standard tableau. The element $W(\bT)\in\mathfrak{S}_n^{\ast}$ is a reduced word of the element $d(\bT)\in \mathfrak{S}_n.$
\end{lemma}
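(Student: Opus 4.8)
The plan is to prove two things about $W(\bT)$: first, that its image under $\Im$ equals $d(\bT)$, and second, that it is reduced, i.e. $\ell^{\ast}(W(\bT)) = \ell(d(\bT))$. Both will be handled by reverse induction on the order $\rhd$, exactly mirroring the recursive structure of Definition \ref{def-Word-bT}. The base case $\bT = \bT^{\blambda}$ is immediate: here $\mathbb{L}(\bT) = \emptyset$, $W(\bT) = 1$, and since $\bT^{\blambda} d(\bT^{\blambda}) = \bT^{\blambda}$ forces $d(\bT^{\blambda}) = \id$, we have $\Im(W(\bT^{\blambda})) = \id = d(\bT^{\blambda})$ with both lengths zero.

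For the inductive step, suppose $\bT \neq \bT^{\blambda}$, set $a = \max(\mathbb{L}(\bT))$ and $\Bs := \bT s_a$. Since $a$ is liftable we have $\Bs \rhd \bT$, so the induction hypothesis applies to $\Bs$: namely $\Im(W(\Bs)) = d(\Bs)$ and $\ell^{\ast}(W(\Bs)) = \ell(d(\Bs))$. First I would establish the image identity. From $\Bs = \bT s_a$ and the definitions $\bT^{\blambda} d(\bT) = \bT$, $\bT^{\blambda} d(\Bs) = \Bs$, one reads off $d(\bT) = d(\Bs) s_a$. Applying $\Im$ to the recursion \eqref{eq-def-Word-bT-2} gives $\Im(W(\bT)) = \Im(W(\Bs)) s_a = d(\Bs) s_a = d(\bT)$, as desired. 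This part is essentially formal bookkeeping once the relation $d(\bT) = d(\Bs)s_a$ is pinned down.

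The substantive part is the length equality, which is where I expect the main obstacle to lie. I need $\ell(d(\bT)) = \ell(d(\Bs)) + 1$, since then $\ell^{\ast}(W(\bT)) = \ell^{\ast}(W(\Bs)) + 1 = \ell(d(\Bs)) + 1 = \ell(d(\bT))$ closes the induction. The key point is that multiplication by $s_a$ must \emph{increase} length, i.e. $\ell(d(\Bs)s_a) = \ell(d(\Bs)) + 1$. The natural way to see this is to exploit that applying $s_a$ moves $\bT$ strictly up in the dominance order ($\Bs \rhd \bT$): liftability of $a$ is designed precisely to track when acting by $s_a$ increases the ``distance'' from $\bT^{\blambda}$, and this dominance increase should be matched by a length increase in the associated permutation. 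Concretely, I would show that the entries $a$ and $a+1$ sit in positions of $\bT$ whose relative order under the underlying node order is reversed compared to their positions in $\Bs$, which via the standard characterization of length (number of inversions, or via the subword/exchange property in the Coxeter group $\Si_m$) yields $\ell(d(\bT)) > \ell(d(\Bs))$, forcing the increase by exactly one.

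The technical heart is therefore relating the combinatorial notion of liftability to Coxeter length, and I anticipate this requires the detailed properties of liftable numbers and of $\bT^{\blambda}$ developed in \cite{Lobos-Ryom-Hansen}; in particular the characterization $\mathbb{L}(\bT) = \emptyset \iff \bT = \bT^{\blambda}$ guarantees the recursion terminates and strictly decreases $\ell(d(\bt))$ at each reverse step, so I would invoke those results rather than reprove them. A clean alternative, if available from the cited work, is to note that $d(\Bs)$ together with the new letter $s_a$ forms a reduced expression because $a = \max(\mathbb{L}(\bT))$ ensures no cancellation with the previously chosen letters; I would check that the maximality of $a$ prevents any braid/cancellation that could shorten the word.
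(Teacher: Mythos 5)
Your proposal is correct and takes essentially the same approach as the paper: the paper's own ``proof'' consists of nothing more than two citations --- to \cite{Lobos-Ryom-Hansen} for the identity $\Im(W(\bT))=d(\bT)$ and to \cite{Bjorner-Brenti} for reducedness --- and your induction along the recursion of definition \ref{def-Word-bT}, with the projection identity obtained formally from $d(\bT)=d(\Bs)s_a$ and the reducedness reduced to showing $\ell(d(\bT))=\ell(d(\Bs))+1$ via the inversion/exchange characterization of Coxeter length, is precisely the argument those references encapsulate. You correctly isolate the length equality as the only substantive point (noting that $\ell(ws_a)=\ell(w)\pm 1$ forces the increase to be exactly one once strict growth is established from $\Bs\rhd\bT$), and your deferral of the detailed liftability-versus-length bookkeeping to \cite{Lobos-Ryom-Hansen} is no less complete than what the paper itself provides.
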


\begin{proof}
  To see that the projection $\Im(W(\bT))\in \mathfrak{S}_n$ is equal to $d(\bT)$ see \cite{Lobos-Ryom-Hansen}.

  To see that $W(\bT)$ is a reduced word see \cite{Bjorner-Brenti}.
\end{proof}

From here and so on, we define for this article as the \emph{official reduced word} of $d(\bT)$ the expression $\boldsymbol{d}(\bT)=W(\bT)$ obtained following the algorithm described in definition \ref{def-Word-bT}.

\subsection{Possible and impossible residue sequences}\label{sec-impossible-res-sec}

Given a residue sequence $\bi\in\II^{n},\quad (n\in\mathbb{N})$  we say that $\bi$ is \emph{$\kappa$-blob possible} if there is a one column standard tableau $\bT,$ such as  $\bi=\bi^{\bT}.$ In other case we say that  $\bi$ is \emph{$\kappa$-blob impossible}.

\begin{example}\label{ex-blob-impossible}
  Since we are assuming that $\boldsymbol{\kappa}$ is strongly adjacency-free, then any residue sequence $\bi=(i_1,\dots,i_n)$ such that $i_1=i_2$ is $\kappa$-blob impossible.

   More generally $\bi=(i_1,\dots,i_n)$ is a residue sequence, and suppose that, there is a $1\leq j\leq n-l$ such that $i_j=i_{j+1}=\cdots=i_{j+l},$ then $\bi$ is $\kappa$-blob impossible.
\end{example}

\begin{lemma}\label{lemma-bj-possible-bjp-too}
  If $\bi=(i_1,\dots,i_n)\in\II^{n}$ is a $\kappa-$blob possible residue sequence, then for any $k<n$ the restriction $\bi|_{k}=(i_1,\dots,i_k)\in\II^{k}$ is a $\kappa-$blob possible residue sequence.
\end{lemma}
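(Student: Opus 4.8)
The plan is to prove the statement by producing, from a one-column standard tableau $\bT$ realizing $\bi$, a one-column standard tableau realizing the restriction $\bi|_k$. By definition, $\bi$ being $\kappa$-blob possible means there is a one-column standard tableau $\bT$ of some shape $\blambda\in\OnePar$ with $\bi=\bi^{\bT}$, so $i_j=\res(\bT(j))$ for all $j$. The natural candidate for a tableau realizing $\bi|_k$ is the restriction $\bT|_k$, the function $\bT$ restricted to $\{1,\dots,k\}$, which by construction satisfies $\bi^{\bT|_k}=(i_1,\dots,i_k)=\bi|_k$. Thus the entire content of the lemma reduces to checking that $\bT|_k$ is again a \emph{one-column standard tableau}, i.e. that its image $\bT(\{1,\dots,k\})$ is the diagram of a one-column $l$-multipartition and that the standardness condition survives restriction.

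First I would verify standardness: since $\bT$ is standard, entries increase down each column in every component, and this property is clearly inherited by any initial segment $\{1,\dots,k\}$ of the entries — removing the largest $n-k$ labels cannot create a violation of monotonicity. Second, and this is the step requiring the most care, I would check that the shape $\shape(\bT|_k)$ is genuinely a one-column $l$-multipartition, meaning each component is again a single column with no gaps. Because $\bT$ is a standard one-column tableau, within each component the boxes are filled strictly increasingly from top to bottom; hence the set of labels occupying a given component forms, after removing all labels exceeding $k$, a set whose remaining boxes are exactly the top portion of that column. The point to confirm is that one never leaves a ``hole'': if a box in row $r$ of a column survives (its label is $\le k$) then every box above it in that column, having a strictly smaller label by standardness, also survives. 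This is precisely what guarantees $\shape(\bT|_k)$ is a legitimate one-column multipartition in $\OnePar$ (now a multipartition of $k$ rather than of $m$, consistent with the remark in the excerpt that tableaux of different sizes occur).

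The main obstacle, then, is not any deep algebraic fact but the bookkeeping needed to see that restriction preserves membership in $\OnePar$; once that is established, setting $\Bs:=\bT|_k$ gives a one-column standard tableau with $\bi^{\Bs}=\bi|_k$, so $\bi|_k$ is $\kappa$-blob possible by definition, completing the proof. I expect the argument to be short, with the column-monotonicity of standard one-column tableaux doing all the work; no appeal to the strong adjacency-freeness of $\boldsymbol{\kappa}$ should be necessary, since the statement is purely combinatorial and does not constrain the residues themselves. If one preferred an induction, it would suffice to treat the case $k=n-1$ (removing the single largest label $n$, which by standardness sits at the bottom of its column and whose removal trivially leaves a one-column shape) and iterate downward, but the direct restriction argument is cleaner.
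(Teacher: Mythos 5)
Your proposal is correct and follows exactly the paper's argument: restrict the tableau $\bT$ witnessing $\bi$ to $\{1,\dots,k\}$ and observe that $\Bs:=\bT|_k$ is a one-column standard $k$-tableau with $\bi^{\Bs}=\bi|_k$. The paper treats the preservation of standardness and of the one-column shape as immediate, whereas you spell out the ``no holes'' verification; this is a harmless elaboration, not a different route.
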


\begin{proof}
  Since $\bi$ is a $\kappa-$blob possible residue sequence, then there is an standard $n-$tableau $\bT$ such that  $\bi=\bi^{\bT}.$ Since the restriction $\Bs:=\bT|_{k}$ is an standard $k-$tableau and $\bi|_{k}=\bi^{\Bs}$ then we conclude that $\bi|_{k}$ is a $\kappa-$blob possible residue sequence.
\end{proof}

\begin{corollary}
   If $\bi=(i_1,\dots,i_n)\in\II^{n}$ is a $\kappa-$blob impossible residue sequence, then for any residue $j\in\II,$ we have that
  $\bj=\bi\otimes j\in\II^{n+1}$ is a $\kappa-$blob impossible residue sequence.
\end{corollary}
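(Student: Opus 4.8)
The plan is to prove the contrapositive, reducing the corollary directly to Lemma \ref{lemma-bj-possible-bjp-too}. Suppose for contradiction that $\bj = \bi \otimes j \in \II^{n+1}$ is $\kappa$-blob possible. Then by definition there exists a one-column standard $(n+1)$-tableau $\bT$ with $\bj = \bi^{\bT}$. Applying Lemma \ref{lemma-bj-possible-bjp-too} with the restriction index $k = n < n+1$, the truncated sequence $\bj|_{n} = (i_1,\dots,i_n)$ must also be $\kappa$-blob possible.

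The key observation is then that $\bj|_{n} = \bi$. Indeed, by the definition of concatenation in section \ref{sec-fixing-notation}, $\bj = \bi \otimes j = (i_1,\dots,i_n,j)$, so discarding the last coordinate returns exactly $\bi = (i_1,\dots,i_n)$. Hence $\bi$ is $\kappa$-blob possible, contradicting the hypothesis that $\bi$ is $\kappa$-blob impossible. Therefore $\bj$ must be $\kappa$-blob impossible, as claimed.

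There is essentially no hard part here: the corollary is the logical contrapositive of Lemma \ref{lemma-bj-possible-bjp-too} specialized to $k = n$, combined with the trivial observation that appending a single residue $j$ and then restricting back to the first $n$ entries is the identity on $\bi$. The only point requiring a line of care is matching the notation: the restriction $\bi|_k$ in the lemma takes the first $k$ coordinates, and one should confirm that $k = n$ is a legitimate choice (the lemma is stated for $k < n$ applied to a sequence of length $n+1$, so we use it with the sequence $\bj$ of length $n+1$ and $k = n$). Once this bookkeeping is in place, the argument is immediate and no genuine combinatorial obstacle arises.
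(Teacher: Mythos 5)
Your proof is correct and matches the paper's intent exactly: the paper states this corollary without proof precisely because it is the immediate contrapositive of Lemma \ref{lemma-bj-possible-bjp-too} applied to $\bj\in\II^{n+1}$ with $k=n$, which is exactly the argument you give. Your extra care about the restriction notation and the legitimacy of $k=n$ is sound bookkeeping but introduces nothing beyond what the paper takes as self-evident.
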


The following lemma have several consequences with respect to the concept of $\kappa-$blob possible (impossible) residue sequences.

\begin{lemma}\label{lemma-same-addable-residues}
  If $\bT,\Bs$ are two one-column standard tableaux with the same residue sequence, then their blob-addable nodes have the same set of residues.
\end{lemma}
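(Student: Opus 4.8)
The plan is to prove the stronger statement that the \emph{multiset} of residues of the blob-addable nodes of $\bT$ equals the corresponding multiset for $\Bs$; since equal multisets have equal underlying sets, this yields the lemma at once. All residue arithmetic below is understood modulo $\een$, and I would argue by induction on the number of boxes $n$.

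For the base case $n=0$ both tableaux are empty, of common shape $(1^{(0)},\dots,1^{(0)})$, so each has blob-addable nodes $(1,1,h)$ of residue $\kappa_h$ for $h=1,\dots,l$, and the two multisets literally coincide. The computation underlying the inductive step is elementary: for a one-column multipartition of shape $(1^{(a_1)},\dots,1^{(a_l)})$ the blob-addable node of component $h$ is $(a_h+1,1,h)$, of residue $\res(a_h+1,1,h)=\kappa_h+1-(a_h+1)=\kappa_h-a_h$. Consequently, adding a box to component $h$ changes this single contribution from $\kappa_h-a_h$ to $\kappa_h-(a_h+1)=(\kappa_h-a_h)-1$ and leaves the blob-addable residues of all other components untouched.

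For the inductive step I would set $\bT':=\bT|_{n-1}$ and $\Bs':=\Bs|_{n-1}$. Because in a standard one-column tableau the largest entry $n$ occupies the bottom box of its component, these restrictions are again one-column standard tableaux, and they share the length-$(n-1)$ residue sequence obtained by deleting the final entry $i_n$ from the common sequence $\bi^{\bT}=\bi^{\Bs}$. By the inductive hypothesis their blob-addable multisets agree; call this common multiset $M$. The box $n$ is adjoined to some component when passing from $\bT'$ to $\bT$; that component must have had blob-addable residue $i_n$ in $\bT'$ (whence $i_n\in M$), and by the computation above the operation replaces $M$ by the multiset obtained from it by deleting one copy of $i_n$ and adjoining one copy of $i_n-1$. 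The identical description holds for $\Bs$, possibly via a different component and even from a different shape.

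The crux, and the only point needing care, is exactly this last observation: although $\bT$ and $\Bs$ may add their last box to different components, the induced operation on $M$ depends only on the residue $i_n$ and not on the chosen component — in both cases one deletes a copy of $i_n$ and adjoins a copy of $i_n-1$. Hence the blob-addable multisets of $\bT$ and $\Bs$ coincide, closing the induction. I would also note that the argument makes no use of the strong adjacency-freeness of $\boldsymbol{\kappa}$, so the conclusion in fact holds in greater generality.
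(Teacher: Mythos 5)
Your proof is correct, and it is a genuine refinement of the paper's argument rather than a copy of it. The paper proves the lemma by the same induction on $n$ via the restrictions $\bT|_{n-1},\Bs|_{n-1}$, but it tracks only the \emph{set} of addable residues: it notes that the nodes below $\bT(n)$ and $\Bs(n)$ get the same residue and that ``the other addable nodes'' are addable nodes of the restrictions, which by induction have equal residue sets; its base case $n=1$ invokes strong adjacency-freeness to force $\bT=\Bs$. Your multiset strengthening is exactly the right inductive statement here, and it quietly repairs a soft spot in the set-level induction: knowing only that the residue \emph{sets} of the restrictions agree does not control multiplicities, so a priori the residue $i_n$ could label two addable nodes of $\bT|_{n-1}$ but only one of $\Bs|_{n-1}$, in which case deleting the used node and adjoining one of residue $i_n-1$ would leave different residue sets for $\bT$ and $\Bs$; with multisets the operation ``delete one copy of $i_n$, adjoin one copy of $i_n-1$'' is unambiguous and manifestly independent of which component received the box. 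Your supporting observations check out against the paper's definitions: in a standard one-column tableau the entry $n$ must occupy a removable node, hence the bottom box of its component, and for shape $(1^{(a_1)},\dots,1^{(a_l)})$ the blob-addable node $(a_h+1,1,h)$ has residue $\kappa_h-a_h$, so each component always contributes exactly one element to the multiset. Starting the induction at $n=0$ also lets you drop strong adjacency-freeness entirely, so your version is both tighter and more general than the published proof, at essentially no extra cost.
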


\begin{proof}
  Let $\bi=(i_1,\dots,i_n)$ the residue sequence of both $\bT$ and $\Bs.$ Let $\blambda=\shape(\bT)$ and $\bmu=\shape(\Bs).$

  If $n=1,$ then by strong adjacency-freeness of $\kappan,$ we have that $\bT=\Bs$ and the assertion of the lemma is trivial.

  If we assume that $n>1,$  $\bT|_{n-1}=\Bs|_{n-1},$ but $\bT\neq \Bs.$ In this case take $\gamma=(r,1,h)=\bT(n)$ and $\beta=(r',1,h')=\Bs(n).$ Since $\res(\bT)=\res(\Bs),$ then $\res(\gamma)=\res(\beta)$ therefore the corresponding blob-addable nodes $(r+1,1,h)$ and $(r'+1,1,h')$ have the same residue. The other blob-addable nodes of $\bT$ and $\Bs$ can be seen as blob-addable nodes of $\bT|_{n-1}=\Bs|_{n-1},$ then it is clear that the set of residues of them is the same for both $\bT$ and $\Bs.$

  Lets we assume that $n$ is large enough, $\bT\neq \Bs.$  Again we can take $\gamma=(r,1,h)=\bT(n)$ and $\beta=(r',1,h')=\Bs(n).$ Since $\res(\bT)=\res(\Bs),$ then $\res(\gamma)=\res(\beta)$ therefore the corresponding blob-addable nodes $(r+1,1,h)$ and $(r'+1,1,h')$ have the same residue. The other addable nodes of $\bT$ and $\Bs$ can be seen as addable nodes of $\bT|_{n-1},\Bs|_{n-1},$ respectively, then by induction the set of residues of them is the same. Therefore we conclude the assertion for $\bT$ and $\Bs.$
\end{proof}

\begin{corollary}\label{coro-bt-j-impossible-sequence}
  Let $\bi=(i_1,\dots,i_n)\in\II^{n}$ the residue sequence of a one-column standard tableau $\bT.$ If $j\in\II$ is not the residue of a blob-addable node of $\bT,$ then the extended residue sequence $\bj=\bi\otimes j\in\II^{n+1}$ is a $\kappa-$blob impossible residue sequence.
\end{corollary}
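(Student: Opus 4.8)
The plan is to argue by contradiction, using Lemma \ref{lemma-same-addable-residues} as the key ingredient. Suppose that $\bj=\bi\otimes j$ were $\kappa-$blob possible. Then by definition there would exist a one-column standard $(n+1)$-tableau $\Bu$ with $\bi^{\Bu}=\bj.$ First I would pass to the restriction $\Bs:=\Bu|_{n}$: since $\bj|_{n}=\bi,$ the tableau $\Bs$ is a one-column standard $n$-tableau whose residue sequence $\bi^{\Bs}$ equals $\bi=\bi^{\bT}.$ Thus $\bT$ and $\Bs$ are two one-column standard $n$-tableaux sharing the same residue sequence, which is exactly the hypothesis needed to invoke Lemma \ref{lemma-same-addable-residues}.

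Next I would locate the box carrying the entry $n+1$ inside $\Bu.$ Writing $\gamma:=\Bu(n+1),$ the fact that $\Bu$ is a one-column standard tableau forces $n+1$, being the maximal entry, to sit at the bottom of one of the columns; hence deleting $\gamma$ recovers precisely the one-column shape $\shape(\Bs).$ By the explicit description of the blob-addable nodes of a one-column $l$-multipartition, this says exactly that $\gamma$ is a blob-addable node of $\Bs.$ Its residue is $\res(\gamma)=i^{\Bu}_{n+1}=j,$ so $j$ is the residue of a blob-addable node of $\Bs.$

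Finally, I would apply Lemma \ref{lemma-same-addable-residues} to the pair $\bT,\Bs$: since they have the same residue sequence, their blob-addable nodes have the same set of residues. Consequently $j$ is the residue of a blob-addable node of $\bT$ as well, contradicting the hypothesis that $j$ is not such a residue. This contradiction forces $\bj$ to be $\kappa-$blob impossible. The only point requiring care — and the main (though mild) obstacle — is the structural bookkeeping in the second step: checking that removing the maximal entry $n+1$ from a one-column standard tableau again yields a one-column standard tableau and returns $\gamma$ to a blob-addable position of the restriction. This is the same restriction principle already used implicitly in the proof of Lemma \ref{lemma-bj-possible-bjp-too}; once it is in hand, the result is a direct appeal to Lemma \ref{lemma-same-addable-residues}.
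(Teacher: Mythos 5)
Your proposal is correct and follows essentially the same route as the paper's own proof: assume $\bj$ is $\kappa$-blob possible via some standard $(n+1)$-tableau, restrict to the first $n$ entries, apply Lemma \ref{lemma-same-addable-residues} to the pair consisting of $\bT$ and the restriction, and derive a contradiction from the node carrying $n+1$. The only difference is that you spell out the bookkeeping (that the maximal entry sits at the bottom of a column, so its node is blob-addable for the restriction) which the paper states without elaboration; this is a harmless and accurate expansion.
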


\begin{proof}
  If we assume that $\bj=(i_1,\dots,i_n,j)$ is $\kappa-$blob possible, then there exist a one-column standard tableau (of $n+1$), $\Bs$ such that $\bi=\bi^{\Bs}.$ But then $\bT$ and $\Bs|_{n}$ are two one-column standard tableaux (of $n$) with the same residue sequence $\bi.$ Then by lemma  \ref{lemma-same-addable-residues} they have the same set of residues for their corresponding addable nodes. Since $\Bs(n+1)$ is an addable node for $\Bs|_{n}$ and it has residue $j$ we obtain a contradiction.
\end{proof}

The two next corollaries are very important for our purposes in the rest of this article:

\begin{corollary}\label{coro-bif-j-not-possible}
  Let $t\in\{1,\dots,l\}.$ If $\bif=(i_1,i_2,\dots,i_n)$ is the residue sequence given by:
  $$i_r=\kappa_t-1-r\quad (\mod\een),$$
  and $j\in\II$ is a residue such that
  $j\neq i_n-1\wedge j\notin\{\kappa_s: s\neq t\},$
  then $\bj=\bif\otimes j$ is a $\kappa-$blob impossible residue sequence.
\end{corollary}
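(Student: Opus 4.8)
The plan is to realize $\bif$ as the residue sequence of one explicit one-column standard tableau and then reduce the statement to Corollary \ref{coro-bt-j-impossible-sequence}. First I would exhibit a one-column standard tableau $\bT$ with $\bi^{\bT}=\bif$: take the tableau whose single nonempty column occupies the $t$-th component, so that $\bT(r)=(r,1,t)$ for $r=1,\dots,n$. A single column is automatically standard (entries increase down the column), and reading $\res(r,1,t)$ down that column returns exactly the prescribed sequence $\bif$; in particular $\bif$ is $\kappa$-blob possible and this $\bT$ is a legitimate realization. Since Lemma \ref{lemma-same-addable-residues} guarantees that any two one-column standard tableaux sharing a residue sequence have blob-addable nodes with the same set of residues, I am free to compute that set from this distinguished $\bT$, and I need not worry about whether other tableaux also realize $\bif$.

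Next I would read off the blob-addable nodes of $\bT$ and their residues. The shape of $\bT$ is $(1^{(a_1)},\dots,1^{(a_l)})$ with $a_t=n$ and $a_s=0$ for $s\neq t$, so by the definition of blob-addable node the candidates are $(n+1,1,t)$ together with the top node $(1,1,s)$ of each empty component $s\neq t$. The node $(n+1,1,t)$ lies immediately below the last box $\bT(n)=(n,1,t)$, so its residue is one less than $i_n$, that is $i_n-1$; and each $(1,1,s)$ has residue $\res(1,1,s)=\kappa_s$. Hence the set of residues of blob-addable nodes of $\bT$ is exactly $\{\,i_n-1\,\}\cup\{\kappa_s:s\neq t\}$.

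Finally I would match this against the hypothesis. The two conditions imposed on $j$, namely $j\neq i_n-1$ and $j\notin\{\kappa_s:s\neq t\}$, say precisely that $j$ is not the residue of any blob-addable node of $\bT$. Applying Corollary \ref{coro-bt-j-impossible-sequence} to this $\bT$ and this $j$ then yields that $\bj=\bif\otimes j$ is $\kappa$-blob impossible, as claimed.

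The only genuine content is the residue bookkeeping of the first two steps: correctly identifying the tableau that realizes $\bif$ and verifying that its blob-addable residues are exactly $i_n-1$ (from extending the single column one box further) and the $\kappa_s$ with $s\neq t$ (from seeding each of the other components). I expect this to be the main — though essentially routine — obstacle; once these residues are pinned down, the conclusion is immediate from Corollary \ref{coro-bt-j-impossible-sequence}, and the invocation of Lemma \ref{lemma-same-addable-residues} removes any need to analyze non-uniqueness of the realizing tableau.
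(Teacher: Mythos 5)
Your proof is correct and takes essentially the same route as the paper's: the paper reduces WLOG to $t=1$, takes the unique standard tableau of shape $(1^{(n)},1^{(0)},\dots,1^{(0)})$, reads off the blob-addable residues $\{i_n-1,\kappa_2,\dots,\kappa_l\}$, and concludes via Corollary \ref{coro-bt-j-impossible-sequence}; your version merely keeps $t$ general, placing the single column in the $t$-th component, and your extra appeal to Lemma \ref{lemma-same-addable-residues} is redundant since Corollary \ref{coro-bt-j-impossible-sequence} already absorbs the non-uniqueness of the realizing tableau. One small point worth flagging: both your computation and the paper's own proof implicitly read the statement's formula as $i_r=\kappa_t+1-r \pmod{\een}$ (so that $\res(r,1,t)=\kappa_t+1-r$ matches), which indicates the displayed $\kappa_t-1-r$ is a sign typo in the statement rather than a gap in your argument.
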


\begin{proof}
  We can assume that $t=1$ without loss of generality.

  Lets define $\blambda=(1^{(n)},1^{(0)},\dots,1^{(0)})$ and $\bT$ the only possible standard tableau with shape $\blambda,$ that is $$\bT(r)=(r,1,1),\quad 1\leq r\leq n.$$
  Is not difficult to see that the blob-addable nodes of $\bT$ are $(n+1,1,1),(1,1,2),(1,1,3),\dots,(1,1,l),$ then the set of residues for blob-addable nodes is given by $\{i_n-1,\kappa_2,\dots,\kappa_l\}.$
  By our assumption on $j$ we conclude that $\bj$ is $\kappa-$blob impossible residue sequence.

  (Note that corollary \ref{coro-bt-j-impossible-sequence} implies that our argument is enough to conclude the assertion of this corollary.)
\end{proof}

\begin{corollary}\label{coro-bif-j-not-possible2}
  Let $t\in\{1,\dots,l\}. $If $\bif=(i_1,i_2,\dots,i_n)$ is the residue sequence given by:
  $$i_r=\kappa_t-1-r\quad (\mod\een),$$
  and $j\in\{\kappa_s:s\neq t\}$
  then $\bj=(i_1,\dots,i_n,j,j+1,j+1)$ is a $\kappa-$blob impossible residue sequence.
\end{corollary}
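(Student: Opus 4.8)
The plan is to iterate the mechanism of Corollary \ref{coro-bt-j-impossible-sequence} together with Lemma \ref{lemma-same-addable-residues} one or two steps beyond what was needed for Corollary \ref{coro-bif-j-not-possible}. As in that proof we may keep $t$ fixed and let $\bT$ be the one-column standard tableau consisting of a single column of length $n$ in component $t$, so that $\bi^{\bT}=\bif$ and $\bif$ is $\kappa$-blob possible. The difference with Corollary \ref{coro-bif-j-not-possible} is that now $j=\kappa_s$ with $s\neq t$ is precisely the residue of the blob-addable node $(1,1,s)$ of $\bT$; hence $\bif\otimes j$ is $\kappa$-blob possible and we cannot stop at this point. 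Instead I would analyse the blob-addable residues one level further and show that the residue $j+1$ can never be legally appended, so that the extension by the final two entries $j+1,j+1$ is blocked.

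First I would record the blob-addable residues after appending $j$. Let $\bT'$ be $\bT$ together with the extra node $(1,1,s)$; this realizes $\bif\otimes j$, and by Lemma \ref{lemma-same-addable-residues} every realization of $\bif\otimes j$ has the same set of blob-addable residues as $\bT'$, namely $\{\,i_n-1,\ j-1\,\}\cup\{\,\kappa_h:h\neq t,s\,\}$, where $i_n-1$ is the residue of the column-continuation node $(n+1,1,t)$, where $j-1=\kappa_s-1$ is the residue of $(2,1,s)$, and where the $\kappa_h$ are the residues of the remaining top nodes $(1,1,h)$. Using strong adjacency-freeness (condition (ii) of Definition \ref{def-new-strong-adj-free}) we get $j+1=\kappa_s+1\neq\kappa_h$ for all $h\neq s$, and $j+1\neq j-1$ because $\een>2$; therefore $j+1$ belongs to this set only in the exceptional case $i_n-1=j+1$.

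The main point, and the only real obstacle, is exactly this exceptional coincidence $i_n-1=j+1$, so I would split into two cases. If $i_n-1\neq j+1$, then $j+1$ is not a blob-addable residue of $\bT'$, so Corollary \ref{coro-bt-j-impossible-sequence} already gives that $\bif\otimes(j,j+1)$ is $\kappa$-blob impossible, and appending one more entry $j+1$ keeps it impossible by the corollary following Lemma \ref{lemma-bj-possible-bjp-too}; hence $\bj$ is $\kappa$-blob impossible. If instead $i_n-1=j+1$, then $\bif\otimes(j,j+1)$ is realized by the tableau $\bT''$ obtained from $\bT'$ by adjoining the node $(n+1,1,t)$, whose residue is $i_n-1=j+1$. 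Now the column continuation of $\bT''$ is $(n+2,1,t)$ with residue $i_n-2=j$, so by Lemma \ref{lemma-same-addable-residues} the blob-addable residues of any realization of $\bif\otimes(j,j+1)$ form the set $\{\,j,\ j-1\,\}\cup\{\,\kappa_h:h\neq t,s\,\}$, which again excludes $j+1$ by the same two observations (condition (ii) and $\een>2$), now together with $j+1\neq j$. A final application of Corollary \ref{coro-bt-j-impossible-sequence} then shows that $\bj=\bif\otimes(j,j+1,j+1)$ is $\kappa$-blob impossible. In both cases we conclude, so the corollary follows.
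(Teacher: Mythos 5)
Your proposal is correct and follows essentially the same route as the paper: the same case split on whether $i_n-1=j+1$, the same explicit one-column tableaux realizing $\bif\otimes j$ (resp.\ $\bif\otimes(j,j+1)$ in the exceptional case), and the same appeal to strong adjacency-freeness together with Corollary \ref{coro-bt-j-impossible-sequence} and Lemma \ref{lemma-bj-possible-bjp-too} to exclude $j+1$ as a blob-addable residue. The only differences are presentational (you keep $t,s$ general and invoke Lemma \ref{lemma-same-addable-residues} explicitly, where the paper normalizes to $t=1$, $s=2$).
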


\begin{proof}
  We can assume that $t=1$ and $j=\kappa_2$ without loss of generality.

   If $j+1=i_n-1,$ then define $\blambda=(1^{(n+1)},1^{(1)},1^{(0)},\dots,1^{(0)}),$ and $\bT:\{1,\dots,n+2\}\rightarrow [\blambda]$ given by:
  \begin{equation*}
    \bT(k)=\left\{\begin{array}{cc}
             (k,1,1) & \quad\textrm{if}\quad 1\leq k\leq n \\
             \quad & \quad \\
             (1,1,2) & \quad\textrm{if}\quad k=n+1\\
             \quad & \quad \\
             (k+1,1,1) & \quad\textrm{if}\quad k=n+2
           \end{array}\right.
  \end{equation*}
  Then $\bT$ is an standard $(n+2)-$tableau with shape $\blambda.$ Note that the blob-addable nodes of $\bT$ are: $$(k+2,1,1),(2,1,2),(1,1,3),\dots,(1,1,l)$$ and their corresponding residues are:
  $$j,j-1,\kappa_3,\dots,\kappa_l.$$
  By strong adjacency-freeness of $\kappan$ we have that $j+1$ is not in the set of residues for blob-addable nodes of $\bT,$ then $\bj$ is $\kappa-$blob impossible (corollary \ref{coro-bt-j-impossible-sequence}).

  If $j+1\neq i_n-1$ then let $\blambda=(1^{(n)},1^{(1)},1^{(0)},\dots,1^{(0)}),$ and $\bT:\{1,\dots,n+1\}\rightarrow[\blambda],$ given by

  \begin{equation*}
    \bT(k)=\left\{\begin{array}{cc}
             (k,1,1) & \quad\textrm{if}\quad 1\leq k\leq n \\
             \quad & \quad \\
             (1,1,2) & \quad\textrm{if}\quad k=n+1
           \end{array}\right.
  \end{equation*}
  Then $\bT$ is an standard $(n+1)-$tableau with shape $\blambda.$ Note that the blob-addable nodes of $\bT$ are in this case:
  $$(k+1,1,1),(2,1,2),(1,1,3),\dots,(1,1,l)$$ and their corresponding residues are:
  $$i_n-1,j-1,\kappa_3,\dots,\kappa_l$$
  By strong adjacency-freeness of $\kappan$ we have that $j+1$ is not in the set of residues for blob-addable nodes of $\bT,$ then $\bj'=(i_1,\dots,i_n,j,j+1)$ is $\kappa-$blob impossible and so is $\bj.$ (corollary \ref{coro-bt-j-impossible-sequence} and lemma \ref{lemma-bj-possible-bjp-too}).
\end{proof}

\begin{corollary}\label{coro-bif-j-not-possible3}
  Let $s,t\in\{1,\dots,l\}$ with $s\neq t$ and $\bjf=(j_1,j_2,\dots,j_n),$ the residue sequence given by:
  \begin{equation*}
    j_r=\left\{
    \begin{array}{cc}
      \kappa_s & \quad \textrm{if}\quad r=1 \\
      \quad & \quad \\
      \kappa_t-2-r & \quad\textrm{if}\quad r>1
    \end{array}\right.
  \end{equation*}
  Then the residue sequence $\bi=(j_1,\dots,j_n,\kappa_s-1,\kappa_s,\kappa_s )$ is a $\kappa-$blob impossible residue sequence.
\end{corollary}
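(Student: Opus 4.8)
The plan is to locate the first position at which $\bi$ appends a residue that is not the residue of a blob-addable node, and then to invoke Corollary \ref{coro-bt-j-impossible-sequence}, exactly as in the proofs of the two preceding corollaries. The device that makes this clean is Lemma \ref{lemma-same-addable-residues}: the set of residues of the blob-addable nodes depends only on the residue sequence, so it suffices to exhibit a \emph{single} standard tableau realizing each prefix and to read off its blob-addable residues. Accordingly, I would first realize $\bjf$ by the one-column standard $n$-tableau $\bT$ defined by $\bT(1)=(1,1,s)$ and $\bT(r)=(r-1,1,t)$ for $2\le r\le n$; since $s\neq t$ this is standard, and a direct computation gives $\bi^{\bT}=\bjf$. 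Its blob-addable nodes are $(2,1,s)$, $(n,1,t)$ and $(1,1,h)$ for $h\neq s,t$, so the set of blob-addable residues of $\bT$ is
\[
\{\kappa_s-1\}\cup\{\kappa_t+1-n\}\cup\{\kappa_h:h\neq s,t\}.
\]

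Next I would append the residue $\kappa_s-1$. As $\kappa_s-1$ lies in the set above, the sequence $\bjf\otimes(\kappa_s-1)$ is blob-possible and is realized by the tableau $\bT'$ obtained from $\bT$ by adjoining $(2,1,s)$ as entry $n+1$; its blob-addable residues are then
\[
\{\kappa_s-2\}\cup\{\kappa_t+1-n\}\cup\{\kappa_h:h\neq s,t\}.
\]
By strong adjacency-freeness of $\kappan$ we have $\kappa_s\neq\kappa_s-2$ and $\kappa_s\neq\kappa_h$ for every $h\neq s$, so $\kappa_s$ is a blob-addable residue of $\bT'$ if and only if $\kappa_s=\kappa_t+1-n$. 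If this equality fails, then $\kappa_s$ is not blob-addable to $\bT'$, hence $\bjf\otimes(\kappa_s-1,\kappa_s)$ is already $\kappa$-blob impossible by Corollary \ref{coro-bt-j-impossible-sequence}, and therefore so is its extension $\bi$ by (the contrapositive of) Lemma \ref{lemma-bj-possible-bjp-too}.

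It remains to treat the coincidence case $\kappa_s=\kappa_t+1-n$, which I expect to be the only real obstacle, since there the middle residue $\kappa_s$ can genuinely be added. Here the node $(n,1,t)$ has residue $\kappa_t+1-n=\kappa_s$ and is blob-addable, so $\bjf\otimes(\kappa_s-1,\kappa_s)$ is realized by the tableau $\bT''$ got from $\bT'$ by adjoining $(n,1,t)$; now component $t$ occupies rows $1,\dots,n$, and the blob-addable residues of $\bT''$ become $\{\kappa_s-2\}\cup\{\kappa_t-n\}\cup\{\kappa_h:h\neq s,t\}$, which equals $\{\kappa_s-2\}\cup\{\kappa_s-1\}\cup\{\kappa_h:h\neq s,t\}$ because $\kappa_t-n=\kappa_s-1$. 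As before this set excludes $\kappa_s$, so appending the final $\kappa_s$ is forbidden and $\bi$ is $\kappa$-blob impossible by Corollary \ref{coro-bt-j-impossible-sequence}. Throughout, Lemma \ref{lemma-same-addable-residues} ensures that even when a prefix is realized by several tableaux (for instance when $\kappa_s-1$ is addable both along component $s$ and along component $t$), the blob-addable residues I compute are independent of the chosen representative, so a single tableau at each stage suffices.
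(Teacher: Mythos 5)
Your proof is correct and takes essentially the same route as the paper's: you split into the cases $\kappa_s=\kappa_t+1-n$ (the paper's $j_n=\kappa_s+1$) versus $\kappa_s\neq\kappa_t+1-n$, realize the relevant prefix by an explicit one-column standard tableau, and conclude via Corollary \ref{coro-bt-j-impossible-sequence} together with Lemma \ref{lemma-bj-possible-bjp-too} and strong adjacency-freeness, with Lemma \ref{lemma-same-addable-residues} justifying (as it implicitly does in the paper) that one realization per prefix suffices. Note only that, exactly like the paper's own proof, you have tacitly read the defining formula as $j_r=\kappa_t+2-r$ rather than the misprinted $j_r=\kappa_t-2-r$ in the statement.
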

\begin{proof}
  We can assume that $t=1$ and $s=2,$ without loss of generality.

  We have two cases:
  \begin{enumerate}
    \item If $j_n=\kappa_2+1.$ In this case we define the standard $(n+2)-$tableau $\bT$ as follows:
    \begin{equation*}
      \bT(r)=\left\{
      \begin{array}{cc}
        (1,1,2) & \quad\textrm{if}\quad r=1 \\
        \quad & \quad \\
        (2,1,2) & \quad\textrm{if}\quad r=n+1 \\
        \quad & \quad \\
        (n,1,1) & \quad\textrm{if}\quad r=n+2 \\
        \quad & \quad \\
        (r-1,1,1) & \quad\textrm{if}\quad 1<r<n+1
      \end{array}
      \right.
    \end{equation*}
    then the set of blob-addable node of $\bT$ is: $(n+1,1,1),(3,1,2),(1,1,3),\dots,(1,1,l),$
    and the corresponding set of residues is: $\kappa_2-1,\kappa_2-2,\kappa_3,\dots,\kappa_l.$
    The strong adjacency-freeness of $\kappan$ implies that the sequence $\bj$ is a $\kappa-$blob impossible residue sequence.
    \item If $j_n\neq\kappa_2+1.$ In this case we define the standard $(n+1)-$tableau $\bT$ as follows:
    \begin{equation*}
      \bT(r)=\left\{
      \begin{array}{cc}
        (1,1,2) & \quad\textrm{if}\quad r=1 \\
        \quad & \quad \\
        (2,1,2) & \quad\textrm{if}\quad r=n+1 \\
        \quad & \quad \\
        (r-1,1,1) & \quad\textrm{if}\quad 1<r<n+1
      \end{array}
      \right.
    \end{equation*}

    then the set of blob-addable node of $\bT$ is: $(n,1,1),(3,1,2),(1,1,3),\dots,(1,1,l),$
    and the corresponding set of residues is: $j_n-1,\kappa_2-2,\kappa_3,\dots,\kappa_l.$
    The strong adjacency-freeness of $\kappan$ implies that the sequence $\bj$ is a $\kappa-$blob impossible residue sequence.
    \end{enumerate}
\end{proof}

\section{Generalized Blob Algebras}\label{ch-gen-blob-alg}

\subsection{Definition}\label{sec-definition-gen-blob}


\medskip

\medskip

\medskip

A \emph{Khovanov-Lauda diagram} $ D$, or simply a \emph{KL-diagram}, on $m$ strings consists of
$m$ points on each of two parallel edges (the top edge and the bottom edge) and $m$ strings connecting
the points of the top edge with the points
of the bottom edge. Strings may intersect, but triple intersections are not allowed. Each string
may be decorated with a finite number of dots,
but dots cannot be located on the intersection of two strings. Finally, each string is labelled with an element of $\II$.
This defines two residue sequences $t(D),b(D)  \in \II^m$ (top and bottom) associated with the diagram $D$
obtained by reading the residues of the extreme points from left to right.

In a \emph{KL-diagram} we say that two strings are \emph{sisters} if they have the same label or we say that they are \emph{cousins} if their labels differ in one. We say that two strings are \emph{not relatives} if they are not sisters nor cousins.

For example, let $\een=5$ and $m=9$. Let $D$ be the following KL-diagram:

\begin{equation*}
  \includegraphics[scale=0.17]{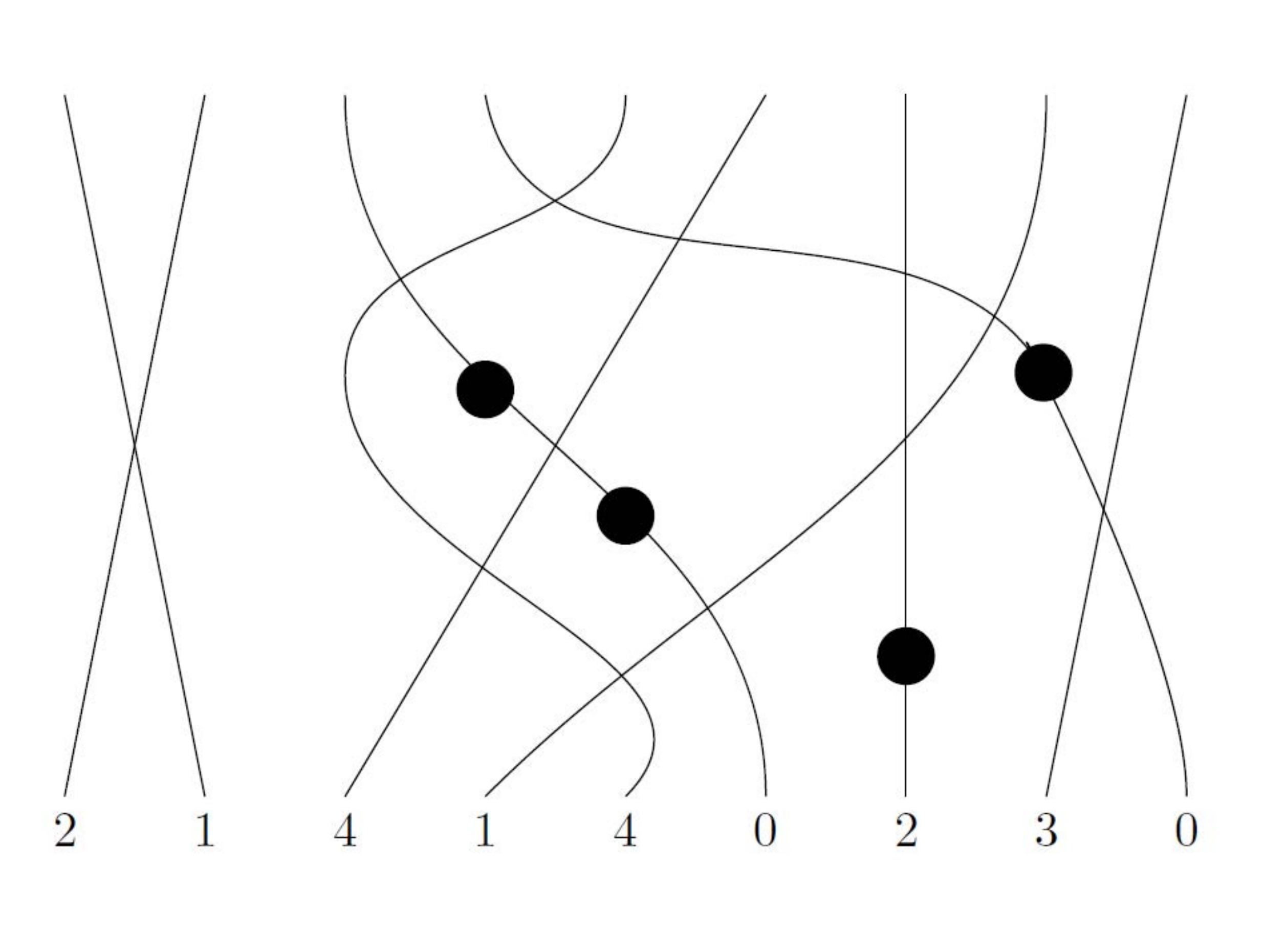}
\end{equation*}

then $b(D)=(2,1,4,1,4,0,2,3,0)$ and $t(D)=(1,2,0,0,4,4,2,1,3).$

The generalized blob algebra $\B=\mathcal{B}_{l,m}^{\F}(\kappan,\een)$ is the $ \F$-vector space consisting
of all $\F$-linear
combinations of KL-diagrams on $m$ strings, modulo planar isotopy and modulo the following relations:

\begin{itemize}
  \item \begin{equation}\label{mal-inicio}
  \begin{tikzpicture}[xscale=0.5,yscale=0.5]
  \draw[](0,0)--(0,3);
  \draw[](1,0)--(1,3);
  \node at(2.5,1.5) {$\dots$};
  \draw[](4,0)--(4,3);
  \node[below] at (0,0) {$i_1$};
  \node[below] at (1,0) {$i_2$};
  \node[below] at (4,0) {$i_m$};
  \node at(5,1.5){$\quad= 0$};
  \node at(10,1.5){if $i_1\not\in{\{\kappa_1,\dots,\kappa_l\}}${\color{black}}};
  \end{tikzpicture}
  \end{equation}
\item \begin{equation}\label{otro-mal-inicio}
\begin{tikzpicture}[xscale=0.5,yscale=0.5]
  \draw[](0,0)--(0,3);
  \draw[](1,0)--(1,3);
  \node at(2.5,1.5) {$\dots$};
  \draw[](4,0)--(4,3);
  \node[below] at (0,0) {$i_1$};
  \node[below] at (1,0) {$i_2$};
  \node[below] at (4,0) {$i_m$};
  \node at(5,1.5){$\quad=\quad 0$};
  \node at(12,1.5){if $ i_2 = i_1 +1${\color{black}}};
\end{tikzpicture}
\end{equation}
\item \begin{equation}\label{dot-al-inicio}
\begin{tikzpicture}[xscale=0.5,yscale=0.5]
  \draw[](0,0)--(0,3);
  \draw[](1,0)--(1,3);
  \draw[](4,0)--(4,3);
  \draw[fill] (0,1.5) circle [radius=0.2];
  \node[below] at (0,0) {$i_1$};
  \node[below] at (1,0) {$i_2$};
  \node[below] at (4,0) {$i_m$};
  \node at (5,1.5) {$\quad=\quad0$};
  \node at (2.5,1.5) {$\dots$};
\end{tikzpicture}
\end{equation}
\item 
\begin{equation}\label{punto-arriba}
\begin{tikzpicture}[xscale=0.5,yscale=0.6]
\draw[] (0,0)--(2,2);
\draw[] (0,2)--(2,0);
\draw[fill] (1.5,0.5) circle [radius=0.2];
\draw[] (3,0)--(5,2);
\draw[] (3,2)--(5,0);
\draw[fill] (3.5,1.5) circle [radius=0.2];
\draw[] (7.5,0)--(7.5,2);
\draw[] (9,0)--(9,2);
\node at (2.5,1) {=};
\node at (6.3,1) {$- \delta_{ij}$\quad};
\node[below] at (0,0) {$i$};
\node[below] at (2,0) {$j$};
\node[below] at (3,0) {$i$};
\node[below] at (5,0) {$j$};
\node[below] at (7.5,0) {$i$};
\node[below] at (9,0) {$j$};
\end{tikzpicture}
\end{equation}
\item \begin{equation}\label{punto-abajo}
\begin{tikzpicture}[xscale=0.5,yscale=0.6]
\draw[] (0,0)--(2,2);
\draw[] (0,2)--(2,0);
\draw[fill] (1.5,1.5) circle [radius=0.2];
\draw[] (3,0)--(5,2);
\draw[] (3,2)--(5,0);
\draw[fill] (3.5,0.5) circle [radius=0.2];
\draw[] (7.5,0)--(7.5,2);
\draw[] (9,0)--(9,2);
\node at (2.5,1) {=};
\node at (6.3,1) {$-\delta_{ij}$\quad};
\node[below] at (0,0) {$i$};
\node[below] at (2,0) {$j$};
\node[below] at (3,0) {$i$};
\node[below] at (5,0) {$j$};
\node[below] at (7.5,0) {$i$};
\node[below] at (9,0) {$j$};
\end{tikzpicture}
\end{equation}
where $ \delta_{ij} $ is the usual Kronecker's delta function.
\item \begin{equation}\label{cruce-pasa}
\begin{tikzpicture}[xscale=0.5,yscale=0.65]
\draw[] (0,0) to [out=45,in=270](2,2);
\draw[] (2,0) to [out=90,in=315](0,2);
\draw[] (1,0) to [out=135,in=225] (1,2);
\draw[] (3,0) to [out=90,in=225](5,2);
\draw[] (5,0) to [out=135,in=270](3,2);
\draw[] (4,0) to [out=45,in=315](4,2);
\draw[] (7,0)--(7,2);
\draw[] (8,0)--(8,2);
\draw[] (9,0)--(9,2);
\node at (2.5,1) {=};
\node at (6,1) {$\quad+\alpha\quad$\quad};
\node[below] at (0,0) {$i$};
\node[below] at (1,0) {$j$};
\node[below] at (2,0) {$k$};
\node[below] at (3,0) {$i$};
\node[below] at (4,0) {$j$};
\node[below] at (5,0) {$k$};
\node[below] at (7,0) {$i$};
\node[below] at (8,0) {$j$};
\node[below] at (9,0) {$k$};
\end{tikzpicture}
\end{equation}
where $$\alpha=\left\{\begin{array}{cc}
                       -1 & \quad\textrm{if}\quad i=k=j-1 \\
                       1 & \quad\textrm{if}\quad i=k=j+1 \\
                       0 & \quad\textrm{otherwise}
                     \end{array}\right.$$
\item \begin{equation}\label{lazo}
\begin{tikzpicture}[xscale=0.5,yscale=0.65]
\draw[](0,0)to[out=90,in=270](1,1);
\draw[](1,1)to[out=90,in=270](0,2);
\draw[](1,0)to[out=90,in=270](0,1);
\draw[](0,1)to[out=90,in=270](1,2);
\node[below] at (0,0) {$i$};
\node[below] at (1,0) {$j$};
\node at (2,1){$=\beta$\quad};
\draw[] (3,0)--(3,2);
\draw[] (4,0)--(4,2);
\node[below] at (3,0) {$i$};
\node[below] at (4,0) {$j$};
\node at (5,1) {$+\gamma$\quad};
\draw[] (6,0)--(6,2);
\draw[] (7,0)--(7,2);
\node[below] at (6,0) {$i$};
\node[below] at (7,0) {$j$};
\draw[fill] (7,1) circle [radius=0.15];
\node at (8,1) {$-\gamma$\quad};
\draw[] (9,0)--(9,2);
\draw[] (10,0)--(10,2);
\node[below] at (9,0) {$i$};
\node[below] at (10,0) {$j$};
\draw[fill] (9,1) circle [radius=0.15];
\end{tikzpicture}
\end{equation}
where $$\beta=\left\{\begin{array}{cc}
                       1 & \quad\textrm{if}\quad |i-j|>1 \\
                       0& \quad\textrm{otherwise}
                     \end{array}\right.$$
and $$\gamma=\left\{\begin{array}{cc}
                      1 & \quad\textrm{if}\quad j=i+1 \\
                      -1 & \quad\textrm{if}\quad j=i-1 \\
                      0 &\quad\textrm{otherwise{\color{black}.}}
                    \end{array}\right.$$
\end{itemize}

\medskip
Multiplication $DD'$ between two diagrams $D$ and $D'$ is defined by vertical concatenation with $D$ above of $D'$ if $b(D)=t(D')$.
If $b(D)\neq t(D')$ the product is defined to be zero.
The product is extended by linearity to the rest of elements.

\medskip
There is an alternative definition for $\B,$ in terms of generators $e(\bi),\psi_j,y_j$ and relations (see \cite{LiPl} for example). The connection between generators and diagrams is given as follows:
\begin{equation*}
\begin{tikzpicture}[xscale=0.5,yscale=0.6]
  \draw[](0,0)--(0,2);
  \draw[](1,0)--(1,2);
  \node at(2,1) {$\dots$};
  \node at (-1,1){$=$};
  \node at (-2,1){$e(\bi)$};
  \draw[](3,0)--(3,2);
  \node[below] at (0,0) {$i_1$};
  \node[below] at (1,0) {$i_2$};
  \node[below] at (3,0) {$i_m$};
  \node[below] at (3.3,1) {$,$};
\end{tikzpicture}
\begin{tikzpicture}[xscale=0.5,yscale=0.6]
  \draw[](0,0)--(0,2);
  \draw[](1.8,0)--(1.8,2);
  \draw[fill] (1.8,1) circle [radius=0.15];
  \node at(0.9,1) {$\dots$};
  \node at(2.8,1) {$\dots$};
  \node at(-1,1){$=$};
  \node at(-2.5,1){$y_re(\bi)$};
  \draw[](3.5,0)--(3.5,2);
  \node[below] at (0,0) {$i_1$};
  \node[below] at (1.8,0) {$i_r$};
  \node[below] at (3.5,0) {$i_m$};
  \node[below] at (3.8,1) {$,$};
\end{tikzpicture}
\begin{tikzpicture}[xscale=0.5,yscale=0.6]
  \draw[](1,0)--(1,2);
  \draw[](3,0)--(4,2);
  \draw[](3,2)--(4,0);
   \node at(2,1) {$\dots$};
  \node at(4.8,1) {$\dots$};
  \node at(0,1){$=$};
  \node at(-1.5,1){$\psi_re(\bi)$};
  \draw[](6,0)--(6,2);
  \node[below] at (1,0) {$i_1$};
  \node[below] at (3,0) {$i_r$};
  \node[below] at (4,0) {$i_{r+1}$};
  \node[below] at (6,0) {$i_m$};

\end{tikzpicture}
\end{equation*}

For the rest of this article we will refer to generators $e(\bi)$ as \emph{idempotents}, generators $y_r$ as \emph{dots} and generators $\psi_r$ as \emph{crosses}

We denote by $ \ast $ the anti-involution of $ \B$, defined on diagrams as vertical flip and extended by linearity. We also have the following degree function on generators:

\begin{equation}\label{Degree-function-v1}
  \deg{e(\bi)}=0,\quad \deg{y_r}=2,\quad \deg{\psi_re(\bi)}=\left\{
  \begin{array}{cc}
    1 & \quad \textrm{if}\quad i_{r}\neq i_{r+1}\pm 1 \\
    -2 & \quad \textrm{if}\quad i_{r}= i_{r+1} \\
    0 & \quad \textrm{otherwhise}
  \end{array}
  \right.
\end{equation}

\medskip

We next recall some useful relations that can be derived directly from the definitions (details in \cite{Lobos-Ryom-Hansen}):


\begin{itemize}
\item \begin{equation}
        \includegraphics[scale=0.3]{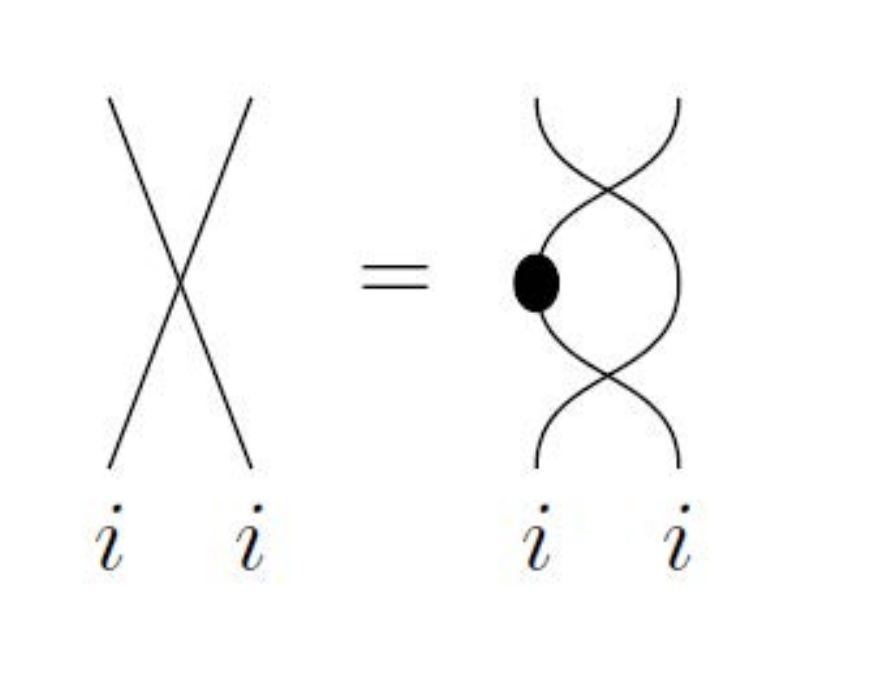}\quad   \includegraphics[scale=0.33]{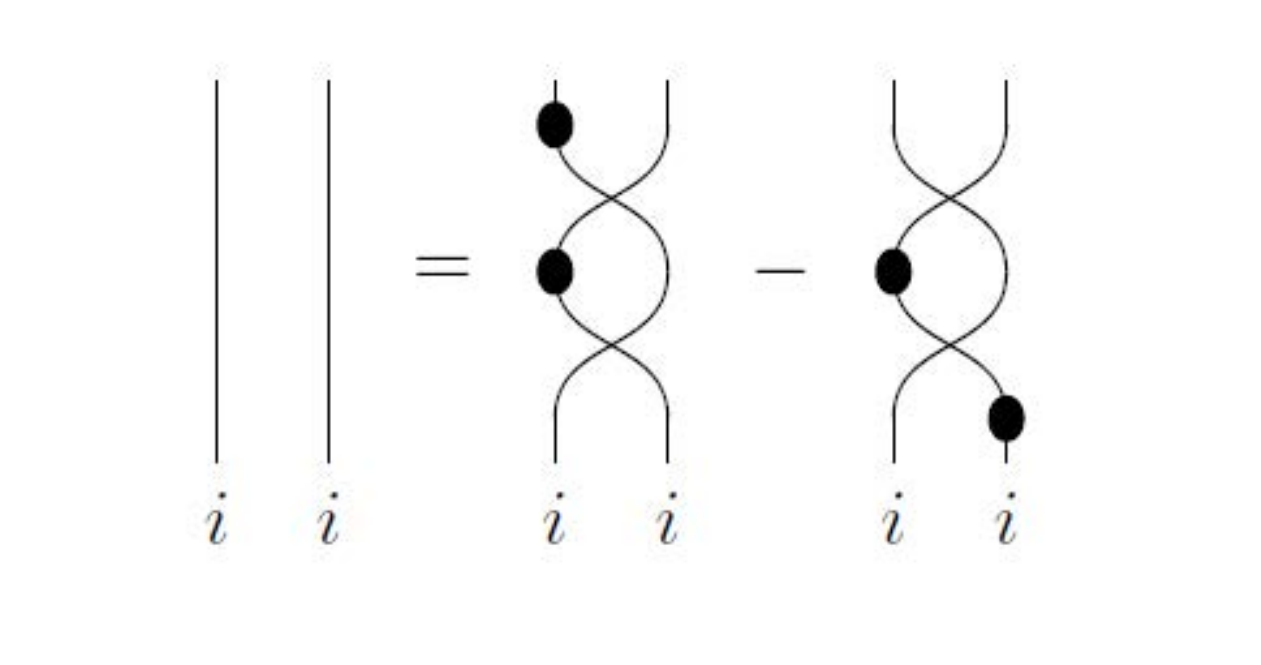}
      \end{equation}

\item

\begin{equation}\label{dot-salta}
        \includegraphics[scale=0.3]{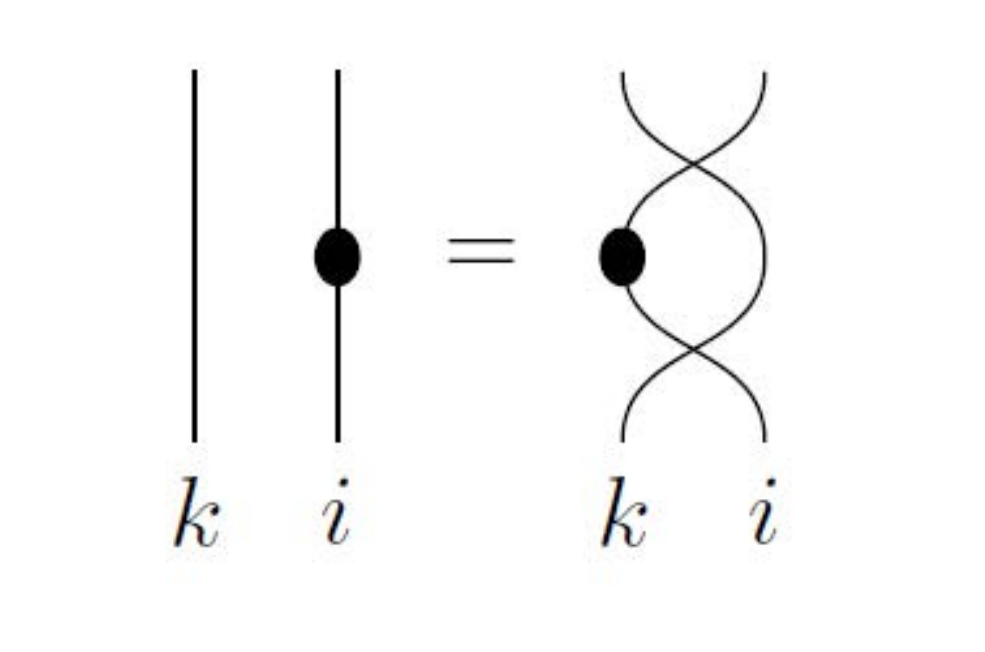}\quad   \includegraphics[scale=0.33]{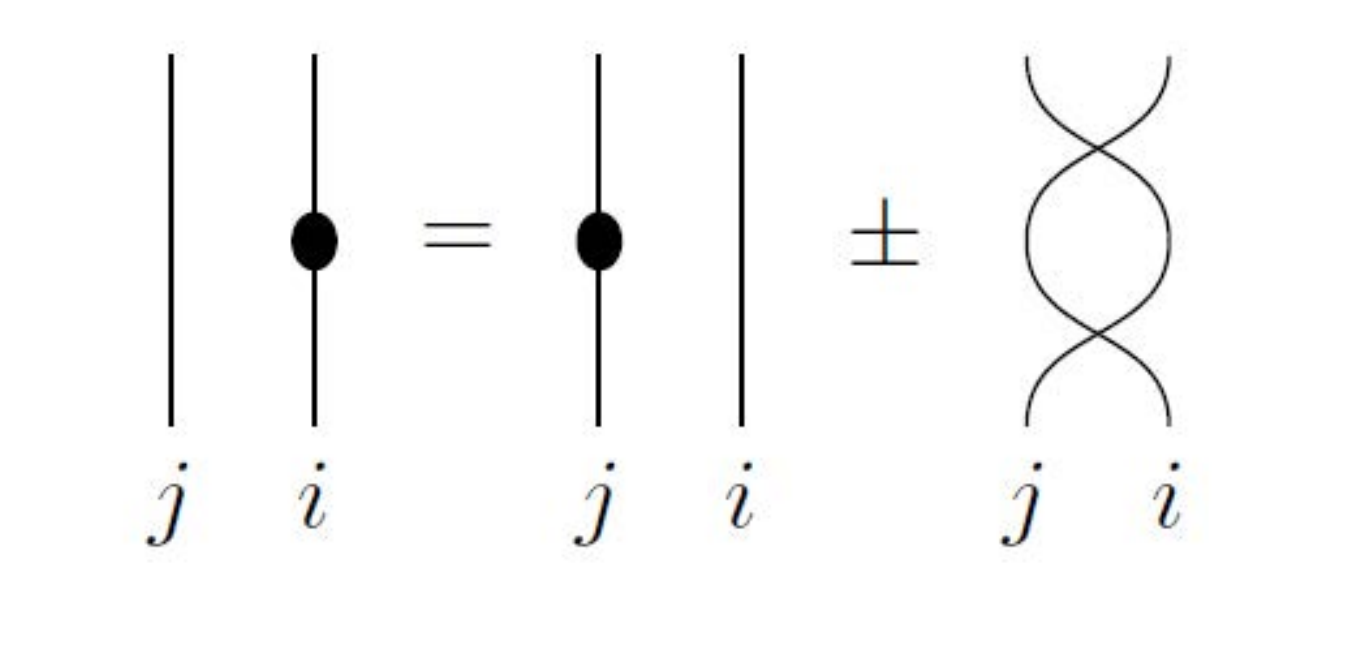}
      \end{equation}

where $|i-k|>1$ and $|i-j|=1$ (the positive sign appears when $j=i-1$ and the negative sign when $j=i+1.$)

\item
If $k=i+1$ and $j=i-1$ then we have
\begin{equation}\label{trio}
 \includegraphics[scale=0.37]{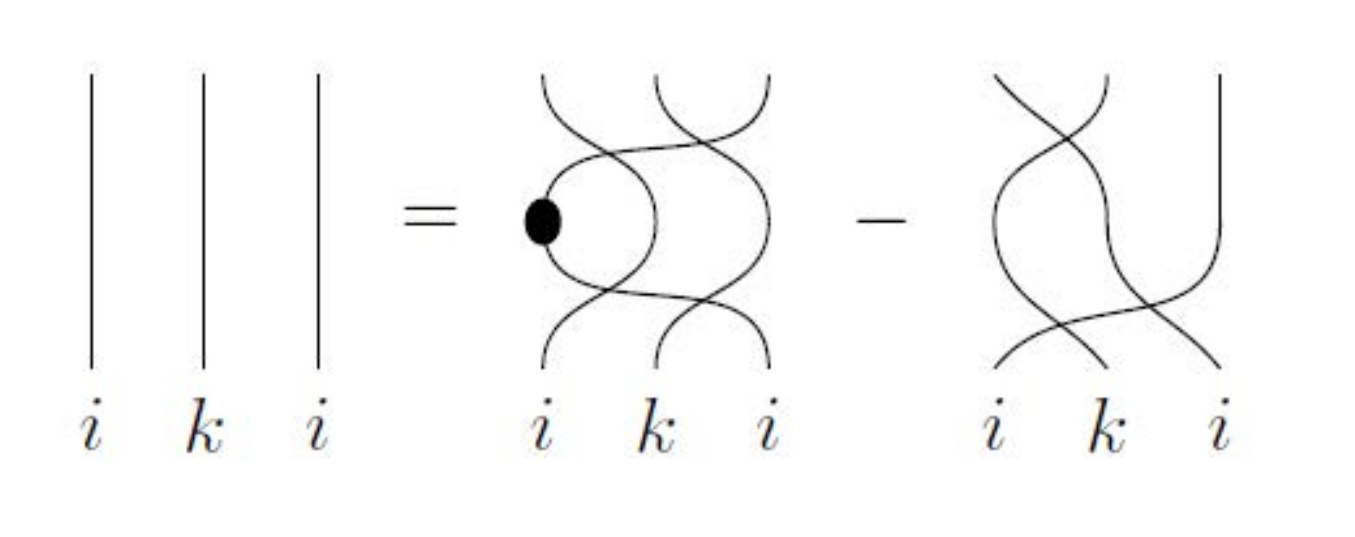}\quad   \includegraphics[scale=0.37]{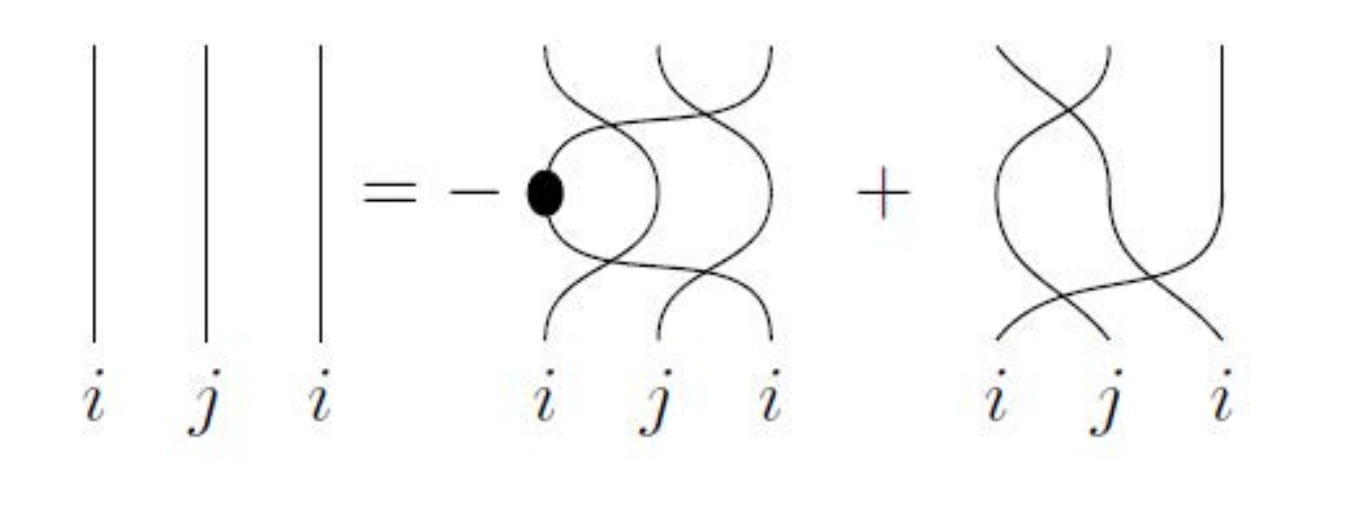}
\end{equation}
\end{itemize}

The following definition will be very useful for the rest of the article:
\begin{definition}\label{def-L-general}
  For each $1\leq r \leq m$ we define the element
  \begin{equation}
    L_r=\left\{
    \begin{array}{cc}
      -y_1 & \quad \textrm{if}\quad r=1 \\
      \quad & \quad \\
      y_{r-1}-y_r & \quad \textrm{if}\quad r>1.
    \end{array}
    \right.
  \end{equation}
\end{definition}

\subsection{A Graded Cellular basis for $\B.$}\label{sec-graded-cellular-basis-of-B}

In this section we shall refer to the concepts of \emph{Cellular basis} defined by Graham and Lehrer (see \cite{GL}) and \emph{Graded Cellular basis} defined by Hu and Mathas (see \cite{hu-mathas}).

The next lemma comes from \cite{hu-mathas}, (this is indeed an adapted version for generalized blob algebras, see \cite{Lobos-Ryom-Hansen} for details):

\begin{lemma}\label{lemma-first-idemp-annihilation}
  Given a residue sequence $\bi\in \II^{m}$ such that the corresponding idempotent $e(\bi)\in \B$ is not zero. Then $\bi=\bi^{\bT}$ for some one-column standard tableau $\bT.$
\end{lemma}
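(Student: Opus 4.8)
The plan is to prove the contrapositive: if $\bi=(i_1,\dots,i_m)$ is $\kappa$-blob impossible, that is $\bi\neq\bi^\bT$ for every one-column standard tableau $\bT$, then $e(\bi)=0$ in $\B$.

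The cleanest route uses the graded cellular basis of $\B$ constructed in \cite{Lobos-Ryom-Hansen}: a basis $\{\psi_{\Bs\bT}\}$ running over pairs $\Bs,\bT\in\std(\blambda)$ of one-column standard tableaux of a common shape $\blambda\in\OnePar$, compatible with the weight idempotents in the sense that $e(\bi)\psi_{\Bs\bT}=\delta_{\bi,\bi^\Bs}\psi_{\Bs\bT}$ and $\psi_{\Bs\bT}e(\bj)=\delta_{\bj,\bi^\bT}\psi_{\Bs\bT}$; this is precisely the generalized-blob analogue of the Hu--Mathas basis. Granting it, I would expand $e(\bi)$ in this basis and use idempotency in the form $e(\bi)=e(\bi)\,e(\bi)\,e(\bi)$: replacing the central factor by its basis expansion and applying the two displayed relations annihilates every term except those $\psi_{\Bs\bT}$ with $\bi^\Bs=\bi^\bT=\bi$. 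Since $\bi$ is blob-impossible no such pair exists, so $e(\bi)=0$; equivalently, $e(\bi)\neq0$ forces some $\bT\in\std(\blambda)$ with $\bi^\bT=\bi$, which is exactly the claim. With this approach the entire weight of the argument sits in the construction of the cellular basis, which I would simply cite from \cite{Lobos-Ryom-Hansen} (following \cite{hu-mathas}); everything else is formal.

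Should one prefer an argument that does not presuppose the basis, I would instead induct on $m$. The base case $m=1$ is immediate from relation \eqref{mal-inicio}, which forces $i_1\in\{\kappa_1,\dots,\kappa_l\}$ and hence $\bi=\bi^\bT$ for a single-node tableau. For the inductive step I would pass to the strand-adding, non-unital embedding $\mathcal{B}_{l,m-1}\hookrightarrow\mathcal{B}_{l,m}$ sending $e(\bi|_{m-1})\mapsto\sum_{j\in\II}e(\bi|_{m-1}\otimes j)$; injectivity of this map together with $e(\bi)\neq0$ yields $e(\bi|_{m-1})\neq0$, and the inductive hypothesis then produces a one-column standard tableau $\bT$ realizing $\bi|_{m-1}$ (cf.\ Lemma \ref{lemma-bj-possible-bjp-too}). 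It then remains to see that the last residue $i_m$ must be the residue of a blob-addable node of $\bT$, for otherwise $\bi=\bi^\bT\otimes i_m$ is blob-impossible by Corollary \ref{coro-bt-j-impossible-sequence}. Showing that this non-addable case forces $e(\bi)=0$ directly from relations \eqref{mal-inicio}--\eqref{lazo} is the delicate point, and is where I expect the main obstacle to lie; the combinatorial Corollaries \ref{coro-bt-j-impossible-sequence}--\ref{coro-bif-j-not-possible3} are exactly the inputs that would organize the case analysis there.
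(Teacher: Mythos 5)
The paper offers no proof of Lemma \ref{lemma-first-idemp-annihilation} at all: it is stated as an imported result, ``comes from \cite{hu-mathas}, \ldots adapted version for generalized blob algebras, see \cite{Lobos-Ryom-Hansen}''. Your first argument is therefore not a divergence but a correct filling-in of exactly the deduction the citation points at. The compatibility you ``grant'' is in fact immediate from the paper's own definitions in Section \ref{sec-graded-cellular-basis-of-B}: since $\Psi_{\Bs}^{\blambda}=e_{\blambda}\psi_{j_1}\cdots\psi_{j_r}$ and idempotents push through crossings exactly ($\psi_r e(\bi)=e(s_r\cdot\bi)\psi_r$, with no correction terms), one gets $(\Psi_{\Bs}^{\blambda})^{\ast}=e(\bi^{\Bs})(\Psi_{\Bs}^{\blambda})^{\ast}$ and $\Psi_{\bT}^{\blambda}=\Psi_{\bT}^{\blambda}e(\bi^{\bT})$, whence $e(\bi)\Psi_{\Bs,\bT}^{\blambda}e(\bj)=\delta_{\bi,\bi^{\Bs}}\delta_{\bj,\bi^{\bT}}\Psi_{\Bs,\bT}^{\blambda}$; a single multiplication $e(\bi)=e(\bi)\,e(\bi)$ already empties the expansion when no one-column standard tableau realizes $\bi$ (your three-factor version works equally well). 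The one caveat worth recording is logical order at the source: in \cite{Lobos-Ryom-Hansen} (as in \cite{hu-mathas}) the idempotent-vanishing statement is an \emph{input} to the spanning half of the basis theorem, not a consequence of it, so your route is a consistent reconstruction within the present paper, where both facts are imported, rather than an independent proof; what it buys is a two-line formal argument in place of a bare citation, and it makes the subsequent corollary ($e(\bi)=0$ for $\kappa$-blob impossible $\bi$) immediate.

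Your fallback induction, by contrast, contains the genuine gap, essentially where you yourself locate it. Corollaries \ref{coro-bt-j-impossible-sequence}--\ref{coro-bif-j-not-possible3} are purely combinatorial statements about residue sequences; they show $\bi^{\bT}\otimes j$ is $\kappa$-blob \emph{impossible}, but passing from impossibility to $e(\bi)=0$ is precisely the content of the lemma being proved, so invoking them there would be circular, and closing that case directly from relations \ref{mal-inicio}--\ref{lazo} amounts to redoing the diagrammatic analysis of \cite{Lobos-Ryom-Hansen}. Two smaller points on the same route: for the reduction to $m-1$ you do not need injectivity of the strand-adding map $\mathcal{B}^{\F}_{l,m-1}(\kappan,\een)\rightarrow\B$, only that it is a well-defined algebra homomorphism --- if $e(\bi|_{m-1})=0$ then $\sum_{j\in\II}e(\bi|_{m-1}\otimes j)=0$, and multiplying by the orthogonal idempotent $e(\bi)$ kills each summand separately, forcing $e(\bi)=0$; and injectivity of that map, while known for cyclotomic KLR algebras, is not established for the blob quotient in the cited literature, so it is fortunate that it is not needed. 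Keep the first argument; the second should be presented only as a sketch of where the real difficulty lives.
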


\begin{corollary}
  If $\bi$ is a $\kappa-$blob impossible residue sequence, then $e(\bi)=0$ in $\B.$
\end{corollary}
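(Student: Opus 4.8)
The plan is to prove the contrapositive of Lemma \ref{lemma-first-idemp-annihilation}, which is exactly what the corollary asserts. The statement to establish is: if $\bi$ is a $\kappa$-blob impossible residue sequence, then $e(\bi)=0$ in $\B$. The logical skeleton is immediate, so the work is really just a clean application of the preceding lemma together with the definition of blob impossibility.

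First I would recall the two relevant definitions. By definition (section \ref{sec-impossible-res-sec}), a residue sequence $\bi$ is \emph{$\kappa$-blob impossible} precisely when there is \textbf{no} one-column standard tableau $\bT$ with $\bi=\bi^{\bT}$. On the other hand, Lemma \ref{lemma-first-idemp-annihilation} asserts that whenever $e(\bi)\neq 0$ in $\B$, the sequence $\bi$ must arise as $\bi^{\bT}$ for some one-column standard tableau $\bT$. These two facts are dual to each other, and the corollary is obtained by contraposition.

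The argument itself is a one-line deduction: suppose, for contradiction, that $\bi$ is $\kappa$-blob impossible but $e(\bi)\neq 0$. Then by Lemma \ref{lemma-first-idemp-annihilation} there exists a one-column standard tableau $\bT$ with $\bi=\bi^{\bT}$. But the existence of such a $\bT$ is exactly the assertion that $\bi$ is $\kappa$-blob \emph{possible}, contradicting our hypothesis. Hence $e(\bi)=0$ in $\B$, as claimed.

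There is essentially no obstacle here, since all the substance has been front-loaded into Lemma \ref{lemma-first-idemp-annihilation}; the corollary is a formal restatement through the definition of blob impossibility. The only point requiring mild care is to make sure the notion of ``one-column standard tableau'' in the hypothesis of Lemma \ref{lemma-first-idemp-annihilation} matches verbatim the one used in the definition of $\kappa$-blob possible sequences, so that the equivalence ``$\bi=\bi^{\bT}$ for some one-column standard tableau $\bT$'' $\iff$ ``$\bi$ is $\kappa$-blob possible'' is literally the negation used in the contrapositive. Given the definitions as stated, this match is exact, and the proof closes immediately.
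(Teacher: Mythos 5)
Your proof is correct and takes the same route as the paper, which states this corollary without proof precisely because it is the immediate contrapositive of Lemma \ref{lemma-first-idemp-annihilation} combined with the definition of a $\kappa$-blob impossible sequence. Your one-line contraposition, including the check that ``one-column standard tableau'' means the same thing in both statements, is exactly the intended argument.
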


 We can associate to each tableau $\bT$ an element $\Psi_{\bT}^{\blambda}\in\B,$ as follows:

 Let $\boldsymbol{d}(\bT)\in\Si_{m}^{\ast}$ be the official reduced word of $d(\bT)$ (see section \ref{sec-general-combinatorics}).
 If $\boldsymbol{d}(\bT)=s_{j_{1}}\cdots s_{j_{r}},$ then we define:

 \begin{equation}\label{def-of-Psidt1}
  \Psi_{\bT}^{\blambda}=e_{\blambda}\psi_{j_{1}}\cdots\psi_{j_{r}}.
\end{equation}
where $e_{\blambda}=e(\bi^{\bT^{\blambda}}).$

Moreover given two tableaux $\Bs,\bT\in \std(\blambda),$ we define the element $\Psi_{\Bs,\bT}^{\blambda}\in\B,$ as follows:

\begin{equation}\label{def-of-Psidt1}
  \Psi_{\Bs,\bT}^{\blambda}=(\Psi_{\Bs}^{\blambda})^{\ast}\Psi_{\bT}^{\blambda}.
\end{equation}

The following is the main result of \cite{Lobos-Ryom-Hansen}
\begin{theorem}\label{theo-cellular-basis}
  The set $$\{\Psi_{\bT,\Bs}^{\blambda}:\bT,\Bs\in\std(\blambda),\blambda\in \OnePar\}$$
  is a graded cellular basis of $\B$ under the order $\rhd$ and the degree function defined by $\deg(\bT)=\deg(\Psi_{\bT}^{\blambda}).$
\end{theorem}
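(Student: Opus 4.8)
The plan is to verify directly the four conditions in the definition of a graded cellular basis of Hu--Mathas \cite{hu-mathas}: homogeneity, the anti-involution axiom, the cellular straightening axiom with respect to the order $\rhd$, and the basis property. I would take the poset to be $(\OnePar,\rhd)$, the index sets to be $\std(\blambda)$, and the distinguished elements to be the $\Psi_{\Bs,\bT}^{\blambda}=(\Psi_{\Bs}^{\blambda})^{\ast}\Psi_{\bT}^{\blambda}$ defined above.

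First I would dispose of the two easy axioms. Each $\Psi_{\bT}^{\blambda}=e_{\blambda}\psi_{j_{1}}\cdots\psi_{j_{r}}$ is a product of generators of known degree, so it is homogeneous of degree $\deg(\bT)$; since $\ast$ is the vertical flip it preserves the degree function (\ref{Degree-function-v1}), whence $\Psi_{\Bs,\bT}^{\blambda}$ is homogeneous of degree $\deg(\Bs)+\deg(\bT)$. The anti-involution axiom is immediate: applying $\ast$ (an anti-automorphism with $\ast^{2}=\id$) to $(\Psi_{\Bs}^{\blambda})^{\ast}\Psi_{\bT}^{\blambda}$ returns $(\Psi_{\bT}^{\blambda})^{\ast}\Psi_{\Bs}^{\blambda}=\Psi_{\bT,\Bs}^{\blambda}$.

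The technical heart is the straightening axiom: for every KLR generator $g\in\{e(\bi),y_r,\psi_r\}$ one must show
$$g\,\Psi_{\Bs,\bT}^{\blambda}\equiv\sum_{\Bs'\in\std(\blambda)}c_{\Bs'}\,\Psi_{\Bs',\bT}^{\blambda}\pmod{\B^{\rhd\blambda}},$$
with coefficients $c_{\Bs'}$ independent of $\bT$, where $\B^{\rhd\blambda}$ is the span of the $\Psi_{\Bu,\Bv}^{\bmu}$ with $\bmu\rhd\blambda$. Because $g$ acts only on the left factor $(\Psi_{\Bs}^{\blambda})^{\ast}$, this reduces (after applying $\ast$) to a right-multiplication analysis of $\Psi_{\Bs}^{\blambda}$ by the generators, which I would carry out by induction on $\rhd$ using the defining relations (\ref{punto-arriba})--(\ref{lazo}) together with the derived relations (\ref{dot-salta})--(\ref{trio}) to commute dots past crossings and resolve double crossings. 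The point that makes this manageable in the one-column setting is that Definitions \ref{def-liftable-nodes} and \ref{def-Word-bT} furnish a canonical reduced word $\boldsymbol{d}(\bT)$ adapted to $\rhd$, so each elementary move either lands on another $\Psi_{\Bs'}^{\blambda}$ or attaches an idempotent $e(\bj)$ to a $\kappa$-blob impossible residue sequence, which vanishes by Lemma \ref{lemma-first-idemp-annihilation} and its corollary; the vanishing results (\ref{coro-bif-j-not-possible})--(\ref{coro-bif-j-not-possible3}) are precisely what is needed to kill the otherwise obstructing terms.

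Finally, for the basis property I would argue by spanning plus a dimension count rather than proving linear independence by hand. Spanning follows by showing that every monomial in the generators can be rewritten, via the same straightening procedure, as an $\F$-linear combination of the $\Psi_{\Bs,\bT}^{\blambda}$; linear independence then follows by comparing the total count $\sum_{\blambda\in\OnePar}|\std(\blambda)|^{2}$ with the independently known graded dimension of $\B$ obtained from its realization as a cyclotomic quotient. I expect the straightening step to be the main obstacle: one must verify that no spanning element is lost and that the error terms genuinely lie in $\B^{\rhd\blambda}$, and it is here that the strong adjacency-freeness of $\boldsymbol{\kappa}$ and the impossible-residue lemmas do the essential work.
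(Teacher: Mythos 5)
The paper contains no proof of Theorem \ref{theo-cellular-basis}: it is imported verbatim as the main result of \cite{Lobos-Ryom-Hansen}, with Remark \ref{remark-new-strong-adj-free} checking only that the modified notion of strong adjacency-freeness is compatible with the arguments there. Measured against that reference, your outline follows the expected template (verification of the Hu--Mathas axioms \cite{hu-mathas}, straightening organized by the official reduced words $\boldsymbol{d}(\bT)$, spanning plus a dimension count), so the overall architecture is sound. One small mis-citation: Corollaries \ref{coro-bif-j-not-possible}--\ref{coro-bif-j-not-possible3} are tailored to the fundamental vertical sequence $\bif$ and exist for the later Gelfand--Tsetlin computations; for straightening over arbitrary $\blambda\in\OnePar$ what you need is the vanishing $e(\bj)=0$ for \emph{arbitrary} $\kappa$-blob impossible sequences (Lemma \ref{lemma-first-idemp-annihilation} and its corollary), and the combinatorial control over which sequences actually arise is precisely the hard content of \cite{Lobos-Ryom-Hansen}, not something those three corollaries supply.

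The genuine gap is in your linear-independence step. You propose to compare $\sum_{\blambda\in\OnePar}|\std(\blambda)|^{2}$ with ``the independently known graded dimension of $\B$ obtained from its realization as a cyclotomic quotient.'' But $\B$ is \emph{not} a cyclotomic KLR algebra: relation \ref{otro-mal-inicio} (and, more generally, the killing of all idempotents attached to $\kappa$-blob impossible sequences) is an extra relation beyond the cyclotomic ones \ref{mal-inicio} and \ref{dot-al-inicio}, so $\B$ is a proper quotient of the cyclotomic KLR algebra. The Brundan--Kleshchev graded dimension of the cyclotomic algebra \cite{brundan-klesc} is $\sum_{\blambda}|\std(\blambda)|^{2}$ summed over \emph{all} $l$-multipartitions, which strictly exceeds the count you need, and the dimension of a proper quotient is not automatically inherited---determining $\dim\B$ is essentially equivalent to the independence you are trying to prove, so the argument as stated is circular. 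The correct lower bound comes from the Martin--Woodcock (ungraded) cellular structure on the generalized blob algebra \cite{martin-wood1}, transported to the diagrammatic presentation via a Brundan--Kleshchev-type isomorphism identifying the algebra defined here with the Hecke-side quotient (cf. \cite{LiPl}); establishing that identification, or otherwise bounding $\dim\B$ from below, is a substantive step that your proposal elides and that the cited proof in \cite{Lobos-Ryom-Hansen} must and does address.
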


As was mentioned in remark \ref{remark-new-strong-adj-free}, the construction of the cellular basis of \cite{Lobos-Ryom-Hansen} was obtained using the (original) concept of strong adjacency-freeness. Although in this article we are using a modified version of this concept, this new version is absolutely compatible with all the results of \cite{Lobos-Ryom-Hansen}.

\subsection{Idempotent truncations of $\B$}\label{sec-idemp-trunc-of-B}

By lemma \ref{lemma-first-idemp-annihilation} we have that
\begin{equation*}
  \B=\bigoplus_{\bi,\bj}e(\bi)\B e(\bj)
\end{equation*}
where the direct sum runs over the set of all pairs $\bi,\bj$ of $\kappa-$blob possible residue sequences, and

\begin{equation*}
  e(\bi)\B e(\bj)=\{e(\bi)a e(\bj):a\in \B\}.
\end{equation*}

In particular if $\bi=\bj,$ the subspace $e(\bi)\B e(\bi)$ is indeed an algebra contained in $\B$ (note that they do not share the same identity element).

For each $\kappa-$blob possible residue sequence $\bi\in \II ^{m},$ we denote $\B (\bi)=e(\bi)\B e(\bi).$  We say that $\B(\bi)$ is an algebra obtained by an idempotent truncation from $\B$ or simply an \emph{idempotent truncation of} $\B.$ If we denote by $\std(\bi)$ the set of all standard tableaux $\bT$ (any shape) such as $\bi^{\bT}=\bi,$ and we denote $\std_{\blambda}(\bi)=\std(\blambda)\cap\std(\bi).$ Then it comes directly from theorem \ref{theo-cellular-basis} that:

\begin{corollary}\label{coro-cellular-basis-truncated}
  The set $$\{\Psi_{\bT,\Bs}^{\blambda}:\bT,\Bs\in\std_{\blambda}(\bi),\blambda\in \OnePar\}$$
is a graded cellular basis for $\B(\bi).$
\end{corollary}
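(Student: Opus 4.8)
The plan is to deduce this directly from Theorem \ref{theo-cellular-basis} together with the decomposition $\B = \bigoplus_{\bi,\bj} e(\bi)\B e(\bj)$, exploiting the general principle that truncating a graded cellular algebra by an idempotent fixed under the anti-involution preserves cellularity. The first step is to record the diagonal behaviour of the basis elements under left and right multiplication by idempotents. Since $\Psi_{\bT}^{\blambda} = e_\blambda\psi_{j_1}\cdots\psi_{j_r}$ and the crosses implement $d(\bT)$, one has $e(\bj)\Psi_{\Bs,\bT}^{\blambda} = \delta_{\bj,\bi^{\Bs}}\Psi_{\Bs,\bT}^{\blambda}$ and $\Psi_{\Bs,\bT}^{\blambda}e(\bj) = \delta_{\bj,\bi^{\bT}}\Psi_{\Bs,\bT}^{\blambda}$ for every residue sequence $\bj$. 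Consequently $e(\bi)\Psi_{\Bs,\bT}^{\blambda}e(\bi)$ equals $\Psi_{\Bs,\bT}^{\blambda}$ when $\bi^{\Bs}=\bi^{\bT}=\bi$ and vanishes otherwise, so the surviving basis elements are exactly those indexed by $\Bs,\bT\in\std(\bi)$ sharing a common shape, i.e.\ by $\Bs,\bT\in\std_{\blambda}(\bi)$.

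Second, I would establish that these elements form a homogeneous basis of $\B(\bi)$. Linear independence is immediate, since they constitute a subset of the basis of $\B$ furnished by Theorem \ref{theo-cellular-basis}. For spanning, every element of $\B(\bi)$ has the form $e(\bi)ae(\bi)$; expanding $a$ in the cellular basis and applying the filter above shows that only the elements with $\Bs,\bT\in\std_{\blambda}(\bi)$ remain. Homogeneity and the degree function are inherited verbatim, because $\B(\bi)$ carries the grading induced from $\B$ and the degrees $\deg\Psi_{\Bs,\bT}^{\blambda}$ are unchanged.

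Third, I would verify the two cellular axioms. For the anti-involution: each $e(\bi)$ is fixed by $\ast$, so $\ast$ restricts to an anti-automorphism of $\B(\bi)$, and it sends $\Psi_{\Bs,\bT}^{\blambda}$ to $\Psi_{\bT,\Bs}^{\blambda}$, which again lies in the truncated set because $\std_{\blambda}(\bi)$ enters both indices symmetrically. For the straightening axiom, set $\B(\bi)^{\rhd\blambda} := e(\bi)\B^{\rhd\blambda}e(\bi)$. Given $a\in\B(\bi)$, the cellular axiom in $\B$ yields $a\,\Psi_{\Bs,\bT}^{\blambda}\equiv\sum_{\Bu}r_{\Bu\Bs}(a)\,\Psi_{\Bu,\bT}^{\blambda}\pmod{\B^{\rhd\blambda}}$ with coefficients independent of $\bT$. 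Using $a = e(\bi)a$ forces $\bi^{\Bu}=\bi$, so the sum collapses to $\Bu\in\std_{\blambda}(\bi)$, and intersecting the congruence with $e(\bi)(-)e(\bi)$ replaces $\B^{\rhd\blambda}$ by $\B(\bi)^{\rhd\blambda}$.

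The main obstacle is this last point: one must confirm that applying the idempotent truncation to the cell filtration of $\B$ produces exactly the desired filtration of $\B(\bi)$, namely that $e(\bi)\B^{\rhd\blambda}e(\bi)$ is spanned precisely by the truncated basis elements of shapes $\bmu\rhd\blambda$, and that no new relations among the coefficients are introduced. This is where the diagonal property is decisive: because $e(\bi)$ never mixes distinct basis elements but merely keeps or kills each one, both the filtration and the coefficients $r_{\Bu\Bs}(a)$ descend without modification, so the graded cellular datum of $\B$ restricts cleanly to a graded cellular datum for $\B(\bi)$.
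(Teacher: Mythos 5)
Your argument is correct, but it takes a genuinely different route from the paper, which does not actually prove this corollary in situ: the paper's entire proof is a citation of \cite{Lobos-Plaza-Ryom-Hansen}, where certain particular cases in level $l=2$ are treated, together with the claim that the general case is analogous. What you supply is precisely the mechanism that makes that analogy work. The idempotent-sliding relation $\psi_re(\bj)=e(s_r\cdot\bj)\psi_r$ holds exactly in $\B$ (no correction terms), so $\Psi_{\bT}^{\blambda}=\Psi_{\bT}^{\blambda}e(\bi^{\bT})$ and hence $e(\bj)\Psi_{\Bs,\bT}^{\blambda}e(\bj')=\delta_{\bj,\bi^{\Bs}}\delta_{\bj',\bi^{\bT}}\Psi_{\Bs,\bT}^{\blambda}$ --- your keep-or-kill property. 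From this the basis claim, the inheritance of the grading, the $\ast$-stability (since $e(\bi)^{\ast}=e(\bi)$) and the collapse of the straightening rule modulo $e(\bi)\B^{\rhd\blambda}e(\bi)$ all follow as you say; in particular you correctly isolate the one point where a generic idempotent truncation of a cellular algebra can fail to be cellular, namely the compatibility of the idempotent with the cell filtration and the structure constants, and you observe that the diagonal property disposes of it, since $e(\bi)$ never mixes basis elements. What the paper's approach buys is brevity; what yours buys is a self-contained and visibly general argument that works for any $\kappa$-blob possible $\bi$, not only the fundamental vertical one. Two cosmetic points, neither affecting validity: in the truncated cell datum the poset should be taken to be $\{\blambda\in\OnePar:\std_{\blambda}(\bi)\neq\emptyset\}$, since the Graham--Lehrer axioms \cite{GL} require nonempty index sets (and one should remark that $\B(\bi)$ is unital with identity $e(\bi)$, so those axioms apply); and your claim that the crosses implement $d(\bT)$ on residue sequences deserves the one-line verification that if $\boldsymbol{d}(\bT)=s_{j_1}\cdots s_{j_r}$ then $\bi^{\bT}=s_{j_r}\cdots s_{j_1}\cdot\bi^{\bT^{\blambda}}$, obtained by sliding $e_{\blambda}$ through the crosses one at a time.
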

\begin{proof}
  In \cite{Lobos-Plaza-Ryom-Hansen} there is a proof for certain particular cases in level $l=2.$ The general case is obtained analogously.
\end{proof}

\section{The fundamental vertical truncation of $\B$}\label{ch-fundamental-idemp-trunc}

We are interested in the study of an idempotent truncation $\B(\bif)$ of $\B,$ corresponding to a particular residue sequence $\bif$ that we call \emph{the fundamental vertical sequence}. More precisely we are interested in the study of a particular commutative subalgebra of $\B (\bif).$ (later in section \ref{ch-gelfand}).

\subsection{The fundamental vertical sequence}\label{sec-fund-resid-sec}

For the rest of the article, unless otherwise indicated, we will assume that $m=\epsilon+k\een$ for certain positive integer $k>1.$

If $\kappa=(\kappa_1,\dots,\kappa_l),$ we define the \emph{fundamental vertical sequence} as
\begin{equation}\label{fund-res-seq}
 \bif=(i_1,\dots,i_m),\quad \textrm{where}\quad i_j=\kappa_1-j+1 \mod \een.
\end{equation}

The fundamental vertical residue sequence $\bif$ is $\kappa-$blob possible. To see this, note that $\bif=\bi^{\bT},$ where $\bT=\bT^{\underline{\blambda}}$ and $\underline{\blambda}=(1^{(m)},0,\dots,0).$

We define the following numbers:

\begin{equation}\label{def-b-numbers}
\left\{
\begin{array}{cc}
  \epsilon=\hat{\kappa}_1-\hat{\kappa}_l+\een & \quad \\
  \quad & \quad \\
   b_{j}=\hat{\kappa}_{l-j}-\hat{\kappa}_{l-j-1} & \textrm{if}\quad 0\leq j \leq l-3\\
  \quad & \quad \\
 b_{l-2}=\hat{\kappa}_2-\hat{\kappa}_l+\een & \quad
\end{array}
\right.
\end{equation}
(Recall that the representatives $\hat{\kappa}_1,\dots,\hat{\kappa}_l $ satisfy the following relation $\hat{\kappa}_1<\hat{\kappa}_2\cdots<\hat{\kappa}_l < \hat{\kappa}_1+\een$).

We also define recursively the following numbers:

\begin{equation}\label{def-m-numbers1}
\left\{
\begin{array}{cc}
  m_{(1,0)}=\epsilon+1 & \quad \\
  \quad & \quad \\
   m_{(1,j)}=m_{(1,j-1)}+b_{j-1}& 1\leq j\leq l-2
\end{array}
\right.
\end{equation}

and more generally

\begin{equation}\label{def-m-numbers2}
\begin{array}{cc}
  m_{(r,j)}=m_{(r-1,j)}+\een& 0\leq j\leq l-2,\quad 1< r\leq k
\end{array}
\end{equation}

Now we define the following intervals of integer numbers, that we call \emph{blocks}:

\begin{equation}\label{def-blocks1}
\left\{
  \begin{array}{cc}
    N =[1:\epsilon]&\quad  \\
    \quad & \quad \\
     B_{(r,j)}=[m_{(r,j)}:m_{(r,j+1)}-1]& 0\leq j<l-2,\quad 1\leq r\leq k \\
    \quad & \quad  \\
   B_{(r,l-2)}=[m_{(r,l-2)}:m_{(r+1,0)}-1]& 1\leq r< k
  \end{array}
  \right.
\end{equation}


The set of blocks $B_{(r,j)}$ is totally ordered by the relation $\succ$ given by:
\begin{equation*}
  B_{(r,j)}\succ B_{(r',j')}\quad \textrm{if and only if}\quad m_{(r,j)}>m_{(r',j')}.
\end{equation*}

The restriction of $\bif=(i_1,\dots,i_m)$ to the different blocks is given by:

\begin{equation*}
\left\{
  \begin{array}{cc}
    \bif|_N=  (i_1,\dots,i_{\epsilon})&\quad \\
    \quad&\quad \\
    \bif|_{(B(r,j))}=(i_{a},\dots,i_{b}) & \quad\textrm{where}\quad B_{(r,j)}=[a:b] \\
  \end{array}\right.
\end{equation*}

It is not difficult to see that $\bif|_{(B(r,j))}=\bif|_{(B(r',j'))}$ if and only if $j=j'.$

\begin{example}
  If $\een=13,l=4,m=32$ and $\kappa=(0,2,5,7)$ then $\epsilon=6,$ $b_{0}=2,$ $b_{1}=3,$ $b_{2}=8.$
  $m_{(1,0)}=7,$ $m_{(1,1)}=9,$ $m_{(1,2)}=12,$ $m_{(2,0)}=20,$ $m_{(1,1)}=22,$ $m_{(1,2)}=25.$
  Then blocks are $N =[1:6],$ $B_{(1,0)}=[7:8],$ $B_{(1,1)}=[9:11],$ $B_{(1,2)}=[12:19],$ $B_{(2,0)}=[20:21],$ $B_{(2,1)}=[22:24],$ $B_{(2,2)}=[25:32].$

  It will be useful for small cases of $l$ to add colors on residues in order to identify different blocks $B_{(r,j)}$ with the same index $j$.
  In this case $\bif|_N=(0,12,11,10,9,8),$ $\bif|_{B(1,0)}=\bif|_{B(2,0)}={\color{blue}(7,6)},$ $\bif|_{B(1,1)}=\bif|_{B(2,1)}={\color{red}(5,4,3)},$ $\bif|_{B(1,2)}=\bif|_{B(2,2)}={\color{green}(2,1,0,12,11,10,9,8)}.$

  Therefore

   $$\bif=\bif|_N\otimes \bif|_{B_{(1,0)}}\otimes\bif|_{B_{(1,1)}}\otimes\bif|_{B_{(1,2)}}\otimes
  \bif|_{B_{(2,0)}}\otimes\bif|_{B_{(2,1)}}\otimes\bif|_{B_{(2,2)}}.$$

  $$\bif=(0,12,11,10,9,8,{\color{blue}7,6},{\color{red}5,4,3},{\color{green}2,1,0,12,11,10,9,8},
  {\color{blue}7,6},{\color{red}5,4,3},{\color{green}2,1,0,12,11,10,9,8})$$
\end{example}

\begin{lemma}{\label{lemma-sum-of-bj}}
  \begin{equation*}
    \sum_{j=0}^{l-2}b_j=\een
  \end{equation*}
\end{lemma}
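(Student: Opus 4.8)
The plan is to evaluate the sum $\sum_{j=0}^{l-2} b_j$ directly, exploiting the telescoping structure that is already built into the definition (\ref{def-b-numbers}) of the numbers $b_j$. First I would split the sum into the generic terms, indexed by $0\le j\le l-3$, for which $b_j=\hat{\kappa}_{l-j}-\hat{\kappa}_{l-j-1}$, and the single boundary term $b_{l-2}=\hat{\kappa}_2-\hat{\kappa}_l+\een$.

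For the generic block I would reindex by $i=l-j$, so that as $j$ runs from $0$ to $l-3$ the index $i$ runs from $l$ down to $3$. The partial sum then becomes $\sum_{i=3}^{l}(\hat{\kappa}_i-\hat{\kappa}_{i-1})$, which is a telescoping sum collapsing to $\hat{\kappa}_l-\hat{\kappa}_2$. Adding the boundary term gives $(\hat{\kappa}_l-\hat{\kappa}_2)+(\hat{\kappa}_2-\hat{\kappa}_l+\een)=\een$, since the two $\hat{\kappa}_2$ contributions cancel and likewise the two $\hat{\kappa}_l$ contributions, leaving exactly $\een$, which is the claim.

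There is no genuine obstacle here: the identity is a pure telescoping cancellation, and the only thing worth verifying is that the index bookkeeping is consistent at the two ends, i.e.\ that the generic block really runs from $\hat{\kappa}_l$ down to $\hat{\kappa}_2$ and that $b_{l-2}$ supplies precisely the wrap-around term $+\een$ together with the missing difference $\hat{\kappa}_2-\hat{\kappa}_l$. As a sanity check on the smallest admissible case $l=3$, the generic block degenerates to the single term $b_0=\hat{\kappa}_3-\hat{\kappa}_2$, and together with $b_1=\hat{\kappa}_2-\hat{\kappa}_3+\een$ one still obtains $\een$, confirming the computation at the boundary of the range of $l$.
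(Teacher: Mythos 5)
Your proof is correct and follows exactly the same route as the paper, which disposes of the lemma with a one-line telescoping argument; you have simply spelled out the reindexing and the cancellation of the boundary term $b_{l-2}=\hat{\kappa}_2-\hat{\kappa}_l+\een$ explicitly. The sanity check at $l=3$ is a nice touch but not needed, since the telescoping identity holds uniformly in $l$.
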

\begin{proof}
  It follows directly, applying a telescopic argument in the sum.
\end{proof}

\begin{lemma}\label{lemma-residues-mrj}
 Let $\bif=(i_1,\dots,i_m)$, then:
  \begin{enumerate}
    \item $i_s=\kappa_{1}$ if and only if $s=1+h\een$ for some $0\leq h<k.$

    \item $ i_{s}\in\{\kappa_2,\dots,\kappa_l\}$ if and only if $ s=m_{(r,j)},$ for some pair $(r,j)$ such that
    $1\leq r\leq k$ and $ 0\leq j\leq l-2.$

    Moreover if $s=m_{(r,j)},$ then $i_s=\kappa_{l-j}.$
  \end{enumerate}

 \end{lemma}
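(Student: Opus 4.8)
The plan is to prove both statements by directly unwinding the definition of $\bif$ in \eqref{fund-res-seq}, namely $i_j = \kappa_1 - j + 1 \pmod{\een}$, and comparing it against the explicit description of the block boundaries $m_{(r,j)}$ coming from \eqref{def-m-numbers1} and \eqref{def-m-numbers2}. The essential observation is that $i_s = \kappa_1 - s + 1 \pmod \een$, so $i_s$ equals a given residue $\kappa_h$ precisely when $s \equiv \kappa_1 - \kappa_h + 1 \pmod \een$; everything then reduces to identifying which positions $s$ in $\{1,\dots,m\}$ hit these residue classes and matching them to the $m_{(r,j)}$.

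First I would prove part (1). We have $i_s = \kappa_1$ if and only if $\kappa_1 - s + 1 \equiv \kappa_1 \pmod \een$, i.e. $s \equiv 1 \pmod \een$. Since $s$ ranges over $\{1,\dots,m\}$ and $m = \epsilon + k\een$, the solutions are exactly $s = 1 + h\een$ with $0 \le h < k$ (the value $h=k$ would give $s = 1+k\een > m$ because $\epsilon < \een$ by \eqref{def-b-numbers} and the assumption $\een > 2l$), which is the claim. This part is essentially immediate once the congruence is set up.

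Next I would turn to part (2), which is the substantive one. By the same congruence computation, $i_s \in \{\kappa_2,\dots,\kappa_l\}$ forces $s \equiv \kappa_1 - \kappa_h + 1 \pmod \een$ for some $h \in \{2,\dots,l\}$, and conversely. The strategy is to show that within a single period of length $\een$ — say the positions in row $r$, i.e. $s \in \{1 + (r-1)\een, \dots, r\een\}$ roughly — the positions where $i_s \in \{\kappa_2,\dots,\kappa_l\}$ are exactly the $l-1$ values $m_{(r,0)}, m_{(r,1)}, \dots, m_{(r,l-2)}$, and that at $s = m_{(r,j)}$ one has $i_s = \kappa_{l-j}$. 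To do this I would first establish the ``moreover'' clause for $r=1$ by induction on $j$: by \eqref{def-m-numbers1}, $m_{(1,0)} = \epsilon + 1$, and using $\epsilon = \hat\kappa_1 - \hat\kappa_l + \een$ together with $i_s = \kappa_1 - s + 1 \pmod \een$ one computes $i_{m_{(1,0)}} = \kappa_1 - \epsilon \equiv \kappa_l \pmod \een$, matching $\kappa_{l-0}$. For the inductive step, $m_{(1,j)} = m_{(1,j-1)} + b_{j-1}$ with $b_{j-1} = \hat\kappa_{l-(j-1)} - \hat\kappa_{l-(j-1)-1} = \hat\kappa_{l-j+1} - \hat\kappa_{l-j}$, so advancing the index by $b_{j-1}$ decreases the residue by exactly $\hat\kappa_{l-j+1} - \hat\kappa_{l-j}$, sending $\kappa_{l-j+1}$ to $\kappa_{l-j}$; the boundary case $j = l-2$ uses $b_{l-2} = \hat\kappa_2 - \hat\kappa_l + \een$ similarly. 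Then \eqref{def-m-numbers2} propagates everything to general $r$ since $m_{(r,j)} = m_{(r-1,j)} + \een$ leaves residues mod $\een$ unchanged.

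The main obstacle I anticipate is the bookkeeping for the ``only if'' direction of part (2): showing that \emph{no other} position $s$ produces a residue in $\{\kappa_2,\dots,\kappa_l\}$. This amounts to checking that as $s$ runs through one full period the residues $i_s$ visit each of $\kappa_2,\dots,\kappa_l$ exactly once at the claimed positions and never coincidentally elsewhere; this is where the hypothesis that $\bbkap$ is strongly adjacency-free (Definition \ref{def-new-strong-adj-free}), guaranteeing the $\kappa_h$ are distinct mod $\een$, together with Lemma \ref{lemma-sum-of-bj} (which ensures $\sum_{j=0}^{l-2} b_j = \een$, so the block decomposition exactly tiles one period and the count of $l-1$ hit positions is forced) does the work. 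I would phrase this as: the $l-1$ gaps $b_0,\dots,b_{l-2}$ sum to $\een$, so the $l-1$ residues $\kappa_2,\dots,\kappa_l$ are encountered exactly once per period at the positions $m_{(r,j)}$ and the distinctness of the $\kappa_h$ rules out any repetition, completing the equivalence.
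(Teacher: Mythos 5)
Your overall route is the same as the paper's: reduce everything to the congruence $i_s=\kappa_1-s+1 \pmod{\een}$, note that $\bif$ is periodic with $i_s=i_{s'}$ exactly when $s\equiv s'\pmod{\een}$, compute $i_{m_{(1,j)}}$ by telescoping the $b_t$'s, and propagate to general $r$ via $m_{(r,j)}=m_{(r-1,j)}+\een$. Your treatment of the ``only if'' half of part (2) --- the residues $\kappa_2,\dots,\kappa_l$ are hit exactly once per period because $\sum_{j=0}^{l-2}b_j=\een$ (Lemma \ref{lemma-sum-of-bj}) and the $\kappa_h$ are pairwise distinct mod $\een$ --- is sound, and is just a more explicit version of the paper's one-line observation that $s\mapsto i_s$ is injective on each period. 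One small slip there: in the recursion $m_{(1,j)}=m_{(1,j-1)}+b_{j-1}$ the index $j-1$ only runs through $0,\dots,l-3$, so the case $j=l-2$ uses $b_{l-3}$, not $b_{l-2}$; the quantity $b_{l-2}$ enters only when passing from $m_{(r,l-2)}$ to $m_{(r+1,0)}$, i.e.\ through Lemma \ref{lemma-sum-of-bj}.

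The one genuinely false step is your parenthetical in part (1). Since $m=\epsilon+k\een$ and $\epsilon\geq 1$ (because $0<\hat{\kappa}_l-\hat{\kappa}_1<\een$ forces $1\leq\epsilon<\een$), the value $h=k$ gives $s=1+k\een\leq \epsilon+k\een=m$, not $s>m$; the inequality $\epsilon<\een$ that you invoke is irrelevant to this comparison. So $s=1+k\een$ does lie in $\{1,\dots,m\}$ and satisfies $i_s=\kappa_1$: in the paper's running example ($\een=13$, $m=32$, $\kappa=(0,2,5,7)$) one has $i_{27}=0=\kappa_1$ with $27=1+2\een$. Consequently the strict bound $h<k$ cannot be proved --- the correct range in part (1) is $0\leq h\leq k$, an off-by-one that the paper's own proof also glosses over, since it never performs a range check at all. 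The error does not propagate: for part (2) one checks that $m_{(1,j)}\leq\een$ (because $\hat{\kappa}_{l-j}>\hat{\kappa}_1$ for $j\leq l-2$) and $m_{(1,j)}+(r-1)\een\leq m$ exactly when $r\leq k$, so the admissible translates are precisely $m_{(1,j)},\dots,m_{(k,j)}$ and your bookkeeping there is correct as written.
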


\begin{proof}
   By definition $i_{s}=\kappa_1-s+1\mod \een,$ then $\bif$ is periodic with period $\een,$ in the sense that $i_{j+\een}=i_j\mod \een,$ for all $j.$ Moreover
   $i_s=i_{s'}\quad \textrm{if and only if}\quad s=s'\mod \een.$
   Therefore we have
      \begin{enumerate}
        \item $i_{s}=\kappa_1$ if and only if $s=1\mod \een,$ that is $s=1+h\een$ for some $0\leq h<k.$
        \item If $s=m_{(r,j)}$ then $i_s=i_{m_{(1,j)}}$ (see equation \ref{def-m-numbers2}).

        If $j=0,$ then
        $$i_s=i_{\epsilon+1}=\kappa_1-(\hat{\kappa}_1-\hat{\kappa}_l+\een+1)+1=\kappa_l\mod \een.$$
        If $0<j\leq l-2$ then $i_s=i_{s'}$ where
        $$s'=m_{(1,j)}=m_{(1,0)}+\sum_{t=0}^{j-1}b_t=m_{(1,0)}+\sum_{t=0}^{j-1}(\hat{\kappa}_{l-t}-\hat{\kappa}_{l-t-1})$$
        by a telescopic argument we obtain:
        $$s'=\epsilon+1+\hat{\kappa}_{l}-\hat{\kappa}_{l-j}=\hat{\kappa_1}-\hat{\kappa_l}+\een+1+\hat{\kappa}_{l}-\hat{\kappa}_{l-j},$$
        therefore
        $$i_s=\kappa_1-{s'}+1=\kappa_{l-j}\mod \een.$$
      \end{enumerate}
\end{proof}

\subsection{Tableaux defined by blocks}\label{sec-tab-def-blocks}

In this subsection we describe the set $\std(\bif).$ Recall that we are considering $m$ with the form $m=\epsilon+k\een,$ for some $k\in \mathbb{Z^{+}},$

In the set of nodes $\mathbb{N}\times\{1\}\times \{1,\dots,l\}$ we define certain subsets that we call \emph{Blocks of nodes}:

We define the block of nodes $[N]$ as:
\begin{equation}\label{block-nod-N}
  [N]=\{(r,1,1):1\leq r\leq \epsilon\}
\end{equation}

For each $0\leq j \leq l-2,$ we define the blocks of \emph{type} $j$ as any set of nodes of the form:

\begin{equation}\label{block-nod-B-type-j}
  \{(r,1,h):x\leq r\leq x+b_j-1; \res(x,1,h)=\kappa_{l-j} \}
\end{equation}
We denote any of this blocks by $B^{(j)}$

Note that if $\res(B^{(j)})$ denotes the sequences of residues of the  nodes (ordered from top to bottom) in $B^{(j)},$ then $\res(B^{(j)})=\bif|_{B(1,j)}.$ We also have $\res([N])=\bif|_N.$



\begin{lemma}\label{lemma-tableux-blocks}
  If $\bT\in\std(\bif),$ then:
  \begin{enumerate}
    \item $\bT(N)=[N]$
    \item $\bT(B_{(r,j)})$ is a block of type $j.$
  \end{enumerate}
\end{lemma}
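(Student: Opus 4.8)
The plan is to reconstruct an arbitrary $\bT\in\std(\bif)$ box by box. Since $\bif$ is a one-column residue sequence, giving such a $\bT$ is the same as specifying, for each $k=1,\dots,m$, the column to whose bottom the box $k$ is appended; if $k$ is appended to a column of current length $\ell_h$ in component $h$, the residue read off is $\kappa_h-\ell_h\pmod\een$. I will prove both assertions simultaneously by induction on $k$, at each step reading $i_k$ from \eqref{fund-res-seq} and asking which bottom-of-column nodes carry that residue. Note first that $|B_{(r,j)}|=b_j$ for every block (using Lemma~\ref{lemma-sum-of-bj} for the case $j=l-2$), so the target ``block of type $j$'' has exactly as many nodes as $B_{(r,j)}$ has integers, and only the shape has to be matched.

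For assertion (1), the residues $i_1,\dots,i_\epsilon$ filling $N$ are $\kappa_1,\kappa_1-1,\dots,\kappa_l+1$ (recall $\epsilon=\hat\kappa_1-\hat\kappa_l+\een$), and by the strong adjacency-freeness of $\kappan$ none of these equals $\kappa_h$ for $h\geq 2$. Hence, starting from the empty tableau, box $1$ can only sit at $(1,1,1)$ (the unique top of residue $\kappa_1$), and inductively, for $r\le\epsilon$ the residue $i_r\neq\kappa_h$ $(h\ge 2)$ forbids opening any component $h\ge 2$, so box $r$ is forced to the bottom $(r,1,1)$ of the first column. This gives $\bT(N)=[N]$.

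The engine for assertion (2) is Lemma~\ref{lemma-same-addable-residues}, applied to the prefixes $\bT|_k$ and $\bT^{\underline{\blambda}}|_k$ (with $\underline{\blambda}=(1^{(m)},0,\dots,0)$), which share the residue sequence $\bif|_k$: comparing addable-node residues against the all-in-component-$1$ tableau shows that for \emph{every} $\bT\in\std(\bif)$ the set of residues of the blob-addable nodes of $\bT|_k$ equals $\{i_{k+1}\}\cup\{\kappa_2,\dots,\kappa_l\}$. Because $i_{k+1}=i_k-1$ throughout $\bif$, continuing the current column is always residue-admissible. By Lemma~\ref{lemma-residues-mrj} one has $i_{k+1}\in\{\kappa_2,\dots,\kappa_l\}$ exactly when $k+1=m_{(r,j)}$ begins a block $B_{(r,j)}$, and then $i_{k+1}=\kappa_{l-j}$. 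Thus a fresh component can be opened only at the first box of a block; that first box lands on a node of residue $\kappa_{l-j}$, the prescribed top of a type-$j$ block; and no interior box of a block can open a new column (for $j<l-2$ its residue is strictly between two consecutive multicharges, hence is not any $\kappa_h$, while for $j=l-2$ the only interior multicharge value is $\kappa_1$, whose component $1$ has been full since block $N$).

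It remains to forbid the one surviving way of breaking a block: inside $B_{(r,j)}$ a box could jump to the bottom of a \emph{different, already started} column $h''$ that momentarily shares the required residue. This is the main obstacle, since the residue-set computation alone does not preclude two columns from having equal bottom residues, and it is most delicate in the type-$(l-2)$ blocks where $\kappa_1$ reappears (Lemma~\ref{lemma-residues-mrj}(1)). Here I will invoke the impossibility Corollaries~\ref{coro-bif-j-not-possible}, \ref{coro-bif-j-not-possible2} and \ref{coro-bif-j-not-possible3}: each hypothetical jump forces, at the bottoms of the two competing columns, a local residue pattern of exactly one of the forbidden forms $\bi\otimes j$, $(i_1,\dots,i_n,j,j+1,j+1)$ or $(j_1,\dots,j_n,\kappa_s-1,\kappa_s,\kappa_s)$ classified there. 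Since every prefix of $\bif$ is blob-possible (Lemma~\ref{lemma-bj-possible-bjp-too}), such a pattern cannot be realized, so the box must continue the current column. Consequently the $b_j$ boxes of $B_{(r,j)}$ occupy $b_j$ consecutive rows of a single component topped by residue $\kappa_{l-j}$, i.e.\ a block of type $j$, which closes the induction. The essential difficulty is therefore entirely concentrated in this last step: a careful case analysis matching each possible break to one of the three forbidden patterns, with the reappearance of $\kappa_1$ inside type-$(l-2)$ blocks the most delicate case.
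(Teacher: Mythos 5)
Your treatment of assertion (1) and of the block-start bifurcations is sound and essentially the paper's: part (1) is the same forced-placement argument, and your use of Lemma~\ref{lemma-same-addable-residues} against the all-in-component-$1$ tableau to pin the addable-residue set down to $\{i_{k+1}\}\cup\{\kappa_2,\dots,\kappa_l\}$, combined with Lemma~\ref{lemma-residues-mrj}, correctly locates where new columns can open. The gap is in the step you yourself flag as the essential one: ruling out an interior box of $B_{(r,j)}$ jumping to another column. Invoking Corollaries~\ref{coro-bif-j-not-possible}, \ref{coro-bif-j-not-possible2} and \ref{coro-bif-j-not-possible3} there is a non sequitur. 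Those corollaries assert that certain \emph{residue sequences} admit no one-column standard tableau at all; they say nothing about ``local residue patterns at the bottoms of two competing columns.'' Any standard placement of box $k+1$ in an extension of $\bT|_k$ --- including the hypothetical jump --- yields a tableau whose residue sequence is the prefix $\bif|_{k+1}$, which is $\kappa$-blob \emph{possible} (Lemma~\ref{lemma-bj-possible-bjp-too}, as you note). So blob-impossibility has no purchase on the choice of node: possibility of a sequence means \emph{some} tableau realizes it, and cannot be converted into a constraint on \emph{which} node a particular tableau uses. Your sentence ``such a pattern cannot be realized, so the box must continue the current column'' therefore proves nothing; in the paper these corollaries are used only later, for algebraic computations showing idempotents vanish, not in the proof of this lemma.

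What actually closes the argument --- and is the mechanism of the paper's proof --- is a structural invariant carried through the induction: since the blocks $B_{(r,j)}$ partition $\{ \epsilon+1,\dots,m\}$ into consecutive intervals and boxes are placed in order, at any moment at most one block is incomplete, namely the current one. Hence, when placing an interior box $a$ of $B_{(r,j)}$, every column other than the one currently being extended ends either at $[N]$ or at a \emph{complete} block, and a short computation with the numbers $b_j$ (your own observation that $|B_{(r,j)}|=b_j$) shows the addable node below a complete type-$j'$ block has residue $\kappa_{l-j'-1}$ (residue $\kappa_l$ below a type-$(l-2)$ block and below $[N]$), while unopened tops carry $\kappa_h$ with $h\geq 2$. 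Thus every competing addable node has residue in $\{\kappa_2,\dots,\kappa_l\}$, whereas $i_a\notin\{\kappa_2,\dots,\kappa_l\}$ for $a\neq m_{(r,j)}$ by Lemma~\ref{lemma-residues-mrj}; the one delicate value $i_a=\kappa_1$, which does occur inside type-$(l-2)$ blocks, is excluded because $\kappa_1\notin\{\kappa_2,\dots,\kappa_l\}$ by strong adjacency-freeness and the top of component $1$ has been occupied since $N$. So the continuation of the current column is the \emph{unique} addable node of residue $i_a$, no impossibility statement required. With this replacement your induction closes correctly and exactly reproduces the paper's proof.
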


\begin{proof}
  Recall that $\boldsymbol{\kappa}=(\boldsymbol{\kappa}_1,\dots,\boldsymbol{\kappa}_l)$ is a strongly adjacency-free multicharge, and $\bif|_N=(\kappa_1,\kappa_1-1,\dots,\kappa_1-\epsilon+1).$

  If  $\bT\in\std(\bif)$ then:

   \begin{enumerate}
     \item $\bT(1)=(1,1,h),$ since $\bT$ is standard. But $\res(1,1,h)=\kappa_1$ if and only if $h=1,$ then $\bT(1)=(1,1,1).$  Analogously  $\bT(2)=(2,1,1),$ since this is the only addable node $\gamma$ of $\bmu_1=(1^{(1)},1^{(0)},\dots,1^{(0)})$ with $\res(\gamma)=\kappa_1-1.$  Following in this way we can see that for each $1\leq s\leq \epsilon$ there is only one possible node $\gamma$ such that $\bT(s)=\gamma,$ that is $\gamma=(s,1,1).$
       Then we conclude that $\bT(N)=[N].$
     \item Since $\bT\in\std(\bif),$ we have for $s=m_{(r,j)}$ that $i_s=\kappa_{l-j}$ (lemma \ref{lemma-residues-mrj}) then $\bT(s)=\gamma$ for some node $\gamma$ such that $\res(\gamma)=\kappa_{l-j}.$ By definition, $\gamma$ is the \emph{first} (highest) node on a block of type $j.$

         In particular for $s=m_{(1,0)}$ there are exactly two possible addable nodes of $\bmu=(1^{(\epsilon)},1^{(0)},\dots,1^{(0)}),$ with residue equal to $i_s=\kappa_l,$ they are $\gamma_1=(m_{(1,0)},1,1)$ and $\gamma_2=(1,1,l).$
         If $\bT(s)=\gamma_1$ then the images (under $\bT$) of the elements of the block $B_{(1,0)}$ are totally determined:

          If $a=s+\delta\in B_{(1,0)}$ then $\bT(a)=(s+\delta,1,1),$ and:
         $$\bT(B_{(1,0)})=\{(r,1,1):r\in B_{(1,0)} \},$$
         and this is a block of type $0.$

         Analogously, if $\bT(s)=\gamma_2$ then the images (under $\bT$) of the elements of the block $B_{(1,0)}$ are totally determined:

          If $a=s+\delta\in B_{(1,0)}$ then $\bT(a)=\{(1+\delta,1,1)\},$ and:
         $$\bT(B_{(1,0)})=\{(r,1,l):1\leq r<m(1,1) \},$$
         and this is a block of type $0.$

         If $s=m_{(1,j)}$ with $j>0$ and we assume that for each $1\leq j'<j$ we have that $\bT(B_{(1,j')})$ is a block of type $j'.$ Then there are exactly two addable nodes of $\bmu=\shape(\bT|_{m_{(1,j)}-1})$ with residue equal to $\kappa_{l-j}.$ They are, $\gamma_1,$ the one below of the block $B^{(j-1)}$ corresponding to $\bT(B(1,j-1))$ and $\gamma_2=(1,1,l-j).$ Any other addable node of $\bmu$ has residue equal to $\kappa_x$ with $x\neq l-j.$

         For any choice we take for $\bT(s),$ the rest of the images for elements of the  block $B_{(1,j)}$ will be forced to complete the block of type $j$ whose \emph{first} node is the chosen one for $\bT(s).$ Then $\bT(B_{(1,j)})$ is a node of type $j.$

         Finally if $s=m(r,j)$ with $r>1$ and $0\leq j \leq l-2$ and we assume that for any block $B_{(r',j')}$ such that $ B_{(r,j)}\succ B_{(r',j')}$ we have that $\bT(B_{(r',j')})$ is a block of type $j'.$ Then for $\bT(s)$ there are two possible addable nodes of $\bmu=\shape(\bT|_{s-1})$ with residue $\kappa_{l-j}.$ Let's call them $\gamma_1$ and $\gamma_2$ respectively. They depend on the two following situations:
         \begin{enumerate}
           \item If $j=0$ we have that $\gamma_1$ is the addable node below to $\bT(B_{(r-1,l-2)})$ and $\gamma_2$ could be either, equal to $(m_{(1,0)},1,1)$ (if it haven't been used yet) or is an addable node below to some block $B^{(l-2)}$ used by   $\bT(B_{(r',l-2)})$ for some $r'<r-1.$
           \item If $j>0$ we have that $\gamma_1$ is the addable node below to $\bT(B_{(r,j-1)})$ and $\gamma_2$ could be either, equal to $(1,1,l-j)$ (if it haven't been used yet) or is an addable node below to some block $B^{(j-1)}$ used by   $\bT(B_{(r',j-1)})$ for some $r'<r-1.$
         \end{enumerate}
         In any case, given a choice for $\bT(s)$ the rest of the block $B_{(r,j)}$ will be totally determined as above. Then $\bT(B_{(r,j)})$ is a block of type $j.$
   \end{enumerate}
   \end{proof}

\begin{corollary}\label{coro-tableaux-blocks-bifurcation}
  Let $\bT\in\std(\bif),$ $1\leq r \leq k,$ $0\leq j \leq l-2$ and  $a=m_{(r,j)}-1.$
  Then the set of residues of the addable nodes for $\bT|_a$ is equal to $\left\{\kappa_2,\dots,\kappa_l\right\}.$
  Moreover, there are exactly two addable nodes for $\bT|_a$ with residue equal to $\kappa_{l-j}$ and exactly one with residue $\kappa_i,$ for each $i\neq 1, l-j.$
\end{corollary}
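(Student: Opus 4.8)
The plan is to reduce the statement to a direct computation of the blob-addable nodes of $\shape(\bT|_a)$ and then to transfer the conclusion to an arbitrary $\bT\in\std(\bif)$. Recall that any one-column $l$-multipartition $\blambda^{\prime}=(1^{(a_1)},\dots,1^{(a_l)})$ has exactly $l$ blob-addable nodes, one in each component, namely $(a_h+1,1,h)$ for $1\leq h\leq l$, whose residue is $\kappa_h-a_h\ (\mathrm{mod}\ \een)$. Thus $\shape(\bT|_a)$ always has exactly $l$ blob-addable nodes, and since $\bi^{\bT|_{a}}=\bif|_a$ does not depend on the particular $\bT\in\std(\bif)$, Lemma~\ref{lemma-same-addable-residues} guarantees that the \emph{set} of residues of these nodes is the same for every such $\bT$. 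Hence it suffices to compute this set on one convenient tableau.

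First I would compute on the tableau $\bT_0$ given by $\bT_0(s)=(s,1,1)$ for all $s$; this lies in $\std(\bif)$ because $\res(s,1,1)=\kappa_1-s+1=i_s$, and indeed $\bT_0=\bT^{\underline{\blambda}}$ for $\underline{\blambda}=(1^{(m)},0,\dots,0)$. Then $\shape(\bT_0|_a)=(1^{(a)},0,\dots,0)$ with $a=m_{(r,j)}-1$, so its blob-addable nodes are $(a+1,1,1)$ together with $(1,1,h)$ for $2\leq h\leq l$. By Lemma~\ref{lemma-residues-mrj} the first has residue $i_{m_{(r,j)}}=\kappa_{l-j}$, while the others have residues $\kappa_2,\dots,\kappa_l$. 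Since $0\leq j\leq l-2$ forces $2\leq l-j\leq l$, the set of residues is exactly $\{\kappa_2,\dots,\kappa_l\}$, which by strong adjacency-freeness of $\kappan$ consists of $l-1$ pairwise distinct elements and does not contain $\kappa_1$. By the previous paragraph this set is the set of addable residues for every $\bT\in\std(\bif)$.

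It remains to pin down the multiplicities for an arbitrary $\bT$. Because $\shape(\bT|_a)$ has exactly $l$ blob-addable nodes whose residues form a set of $l-1$ elements, a counting (pigeonhole) argument shows that exactly one residue occurs twice and each of the remaining $l-2$ occurs once. To identify the doubled residue I would invoke the bifurcation analysis carried out in the proof of Lemma~\ref{lemma-tableux-blocks}: there it is shown that, regardless of the choices made in filling the earlier blocks, $\shape(\bT|_{m_{(r,j)}-1})$ possesses exactly two blob-addable nodes of residue $\kappa_{l-j}$ (the two candidates for $\bT(m_{(r,j)})$). Hence the doubled residue is $\kappa_{l-j}$, each $\kappa_i$ with $i\neq 1,l-j$ occurs exactly once, and $\kappa_1$ does not occur, which is the assertion. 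The only real obstacle is precisely this last point: Lemma~\ref{lemma-same-addable-residues} controls the set but not the multiset of addable residues, so the tableau-independence of the doubled residue genuinely relies on the block structure established in Lemma~\ref{lemma-tableux-blocks} rather than on the single computation with $\bT_0$.
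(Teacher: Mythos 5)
Your proof is correct, and it takes a partly different route from the paper. The paper treats this corollary as a direct byproduct of the case analysis inside the proof of Lemma \ref{lemma-tableux-blocks}: there, at each stage $s=m_{(r,j)}$, the two addable nodes $\gamma_1,\gamma_2$ of residue $\kappa_{l-j}$ are exhibited explicitly (one continuing the previously filled block, one at the top of a component or below an earlier block of the preceding type), and every other addable node is shown to have residue $\kappa_x$ with $x\neq 1,\,l-j$; the corollary is then a restatement of what that analysis establishes. You instead split the statement in two: the \emph{set} of addable residues is computed once, on the canonical tableau $\bT_0=\bT^{\underline{\blambda}}$ via Lemma \ref{lemma-residues-mrj}, and transferred to an arbitrary $\bT\in\std(\bif)$ by Lemma \ref{lemma-same-addable-residues}; the multiplicities then follow from your counting observation that a one-column $l$-multipartition has exactly $l$ blob-addable nodes (one per component) against $l-1$ distinct residues, so exactly one residue doubles, together with the bifurcation fact from the proof of Lemma \ref{lemma-tableux-blocks} identifying the doubled residue as $\kappa_{l-j}$. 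Your version buys a cleaner division of labor: the elaborate block analysis is needed only for the single fact that two addable nodes of residue $\kappa_{l-j}$ exist, while the rest is an elementary computation plus a tableau-independence principle; and you are right to flag that Lemma \ref{lemma-same-addable-residues} as stated controls only the set, not the multiset, which is exactly why that residual appeal is necessary. It may be worth observing that the induction proving Lemma \ref{lemma-same-addable-residues} actually preserves multisets (at each step one addable node of residue $\res(\bT(n))$ is consumed and one of residue $\res(\bT(n))-1$ is created, identically for both tableaux), so a multiset-strengthened version of that lemma would make your argument entirely independent of the block analysis of Lemma \ref{lemma-tableux-blocks}.
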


By corollary \ref{coro-tableaux-blocks-bifurcation}, in the process of building a tableau in $\std(\bif)$ there are relevant steps where we have to chose between two nodes with the same residue. We can differentiate them as the highest and lowest with respect to the order $\rhd.$  The other steps are totally determined by the residue sequence $\bif$ and our previous choices. Therefore we introduce the following notation:

\begin{definition}\label{def-notation-tableaux-blocks}
  Let $$M=\left\{m_{(r,j)}:1\leq r \leq k; 0\leq j\leq l-2\right\}.$$ For any subset $A\subset M$ we define the tableaux $\bT\langle A\rangle$ as follows:
  \begin{enumerate}
    \item If $A = \emptyset,$ we define $\bT\langle A\rangle$ as the only possible standard tableaux of shape $\blambda=(1^{(m)},0,0,\dots,0).$
    \item If $A \neq \emptyset,$ we define recursively:
    \begin{enumerate}
      \item $\bT\langle A\rangle(N)=[N].$
      \item If $a=m_{(r,j)}\in A,$ then $\bT\langle A\rangle(a)$ is defined as the highest addable node of $\bT\langle A\rangle|_{a-1}$ with residue $i_a.$ Otherwise, if $a=m_{(r,j)}\notin A,$ then $\bT\langle A\rangle(a)$ is defined as the lowest addable node of $\bT\langle A\rangle|_{a-1}$ with residue $i_a.$
      \item If $a>\epsilon$ and $a\neq m_{(r,j)}$ for any pair $(r,j),$ then $\bT\langle A\rangle(a)$ is totally determined by the nodes $\bT\langle A\rangle(m_{(r,j)})$ with $m_{(r,j)}<a.$
    \end{enumerate}
  \end{enumerate}
  We also introduce the following notation:

    Let $1\leq r \leq k,$ $0\leq j \leq l-2$, let $M_{m_{(r,j)}}=\left\{m_{(s,t)}:m_{(s,t)}\leq m_{(r,j)}\right\},$ then we define:
    \begin{equation*}
      \bT_{(r,j)}=\bT\langle M_{m_{(r,j)}}\rangle,\quad \blambda_{(r,j)}=\shape(\bT_{(r,j)}),\quad \bT^{(r,j)}=\bT^{\blambda_{(r,j)}}.
    \end{equation*}
\end{definition}

\begin{example}
If $\een=13,l=4$ $\kappa=(0,4,6,10)$ and $m=29$ then:

\begin{equation*}
  \includegraphics[scale=0.27]{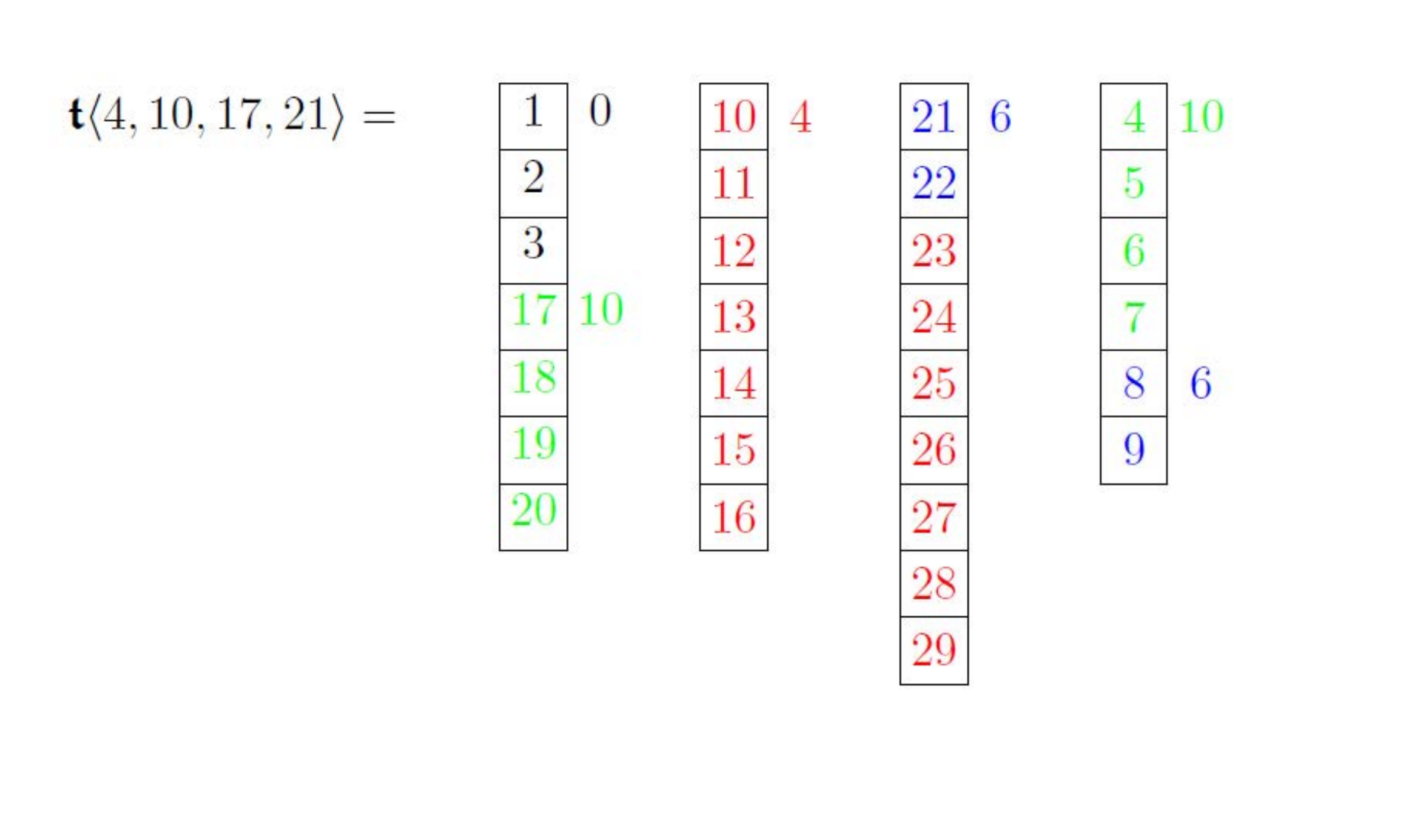}\includegraphics[scale=0.27]{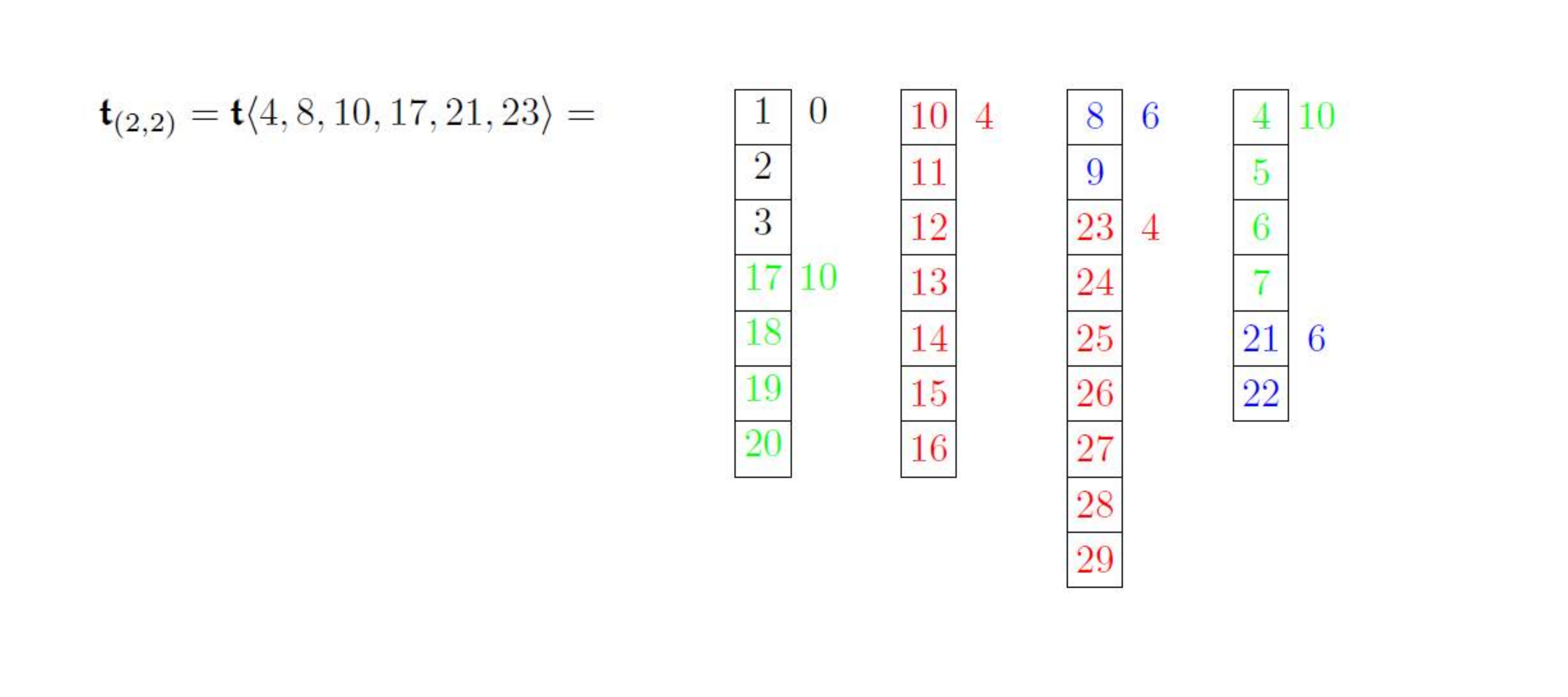}
\end{equation*}
\end{example}

\begin{corollary}\label{coro-cardinality-of-std-bif}
Let $n=k(l-1)$ then
  \begin{equation*}
    |\std(\bif)|=2^{n}.
  \end{equation*}
\end{corollary}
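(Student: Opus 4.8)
The plan is to set up a bijection between the power set of the index set $M=\{m_{(r,j)}:1\leq r\leq k,\ 0\leq j\leq l-2\}$ of Definition \ref{def-notation-tableaux-blocks} and the set $\std(\bif)$, realised by the assignment $A\mapsto\bT\langle A\rangle$. Since $j$ ranges over the $l-1$ values $0,1,\dots,l-2$ and $r$ over the $k$ values $1,\dots,k$, and the integers $m_{(r,j)}$ are pairwise distinct, we have $|M|=k(l-1)=n$. Hence, once the bijection is established, $|\std(\bif)|=2^{|M|}=2^{n}$, which is the claim.

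First I would verify that for every $A\subset M$ the recipe of Definition \ref{def-notation-tableaux-blocks} really produces a standard tableau in $\std(\bif)$. Reading the positions $1,\dots,m$ in order, the block $N$ is forced to be $[N]$, and by Lemma \ref{lemma-tableux-blocks} (together with the discussion preceding Definition \ref{def-notation-tableaux-blocks}) every position $a>\epsilon$ with $a\notin M$ is completely determined by the earlier images. At a bifurcation position $a=m_{(r,j)}$, Corollary \ref{coro-tableaux-blocks-bifurcation} guarantees that the partial tableau $\bT\langle A\rangle|_{a-1}$ has exactly two addable nodes of residue $i_a=\kappa_{l-j}$, so the rule ``highest if $m_{(r,j)}\in A$, lowest otherwise'' always selects a genuinely available node; the resulting tableau is standard and has residue sequence $\bif$.

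Next I would check that $A\mapsto\bT\langle A\rangle$ is a bijection. For surjectivity, given $\bT\in\std(\bif)$, set $A=\{m_{(r,j)}\in M:\bT(m_{(r,j)})\text{ is the highest of the two addable nodes of residue }\kappa_{l-j}\}$; by Lemma \ref{lemma-tableux-blocks} the value of $\bT$ on each entire block is dictated by its value at the first node $m_{(r,j)}$, so $\bT=\bT\langle A\rangle$. For injectivity, if $A\neq A'$, let $a=m_{(r,j)}$ be the smallest element of their symmetric difference; then $\bT\langle A\rangle$ and $\bT\langle A'\rangle$ coincide on $\{1,\dots,a-1\}$ but assign respectively the highest and the lowest of the two distinct addable nodes to $a$, so they differ. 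This gives the bijection and hence the count.

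The one point requiring care, and the main obstacle, is the well-definedness in the first step: one must know that the choice stays exactly binary at every $m_{(r,j)}$ and is forced at all other positions, regardless of the previous selections. This is precisely what Lemma \ref{lemma-tableux-blocks} and Corollary \ref{coro-tableaux-blocks-bifurcation} provide, since they apply to an arbitrary element of $\std(\bif)$ and therefore to any partial tableau that extends to one; with this in place the remainder is a routine counting argument.
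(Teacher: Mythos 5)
Your proof is correct and follows essentially the same route as the paper, which establishes the bijection $A\mapsto\bT\langle A\rangle$ between the power set of $M$ and $\std(\bif)$ by citing Lemma \ref{lemma-tableux-blocks}, Corollary \ref{coro-tableaux-blocks-bifurcation} and Definition \ref{def-notation-tableaux-blocks}. You merely spell out the well-definedness, injectivity and surjectivity that the paper leaves implicit, which is a sound (and more careful) elaboration of the same argument.
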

\begin{proof}
  By lemma \ref{lemma-tableux-blocks}, corollary \ref{coro-tableaux-blocks-bifurcation} and definition \ref{def-notation-tableaux-blocks}, the assignment $A\mapsto \bT\langle A\rangle$ define a bijection between the power set of $M=\left\{m_{(r,j)}:1\leq r \leq k,\quad 0\leq j\leq l-2\right\}$ and $\std(\bif).$
\end{proof}

\subsection{The fundamental vertical truncation and its cellular basis}\label{sec-fund-idemp-and-cellular-basis}

We define the \emph{fundamental vertical truncation} of $\B$ as the algebra $\B(\bif)$ obtained by idempotent truncation with respect to the fundamental vertical sequence $\bif.$

The following is a particular case of corollary \ref{coro-cellular-basis-truncated}:

\begin{corollary}\label{coro-cellular-basis-truncated-2}
  The set $$\{\Psi_{\bT,\Bs}^{\blambda}:\bT,\Bs\in\std_{\blambda}(\bif),\blambda\in \OnePar\}$$
is a graded cellular basis for $\B(\bif).$
\end{corollary}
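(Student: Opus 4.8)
The plan is to recognize the final statement, Corollary \ref{coro-cellular-basis-truncated-2}, as a direct specialization of the already-established Corollary \ref{coro-cellular-basis-truncated} to the particular residue sequence $\bif$. Since Corollary \ref{coro-cellular-basis-truncated} asserts that for \emph{any} $\kappa$-blob possible residue sequence $\bi$, the set $\{\Psi_{\bT,\Bs}^{\blambda}:\bT,\Bs\in\std_{\blambda}(\bi),\blambda\in \OnePar\}$ is a graded cellular basis for the idempotent truncation $\B(\bi)=e(\bi)\B e(\bi)$, the only thing I would need to verify is that the hypotheses of that corollary are met by $\bif$.

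First I would note that $\bif$ is indeed a $\kappa$-blob possible residue sequence: this is exactly the observation recorded just after equation \eqref{fund-res-seq}, namely that $\bif=\bi^{\bT}$ where $\bT=\bT^{\underline{\blambda}}$ and $\underline{\blambda}=(1^{(m)},0,\dots,0)$ is a one-column $l$-multipartition of $m$. Hence the idempotent $e(\bif)$ is nonzero (by Lemma \ref{lemma-first-idemp-annihilation} and its corollary), and the truncation $\B(\bif)=e(\bif)\B e(\bif)$ is a well-defined algebra contained in $\B$ in the sense discussed in section \ref{sec-idemp-trunc-of-B}.

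Once $\bif$ is known to be $\kappa$-blob possible, Corollary \ref{coro-cellular-basis-truncated} applies verbatim with $\bi$ replaced by $\bif$. The conclusion is precisely that $\{\Psi_{\bT,\Bs}^{\blambda}:\bT,\Bs\in\std_{\blambda}(\bif),\blambda\in \OnePar\}$ is a graded cellular basis for $\B(\bif)$, which is the assertion to be proved. Thus the proof is a one-line invocation: the statement is the instance $\bi=\bif$ of the previous corollary. I would write simply that this is a particular case of Corollary \ref{coro-cellular-basis-truncated}, as the statement itself already announces.

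There is essentially no obstacle here; the content is bookkeeping rather than mathematics. The only point worth a moment's care is confirming that $\bif$ satisfies the standing hypotheses (in particular that $\kappa$ is strongly adjacency-free and $m=\epsilon+k\een$ as fixed in section \ref{sec-fund-resid-sec}), so that both the ambient cellular basis of Theorem \ref{theo-cellular-basis} and the truncated version of Corollary \ref{coro-cellular-basis-truncated} are in force. Given that these are global assumptions already in place throughout the article, the corollary follows immediately and requires no further argument.
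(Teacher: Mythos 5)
Your proposal is correct and matches the paper exactly: the paper introduces this corollary with the phrase ``The following is a particular case of corollary \ref{coro-cellular-basis-truncated}'' and gives no further argument, just as you do. Your additional check that $\bif$ is $\kappa$-blob possible (via $\bif=\bi^{\bT^{\underline{\blambda}}}$ with $\underline{\blambda}=(1^{(m)},0,\dots,0)$) is precisely the observation the paper records in section \ref{sec-fund-resid-sec}, so nothing is missing.
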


In particular we have

\begin{corollary}\label{coro-dimension-B-bif}
  \begin{equation*}
    \dim(\B(\bif))=\sum_{\blambda\in\OnePar}|\std_{\blambda}(\bif)|^{2}.
  \end{equation*}
\end{corollary}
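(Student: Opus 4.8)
The plan is to derive the dimension formula directly from the graded cellular basis established in Corollary \ref{coro-cellular-basis-truncated-2}. Recall that a cellular basis of an algebra $A$ is indexed by pairs of tableaux of the same shape, and such a basis is in particular an $\F$-vector space basis. Hence the dimension of $A$ equals the cardinality of the indexing set.

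First I would apply Corollary \ref{coro-cellular-basis-truncated-2}, which asserts that
\begin{equation*}
  \{\Psi_{\bT,\Bs}^{\blambda}:\bT,\Bs\in\std_{\blambda}(\bif),\blambda\in \OnePar\}
\end{equation*}
is a graded cellular basis for $\B(\bif).$ Since a cellular basis is by definition a basis of the underlying vector space, the dimension of $\B(\bif)$ over $\F$ is the number of elements in this set. I would then count: the basis elements $\Psi_{\bT,\Bs}^{\blambda}$ are parametrized, for each fixed $\blambda\in\OnePar,$ by ordered pairs $(\bT,\Bs)$ with both $\bT,\Bs\in\std_{\blambda}(\bif).$ The number of such ordered pairs is exactly $|\std_{\blambda}(\bif)|^2,$ and distinct triples $(\blambda,\bT,\Bs)$ give distinct (linearly independent) basis elements.

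Summing over all $\blambda\in\OnePar$ then yields
\begin{equation*}
  \dim(\B(\bif))=\sum_{\blambda\in\OnePar}|\std_{\blambda}(\bif)|^{2},
\end{equation*}
which is precisely the claimed identity. The argument is essentially a bookkeeping consequence of the cellular structure, so there is no genuine obstacle here; the only point requiring minor care is to confirm that the indexing is by ordered pairs drawn from $\std_{\blambda}(\bif)$ (so that one squares the cardinality rather than taking, say, the number of unordered pairs), which is immediate from the definition $\Psi_{\Bs,\bT}^{\blambda}=(\Psi_{\Bs}^{\blambda})^{\ast}\Psi_{\bT}^{\blambda}$ and the fact that both tableaux range independently over $\std_{\blambda}(\bif).$ Everything follows formally from the established cellularity, so the proof is short.
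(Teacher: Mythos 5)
Your proposal is correct and matches the paper's own (implicit) argument: the corollary is stated as an immediate consequence of the graded cellular basis of Corollary \ref{coro-cellular-basis-truncated-2}, whose elements $\Psi_{\bT,\Bs}^{\blambda}$ are indexed by ordered pairs $\bT,\Bs\in\std_{\blambda}(\bif)$ for each $\blambda\in\OnePar,$ so counting gives $\sum_{\blambda}|\std_{\blambda}(\bif)|^{2}.$ Your extra remark about ordered versus unordered pairs is a reasonable sanity check but not a point of divergence from the paper.
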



Following the notation introduced in definition \ref{def-notation-tableaux-blocks-2} we define:

\begin{definition}\label{def-H-factors}
  Let $1\leq r \leq k,$ $0\leq j \leq l-2.$ For each pair $(s,t)$ such that $m_{(s,t)}\leq m_{(r,j)}$ we define the tableau $\Bs_{(r,j)}^{(s,t)}$ as follows:
  \begin{enumerate}
    \item If $a=m_{(s,t)}-1,$ then
    \begin{equation*}
      \Bs_{(r,j)}^{(s,t)}|_a=\bT_{(r,j)}|_a
    \end{equation*}
    \item The restricted function $\Bs_{(r,j)}^{(s,t)}:\{m_{(s,t)},\dots,m\}\rightarrow [\blambda_{(r,j)}]$ is strictly decreasing with respect to the order $\rhd$ on nodes in $[\blambda_{(r,j)}].$
  \end{enumerate}
  We also define the elements $H_{(r,j)}^{(s,t)}\in\mathfrak{S}_m$ and $\underline{H}_{(r,j)}^{(s,t)}\in\B$ respectively as follows:
  \begin{equation*}
    H_{(r,j)}^{(s,t)}=d(\Bs),\quad  \underline{H}_{(r,j)}^{(s,t)}=\Psi_{\Bs}^{\blambda_{(r,j)}},\quad\textrm{where}\quad \Bs=\Bs_{(r,j)}^{(s,t)}.
  \end{equation*}

  Finally, if $m_{(s,t)}>m_{(1,0)},$ we define the elements $T_{(r,j)}^{(s,t)}\in\mathfrak{S}_m$ and $\underline{T}_{(r,j)}^{(s,t)}\in\B$ respectively by:
  \begin{equation*}
     H_{(r,j)}^{(s,t)}= H_{(r,j)}^{(u,v)}T_{(r,j)}^{(s,t)},\quad \underline{H}_{(r,j)}^{(s,t)}= \underline{H}_{(r,j)}^{(u,v)}\underline{T}_{(r,j)}^{(s,t)},
  \end{equation*}

  where $m_{(u,v)}$ is the antecessor of $m_{(s,t)}$ in $M_{m_{(r,j)}}.$
  \end{definition}

    By definition of the official reduced word of $d(\bT)$ for one-column tableaux, given in section \ref{sec-general-combinatorics}, all the equations given in definition  \ref{def-H-factors} are consistent.

    By definition, it is clear that:
    \begin{equation*}
      \Bs_{(r,j)}^{(r,j)}=\bT_{(r,j)},\quad H_{(r,j)}^{(r,j)}=d(\bT_{(r,j)}),\quad \underline{H}_{(r,j)}^{(r,j)}=\Psi_{\bT_{(r,j)}}^{\blambda_{(r,j)}}.
   \end{equation*}

\begin{lemma}\label{lemma-factor-Psit-in-triangles}
 Let $1\leq r \leq k$ and   $0\leq j \leq l-2.$ Then we have

 \begin{equation*}
   d(\bT_{(r,j)})=H_{(r,j)}^{(1,0)}\left(\prod_{m_{(1,0)}<m_{(s,t)}\leq m_{(r,t)}}T_{(r,j)}^{(s,t)}\right)
 \end{equation*}
 and
 \begin{equation*}
   \Psi_{\bT_{(r,j)}}^{\blambda_{(r,j)}}=\underline{H}_{(r,j)}^{(1,0)}\left(\prod_{m_{(1,0)}<m_{(s,t)}\leq m_{(r,t)}}\underline{T}_{(r,j)}^{(s,t)}\right).
 \end{equation*}
\end{lemma}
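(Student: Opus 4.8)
The plan is to recognize both identities as telescoping products assembled directly from the recursive factorizations in Definition \ref{def-H-factors}, so that the whole argument reduces to a short induction along the chain $M_{m_{(r,j)}}$. Existence and consistency of the factors $T_{(r,j)}^{(s,t)}$ and $\underline{T}_{(r,j)}^{(s,t)}$ are already guaranteed by the remark following that definition, so nothing new has to be constructed; the task is purely to unroll the recursion.

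First I would fix $(r,j)$ and list the elements of $M_{m_{(r,j)}}=\{m_{(s,t)}:m_{(s,t)}\leq m_{(r,j)}\}$ in increasing order. Since the values $m_{(s,t)}$ are totally ordered by $\succ$, this chain has a unique minimal element $m_{(1,0)}=\epsilon+1$ (the global minimum of all the $m_{(s,t)}$) and a unique maximal element $m_{(r,j)}$. For every $m_{(s,t)}$ in the chain with $m_{(s,t)}>m_{(1,0)}$, Definition \ref{def-H-factors} provides its immediate $\succ$-predecessor $m_{(u,v)}$ together with the one-step relations $H_{(r,j)}^{(s,t)}=H_{(r,j)}^{(u,v)}T_{(r,j)}^{(s,t)}$ and $\underline{H}_{(r,j)}^{(s,t)}=\underline{H}_{(r,j)}^{(u,v)}\underline{T}_{(r,j)}^{(s,t)}$.

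The core step is an induction along this chain establishing, for every $m_{(s,t)}\in M_{m_{(r,j)}}$, the formula $H_{(r,j)}^{(s,t)}=H_{(r,j)}^{(1,0)}\prod_{m_{(1,0)}<m_{(s',t')}\leq m_{(s,t)}}T_{(r,j)}^{(s',t')}$, the product being read in increasing order of $m_{(s',t')}$ along $\succ$. The base case $m_{(s,t)}=m_{(1,0)}$ is the empty product. For the inductive step I would take the formula at the predecessor $m_{(u,v)}$, multiply on the right by $T_{(r,j)}^{(s,t)}$, and apply the one-step relation to append the new factor. Specializing to the maximal element $(s,t)=(r,j)$ and invoking the identity $H_{(r,j)}^{(r,j)}=d(\bT_{(r,j)})$ recorded right after Definition \ref{def-H-factors} yields the first equality; the second follows from the verbatim induction with $\underline{H}$ and $\underline{T}$ in place of $H$ and $T$, together with $\underline{H}_{(r,j)}^{(r,j)}=\Psi_{\bT_{(r,j)}}^{\blambda_{(r,j)}}$.

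The only delicate point — and the nearest thing to an obstacle — is that both $\Si_m$ and $\B$ are noncommutative, so the order of the factors is essential: each successive $T_{(r,j)}^{(s,t)}$ (resp. $\underline{T}_{(r,j)}^{(s,t)}$) must be appended on the right, and the product in the statement must be read in increasing $\succ$-order of $m_{(s,t)}$ so that it matches the right-to-left unrolling of the predecessor relations. Since the recursion $H^{(s,t)}=H^{(u,v)}T^{(s,t)}$ already places the new factor on the right, with $(u,v)$ precisely the immediate $\succ$-predecessor of $(s,t)$, this is exactly the ordering that makes the telescoping close up; no rearrangement of factors is needed. I would also note in passing that the upper bound $m_{(r,t)}$ displayed in the two products is a typographical slip for $m_{(r,j)}$.
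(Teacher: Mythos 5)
Your proof is correct and matches the paper's approach exactly: the paper's own proof is simply ``It follows directly from definition \ref{def-H-factors},'' and your induction along the chain $M_{m_{(r,j)}}$, unrolling the predecessor relations $H_{(r,j)}^{(s,t)}=H_{(r,j)}^{(u,v)}T_{(r,j)}^{(s,t)}$ and specializing to $H_{(r,j)}^{(r,j)}=d(\bT_{(r,j)})$, is precisely the detailed version of that one-line argument. You are also right that $m_{(r,t)}$ in the product bounds is a typographical slip for $m_{(r,j)}$.
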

\begin{proof}
  It follows directly from definition \ref{def-H-factors}.
\end{proof}

\begin{example}
If $\een=13,l=4$ $\kappa=(0,4,6,10)$ and $m=29$ and we take $(r,j)=(2,2)$ (see equation \ref{Ex-t-bounded-langle-rangle-4}), then with the help of the algorithm described in section \ref{sec-general-combinatorics} we can see:

\begin{equation*}
   \includegraphics[scale=0.25]{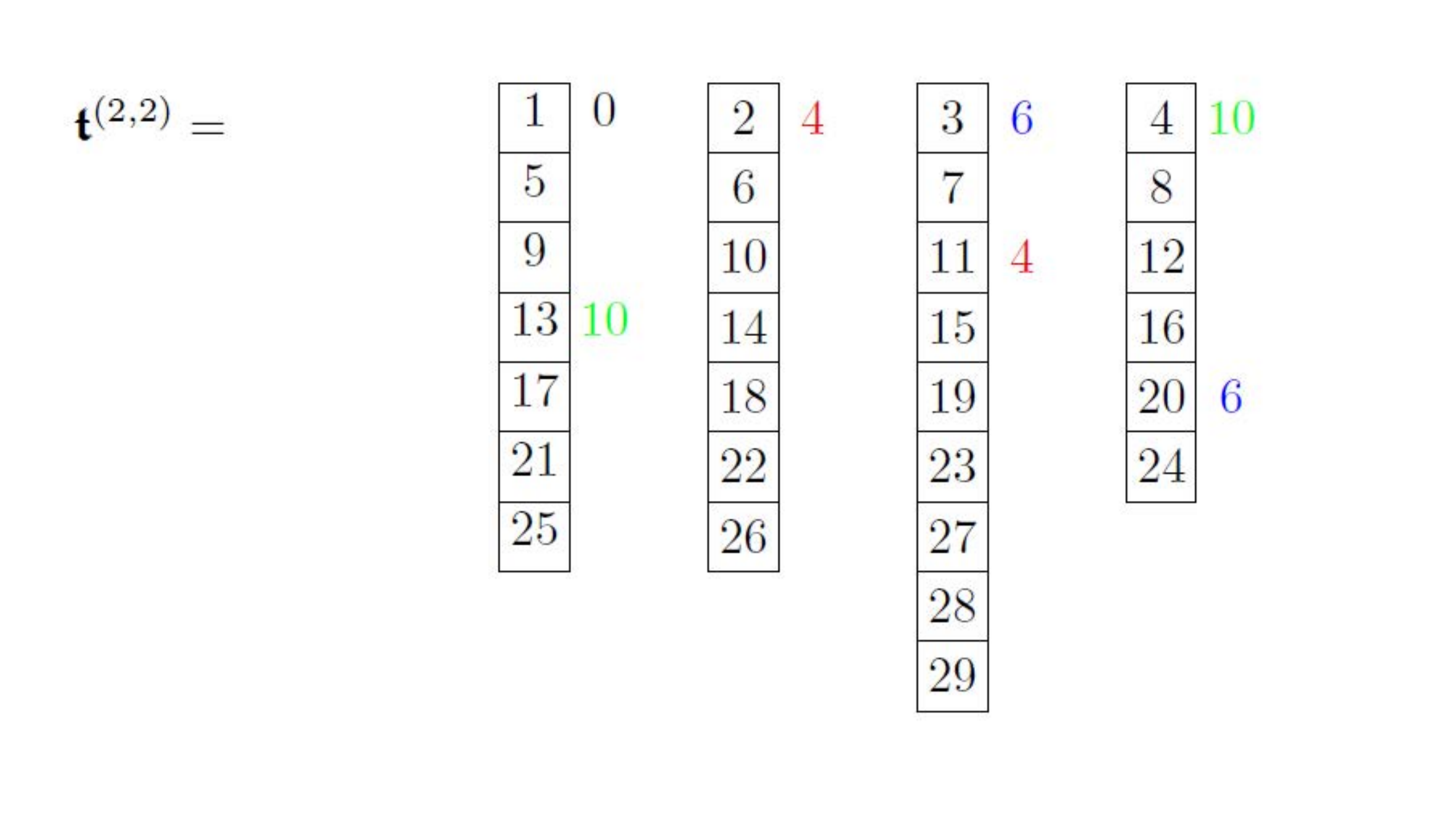}  \includegraphics[scale=0.2]{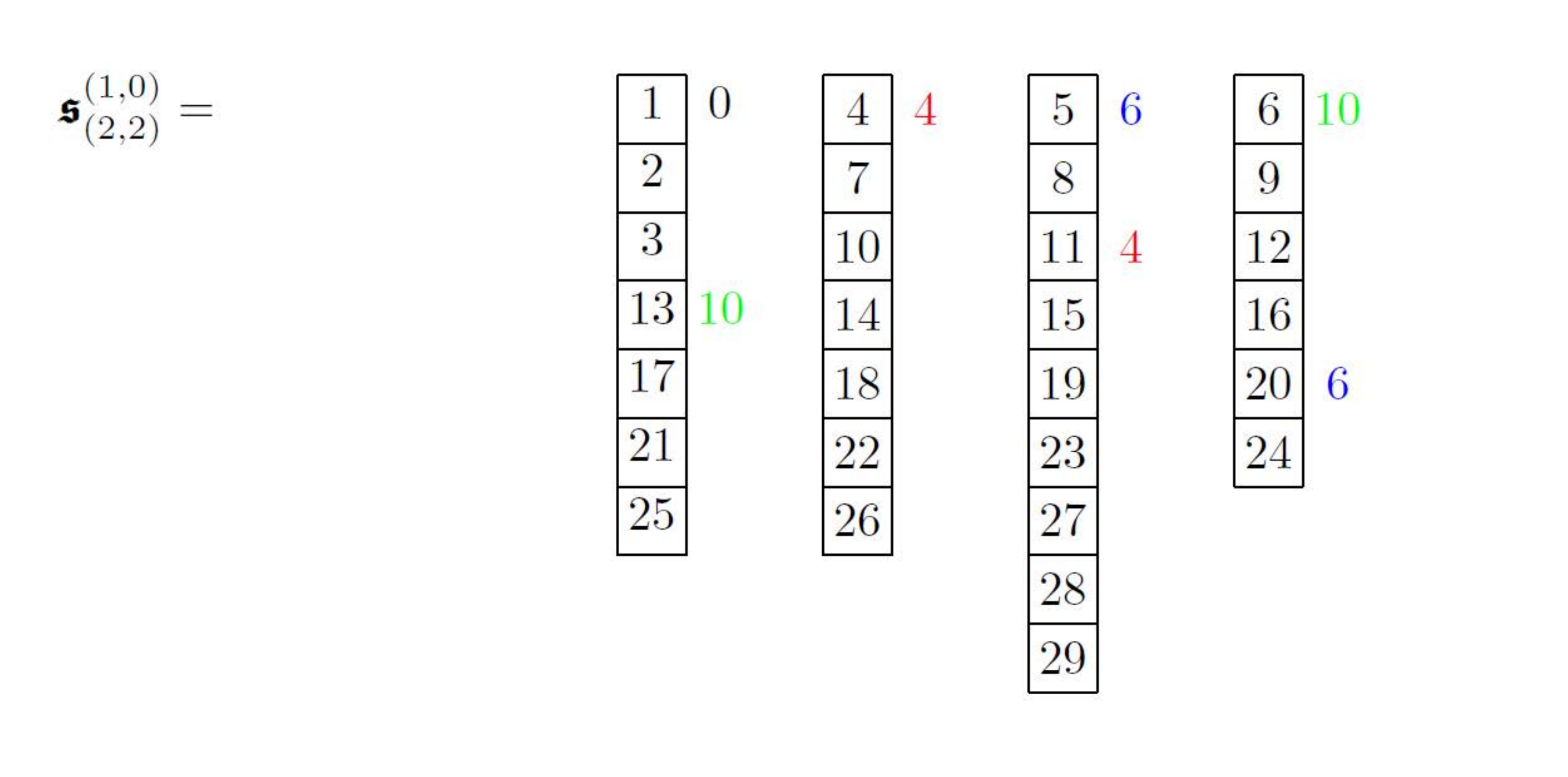}
  \end{equation*}

\begin{equation*}
   \includegraphics[scale=0.2]{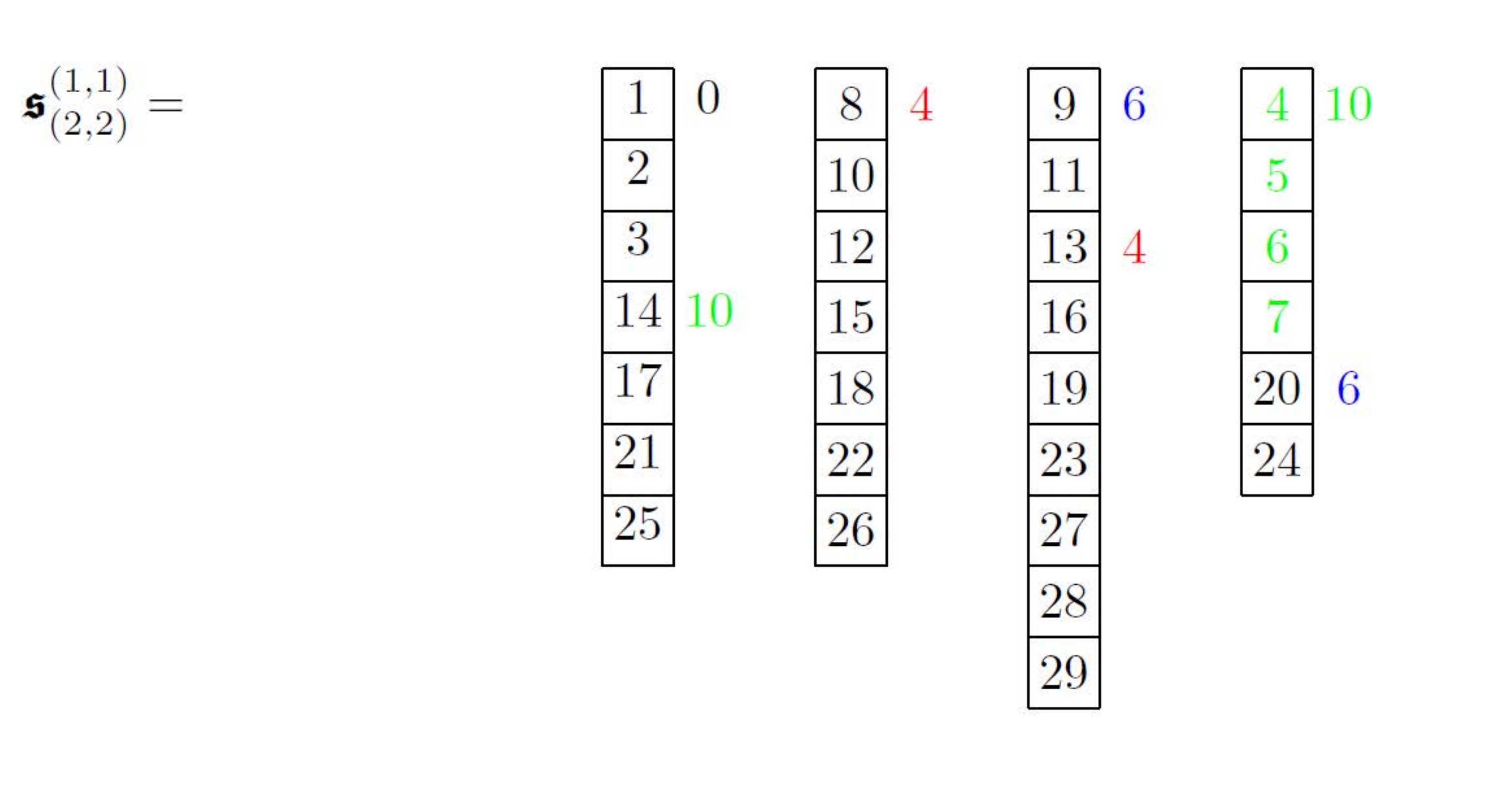} \includegraphics[scale=0.2]{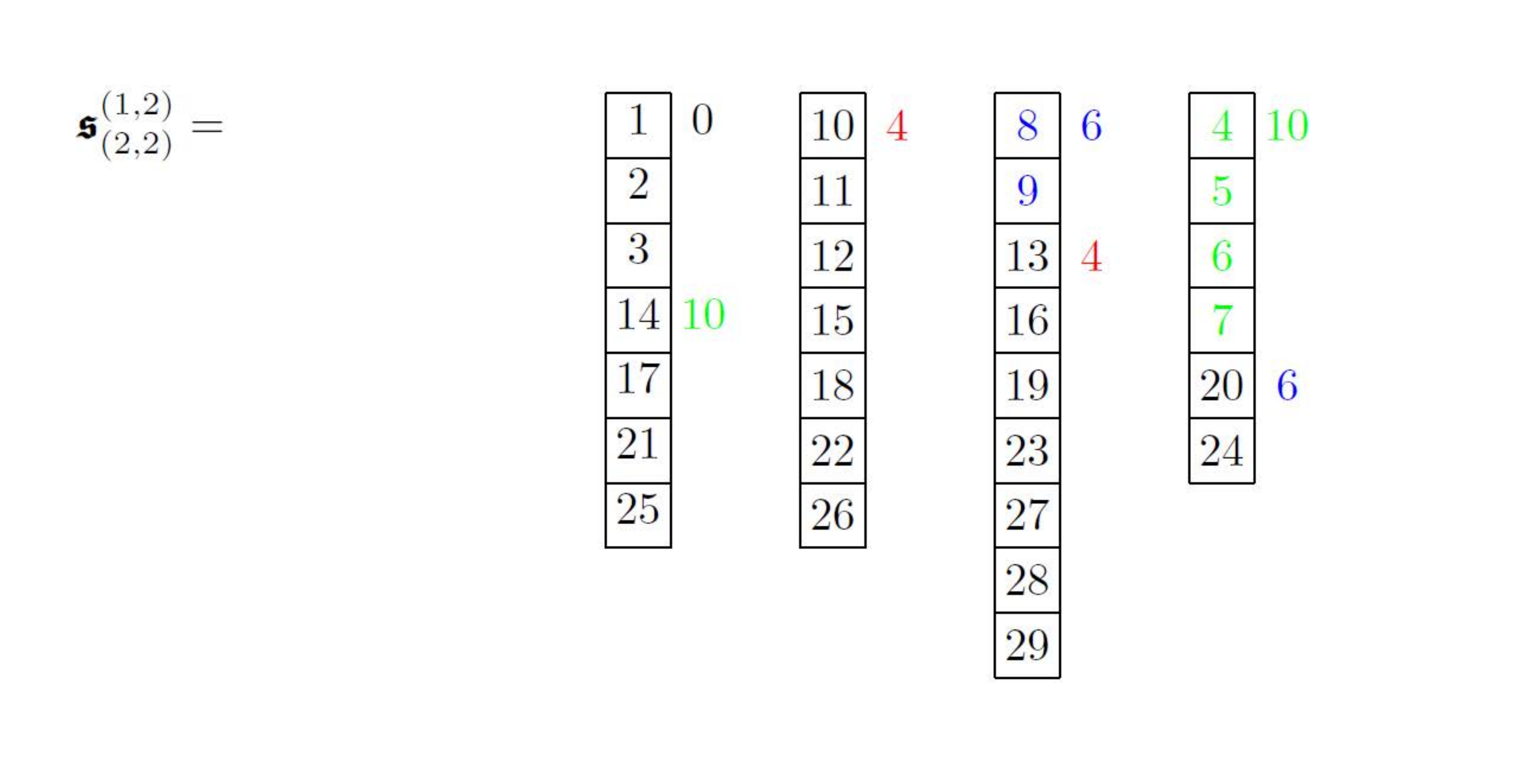}
\end{equation*}

\begin{equation*}
 \includegraphics[scale=0.25]{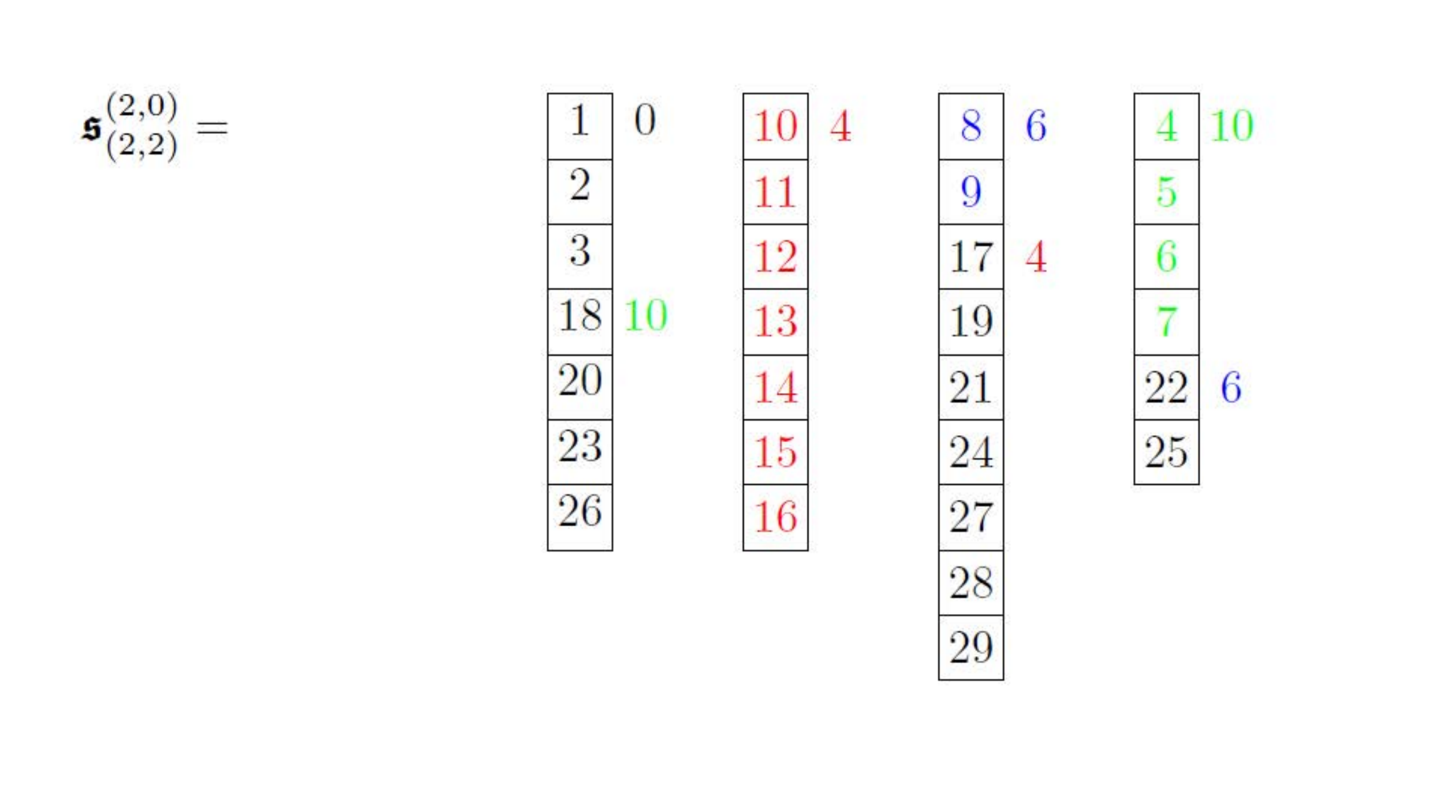} \includegraphics[scale=0.25]{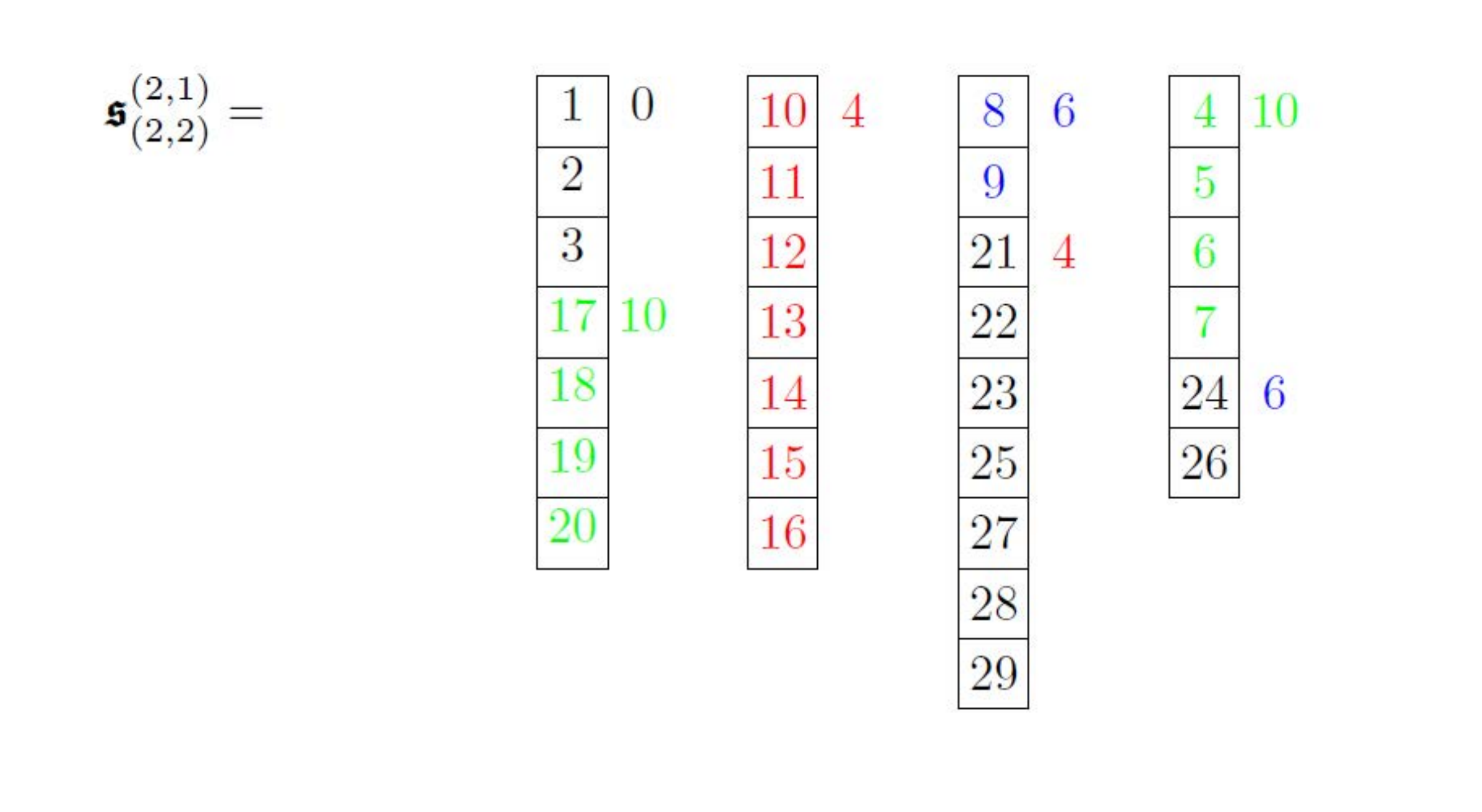}
  \end{equation*}

 \begin{equation*}
 \includegraphics[scale=0.25]{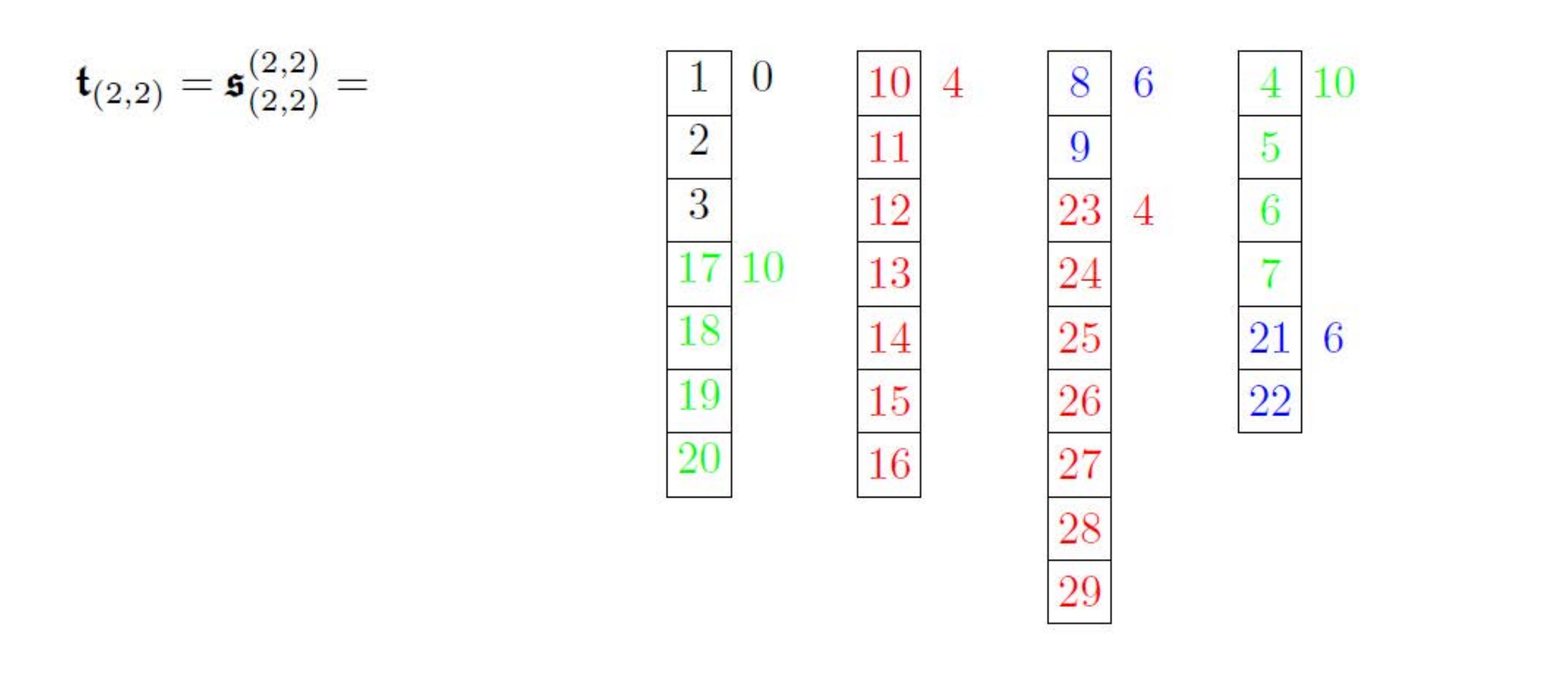}\includegraphics[scale=0.3]{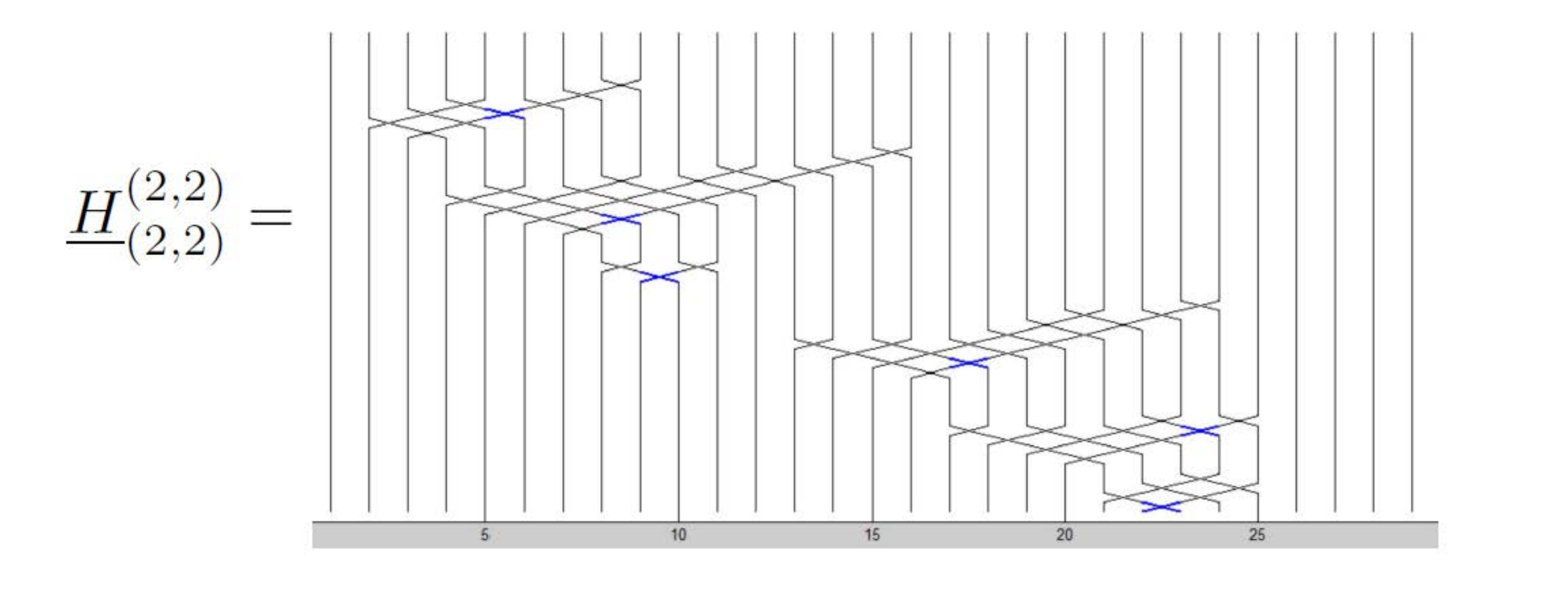}
  \end{equation*}

  We have added the color blue to highlight a cross between two cousins strings (we will use this convention for the rest of this section). One can check that
\begin{equation*}
  \begin{array}{cc}
    H_{(2,2)}^{(1,0)}=s_{[4:-1:2]}s_{[8:-1:3]}, & T_{(2,2)}^{(1,1)}=s_{5}s_{4}s_{[8:-1:5]}s_{[11:-1:6]}s_{[15:-1:7]}, \\
    \quad & \quad\\
    T_{(2,2)}^{(1,2)}=s_{8}s_{10}s_{9}, & T_{(2,2)}^{(2,0)}=s_{13}s_{[16:-1:14]}s_{[20:-1:15]}s_{[24:-1:16]},  \\
    \quad & \quad\\
    T_{(2,2)}^{(2,1)}=s_{17}s_{19}s_{18}s_{[22:-1:19]}s_{[25:-1:20]}, & T_{(2,2)}^{(2,2)}=s_{[23:-1:21]}s_{[25:-1:22]}.
  \end{array}
\end{equation*}
\end{example}

\begin{lemma}\label{lemma-no-sister-in-principal-tableaux}
  In the diagrammatic representation of $\Psi_{\bT_{(r,j)}}^{\blambda_{(r,j)}}$ there are no crosses between sisters strings.
\end{lemma}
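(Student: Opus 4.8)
The plan is to translate ``cross between sisters'' into the combinatorics of the lifting algorithm of Definition \ref{def-Word-bT}, and then to rule out same-residue lifts. Recall that $\Psi_{\bT_{(r,j)}}^{\blambda_{(r,j)}}=e_{\blambda_{(r,j)}}\psi_{j_1}\cdots\psi_{j_r}$, where $W(\bT_{(r,j)})=s_{j_1}\cdots s_{j_r}$ is produced by successive lifts: set $\Bu_0=\bT_{(r,j)}$ and $\Bu_p=\Bu_{p-1}s_{a_p}$ with $a_p=\max\mathbb{L}(\Bu_{p-1})$, until $\Bu_N=\bT^{\blambda_{(r,j)}}$. Reading the KL-diagram of $\Psi_{\bT_{(r,j)}}^{\blambda_{(r,j)}}$ from the bottom row (whose residue sequence is $\bif=\bi^{\bT_{(r,j)}}$) upwards, the cross $\psi_{j_k}$ records exactly the lift $\Bu_{p-1}\mapsto\Bu_p$ with $p=N-k+1$, and the two strings it crosses carry the residues $\res(\Bu_{p-1}(a_p))$ and $\res(\Bu_{p-1}(a_p+1))$. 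Hence the cross is between sisters if and only if the entries $a_p$ and $a_p+1$ occupy nodes of equal residue in $\Bu_{p-1}$. Since the swap $s_{a_p}$ leaves this pair of residues unchanged, the lemma reduces to the claim: for every one-column standard tableau $\Bu$ occurring in the chain $\Bu_0,\dots,\Bu_{N-1}$ and for $a=\max\mathbb{L}(\Bu)$, one has $\res(\Bu(a))\neq\res(\Bu(a+1))$.

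Next I would analyse which swaps are liftable. Writing $\Bu(a)=(x,1,h)$ and $\Bu(a+1)=(y,1,g)$, a short check shows that a swap inside a single component ($h=g$) destroys standardness and is never liftable, so necessarily $h\neq g$; moreover the only restriction in which $\Bu s_a$ and $\Bu$ differ is at level $a$, so $\Bu s_a\rhd\Bu$ is equivalent to the node comparison $(y,1,g)\rhd(x,1,h)$, that is $y<x$, or $y=x$ with $g<h$. The residues are $\res(\Bu(a))=\kappa_h+1-x$ and $\res(\Bu(a+1))=\kappa_g+1-y$, so they coincide exactly when $\kappa_h-\kappa_g\equiv x-y \pmod{\een}$. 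Strong adjacency-freeness disposes of the two closest cases at once: if $y=x$ it would force $\kappa_h\equiv\kappa_g$, and if $y=x-1$ it would force $\kappa_h-\kappa_g\equiv 1$, both excluded by Definition \ref{def-new-strong-adj-free}(ii).

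The remaining range $y\leq x-2$ is the heart of the matter, and I expect it to be the main obstacle. Here the gap $x-y$ can be large (it is governed by the block sizes $b_j$), so adjacency-freeness alone no longer suffices and the maximality of $a$ must be used. The plan is to argue by contradiction: a maximal lift exchanging two entries of a common residue $i$ pins down, around positions $a$ and $a+1$ in $\Bu$, a rigid local configuration (for instance, each of the two $i$-nodes sits immediately below a node of residue $i+1$, while the intervening entries are frozen because no larger index is liftable) whose restricted residue sequence reproduces one of the forbidden patterns of Corollaries \ref{coro-bif-j-not-possible}, \ref{coro-bif-j-not-possible2} and \ref{coro-bif-j-not-possible3}. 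Since every $\Bu$ in the lift chain is standard, its residue sequence is $\kappa$-blob possible, contradicting the $\kappa$-blob impossibility supplied by those corollaries. I would organise this step according to the type $t$ of the block whose highest node has residue $i=\kappa_{l-t}$, the decisive point being to exclude that the ``moving'' string of residue $\kappa_{l-t}$ ever reaches the top of another block of the same type.

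As an alternative, more computational route for this last step, one can use the explicit factorisation of Lemma \ref{lemma-factor-Psit-in-triangles}: each factor $\underline{T}_{(r,j)}^{(s,t)}$ is a product of cascades $\psi_b\psi_{b-1}\cdots\psi_c$, and such a cascade drags a single string of residue $\kappa_{l-t}$ leftwards past exactly one block of nodes. By Lemma \ref{lemma-tableux-blocks} and Corollary \ref{coro-tableaux-blocks-bifurcation} the residues inside that block form a descending run confined to the residues lying strictly between two consecutive charges, which by strong adjacency-freeness does not contain $\kappa_{l-t}$ unless it is the block's own top; the ordering $\succ$ of blocks then guarantees that the dragged string is inserted above, and never crosses, a string of residue $\kappa_{l-t}$. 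Either way, no cross between sisters can occur.
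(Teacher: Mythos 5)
Your reduction of the lemma to the statement that no maximal lift in the chain from $\bT_{(r,j)}$ up to $\bT^{(r,j)}$ exchanges two entries sitting on sister nodes is sound, and your adjacency-freeness computation correctly disposes of the cases $y=x$ and $y=x-1$. But the case you yourself flag as the heart of the matter, $y\leq x-2$, is exactly where the proof is missing, and neither of your two sketches closes it. The first sketch (a contradiction via Corollaries \ref{coro-bif-j-not-possible}, \ref{coro-bif-j-not-possible2} and \ref{coro-bif-j-not-possible3}) cannot work as described, because it nowhere invokes the defining property of $\bT_{(r,j)}$, namely that at every branching point of Definition \ref{def-notation-tableaux-blocks} the \emph{highest} admissible node is chosen. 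That property is indispensable: for other choice sets the conclusion of the lemma is simply false. For instance in $\bT\langle\{m_{(2,0)}\}\rangle$ the entry $m_{(2,0)}$ occupies $(1,1,l)$, which is strictly $\rhd$-above the node $(\epsilon+1,1,1)$ of its sister entry $m_{(1,0)}$, so the corresponding pair of strings of residue $\kappa_l$ is inverted by $d(\bT\langle\{m_{(2,0)}\}\rangle)$ and \emph{does} cross; yet every intermediate tableau in its lift chain is standard, hence carries a $\kappa$-blob possible residue sequence. So standardness plus blob-(im)possibility alone can never produce the contradiction you are after; any argument must use maximality of the choices. Your second, ``computational'' sketch does point at the right structure (it is essentially the factorisation of Lemma \ref{lemma-factor-Psit-in-triangles}), but its decisive sentence --- that the order $\succ$ guarantees the dragged string ``is inserted above, and never crosses, a string of residue $\kappa_{l-t}$'' --- is the lemma itself restated and asserted without proof; moreover a single cascade may pass strings belonging to $N$ and to several blocks, so ``past exactly one block'' is not accurate in general.

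For comparison, the paper avoids your case analysis entirely by a short global monotonicity argument: in the lifting algorithm every applied reflection $s_a$ moves the entry $a$ to a strictly $\rhd$-lower node, and by Lemma \ref{lemma-factor-Psit-in-triangles} the smaller entries are brought to their final positions before the larger ones; since for sister entries $a<b$ one has both $\bT^{(r,j)}(a)\rhd\bT^{(r,j)}(b)$ and, by the maximal-choice construction, $\bT_{(r,j)}(a)\rhd\bT_{(r,j)}(b)$, the relative order of the pair is preserved along the whole chain, so the two strings never cross. Your local analysis could be completed by exactly this observation, but as written the essential case rests on an expectation rather than an argument, so the proposal has a genuine gap.
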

\begin{proof}
  Following the algorithm described in section \ref{sec-general-combinatorics}, in each step of the transformation from $\bT^{(r,j)}$ to $\bT_{(r,j)},$ when we apply a simple reflection $s_a$ we always move the entry $a$ from a node to a lower node (with respect to the order $\rhd.$)

  By lemma \ref{lemma-factor-Psit-in-triangles} and by definition \ref{def-H-factors}, we can make this process of transformation progressively locating first the lower entries in their final position and then working with the greater entries.

  Let $1\leq a<b\leq m$ two entries corresponding with two sister nodes in   $\bT_{(r,j)}.$ By definition of $\bT^{(r,j)}$ and $\bT_{(r,j)}$ we have
  \begin{equation*}
   \bT^{(r,j)}(a)\rhd \bT^{(r,j)}(b) \quad \textrm{and}\quad \bT_{(r,j)}(a)\rhd \bT_{(r,j)}(b),
  \end{equation*}
  this implies that in the whole process, the entry $a$ will never be in a lower node than the entry $b.$

  Finally this implies that in the diagrammatic representation of $\Psi_{\bT_{(r,j)}}^{\blambda_{(r,j)}}$ the strings corresponding with $a$ and $b$ will not cross one each other.

\end{proof}

\begin{lemma}\label{lemma-HvsH-reduction}
  Let $1\leq r \leq k,$ $0\leq j\leq l-2.$
  Then
  \begin{equation*}
    \Psi_{\bT_{(r,j)},\bT_{(r,j)}}^{\blambda_{(r,j)}}=e(\bif)\left(\prod_{m_{(s,t)}\leq m_{(r,j)}}{L_{m_{(s,t)}}}\right).
  \end{equation*}
\end{lemma}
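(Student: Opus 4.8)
The plan is to compute the diagonal element $\Psi_{\bT_{(r,j)},\bT_{(r,j)}}^{\blambda_{(r,j)}}=(\Psi_{\bT_{(r,j)}}^{\blambda_{(r,j)}})^{\ast}\,\Psi_{\bT_{(r,j)}}^{\blambda_{(r,j)}}$ diagrammatically, as the vertical concatenation of the diagram $D=\Psi_{\bT_{(r,j)}}^{\blambda_{(r,j)}}$ with its mirror image $D^{\ast}$. Since this is a diagonal cellular element, both its top and bottom residue sequences are $\bif$, so it lies in $\B(\bif)$ and is a legitimate candidate for $e(\bif)\prod L$. Writing $\boldsymbol{d}(\bT_{(r,j)})=s_{j_1}\cdots s_{j_N}$ and using $\Psi_{\bT_{(r,j)}}^{\blambda_{(r,j)}}=e_{\blambda_{(r,j)}}\psi_{j_1}\cdots\psi_{j_N}$, the element becomes the palindromic word $\psi_{j_N}\cdots\psi_{j_1}\,e_{\blambda_{(r,j)}}\,\psi_{j_1}\cdots\psi_{j_N}$, symmetric about its centre $\psi_{j_1}e_{\blambda_{(r,j)}}\psi_{j_1}$. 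The engine of the proof is relation (\ref{lazo}): a central bigon $\psi_a^2 e(\bi)$ straightens to $e(\bi)$ when the two strands are not relatives ($\beta=1,\gamma=0$), vanishes when they are sisters ($\beta=\gamma=0$), and collapses to a difference of dots when they are cousins ($\beta=0$). By Lemma \ref{lemma-no-sister-in-principal-tableaux} the sister case never occurs among the crossings of $D$.

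First I would run an induction on $N=\ell(d(\bT_{(r,j)}))$ that peels the innermost crossing. At each stage the centre of the current palindrome is a bigon $\psi_a\,e(\bi)\,\psi_a$ on two adjacent strands. If these are not relatives, (\ref{lazo}) replaces the bigon by an idempotent and shortens the palindrome to the one coming from $s_{j_2}\cdots s_{j_N}$, so the induction proceeds. If they are cousins, (\ref{lazo}) produces $\pm(y_a-y_{a+1})$ times the shortened diagram; I would then push these dots out to the boundary past the surrounding crossings using (\ref{punto-arriba}) and (\ref{punto-abajo}). Since $D$ has no crossing between sisters, the Kronecker terms $\delta_{ij}$ in those relations all vanish, so the dots slide out freely, with no correction terms, to the boundary where the residues read $\bif$. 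After all $N$ steps no crossing survives, leaving $e(\bif)$ multiplied by one dot-difference factor for every cousin bigon met along the way.

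It then remains to identify these factors. By Lemma \ref{lemma-factor-Psit-in-triangles} the word $\boldsymbol{d}(\bT_{(r,j)})$ splits into one block of crossings per bifurcation $m_{(s,t)}\le m_{(r,j)}$, and I would show that each block contributes exactly one cousin crossing. Indeed, at the bifurcation $m_{(s,t)}$ the strand being moved carries residue $\kappa_{l-t}$ (Lemma \ref{lemma-residues-mrj}), and by Corollary \ref{coro-tableaux-blocks-bifurcation} together with the strong adjacency-freeness of $\boldsymbol{\kappa}$ it meets a cousin only at the adjacent position $m_{(s,t)}-1$, whose residue is $\kappa_{l-t}+1$; all other crossings in the block are between non-relatives. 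The surviving bigon thus has left residue $\kappa_{l-t}+1$ and right residue $\kappa_{l-t}$, so in (\ref{lazo}) one has $\gamma=-1$ and $\beta=0$ and the factor produced is exactly $+(y_{m_{(s,t)}-1}-y_{m_{(s,t)}})=+L_{m_{(s,t)}}$, with no stray sign. Because the $L_{m_{(s,t)}}$ are polynomials in the $y$'s they commute with one another, and collecting one factor per bifurcation yields $e(\bif)\prod_{m_{(s,t)}\le m_{(r,j)}}L_{m_{(s,t)}}$.

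The hard part will be the combinatorial control of this reduction: I must confirm that, block by block, precisely one cousin bigon survives while every remaining bigon is genuinely between non-relatives, so that no unexpected cousin or sister crossing appears and the total number of dot factors equals the number of bifurcations. This is exactly where the strong adjacency-freeness of $\boldsymbol{\kappa}$ and the impossible-residue results (Corollaries \ref{coro-bif-j-not-possible}, \ref{coro-bif-j-not-possible2} and \ref{coro-bif-j-not-possible3}) will be used, ruling out the residue coincidences that would corrupt the count. A secondary point to verify is that each extracted factor is anchored at the positions $m_{(s,t)}-1,m_{(s,t)}$ and reaches its final slot without ever sliding a dot through a crossing of equal residues; Lemma \ref{lemma-no-sister-in-principal-tableaux} is what guarantees this last step.
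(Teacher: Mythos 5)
Your proposal is correct and follows essentially the same route as the paper: both resolve the palindromic product $(\Psi_{\bT_{(r,j)}}^{\blambda_{(r,j)}})^{\ast}\Psi_{\bT_{(r,j)}}^{\blambda_{(r,j)}}$ from the inside out via relation \ref{lazo}, exclude sister crossings by Lemma \ref{lemma-no-sister-in-principal-tableaux}, isolate exactly one cousin bigon per bifurcation $m_{(s,t)}$ using the factorization of Lemma \ref{lemma-factor-Psit-in-triangles}, and transport the resulting dot-differences through the remaining crossings with \ref{punto-arriba} and \ref{punto-abajo} so that each lands as $L_{m_{(s,t)}}$ anchored at positions $m_{(s,t)}-1,m_{(s,t)}$. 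The only cosmetic difference is packaging: the paper runs the induction along the intermediate elements $\underline{H}_{(r,j)}^{(s,t)}$, repositioning each freshly created factor $L_a$ by conjugation through the next triangle $\underline{T}_{(r,j)}^{(s,t)}$, while you peel crossings one at a time and push dots directly to the boundary (and your appeal to the blob-impossible corollaries is not actually needed in this particular proof).
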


\begin{proof}
 Let $\bi=\bi^{\bT^{(r,j)}}.$

 We first prove the following assertion:

 If $m_{(s,t)},m_{(u,v)}\in M_{m_{(r,j)}}$ are such that $m_{(1,0)}\leq m_{(u,v)}< m_{(s,t)}$ and $m_{(u,v)}$ is the antecessor of $m_{(s,t)}$ in $M_{m_{(r,j)}}.$
  Then
  \begin{equation*}
    \left(\underline{H}_{(r,j)}^{(s,t)}\right)^{\ast}\underline{H}_{(r,j)}^{(s,t)}
    =\left(\underline{T}_{(r,j)}^{(s,t)}\right)^{\ast}e(\bj)\underline{T}_{(r,j)}^{(s,t)}\left(\prod_{m_{(w,x)}\leq m_{(u,v)}} L_{m_{(w,x)}}\right).
  \end{equation*}
  where $\bj=H_{(r,j)}^{(u,v)}\cdot \bi.$

 We proceed by induction on the numbers $m_{(s,t)}\in M_{m_{(r,j)}}:$

  Let $(s,t)=(1,1).$ Let $$\Bs=\bT^{(r,j)}H_{(r,j)}^{(1,1)}=\bT^{(r,j)}H_{(r,j)}^{(1,0)}T_{(r,j)}^{(1,1)}.$$

  By definition of $H_{(r,j)}^{(1,0)},$ in some point of the process of transformation from $\bT^{(r,j)}$ to $\bT^{(r,j)}H_{(r,j)}^{(1,0)},$ it will be necessary to apply once a simple reflection between the entries of the cousin nodes $\beta=(\epsilon,1,1)$ and $\gamma=(1,1,l)$ (note that $\res(\beta)=\res(\gamma)-1=\kappa_l-1$). The rest of the permutations involved in this process will be between not related nodes (lemma \ref{lemma-no-sister-in-principal-tableaux}). This implies in the context of the generalized blob algebra that, by relation \ref{lazo}, we will have:
  \begin{equation*}
    \left(\underline{H}_{(r,j)}^{(1,0)}\right)^{\ast}\underline{H}_{(r,j)}^{(1,0)}=e(\bj)L_a
  \end{equation*}
  for an adequate $a\in\{1,\dots,m\}$ and where $\bj=H_{(r,j)}^{(1,0)}\cdot\bi.$

  Therefore, we have

  \begin{equation*}
    \left(\underline{H}_{(r,j)}^{(1,1)}\right)^{\ast}\underline{H}_{(r,j)}^{(1,1)}
    =\left(\underline{T}_{(r,j)}^{(1,1)}\right)^{\ast}e(\bj)L_a\underline{T}_{(r,j)}^{(1,1)}
  \end{equation*}

  By definition of $H_{(r,j)}^{(1,1)},$ we have that $\Bs^{-1}(\gamma)=m_{(1,0)}$ and $\Bs^{-1}(\beta)=m_{(1,0)}-1.$ This implies that
  \begin{equation*}
    T_{(r,j)}^{(1,1)}(a-1)=m_{(1,0)}-1,\quad \textrm{and}\quad T_{(r,j)}^{(1,1)}(a)=m_{(1,0)}.
  \end{equation*}

  Therefore, by relations \ref{punto-arriba},\ref{punto-abajo} and \ref{cruce-pasa}, we have:

  \begin{equation*}
    \left(\underline{H}_{(r,j)}^{(1,1)}\right)^{\ast}\underline{H}_{(r,j)}^{(1,1)}
    =\left(\underline{T}_{(r,j)}^{(1,0)}\right)^{\ast}e(\bj)\underline{T}_{(r,j)}^{(1,0)}L_{m_{(1,0)}}.
  \end{equation*}

  This is the base of our induction.

  Let's assume now that $m_{(s,t)}>m_{(1,1)}.$

    Let $$\Bs=\bT^{(r,j)}H_{(r,j)}^{(s,t)}=\bT^{(r,j)}H_{(r,j)}^{(u,v)}T_{(r,j)}^{(s,t)}$$
  By definition \ref{def-H-factors} and inductive hypothesis we have:
  \begin{equation*}
     \left(\underline{H}_{(r,j)}^{(s,t)}\right)^{\ast}\underline{H}_{(r,j)}^{(s,t)}
    =\left(\underline{T}_{(r,j)}^{(s,t)}\right)^{\ast}\left(\underline{T}_{(r,j)}^{(u,v)}\right)^{\ast}
    e(\boldsymbol{h})\underline{T}_{(r,j)}^{(u,v)}\left(\prod_{m_{(w,x)}<m_{(u,v)}}{L_{m_{(w,x)}}}\right)\underline{T}_{(r,j)}^{(s,t)}.
  \end{equation*}
  for an appropriate residue sequence $\boldsymbol{h}.$

  Let $m_{(u',v')}$ is the antecessor of $m_{(u,v)}$ in $M_{m_{(r,j)}}.$

  By definition \ref{def-H-factors} we can see that $T_{(r,j)}^{(s,t)}(a)=a$ for each $a\leq m_{(u',v')}.$  Therefore we have

  \begin{equation*}
     \left(\underline{H}_{(r,j)}^{(s,t)}\right)^{\ast}\underline{H}_{(r,j)}^{(s,t)}
    =\left(\underline{T}_{(r,j)}^{(s,t)}\right)^{\ast}\left(\underline{T}_{(r,j)}^{(u,v)}\right)^{\ast}
    e(\boldsymbol{h})\underline{T}_{(r,j)}^{(u,v)}\underline{T}_{(r,j)}^{(s,t)}\left(\prod_{m_{(w,x)}<m_{(u,v)}}{L_{m_{(w,x)}}}\right).
  \end{equation*}

  Lets we denote by $B^{(v)}$ and $B^{(v')}$ the blocks of nodes in $[\blambda_{(r,j)}]$ corresponding with $\bT_{(r,j)}(B_{(u,v)})$ and $\bT_{(r,j)}(B_{(u',v')})$ respectively. Let $\beta$ the lowest node in $B^{(v')}$ and $\gamma$ the highest node in $B^{(v)}.$ Note that those nodes are cousins.

  By definition \ref{def-H-factors} one can see that in some point of the process of transforming $\bT^{(r,j)}H_{(r,j)}^{(u',v')}$ into $\bT^{(r,j)}H_{(r,j)}^{(u,v)}$ we will be forced to permute once the entries of $\beta$ and $\gamma.$   The rest of the permutations involved in this process will be between not related nodes (lemma \ref{lemma-no-sister-in-principal-tableaux}). This implies that

  \begin{equation*}
     \left(\underline{T}_{(r,j)}^{(u,v)}\right)^{\ast}
    e(\boldsymbol{h})\underline{T}_{(r,j)}^{(u,v)}=e(\boldsymbol{k})L_{a}.
  \end{equation*}
  where $\boldsymbol{k}=T_{(r,j)}^{(u,v)}\cdot\boldsymbol{h}$ and $a$ is certain number greater that $m_{(u',v')}.$

  Now by definition of $H_{(r,j)}^{(s,t)}$ we have that $\Bs^{-1}(\beta)=m_{(u,v)}-1$ and $\Bs^{-1}(\gamma)=m_{(u,v)}.$ This implies that

  \begin{equation*}
     \left(\underline{T}_{(r,j)}^{(s,t)}\right)^{\ast}e(\boldsymbol{k})L_{a}\underline{T}_{(r,j)}^{(s,t)}
     =\left(\underline{T}_{(r,j)}^{(s,t)}\right)^{\ast}e(\boldsymbol{k})\underline{T}_{(r,j)}^{(s,t)}L_{m_{(u,v)}}.
  \end{equation*}
  and therefore
  \begin{equation*}
     \left(\underline{H}_{(r,j)}^{(s,t)}\right)^{\ast}\underline{H}_{(r,j)}^{(s,t)}
    =\left(\underline{T}_{(r,j)}^{(s,t)}\right)^{\ast}
    e(\boldsymbol{k})\underline{T}_{(r,j)}^{(s,t)}\left(\prod_{m_{(w,x)}\leq m_{(u,v)}}{L_{(w,x)}}\right).
  \end{equation*}

Particularly we have:
  \begin{equation*}
    \Psi_{\bT_{(r,j)},\bT_{(r,j)}}^{\blambda_{(r,j)}}=
     \left(\underline{H}_{(r,j)}^{(r,j)}\right)^{\ast}\underline{H}_{(r,j)}^{(r,j)}
    =\left(\underline{T}_{(r,j)}^{(r,j)}\right)^{\ast}
    e(\boldsymbol{h})\underline{T}_{(r,j)}^{(r,j)}\left(\prod_{m_{(s,t)}< m_{(r,j)}}{L_{(s,t)}}\right).
  \end{equation*}

and under the same arguments as above, we can see that:

\begin{equation*}
  \left(\underline{T}_{(r,j)}^{(r,j)}\right)^{\ast}
    e(\boldsymbol{h})\underline{T}_{(r,j)}^{(r,j)}\left(\prod_{m_{(w,x)}< m_{(r,j)}}{L_{(w,x)}}\right)=
    e(\bif)\left(\prod_{m_{(s,t)}\leq m_{(r,j)}}{L_{m_{(s,t)}}}\right).
\end{equation*}
as desired.
\end{proof}

\begin{example}

If $\een=13,l=4$ $\kappa=(0,4,6,10)$ and $m=29$ and we take $(r,j)=(2,2),$ then
  \begin{equation*}\label{im-ex-HvsH-factors1}
    \Psi_{\bT_{(2,2)},\bT_{(2,2)}}^{\blambda_{(2,2)}}
    =\raisebox{-.6\height}{\includegraphics[scale=0.17]{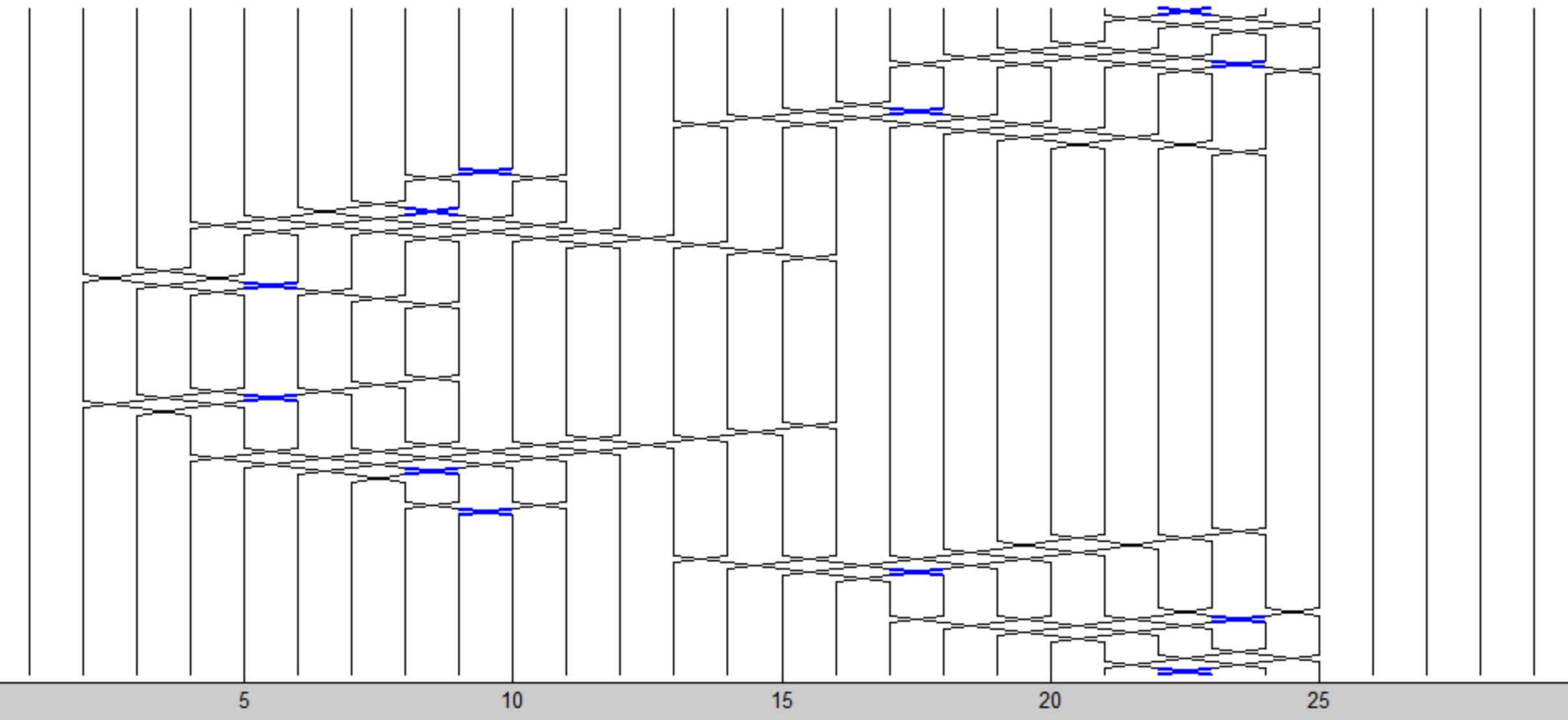}}
  \end{equation*}

If we follow the proof of lemma \ref{lemma-HvsH-reduction}, applying relations \ref{punto-arriba},\ref{punto-abajo}, \ref{cruce-pasa} and \ref{lazo}, we can see that

  \begin{equation*}
    \left(\underline{H}_{(2,2)}^{(1,0)}\right)^{\ast}\underline{H}_{(2,2)}^{(1,0)}
    =\raisebox{-.6\height}{\includegraphics[scale=0.08]{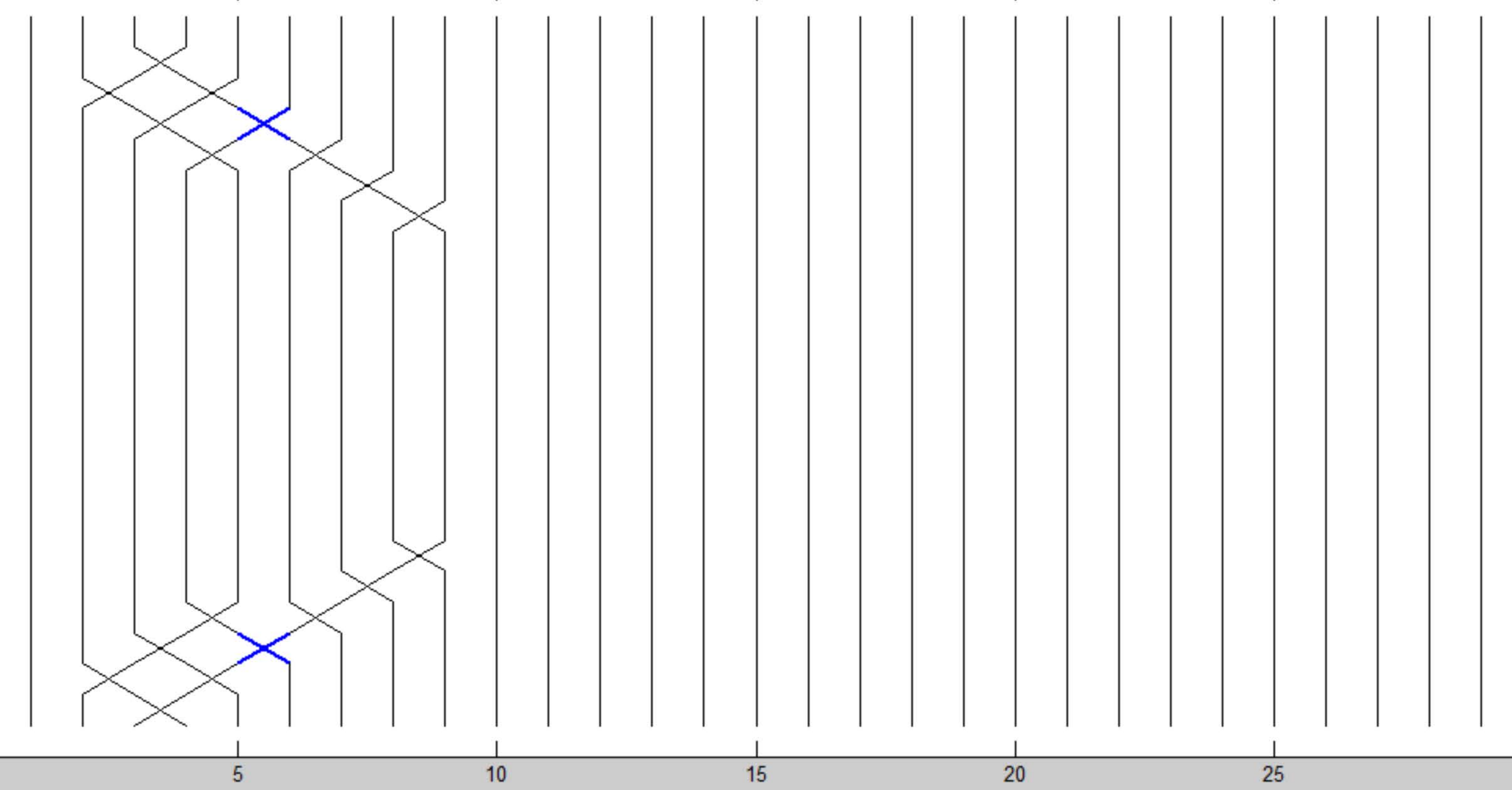}}=
    \raisebox{-.6\height}{\includegraphics[scale=0.08]{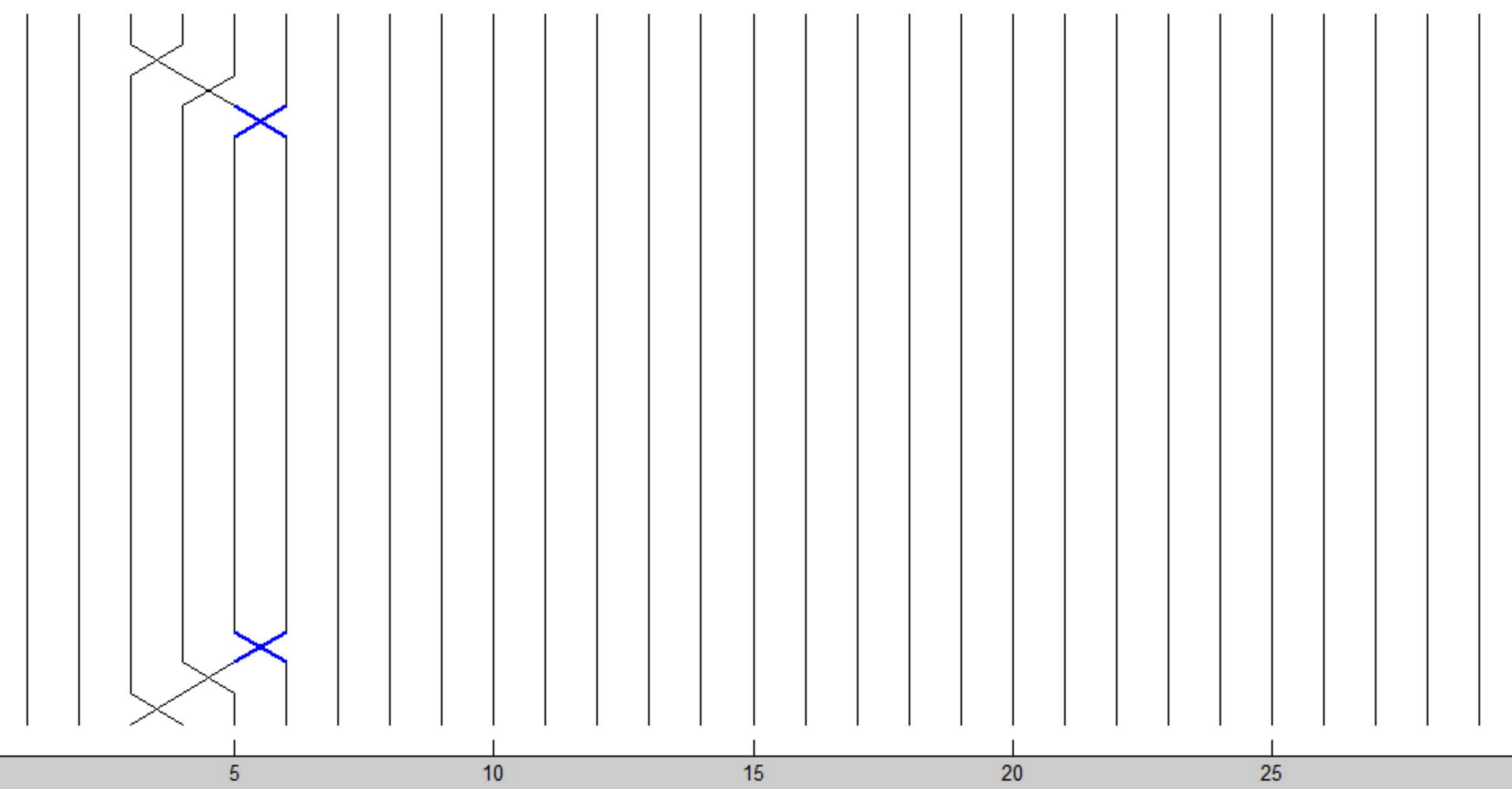}}
  \end{equation*}

The next step is:
  \begin{equation*}
    \left(\underline{H}_{(2,2)}^{(1,1)}\right)^{\ast}\underline{H}_{(2,2)}^{(1,1)}
    =\raisebox{-.6\height}{\includegraphics[scale=0.08]{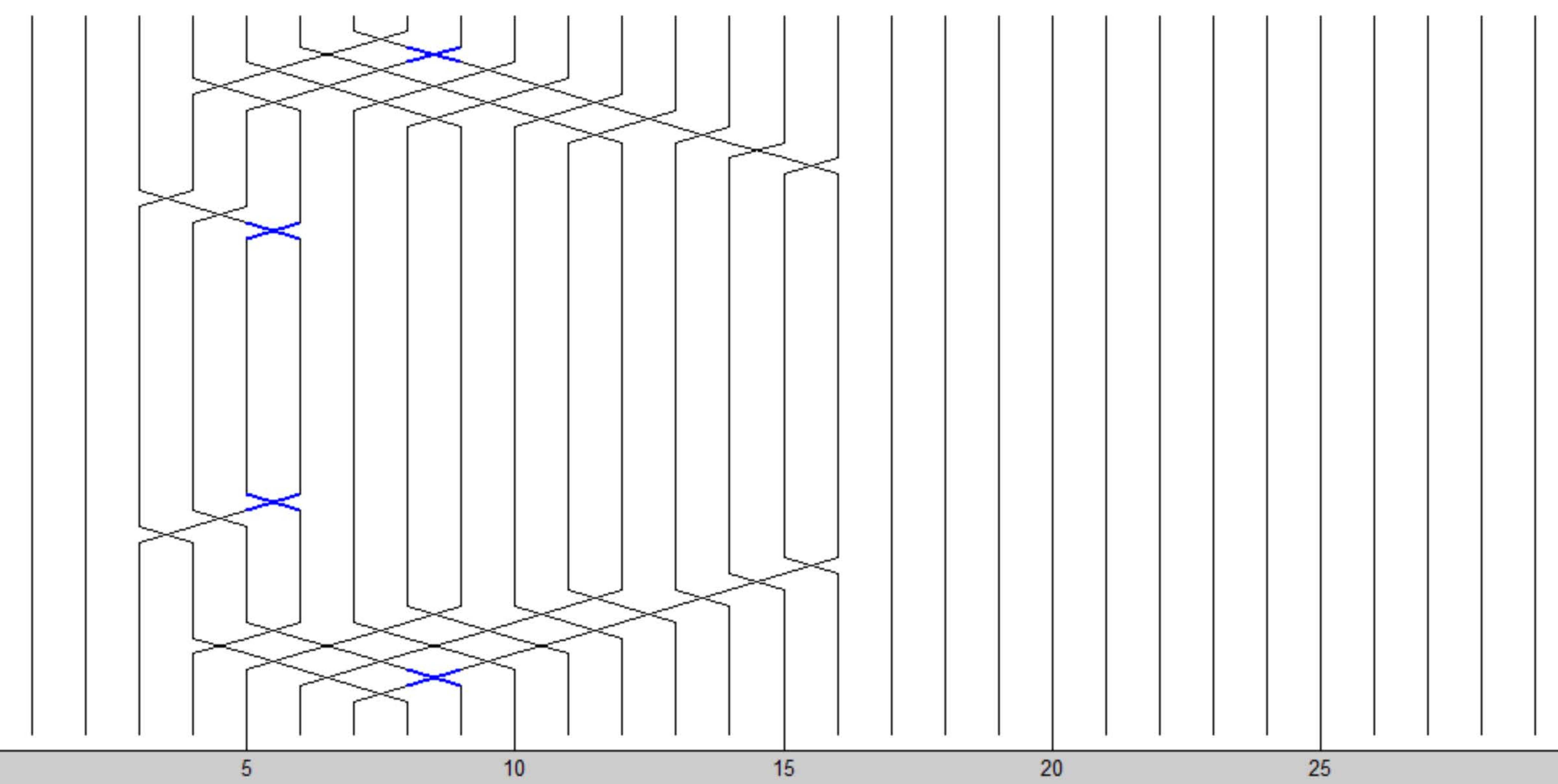}}
    =\raisebox{-.6\height}{\includegraphics[scale=0.08]{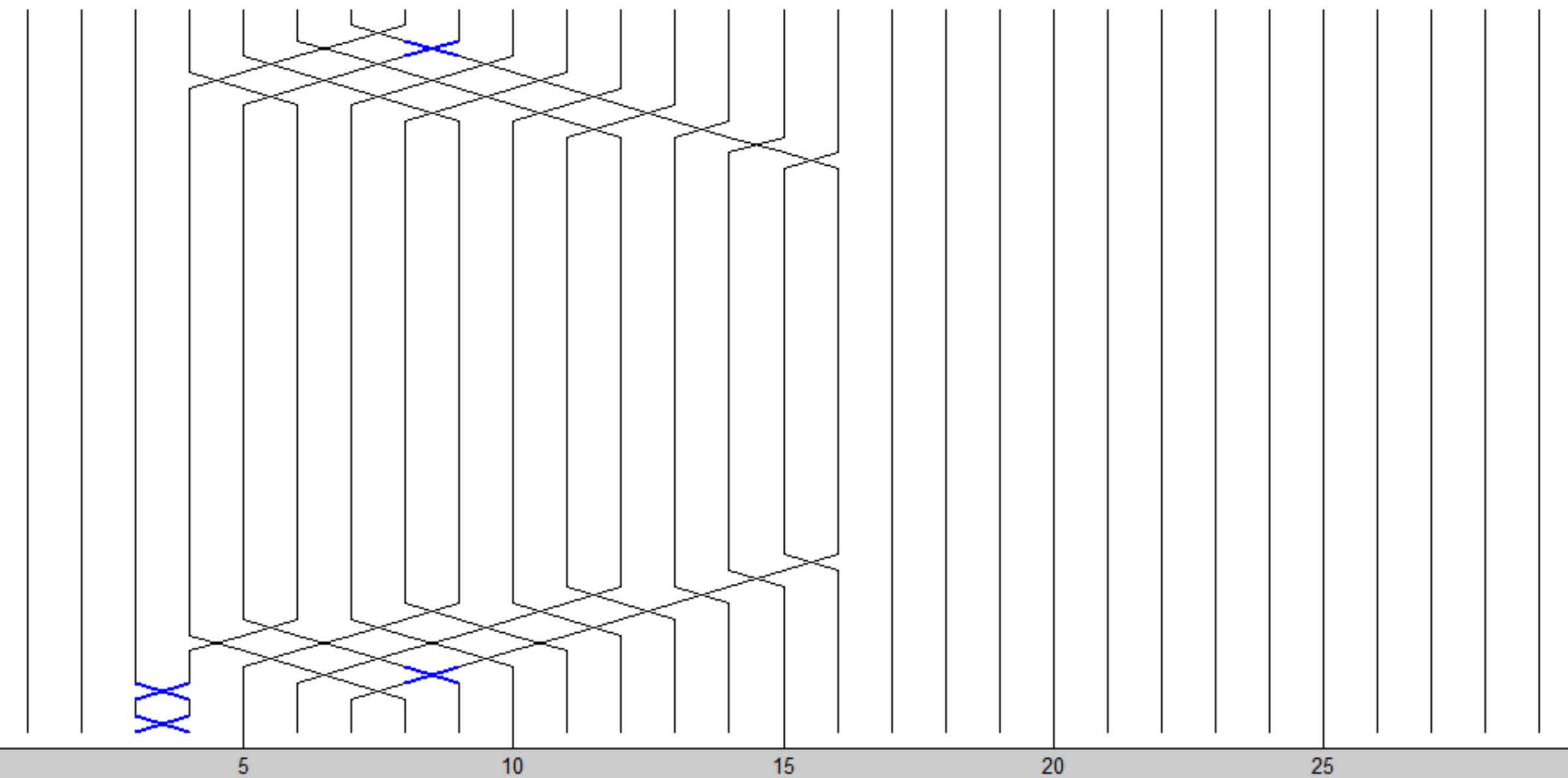}}
  \end{equation*}
then
\begin{equation*}
  \left(\underline{H}_{(2,2)}^{(1,1)}\right)^{\ast}\underline{H}_{(2,2)}^{(1,1)}
  =\left(\underline{T}_{(2,2)}^{(1,1)}\right)^{\ast}e(\bj)\underline{T}_{(2,2)}^{(1,1)}L_{m_{(1,0)}}.
\end{equation*}
following in this way, we can see
  \begin{equation*}
    \left(\underline{H}_{(2,2)}^{(1,2)}\right)^{\ast}\underline{H}_{(2,2)}^{(1,2)}
    =\raisebox{-.6\height}{\includegraphics[scale=0.08]{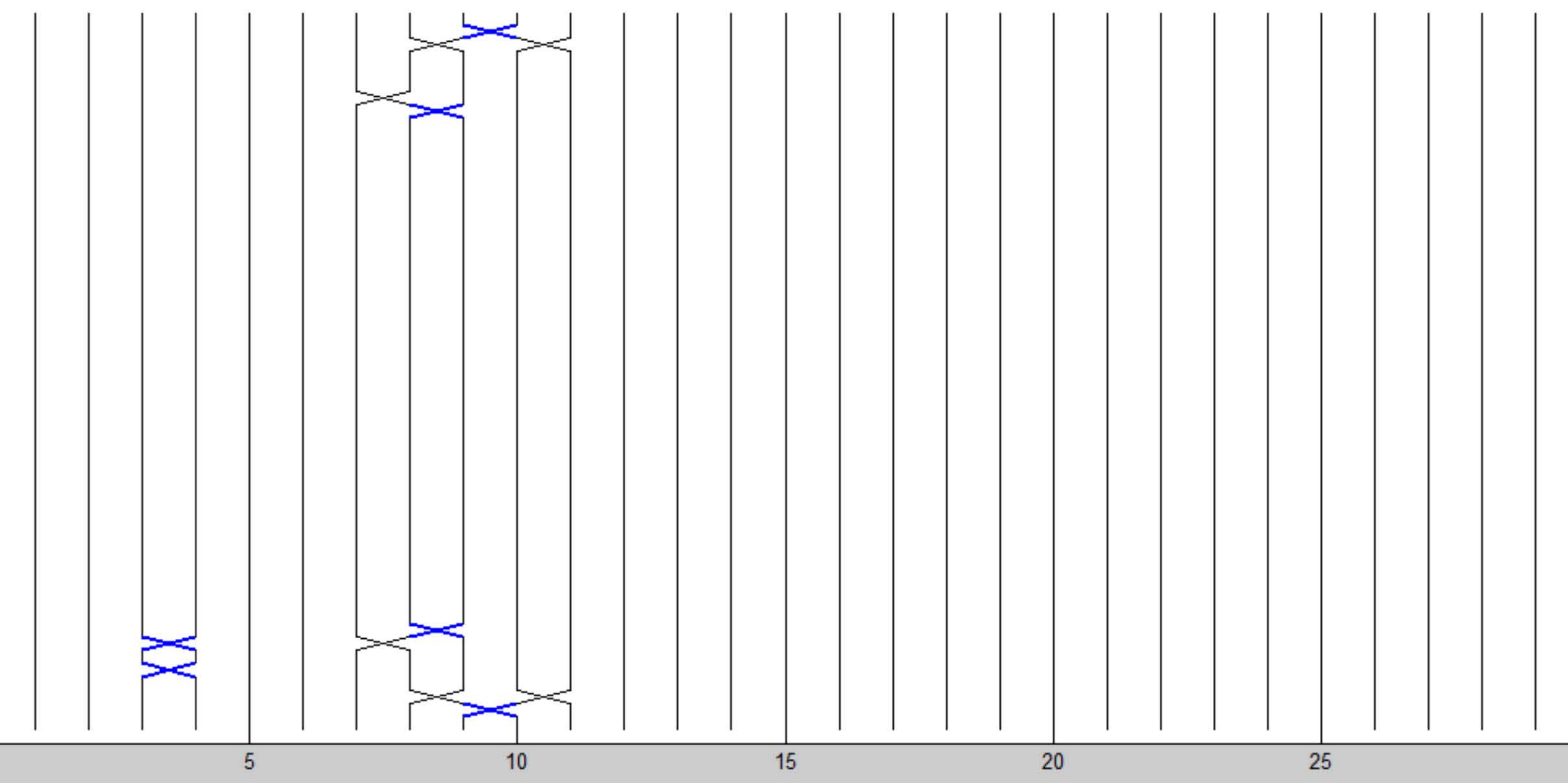}}=
    \raisebox{-.6\height}{\includegraphics[scale=0.08]{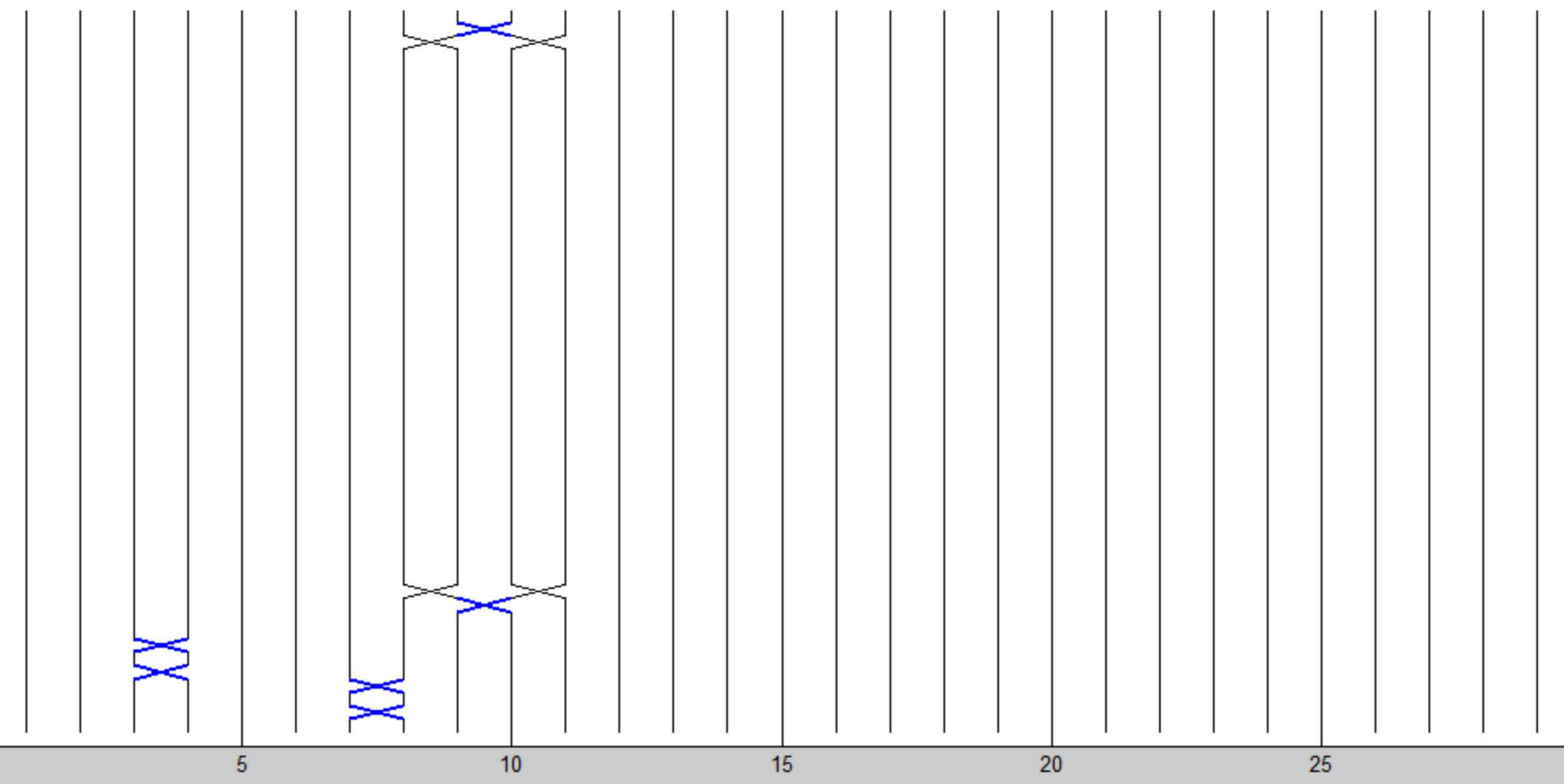}}
  \end{equation*}

  \begin{equation*}
    \left(\underline{H}_{(2,2)}^{(1,2)}\right)^{\ast}\underline{H}_{(2,2)}^{(1,2)}
    =\left(\underline{T}_{(2,2)}^{(1,2)}\right)^{\ast}e(\boldsymbol{h})\underline{T}_{(2,2)}^{(1,2)}L_{m_{(1,0)}}L_{m_{(1,1)}}.
  \end{equation*}
 and so on
  \begin{equation*}
    \raisebox{-.6\height}{\includegraphics[scale=0.09]{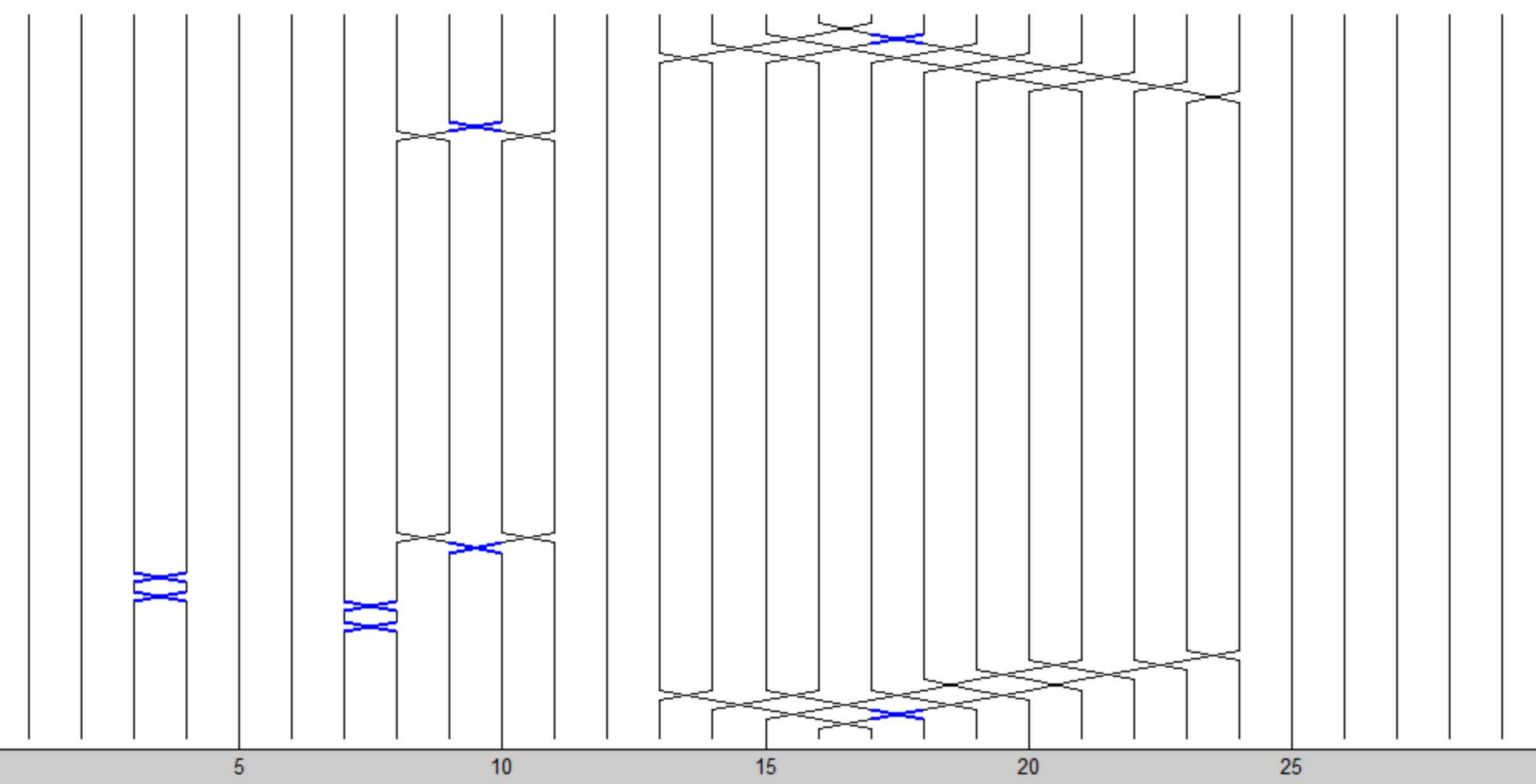}} =
    \raisebox{-.6\height}{\includegraphics[scale=0.09]{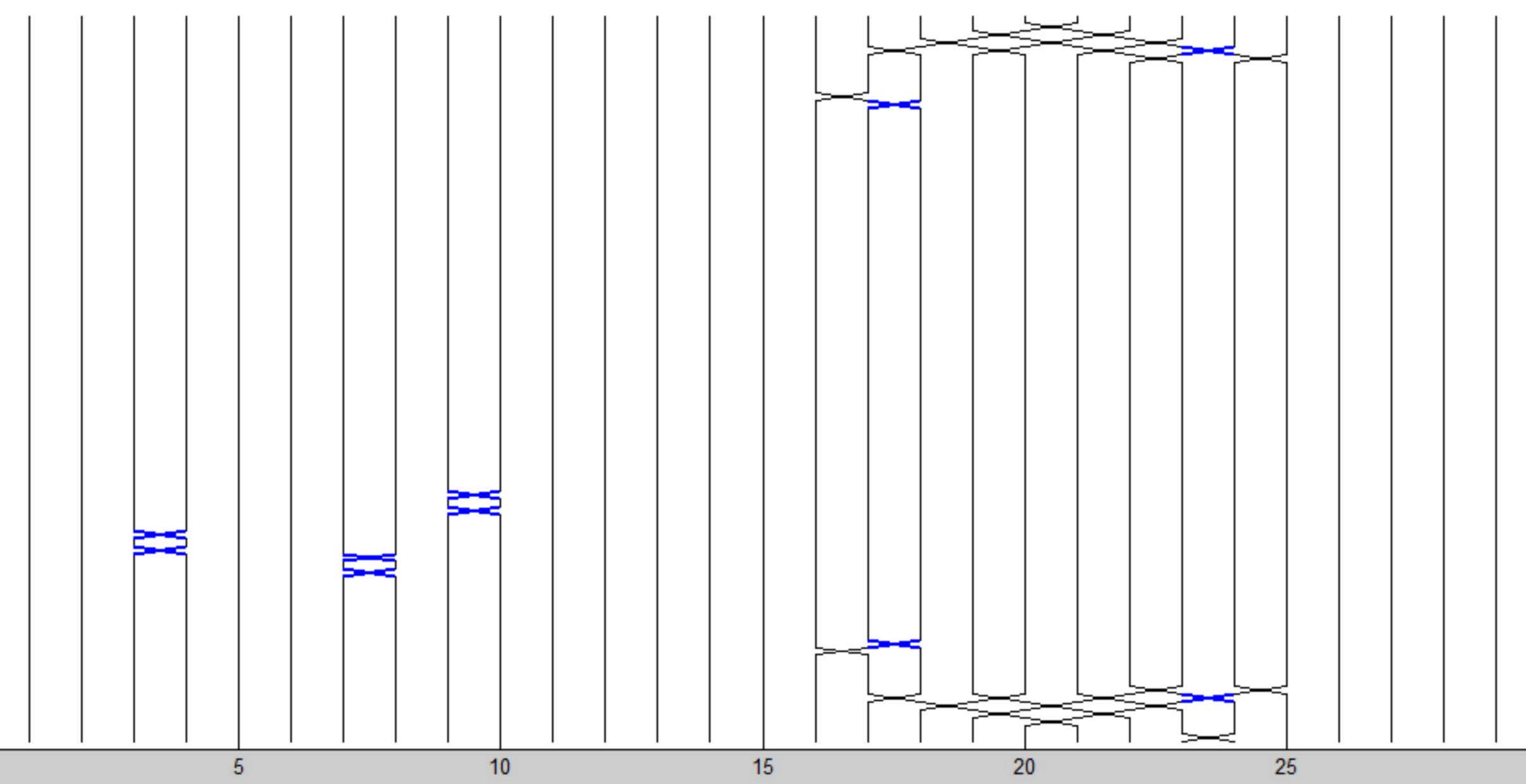}}
  \end{equation*}

  \begin{equation*}
    \raisebox{-.6\height}{\includegraphics[scale=0.09]{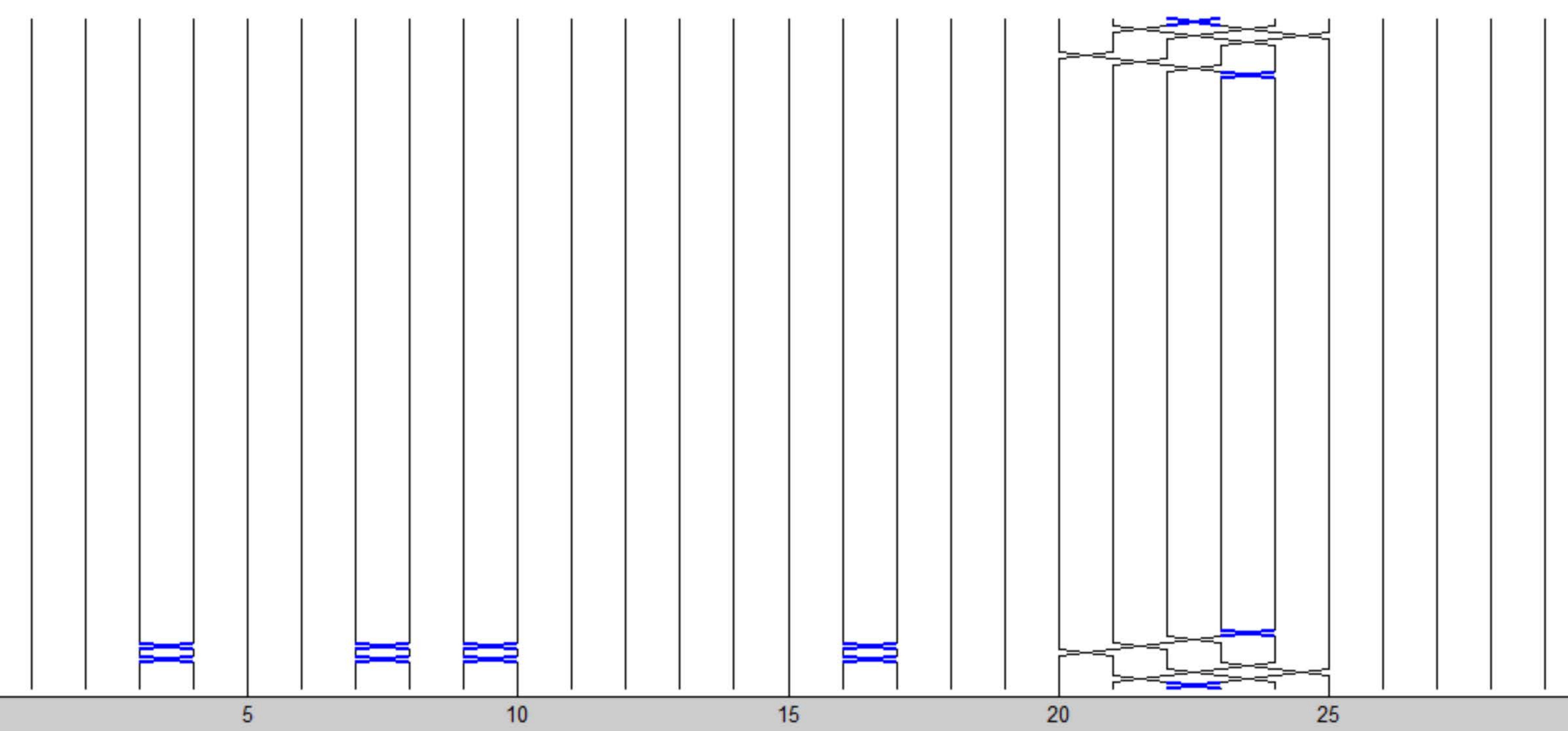}}= \raisebox{-.6\height}{\includegraphics[scale=0.09]{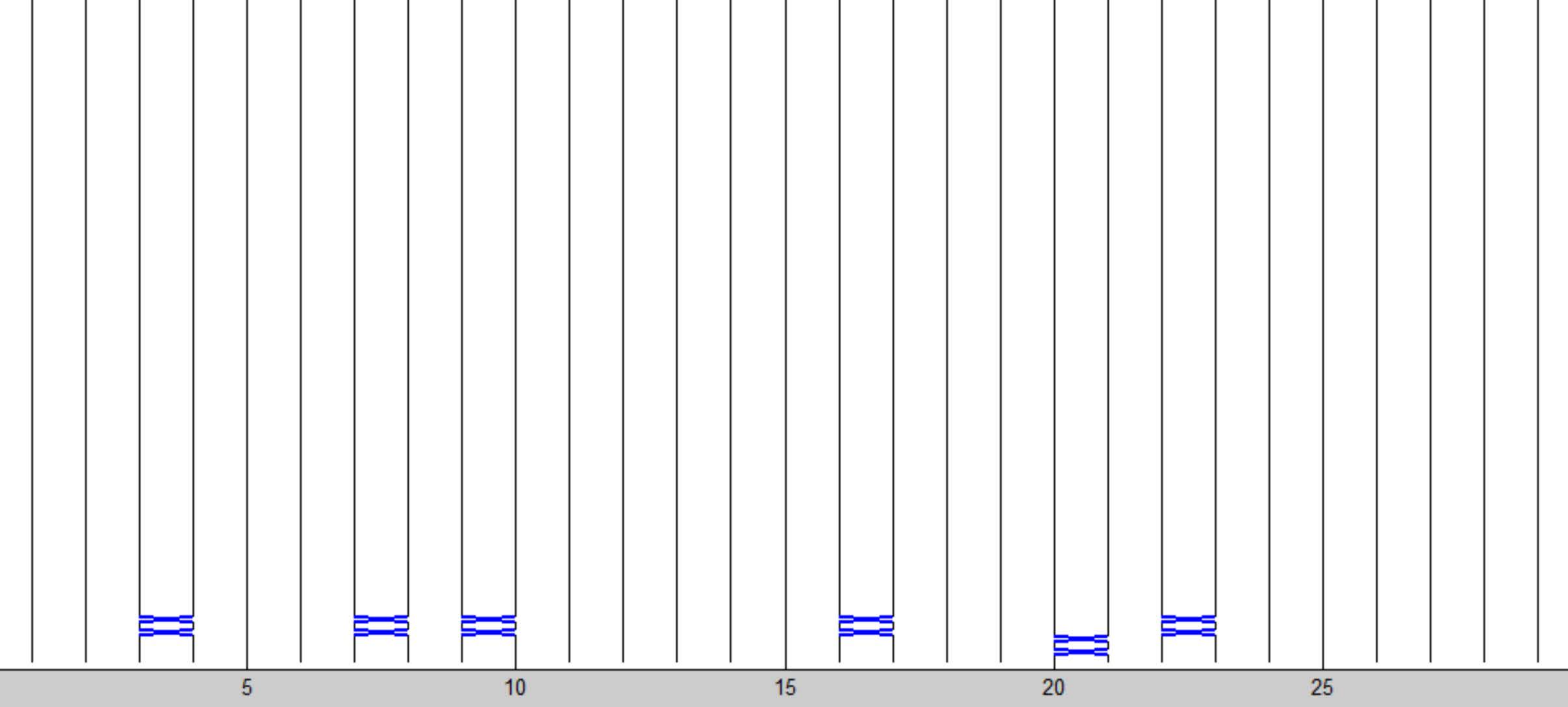}}
  \end{equation*}

  Then
  \begin{equation*}
    \Psi_{\bT_{(2,2)},\bT_{(2,2)}}^{\blambda_{(2,2)}}=e(\bif)L_{m_{(1,0)}}L_{m_{(1,1)}}L_{m_{(1,2)}}L_{m_{(2,0)}}L_{m_{(2,1)}}L_{m_{(2,2)}}.
  \end{equation*}
\end{example}


\section{Gelfand-Tsetlin subalgebra}\label{ch-gelfand}

Lets we denote by $\calD$ the subalgebra of $\B(\bif),$ generated by the set
$$ \{y_je(\bif): j=1,\dots,m\}.$$


By definition of $\B,$ we know that the algebra $\calD$ is a commutative subalgebra of $\B(\bif).$

From \cite{Lobos-Ryom-Hansen} we know that the cellular basis of theorem \ref{theo-cellular-basis}  admits a family of Jucys-Murphy elements in the sense of Mathas (see \cite{Mat-So}). Particularly if we project this family of elements into $\B(\bif)$ we obtain a family of Jucys-Murphy elements $\{\mathbb{J}_j:1\leq j\leq m\},$ for the algebra  $\B(\bif)$ with respect to the cellular basis of corollary \ref{coro-cellular-basis-truncated-2}. The elements $\mathbb{J}_j$ have the form:

\begin{equation}\label{JM-elements-for-bif}
  \mathbb{J}_j=(c(j)-y_j)e(\bif)
\end{equation}
for certain scalars $c(j)\in\F$ (see \cite{Lobos-Ryom-Hansen}).

Is not difficult to see that the algebra $\calD$ is also generated by the set $$ \{\mathbb{J}_j: j=1,\dots,m\}.$$

Following the terminology of Okounkov and Vershik (see \cite{OkunVershik1}, \cite{OkunVershik2}),  we call the algebra $\calD,$ the \emph{Gelfand-Tsetlin} subalgebra of the fundamental vertical truncation $\B(\bif).$

\subsection{An optimized set of generators}\label{sec-optimized-generators}

Following the notation of blocks given in section \ref{sec-fund-resid-sec}, we have:
\begin{lemma}\label{lemma-yz-inN-die}
  If $z\in N$ then
  \begin{equation*}
    y_{z}e(\bif)=0.
  \end{equation*}
\end{lemma}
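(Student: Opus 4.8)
The plan is to induct on $z$, sliding the lone dot leftwards through the first block until it reaches the initial strand, where it is annihilated by relation \eqref{dot-al-inicio}. Throughout I use that on the first block the residues of $\bif$ form the strictly decreasing run $\bif|_N=(\kappa_1,\kappa_1-1,\dots,\kappa_1-\epsilon+1)$, so that for $2\le z\le\epsilon$ the consecutive strands $z-1$ and $z$ are cousins with $i_{z-1}=i_z+1$. The base case $z=1$ is exactly relation \eqref{dot-al-inicio}: a dot on the first strand is zero, i.e. $y_1e(\bif)=0$.

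The engine of the inductive step is the quadratic relation \eqref{lazo}. Because $i_{z-1}=i_z+1$ we land in the case $\beta=0$, $\gamma=-1$, so \eqref{lazo} reads
\[
  \psi_{z-1}^2\,e(\bif)=(y_{z-1}-y_z)\,e(\bif)=L_z\,e(\bif).
\]
It therefore suffices to show the left-hand side vanishes, and I will in fact prove the stronger $\psi_{z-1}e(\bif)=0$. Since the crossing relations give $\psi_{z-1}e(\bif)=e(s_{z-1}\cdot\bif)\psi_{z-1}$ (a crossing carries the bottom idempotent to the top idempotent $e(s_{z-1}\cdot\bif)$), it is enough to check that the swapped sequence $s_{z-1}\cdot\bif$ is $\kappa$-blob impossible, for then $e(s_{z-1}\cdot\bif)=0$ and the crossing dies.

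The combinatorial heart is thus the impossibility of $s_{z-1}\cdot\bif$. Its first $z-2$ entries are the straight-column residues $\kappa_1,\kappa_1-1,\dots,\kappa_1-z+3$, realised by the forced tableau $\bT(r)=(r,1,1)$, while its $(z-1)$-th entry is $j=\kappa_1-z+1=i_z$. As $2\le z\le\epsilon$, Lemma \ref{lemma-residues-mrj} gives $i_z\notin\{\kappa_1,\dots,\kappa_l\}$ (the charges $\kappa_2,\dots,\kappa_l$ are first met only at position $m_{(1,0)}=\epsilon+1$), and plainly $j\neq\kappa_1-z+2=i_{z-2}-1$. Corollary \ref{coro-bif-j-not-possible} then makes the length-$(z-1)$ prefix blob-impossible, and appending the remaining entries preserves impossibility, so $s_{z-1}\cdot\bif$ is blob-impossible; the corollary following Lemma \ref{lemma-first-idemp-annihilation} gives $e(s_{z-1}\cdot\bif)=0$, whence $\psi_{z-1}e(\bif)=0$. (For $z=2$ the prefix is empty and impossibility is immediate from \eqref{mal-inicio}, since the leading residue $\kappa_1-1$ is not a charge.)

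Putting the pieces together, for each $2\le z\le\epsilon$ we obtain $L_z\,e(\bif)=\psi_{z-1}^2e(\bif)=0$, that is $y_ze(\bif)=y_{z-1}e(\bif)$; telescoping down to the base case $y_1e(\bif)=0$ yields $y_ze(\bif)=0$ for every $z\in N$. Equivalently, since $y_z=-\sum_{w=1}^z L_w$, the vanishing of each $L_we(\bif)$ on $N$ gives the claim at once. I expect the one genuine obstacle to be the blob-impossibility verification of the third paragraph: one must ensure the exceptional residue $j=\kappa_1-z+1$ never coincides with a charge $\kappa_2,\dots,\kappa_l$ across the whole range $2\le z\le\epsilon$, which is precisely where the value $\epsilon=\hat\kappa_1-\hat\kappa_l+\een$ and the strong adjacency-freeness of $\kappan$ (through Lemma \ref{lemma-residues-mrj}) enter; the remaining steps are routine applications of \eqref{lazo} and the idempotent-annihilation corollary.
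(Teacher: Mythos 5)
Your proof is correct, but it runs on a different engine than the paper's. The paper moves the dot from strand $z$ to strand $z-1$ via the sliding relation \eqref{dot-salta}, picking up the correction term $\psi_{z-1}e(s_{z-1}\cdot\bif)\psi_{z-1}$, then transports the crossing strand all the way to the first position by repeated use of \eqref{lazo} in the distant-labels case ($i_r\neq i_z,i_z\pm1$ for $r<z-1$), and finally kills the transported diagram with \eqref{mal-inicio} because $i_z\notin\{\kappa_1,\dots,\kappa_l\}$. You instead use \eqref{lazo} directly in the cousin case to get $L_ze(\bif)=\psi_{z-1}^2e(\bif)$ and then prove the strictly stronger statement $\psi_{z-1}e(\bif)=0$, by checking via Lemma \ref{lemma-residues-mrj} and Corollary \ref{coro-bif-j-not-possible} (together with Lemma \ref{lemma-bj-possible-bjp-too} and the corollary to Lemma \ref{lemma-first-idemp-annihilation}) that the swapped sequence $s_{z-1}\cdot\bif$ has a $\kappa$-blob impossible prefix, so its idempotent already vanishes --- no strand transport needed. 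Your verification of the hypotheses is sound: $j=\kappa_1-z+1\neq i_{z-2}-1$, and $i_z\notin\{\kappa_2,\dots,\kappa_l\}$ precisely because those residues first occur at position $m_{(1,0)}=\epsilon+1$, with the $z=2$ edge case handled separately; note only that the displayed formula in Corollary \ref{coro-bif-j-not-possible} reads $i_r=\kappa_t-1-r$, which per its own proof should be $\kappa_t+1-r$, and you apply it in that intended sense. What each route buys: yours is shorter and establishes the stronger rigidity fact that every crossing $\psi_{z-1}e(\bif)$, $2\leq z\leq\epsilon$, dies inside the first block, at the price of invoking the impossibility machinery of section \ref{sec-impossible-res-sec}; the paper's argument stays purely within the diagrammatic relations \eqref{dot-salta}, \eqref{lazo}, \eqref{mal-inicio} and, more importantly, rehearses the pull-to-the-left technique that is then reused almost verbatim in the proofs of Lemmas \ref{lemma-clean-Y} and \ref{lemma-diag-L}. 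Both ultimately hinge on the same fact, that $i_z$ is not a charge for $z\in N$ with $z\geq 2$.
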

\begin{proof}
  We proceed by induction on $z\in N=[1:\epsilon].$

  If $z=1,$ then $y_1e(\bif)=0$ by relation \ref{dot-al-inicio}.

  If $z>1$ and we assume that $y_re(\bif)=0$ for all $1\leq r <z,$ then by relations \ref{dot-salta} we have:

  \begin{equation*}
    y_{z}e(\bif)=y_{z-1}e(\bif)-\psi_{z-1}e(s_{z-1}\cdot\bif)\psi_{z-1},
  \end{equation*}

  that is
  \begin{equation}{\label{im-dot-salta2}}
\begin{tikzpicture}[xscale=0.5,yscale=0.65]
  \draw[](-3,0)--(-3,2);
  \node at (-2,1){$\cdots$};
  \draw[](-1,0)--(-1,2);
  \draw[](0,0)--(0,2);
  \draw[](1,0)--(1,2);
  \draw[fill] (1,1) circle [radius=0.15];
  \node[below]at (-3,0) {$i_1$};
  \node[below]at (1,0) {$i_{z}$};
  \node at (2,1){$\quad=\quad$};
  \draw[](3,0)--(3,2);
  \node at (4,1){$\cdots$};
  \draw[](5,0)--(5,2);
  \draw[](6,0)--(6,2);
  \draw[](7,0)--(7,2);
  \draw[fill] (6,1) circle [radius=0.15];
  \node[below]at (3,0) {$i_1$};
  \node[below]at (7,0) {$i_{z}$};

  \node at (8,1){$\quad \sign \quad$};
  \draw[](9,0)--(9,2);
  \node at (10,1){$\cdots$};
  \draw[](11,0)--(11,2);
  \draw[](12,0)to[out=90,in=270](13,1);
  \draw[](13,1)to[out=90,in=270](12,2);
  \draw[](13,0)to[out=90,in=270](12,1);
  \draw[](12,1)to[out=90,in=270](13,2);
  \node[below]at (9,0) {$i_1$};
  \node[below]at (13,0) {$i_{z}$};

\end{tikzpicture}
\end{equation}
By definition of $\bif$ we can see that if $1\leq r <z-1$ then $i_r\neq i_{z},i_{z}\pm1.$ Therefore, applying repeatedly relation \ref{lazo} we have:

\begin{equation*}
    y_{z}e(\bif)=y_{z-1}e(\bif)-(\Psi_{[1:z-1]})^{\ast} e(s_{[1:z-1]}\cdot\bif)(\Psi_{[1:z-1]}),
  \end{equation*}
where $\Psi_{[1:z-1]}=\psi_{1}\psi_{2}\cdots\psi_{z-1}.$

 \begin{equation}{\label{im-lazo-hasta-inicio}}
\begin{tikzpicture}[xscale=0.5,yscale=0.65]
  \draw[](-3,0)--(-3,2);
  \node at (-2,1){$\cdots$};
  \draw[](-1,0)--(-1,2);
  \draw[](0,0)--(0,2);
  \draw[](1,0)--(1,2);
  \draw[fill] (1,1) circle [radius=0.15];
  \node[below]at (-3,0) {$i_1$};
  \node[below]at (1,0) {$i_{z}$};
  \node at (2,1){$\quad=\quad$};
  \draw[](3,0)--(3,2);
  \node at (4,1){$\cdots$};
  \draw[](5,0)--(5,2);
  \draw[](6,0)--(6,2);
  \draw[](7,0)--(7,2);
  \draw[fill] (6,1) circle [radius=0.15];
  \node[below]at (3,0) {$i_1$};
  \node[below]at (7,0) {$i_{z}$};

  \node at (8,1){$\quad \sign \quad$};
  \draw[](10,0)--(10,2);
  \node at (11,1){$\cdots$};
  \draw[](12,0)--(12,2);
  \draw[](14,0)--(9,0.8);
  \draw[](9,1.2)--(9,0.8);
  \draw[](9,1.2)--(14,2);
  \node[below]at (10,0) {$i_1$};
  \node[below]at (14,0) {$i_{z}$};

\end{tikzpicture}
\end{equation}
Note that by definition of $N$ we have that $i_z\notin \{\kappa_1,\dots,\kappa_l\}.$ Then we conclude the proof by combining relation \ref{mal-inicio} and our inductive hypothesis.
\end{proof}

For the rest of the article we adopt the following notation: If $z=m_{(r,j)}$ we denote the element $y_ze(\bif)$ by $\Y_{(r,j)}.$

\begin{lemma}{\label{lemma-clean-Y}}
  If $z\in B(r,j)$ then
  \begin{equation}{\label{eq-clean-Y}}
    y_{z}e(\bif)=\Y_{(r,j)}.
  \end{equation}
\end{lemma}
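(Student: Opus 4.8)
The plan is to fix the block $B_{(r,j)}$ and argue by induction on $z\in B_{(r,j)}$, climbing upward from the top of the block. The base case is $z=m_{(r,j)}$, where $y_ze(\bif)=\Y_{(r,j)}$ holds by the very definition of $\Y_{(r,j)}$. For the inductive step I take $z\in B_{(r,j)}$ with $z>m_{(r,j)}$ and assume $y_{z-1}e(\bif)=\Y_{(r,j)}$; since $z-1\geq m_{(r,j)}$ forces $z-1\in B_{(r,j)}$, it is enough to prove the single identity $y_ze(\bif)=y_{z-1}e(\bif)$.

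For that identity I would reuse the opening step of the proof of Lemma \ref{lemma-yz-inN-die}. Because $\bif$ strictly decreases by $1$ at each position, the residues $i_{z-1}$ and $i_z=i_{z-1}-1$ are always cousins, so relation \ref{dot-salta} applies and gives
\begin{equation*}
  y_ze(\bif)=y_{z-1}e(\bif)-\psi_{z-1}\,e(s_{z-1}\cdot\bif)\,\psi_{z-1}.
\end{equation*}
Everything therefore reduces to showing that the correction term vanishes, and since that term factors through the idempotent $e(s_{z-1}\cdot\bif)$, it suffices to prove $e(s_{z-1}\cdot\bif)=0$.

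This is the step I expect to carry the weight: I would show that $s_{z-1}\cdot\bif$ is a $\kappa$-blob impossible residue sequence. We have $s_{z-1}\cdot\bif=(i_1,\dots,i_{z-2},i_z,i_{z-1},\dots)$, so by Lemma \ref{lemma-bj-possible-bjp-too} in contrapositive form it is enough to see that its prefix $(i_1,\dots,i_{z-2},i_z)$ is already impossible. Now $(i_1,\dots,i_{z-2})$ is realized by the single column of length $z-2$ in the first component (a legitimate one-column standard tableau, as $\res(r,1,1)=\kappa_1-r+1=i_r$), whose blob-addable nodes are $(z-1,1,1)$ and $(1,1,h)$ for $2\leq h\leq l$; their residues are exactly $\{\,i_z+1,\kappa_2,\dots,\kappa_l\,\}$. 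The crucial point is that $i_z$ equals none of these: $i_z\neq i_z+1$ because $\een>2$, and $i_z\notin\{\kappa_2,\dots,\kappa_l\}$ precisely because $z$ is an interior point of $B_{(r,j)}$ and hence is not a block start $m_{(r',j')}$, by Lemma \ref{lemma-residues-mrj}. Corollary \ref{coro-bt-j-impossible-sequence} (the relevant instance is also recorded in Corollary \ref{coro-bif-j-not-possible}) then shows $(i_1,\dots,i_{z-2},i_z)$ is blob-impossible, whence $s_{z-1}\cdot\bif$ is blob-impossible and $e(s_{z-1}\cdot\bif)=0$ by the corollary following Lemma \ref{lemma-first-idemp-annihilation}. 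Substituting this back gives $y_ze(\bif)=y_{z-1}e(\bif)=\Y_{(r,j)}$, closing the induction.

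I would stress that, unlike in Lemma \ref{lemma-yz-inN-die}, no further diagrammatic manipulation is needed: there the crossing had to be slid all the way to the first strand and killed by relation \ref{mal-inicio}, whereas here the middle idempotent is directly zero. The only genuine work is the combinatorial verification that $i_z$ is not a blob-addable residue of the fundamental column; everything else is a verbatim reuse of the dot-past-crossing relation. The same computation in fact re-derives Lemma \ref{lemma-yz-inN-die}, the distinctive feature of $N$ being merely the vanishing anchor $y_1e(\bif)=0$.
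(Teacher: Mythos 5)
Your proof is correct, and it takes a genuinely shorter route than the paper's. Both arguments open identically: one application of relation \ref{dot-salta} gives $y_ze(\bif)=y_{z-1}e(\bif)-\psi_{z-1}e(s_{z-1}\cdot\bif)\psi_{z-1}$, and everything hinges on killing the correction term. The paper keeps that term alive as a diagram: for $r=1$ it slides the crossing to the first strand via relation \ref{lazo} and invokes relation \ref{mal-inicio}, while for $r>1$ the slide is blocked by the sister/cousin cluster at positions $z-\een$, $z-\een\pm1$, so it applies relation \ref{trio} to split $A=A_1-A_2$, then relation \ref{dot-salta} again to split $A_2=A_{21}-A_{22}$, and kills $A_1$, $A_{21}$, $A_{22}$ separately (via corollaries \ref{coro-bif-j-not-possible} and \ref{coro-bif-j-not-possible2} and the sister case of relation \ref{lazo}). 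You instead kill the correction term at birth by showing the middle idempotent is already zero: the swapped sequence $s_{z-1}\cdot\bif$ has prefix $(i_1,\dots,i_{z-2},i_z)$; the sequence $(i_1,\dots,i_{z-2})$ is realized by the one-column tableau in the first component, whose blob-addable residues are $\{i_{z-1},\kappa_2,\dots,\kappa_l\}$; and $i_z=i_{z-1}-1$ avoids this set precisely because an interior point of a block is not any $m_{(r',j')}$ (Lemma \ref{lemma-residues-mrj}). Corollary \ref{coro-bt-j-impossible-sequence} — whose force is Lemma \ref{lemma-same-addable-residues}, so the conclusion applies to every realizer of the prefix, not only the column you exhibited — together with Lemma \ref{lemma-bj-possible-bjp-too} then gives that $s_{z-1}\cdot\bif$ is $\kappa$-blob impossible, hence $e(s_{z-1}\cdot\bif)=0$. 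This argument is uniform in $r$ and $j$ (no case split $r=1$ versus $r>1$), dispenses with relations \ref{trio} and \ref{mal-inicio} and all the sliding, and, as you observe, simultaneously re-derives Lemma \ref{lemma-yz-inN-die} from the single anchor $y_1e(\bif)=0$. What the paper's longer diagrammatic treatment buys is not this lemma but its sequels: the same slide-and-split manipulations recur in Lemma \ref{lemma-diag-L} and Lemma \ref{lemma-EsLrj-as-LsjLrj}, where the analogous correction terms do \emph{not} vanish and survive as the elements $\calL_{(r,j)}$ and $E^{(s)}(\calL_{(r,j)})$; your shortcut is specific to the present situation, where the swapped residue sequence happens to be blob-impossible.
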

\begin{proof}
 We only prove the case where $j\neq l-2.$ The other case is analogous.

  If $r=1$ then $B(1,j)=[m_{(1,j)}:m_{(1,j+1)}-1],$ and $m_{(1,j)}\leq z < m_{(1,j+1)},$

  If $z=m_{(1,j)},$ equation \ref{eq-clean-Y} is trivial.

  If $m_{(1,j)}<z < m_{(1,j+1)},$ then by relation \ref{dot-salta} we have:
  \begin{equation*}
    y_ze(\bif)=y_{z-1}e(\bif)-\psi_{z-1}e(s_{z-1}\cdot\bif)\psi_{z-1}.
  \end{equation*}
  (see equation \ref{im-dot-salta2})

  Since $r=1$ and $z<m_{(1,j+1)}$ (defined above) we have that $i_u\neq i_z,i_z\pm1$ for all $u<z-1$ then

  \begin{equation*}
    y_ze(\bif)=y_{z-1}e(\bif)-(\Psi_{[1:z-1]})^{\ast} e(s_{[1: {z-1}]}\cdot\bif)(\Psi_{[1:z-1]})
  \end{equation*}
  (see equation \ref{im-lazo-hasta-inicio})
  and since $i_{z}\notin\{\kappa_1,\dots,\kappa_l\}$ (lemma \ref{lemma-residues-mrj}), we have

  \begin{equation*}
   (\Psi_{[1:z-1]})^{\ast} e(s_{[1: {z-1}]}\cdot\bif)(\Psi_{[1:z-1]})=0.
  \end{equation*}
  (by relation \ref{mal-inicio}).

  Therefore
  \begin{equation*}
    y_ze(\bif)=y_{z-1}e(\bif).
  \end{equation*}
  Now by induction on $z$ we conclude the proof for this case:
  \begin{equation*}
    y_ze(\bif)=\Y_{(1,j)}.
  \end{equation*}

  If $r>1$ and $z\in B(r,j)=[m_{(r,j)}:m_{(r,j+1)}-1].$   Then we have two cases:

  If $z=m_{(r,j)}$ equation \ref{eq-clean-Y} is trivial.

  If $m_{(r,j)}<z<m_{(r,j+1)}$ then by relation \ref{dot-salta} we have:

  \begin{equation*}
    y_ze(\bif)=y_{z-1}e(\bif)-\psi_{z-1}e(s_{z-1}\cdot\bif)\psi_{z-1}.
  \end{equation*}
  (see equation \ref{im-dot-salta2}).

  By definition of $\bif$ we have that $i_u\neq i_z,i_{z}\pm1,$ for all $u$ such that $z-e+1<u<z-1.$

  Therefore by relation \ref{lazo} we have:
  \begin{equation*}
    y_{z}e(\bif)=y_{z-1}e(\bif)-(\Psi_{[x:z-1]})^{\ast}e(\bj)\Psi_{[x:z-1]},
  \end{equation*}
  where $x=z-e+2$ and $\bj=s_{[x:{z-1}]}\cdot\bif.$

 \begin{equation*}
\begin{tikzpicture}[xscale=0.5,yscale=0.65]
  \node at (-2,1){$y_{z}e(\bif)=y_{z-1}e(\bif)\quad\sign$};

  \draw[](2,0)--(2,2);
  \node[below]at (2,0) {$i_1$};
  \node at (3,1){$\cdots$};
  \draw[](5,0)--(5,2);
  \draw[](6,0)--(6,2);
  \node[below]at (6,0) {$i_t$};
  \draw[thick](7,0)--(7,2);
  \node[below]at (7,0) {$i_u$};
  \draw[thick](8,0)--(8,2);
  \node[below]at (8,0) {$i_v$};
  \draw[](10,0)--(10,2);
  \node at (11,1){$\cdots$};
  \draw[](13,0)--(13,2);
  \draw[thick](14,0)--(9,0.8);
  \draw[thick](9,1.2)--(9,0.8);
  \draw[thick](9,1.2)--(14,2);
  \node[below]at (10,0) {$i_x$};
  \node[below]at (14,0) {$i_{z}$};

\end{tikzpicture}
\end{equation*}
  (here $u=z-e,t=u-1,v=u+1$ then $i_u=i_z,i_t=i_z+1$ and $i_v=i_z-1$).

    Let $A=(\Psi_{[x:z-1]})^{\ast}e(\bj)\Psi_{[x:z-1]}.$ If we apply relation \ref{trio} over strings corresponding with $i_u,i_v$ and $i_z$ we obtain $A=A_1-A_2,$ where

\begin{equation*}
\begin{tikzpicture}[xscale=0.5,yscale=0.65]
  \node at (-2,1){$A_1=$};

  \draw[](2,0)--(2,2);
  \node[below]at (2,0) {$i_1$};
  \node at (3,1){$\cdots$};
  \draw[](5,0)--(5,2);
  \draw[](6,0)--(6,2);
  \node[below]at (6,0) {$i_t$};
  \draw[thick](7,0)--(9,0.8);
  \draw[thick](7,2)--(8,1.5);
  \node[below]at (7,0) {$i_u$};
  \draw[thick](8,0)--(6.5,0.8);
  \draw[thick](6.5,1.2)--(6.5,0.8);
  \draw[thick](8,2)--(6.5,1.2);
  \node[below]at (8,0) {$i_v$};
  \draw[](10,0)--(10,2);
  \node at (11,1){$\cdots$};
  \draw[](13,0)--(13,2);
  \draw[thick](14,0)--(8,0.8);
  \draw[thick](9,1.2)--(9,0.8);
  \draw[thick](8,1.5)--(8,0.8);
  \draw[thick](9,1.2)--(14,2);
  \node[below]at (10,0) {$i_x$};
  \node[below]at (14,0) {$i_{z}$};

\end{tikzpicture}
\end{equation*}
 and
\begin{equation*}
\begin{tikzpicture}[xscale=0.5,yscale=0.65]
  \node at (-2,1){$A_2=$};

  \draw[](2,0)--(2,2);
  \node[below]at (2,0) {$i_1$};
  \node at (3,1){$\cdots$};
  \draw[](5,0)--(5,2);
  \draw[](6,0)--(6,2);
  \node[below]at (6,0) {$i_t$};
  \draw[thick](7,0)--(7,2);
  \node[below]at (7,0) {$i_u$};
  \draw[thick](8,0)--(8,2);
  \node[below]at (8,0) {$i_v$};
  \draw[](10,0)--(10,2);
  \node at (11,1){$\cdots$};
  \draw[](13,0)--(13,2);
  \draw[thick](14,0)--(6.5,0.8);
  \draw[thick](6.5,1.2)--(6.5,0.8);
  \draw[thick](6.5,1.2)--(14,2);
  \draw[fill] (6.5,1) circle [radius=0.15];
  \node[below]at (10,0) {$i_x$};
  \node[below]at (14,0) {$i_{z}$};

\end{tikzpicture}
\end{equation*}
Note that $A_1$ belongs to the ideal generated by the idempotent $e(\bjf)$ where $$\bjf=(i_1,i_2,\dots,i_{u-1},i_{u}-1,i_{u},i_{u},i_{x},i_{x+1}\dots)$$
By definition of $\bif=(i_1,i_2,\dots)$ we have that $\bjf$ is a $\kappa-$blob impossible residue sequence (corollaries \ref{coro-bif-j-not-possible} and \ref{coro-bif-j-not-possible2}). Then $A_1=0.$

On the other hand if we apply relation \ref{dot-salta} to the strings corresponding to $i_t$ and $i_z$ in $A_2,$ then we obtain that $A_2=A_{21}-A_{22},$ where

\begin{equation*}
\begin{tikzpicture}[xscale=0.5,yscale=0.65]
  \node at (-2,1){$A_{21}=$};

  \draw[](2,0)--(2,2);
  \node[below]at (2,0) {$i_1$};
  \node at (3,1){$\cdots$};
  \draw[](5,0)--(5,2);
  \draw[](6,0)--(6,2);
  \node[below]at (6,0) {$i_t$};
  \draw[thick](7,0)--(7,2);
  \node[below]at (7,0) {$i_u$};
  \draw[thick](8,0)--(8,2);
  \node[below]at (8,0) {$i_v$};
  \draw[](10,0)--(10,2);
  \node at (11,1){$\cdots$};
  \draw[](13,0)--(13,2);
  \draw[thick](14,0)--(6.5,0.8);
  \draw[thick](6.5,1.2)--(6.5,0.8);
  \draw[thick](6.5,1.2)--(14,2);
  \draw[fill] (6,1) circle [radius=0.1];
  \node[below]at (10,0) {$i_x$};
  \node[below]at (14,0) {$i_{z}$};

\end{tikzpicture}
\end{equation*}
and

\begin{equation*}
\begin{tikzpicture}[xscale=0.5,yscale=0.65]
  \node at (-2,1){$A_{22}=$};

  \draw[](2,0)--(2,2);
  \node[below]at (2,0) {$i_1$};
  \node at (3,1){$\cdots$};
  \draw[](5,0)--(5,2);
  \draw[](6,0)--(6,2);
  \node[below]at (6,0) {$i_t$};
  \draw[thick](7,0)--(7,2);
  \node[below]at (7,0) {$i_u$};
  \draw[thick](8,0)--(8,2);
  \node[below]at (8,0) {$i_v$};
  \draw[](10,0)--(10,2);
  \node at (11,1){$\cdots$};
  \draw[](13,0)--(13,2);
  \draw[thick](14,0)--(5.5,0.8);
  \draw[thick](5.5,1.2)--(5.5,0.8);
  \draw[thick](5.5,1.2)--(14,2);
  \node[below]at (10,0) {$i_x$};
  \node[below]at (14,0) {$i_{z}$};

\end{tikzpicture}
\end{equation*}

Now by relation \ref{lazo} we have that $A_{21}=0$ (since $i_u=i_z$). On the other hand note that $A_{22}$ belongs to the ideal generated by $e(\bjf)\in \B$ where $\bjf=(i_1,i_2,\dots,i_{t-1},i_z,\dots)$ and since $i_z\neq i_t$ and by definition $i_z\notin\{\kappa_2,\dots,\kappa_l\},$ we conclude that $\bjf$ is a $\kappa-$blob impossible residue sequence (corollary \ref{coro-bif-j-not-possible}), then $e(\bjf)=0$ and therefore $A_{22}=0.$

Finally we have obtained that $y_ze(\bif)=y_{z-1}e(\bif),$ then by induction on $z$ we conclude that $y_ze(\bif)=\Y_{(r,j)}.$
\end{proof}

\begin{corollary}\label{coro-Y-generate-B}
  The subalgebra $\calD$ is generated by the set $$\{\Y_{(r,j)}:1\leq r \leq k;\quad 0\leq j \leq l-2\}.$$
\end{corollary}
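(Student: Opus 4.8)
The plan is to deduce the statement directly from Lemmas \ref{lemma-yz-inN-die} and \ref{lemma-clean-Y}, the only nontrivial input being the observation that the blocks exhaust the whole index set. I would begin by recalling that, by definition, $\calD$ is generated by the set $\{y_je(\bif):1\leq j\leq m\}$, so it suffices to show that each such generator is either zero or coincides with one of the elements $\Y_{(r,j)}$.

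The key preliminary remark is that $\{1,\dots,m\}$ is the disjoint union of $N=[1:\epsilon]$ together with the blocks $B_{(r,j)}$ defined in \ref{def-blocks1}. Indeed, the blocks follow one another consecutively starting immediately after $N$ (since $m_{(1,0)}=\epsilon+1$), and by Lemma \ref{lemma-sum-of-bj} the blocks $B_{(r,0)},\dots,B_{(r,l-2)}$ in a fixed row $r$ have lengths $b_0,\dots,b_{l-2}$ summing to $\een$; hence the $k$ rows of blocks account for exactly $k\een$ indices, and together with $N$ they cover all $\epsilon+k\een=m$ indices. Consequently every $j\in\{1,\dots,m\}$ lies in exactly one of $N$ or some $B_{(r,j')}$.

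With this in hand I would split into two cases. If $j\in N$, then Lemma \ref{lemma-yz-inN-die} gives $y_je(\bif)=0$, so the corresponding generator is redundant. If instead $j\in B_{(r,j')}$ for some pair $(r,j')$, then Lemma \ref{lemma-clean-Y} gives $y_je(\bif)=\Y_{(r,j')}$. Therefore the original generating set satisfies
\begin{equation*}
  \{y_je(\bif):1\leq j\leq m\}=\{0\}\cup\{\Y_{(r,j)}:1\leq r\leq k,\ 0\leq j\leq l-2\}.
\end{equation*}

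Finally, since removing the zero element from a generating set does not alter the subalgebra it generates, I would conclude that $\calD$ is generated by $\{\Y_{(r,j)}:1\leq r\leq k,\ 0\leq j\leq l-2\}$, as claimed. There is no real obstacle here: all the substantive computation has already been carried out in the two preceding lemmas, and this corollary is a matter of organizing the index set and discarding the vanishing generators.
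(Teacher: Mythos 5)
Your proof is correct and takes essentially the same route as the paper, whose entire proof is the one-line remark that the statement follows from the definition of $\calD$ and Lemma \ref{lemma-clean-Y}; you simply make explicit the supporting details, namely that $\{1,\dots,m\}$ is partitioned into $N$ and the blocks $B_{(r,j)}$ (via Lemma \ref{lemma-sum-of-bj}) and that the generators indexed by $N$ vanish by Lemma \ref{lemma-yz-inN-die}. There is no gap, and no substantive difference in method.
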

\begin{proof}
  It follows directly by definition of $\calD$ and lemma \ref{lemma-clean-Y}.
\end{proof}

In the diagrammatic version of $\calD,$ it will be useful to replace all the strings corresponding to the same block, simply by a box representing the corresponding block. Each element $\Y_{(r,j)}$ could be represented by a dot  at the begining of the corresponding box

\begin{example}
For example we can represent:
\begin{equation*}
\begin{tikzpicture}[xscale=0.4,yscale=1.5]
  \node at (-2,0.5) {$e(\underline{\bi})=$};
  \draw[thick](1,0)--(1,1);
  \draw[thick](1,0)--(3,0);
  \draw[thick](3,0)--(3,1);
  \draw[thick](1,1)--(3,1);
  \node[below] at (2,-0.2) {$N$};
  \draw[thick](4,0)--(4,1);
  \draw[thick](4,0)--(6,0);
  \draw[thick](6,0)--(6,1);
  \draw[thick](4,1)--(6,1);
  \node[below] at (5,-0.2) {${B_{(1,0)}}$};
  \draw[thick](7,0)--(7,1);
  \draw[thick](9,0)--(9,1);
  \draw[thick](9,0)--(7,0);
  \draw[thick](9,1)--(7,1);
  \node[below] at (8,-0.2) {${B_{(1,1)}}$};
  \draw[thick](10,0)--(10,1);
  \draw[thick](12,0)--(12,1);
  \draw[thick](10,0)--(12,0);
  \draw[thick](10,1)--(12,1);
  \node[below] at (11,-0.2) {${B_{(1,2)}}$};
  \draw[thick](13,0)--(13,1);
  \draw[thick](15,0)--(15,1);
  \draw[thick](15,0)--(13,0);
  \draw[thick](15,1)--(13,1);
  \node[below] at (14,-0.2) {${B_{(2,0)}}$};
  \draw[thick](16,0)--(16,1);
  \draw[thick](18,0)--(18,1);
  \draw[thick](16,0)--(18,0);
  \draw[thick](16,1)--(18,1);
  \node[below] at (17,-0.2) {${B_{(2,1)}}$};
  \draw[thick](19,0)--(19,1);
  \draw[thick](21,0)--(21,1);
  \draw[thick](19,0)--(21,0);
  \draw[thick](19,1)--(21,1);
  \node[below] at (20,-0.2) {${B_{(2,2)}}$};

\end{tikzpicture}
\end{equation*}

\begin{equation*}
\begin{tikzpicture}[xscale=0.4,yscale=0.5]
  \node at (-2,1.5) {$\Y_{(1,1)}=$};
  \draw[thick](1,0)--(1,3);
  \draw[thick](1,0)--(3,0);
  \draw[thick](3,0)--(3,3);
  \draw[thick](1,3)--(3,3);
  \node[below] at (2,-0.2) {$N$};
  \draw[thick](4,0)--(4,3);
  \draw[thick](4,0)--(6,0);
  \draw[thick](6,0)--(6,3);
  \draw[thick](4,3)--(6,3);
  \node[below] at (5,-0.2) {${B_{(1,0)}}$};
  \draw[thick](7,0)--(7,3);
  \draw[thick](9,0)--(9,3);
  \draw[thick](9,0)--(7,0);
  \draw[thick](9,3)--(7,3);
  \draw[fill] (7,1.5) circle [radius=0.15];
  \node[below] at (8,-0.2) {${B_{(1,1)}}$};
  \draw[thick](10,0)--(10,3);
  \draw[thick](12,0)--(12,3);
  \draw[thick](10,0)--(12,0);
  \draw[thick](10,3)--(12,3);
  \node[below] at (11,-0.2) {${B_{(1,2)}}$};
  \draw[thick](13,0)--(13,3);
  \draw[thick](15,0)--(15,3);
  \draw[thick](15,0)--(13,0);
  \draw[thick](15,3)--(13,3);
  \node[below] at (14,-0.2) {${B_{(2,0)}}$};
  \draw[thick](16,0)--(16,3);
  \draw[thick](18,0)--(18,3);
  \draw[thick](16,0)--(18,0);
  \draw[thick](16,3)--(18,3);
  \node[below] at (17,-0.2) {${B_{(2,1)}}$};
  \draw[thick](19,0)--(19,3);
  \draw[thick](21,0)--(21,3);
  \draw[thick](19,0)--(21,0);
  \draw[thick](19,3)--(21,3);
  \node[below] at (20,-0.2) {${B_{(2,2)}}$};

\end{tikzpicture}
\end{equation*}

\begin{equation*}
\begin{tikzpicture}[xscale=0.4,yscale=0.5]
  \node at (-4,1.5) {$(\Y_{(1,0)})^{2}\Y_{(1,1)}\Y_{(2,2)}=$};
  \draw[thick](1,0)--(1,3);
  \draw[thick](1,0)--(3,0);
  \draw[thick](3,0)--(3,3);
  \draw[thick](1,3)--(3,3);
  \node[below] at (2,-0.2) {$N$};
  \draw[thick](4,0)--(4,3);
  \draw[thick](4,0)--(6,0);
  \draw[thick](6,0)--(6,3);
  \draw[thick](4,3)--(6,3);
  \draw[fill] (4,1) circle [radius=0.15];
  \draw[fill] (4,2) circle [radius=0.15];
  \node[below] at (5,-0.2) {${B_{(1,0)}}$};
  \draw[thick](7,0)--(7,3);
  \draw[thick](9,0)--(9,3);
  \draw[thick](9,0)--(7,0);
  \draw[thick](9,3)--(7,3);
  \draw[fill] (7,1.5) circle [radius=0.15];
  \node[below] at (8,-0.2) {${B_{(1,1)}}$};
  \draw[thick](10,0)--(10,3);
  \draw[thick](12,0)--(12,3);
  \draw[thick](10,0)--(12,0);
  \draw[thick](10,3)--(12,3);
  \node[below] at (11,-0.2) {${B_{(1,2)}}$};
  \draw[thick](13,0)--(13,3);
  \draw[thick](15,0)--(15,3);
  \draw[thick](15,0)--(13,0);
  \draw[thick](15,3)--(13,3);
  \node[below] at (14,-0.2) {${B_{(2,0)}}$};
  \draw[thick](16,0)--(16,3);
  \draw[thick](18,0)--(18,3);
  \draw[thick](16,0)--(18,0);
  \draw[thick](16,3)--(18,3);
  \node[below] at (17,-0.2) {${B_{(2,1)}}$};
  \draw[thick](19,0)--(19,3);
  \draw[thick](21,0)--(21,3);
  \draw[thick](19,0)--(21,0);
  \draw[thick](19,3)--(21,3);
  \draw[fill] (19,1.5) circle [radius=0.15];
  \node[below] at (20,-0.2) {${B_{(2,2)}}$};

\end{tikzpicture}
\end{equation*}
and so on.
\end{example}

It will be very useful to define the following set of elements in $\mathcal{D}.$

\begin{definition}\label{def-L-elements}
  For any pair $(r,j)$ with $1\leq r \leq k$ and $0\leq j \leq l-2,$ we define the element $\calL_{(r,j)}$ as:
  \begin{equation*}
    \calL_{(r,j)}=\left\{
    \begin{array}{cc}
      -\Y_{(1,0)} & \quad \textrm{if}\quad (r,j)=(1,0). \\
      \quad & \quad \\
      \Y_{(r,j-1)}-\Y_{(r,j)} & \quad \textrm{if}\quad j\neq 0. \\
       \quad & \quad \\
      \Y_{(r-1,l-2)}-\Y_{(r,0)} & \quad \textrm{if}\quad r>1\wedge j=0.
    \end{array}
    \right.
  \end{equation*}
\end{definition}

\begin{lemma}\label{lema-calL-is-L}
  Let $1\leq r \leq k$ and $0\leq j \leq l-2.$ Then
  \begin{equation*}
    \calL_{(r,j)}=L_{m_{(r,j)}}e(\bif).
  \end{equation*}
\end{lemma}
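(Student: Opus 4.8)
The plan is to reduce everything to the definition of $L_r$ and then to identify the two resulting dots with the generators $\Y_{(r,j)}$ by means of the two preceding lemmas. Since $m_{(r,j)}\geq m_{(1,0)}=\epsilon+1>1$, Definition \ref{def-L-general} always applies in its second branch, giving $L_{m_{(r,j)}}=y_{m_{(r,j)}-1}-y_{m_{(r,j)}}$, so that
\begin{equation*}
L_{m_{(r,j)}}e(\bif)=y_{m_{(r,j)}-1}e(\bif)-y_{m_{(r,j)}}e(\bif)=y_{m_{(r,j)}-1}e(\bif)-\Y_{(r,j)},
\end{equation*}
where the last equality is just the notation $\Y_{(r,j)}=y_{m_{(r,j)}}e(\bif)$. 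Thus the whole statement reduces to computing the single term $y_{m_{(r,j)}-1}e(\bif)$, i.e. to determining which block contains the integer $m_{(r,j)}-1$ and then invoking Lemma \ref{lemma-clean-Y} or Lemma \ref{lemma-yz-inN-die}.

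I would then split into the three cases of Definition \ref{def-L-elements}, reading the block boundaries off equation \ref{def-blocks1}. If $(r,j)=(1,0)$, then $m_{(1,0)}-1=\epsilon$ is the last index of $N=[1:\epsilon]$, so Lemma \ref{lemma-yz-inN-die} gives $y_{m_{(1,0)}-1}e(\bif)=0$ and hence $L_{m_{(1,0)}}e(\bif)=-\Y_{(1,0)}=\calL_{(1,0)}$. If $j\neq 0$, then $1\leq j\leq l-2$ and $0\leq j-1<l-2$, so $B_{(r,j-1)}=[m_{(r,j-1)}:m_{(r,j)}-1]$; hence $m_{(r,j)}-1$ is the bottom index of $B_{(r,j-1)}$, Lemma \ref{lemma-clean-Y} yields $y_{m_{(r,j)}-1}e(\bif)=\Y_{(r,j-1)}$, and therefore $L_{m_{(r,j)}}e(\bif)=\Y_{(r,j-1)}-\Y_{(r,j)}=\calL_{(r,j)}$. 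Finally, if $r>1$ and $j=0$, then $1\leq r-1<k$, so $B_{(r-1,l-2)}=[m_{(r-1,l-2)}:m_{(r,0)}-1]$; hence $m_{(r,0)}-1$ is the bottom index of $B_{(r-1,l-2)}$, Lemma \ref{lemma-clean-Y} gives $y_{m_{(r,0)}-1}e(\bif)=\Y_{(r-1,l-2)}$, and therefore $L_{m_{(r,0)}}e(\bif)=\Y_{(r-1,l-2)}-\Y_{(r,0)}=\calL_{(r,0)}$.

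In each case the outcome matches exactly the corresponding branch of Definition \ref{def-L-elements}, which finishes the proof. There is no genuine analytic difficulty here: the entire content of the lemma is the combinatorial observation, extracted from \ref{def-blocks1}, that $m_{(r,j)}-1$ is always the last index of the block immediately preceding $B_{(r,j)}$ in the order $\succ$ (or the top of $N$ in the initial case). The only point requiring care is the bookkeeping of block boundaries — checking that the shifted indices $j-1$ and $r-1$ remain in the admissible ranges so that the correct line of \ref{def-blocks1} applies, and confirming that $m_{(r,j)}-1>\epsilon$ in the two non-trivial cases so that Lemma \ref{lemma-clean-Y}, rather than Lemma \ref{lemma-yz-inN-die}, is the tool that applies.
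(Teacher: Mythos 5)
Your proof is correct and is precisely the argument the paper compresses into its one-line proof (``It follows directly from definitions \ref{def-L-general}, \ref{def-L-elements} and lemma \ref{lemma-clean-Y}''): you simply carry out the block-boundary bookkeeping in the three cases of Definition \ref{def-L-elements}. Your explicit appeal to Lemma \ref{lemma-yz-inN-die} in the case $(r,j)=(1,0)$, to get $y_{\epsilon}e(\bif)=0$, is a detail the paper's citation list omits but genuinely needs, so if anything your write-up is slightly more complete.
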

\begin{proof}
  It follows directly from definitions \ref{def-L-general}, \ref{def-L-elements} and lemma \ref{lemma-clean-Y}.
\end{proof}

\begin{lemma}\label{lemma-L-generate-D}
  The algebra $\mathcal{D}$ is generated by
  $$\{\calL_{(r,j)}:1\leq r \leq k;\quad 0\leq j \leq l-2\}$$
\end{lemma}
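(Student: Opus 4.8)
The plan is to leverage Corollary \ref{coro-Y-generate-B}, which states that $\mathcal{D}$ is generated by the elements $\Y_{(r,j)}$, and to show that the two generating sets generate the same subalgebra. One inclusion is immediate: by Definition \ref{def-L-elements} each $\calL_{(r,j)}$ is a $\Z$-linear combination of the $\Y_{(r,j)}$, so the subalgebra generated by the $\calL_{(r,j)}$ is contained in $\mathcal{D}$. It therefore suffices to establish the reverse inclusion by proving that every generator $\Y_{(r,j)}$ lies in the subalgebra generated by the $\calL_{(r,j)}$.

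To obtain this, I would read Definition \ref{def-L-elements} as a telescoping system and invert it along the total order on the pairs $(r,j)$ induced by $m_{(r,j)}$ (equivalently, the block order $\succ$ of section \ref{sec-fund-resid-sec}). Concretely, $\calL_{(1,0)} = -\Y_{(1,0)}$ gives the base case $\Y_{(1,0)} = -\calL_{(1,0)}$, while in every other case the definition has the shape $\calL_{(r,j)} = \Y_{\mathrm{prev}} - \Y_{(r,j)}$, where $\Y_{\mathrm{prev}}$ is $\Y_{(r,j-1)}$ when $j\neq 0$ and $\Y_{(r-1,l-2)}$ when $j=0$; that is, $\Y_{\mathrm{prev}}$ is exactly the $\Y$ indexed by the immediate predecessor of $m_{(r,j)}$. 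Solving for $\Y_{(r,j)}$ and applying induction along this order yields the closed form
$$\Y_{(r,j)} = -\sum_{m_{(s,t)}\le m_{(r,j)}} \calL_{(s,t)}.$$
In particular each $\Y_{(r,j)}$ is a $\Z$-linear combination of the $\calL_{(s,t)}$, so it lies in the algebra they generate; together with Corollary \ref{coro-Y-generate-B} this proves the lemma.

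Since this amounts to a finite, unitriangular (hence invertible over $\Z$) change of generators, I do not expect any real obstacle. The only point that needs a little care is to verify that the recursion in Definition \ref{def-L-elements} genuinely links each index $(r,j)$ to its immediate predecessor in the order determined by $m_{(r,j)}$, so that the telescoping collapses correctly and the induction is well founded.
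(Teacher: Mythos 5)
Your proposal is correct and follows essentially the same route as the paper: the paper's proof also solves Definition \ref{def-L-elements} for $\Y_{(r,j)}$ and inducts on the numbers $m_{(r,j)}$, then invokes Corollary \ref{coro-Y-generate-B}. Your explicit closed form $\Y_{(r,j)}=-\sum_{m_{(s,t)}\leq m_{(r,j)}}\calL_{(s,t)}$ is just the paper's Corollary \ref{coro-telescopic-Y-in-L-2}, stated immediately afterwards.
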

\begin{proof}
By definition each element $\calL_{(r,j)}$ belongs to $\mathcal{D}.$
  On the other hand by definition $\Y_{(1,0)}$  belongs to the subalgebra generated by the elements $\calL_{(r,j)}.$

   If $(r,j)\neq(1,0),$ we have
  \begin{equation*}
    \Y_{(r,j)}=\left\{\begin{array}{cc}
                        \Y_{(r,j-1)}-\calL_{(r,j)} & \quad\textrm{if}\quad j\neq 0 \\
                        \quad & \quad \\
                        \Y_{(r-1,l-2)}-\calL_{(r,0)} & \quad\textrm{if}\quad j= 0
                      \end{array}\right.
  \end{equation*}
  then by an inductive argument on the set of numbers $m_{(r,j)}$ we obtain that $\Y_{(r,j)}$ belongs to the algebra generated by the elements $\calL_{(r,j)}.$
  Finally by corollary \ref{coro-Y-generate-B} we obtain the desired result.
\end{proof}

\begin{corollary}\label{coro-telescopic-Y-in-L-2} Let $1\leq r\leq k$ and $0\leq j\leq l-2.$ Then
\begin{equation*}
  \sum_{m_{(s,t)}\leq m_{(r,j)}}\calL_{(s,t)}=-\Y_{(r,j)}.
\end{equation*}
\end{corollary}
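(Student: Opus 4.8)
The plan is to prove the identity by a short induction on the totally ordered set of indices $m_{(r,j)}$, taking advantage of the fact that Definition \ref{def-L-elements} is set up precisely so that the sum telescopes against this ordering. No properties of the ambient algebra $\B$ are needed; everything happens inside the commutative subalgebra $\calD$ and uses only the defining expressions relating the $\calL_{(r,j)}$ and the $\Y_{(r,j)}$.

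First I would record the purely combinatorial fact that the order $\succ$ on the pairs $(r,j)$ (equivalently, the order by the value of $m_{(r,j)}$) is the lexicographic order, so that the \emph{immediate predecessor} of $(r,j)$ in $M_{m_{(r,j)}}$ is $(r,j-1)$ when $j\neq 0$, and is $(r-1,l-2)$ when $j=0$ and $r>1$, with $(1,0)$ the minimum. Monotonicity within a fixed $r$ is immediate from \ref{def-m-numbers1}, and the only point requiring an argument is that $m_{(r,l-2)}<m_{(r+1,0)}$; this follows from $m_{(r+1,0)}=m_{(r,0)}+\een$ (equation \ref{def-m-numbers2}) together with $m_{(r,l-2)}=m_{(r,0)}+\sum_{t=0}^{l-3}b_t$ and Lemma \ref{lemma-sum-of-bj}, since then $\sum_{t=0}^{l-3}b_t=\een-b_{l-2}<\een$ (note $b_{l-2}>0$ because $\hat{\kappa}_l-\hat{\kappa}_2<\een$).

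The crucial observation is that for every pair the ``positive'' term in $\calL_{(r,j)}$ is exactly $\Y$ evaluated at the immediate predecessor: explicitly $\calL_{(r,j)}=\Y_{(\mathrm{pred})}-\Y_{(r,j)}$ for $(r,j)\neq(1,0)$, while $\calL_{(1,0)}=-\Y_{(1,0)}$. The induction then runs as follows. The base case $(r,j)=(1,0)$ is immediate, since the sum reduces to the single term $\calL_{(1,0)}=-\Y_{(1,0)}$. For the inductive step, let $(r',j')$ denote the predecessor of $(r,j)$; splitting off the last summand and applying the inductive hypothesis for $(r',j')$ gives
\begin{equation*}
  \sum_{m_{(s,t)}\leq m_{(r,j)}}\calL_{(s,t)}
  =\Big(\sum_{m_{(s,t)}\leq m_{(r',j')}}\calL_{(s,t)}\Big)+\calL_{(r,j)}
  =-\Y_{(r',j')}+\big(\Y_{(r',j')}-\Y_{(r,j)}\big)=-\Y_{(r,j)}.
\end{equation*}

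I expect the only genuine work to be the bookkeeping of the first step, namely confirming that $\succ$ orders the pairs lexicographically so that the predecessor matches the offset used in Definition \ref{def-L-elements}. Once that is pinned down, the remainder is a pure telescoping cancellation and the conclusion is immediate.
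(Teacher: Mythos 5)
Your proof is correct and follows essentially the same route as the paper, whose proof is exactly the telescoping argument you carry out (``It follows by definition of $\calL_{(r,j)}$ and telescopic arguments''). The only difference is that you make explicit the bookkeeping the paper leaves implicit --- verifying via Lemma \ref{lemma-sum-of-bj} and the positivity of the $b_j$ that the order by $m_{(s,t)}$ is lexicographic, so that the predecessor in Definition \ref{def-L-elements} matches the summation order --- which is a worthwhile check but not a different method.
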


\begin{proof}
  It follows by definition of $\calL_{(r,j)}$ and telescopic arguments.
\end{proof}

\subsection{Fundamental relations}\label{sec-fundamental-relations-for-D}

The next lemma provides a convenient diagrammatic representation for elements $\calL_{(r,j)}$

\begin{lemma}\label{lemma-diag-L}
Let $1\leq r\leq k,$ $0\leq  j \leq l-2$ and $z=m_{(r,j)}-1.$ Then
\begin{equation*}
  \calL_{(r,j)}=(\Psi_{[1:z]})^{\ast}e(\bj)\Psi_{[1:z]},\quad\textrm{where}\quad\bj=s_{[1:z]}\cdot\bif.
\end{equation*}
\end{lemma}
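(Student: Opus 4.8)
The plan is to collapse the defining expression of $\calL_{(r,j)}$ to a single ``bigon'' and then to widen that bigon all the way to the left edge of the diagram, discarding along the way every summand whose residue sequence is $\kappa$-blob impossible.

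First I would record the reduction to a bigon. Since $m_{(r,j)}\geq m_{(1,0)}=\epsilon+1>1$, Lemma \ref{lema-calL-is-L} together with Definition \ref{def-L-general} gives $\calL_{(r,j)}=L_{m_{(r,j)}}e(\bif)=(y_{z}-y_{z+1})e(\bif)$, where $z=m_{(r,j)}-1$ and $z+1=m_{(r,j)}$. Applying relation \ref{dot-salta} to the adjacent strings $z,z+1$ exactly as in the computation \ref{im-dot-salta2} yields $y_{z+1}e(\bif)=y_{z}e(\bif)-\psi_{z}e(s_{z}\cdot\bif)\psi_{z}$, and therefore
\begin{equation*}
  \calL_{(r,j)}=\psi_{z}e(s_{z}\cdot\bif)\psi_{z},
\end{equation*}
the bigon in which the $(z+1)$-st string, whose residue is $i_{z+1}=\kappa_{l-j}$ by Lemma \ref{lemma-residues-mrj}, loops once over the $z$-th string and returns.

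Next I would drag the tip of this bigon leftward, one string at a time, until it sweeps over all of $1,\dots,z$. The product of a clean sweep is precisely $(\Psi_{[1:z]})^{\ast}e(\bj)\Psi_{[1:z]}$ with $\bj=s_{[1:z]}\cdot\bif=(\kappa_{l-j},i_{1},\dots,i_{z},i_{z+2},\dots,i_{m})$, which is nonzero by relation \ref{mal-inicio} since its first residue $\kappa_{l-j}\in\{\kappa_{2},\dots,\kappa_{l}\}$. Whenever the travelling string of residue $\kappa_{l-j}$ passes a string that is \emph{not a relative}, the crossing-slide relation \ref{cruce-pasa} (with $\alpha=0$) enlarges the loop and relation \ref{lazo} (with $\beta=1,\gamma=0$) straightens the double crossings at the tip, so the bigon widens with no correction; this is the mechanism already used in Lemmas \ref{lemma-yz-inN-die} and \ref{lemma-clean-Y}. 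At the positions where the travelling string meets cousins or a full trio $i_{z}+1,i_{z},i_{z}-1$, relations \ref{trio}, \ref{cruce-pasa}, \ref{dot-salta} and \ref{lazo} split off additional summands carrying a dot and a locally modified residue sequence, exactly like the terms $A_{1},A_{2},A_{21},A_{22}$ in the proof of Lemma \ref{lemma-clean-Y}.

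The heart of the argument, and the step I expect to be the main obstacle, is to verify that every one of these split-off summands vanishes. Each of them carries a residue sequence of one of the shapes $\bif\otimes j$, $(\dots,j,j+1,j+1)$ or $(\dots,\kappa_{s}-1,\kappa_{s},\kappa_{s})$ treated in Corollaries \ref{coro-bif-j-not-possible}, \ref{coro-bif-j-not-possible2} and \ref{coro-bif-j-not-possible3}, hence is $\kappa$-blob impossible and its idempotent is zero. The bookkeeping is the genuinely delicate part: since $z=m_{(r,j)}-1$ may span several periods of $\bif$, the travelling string meets such a configuration once per period, and one must check that after each encounter it is the \emph{dotless} pass-through that continues, so that the lone survivor is the clean sweep. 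Granting this, I conclude $\calL_{(r,j)}=(\Psi_{[1:z]})^{\ast}e(\bj)\Psi_{[1:z]}$. I note that the case $(r,j)=(1,0)$, where $z=\epsilon$ sits at the right end of the block $N$, is a transparent base case and model: there $i_{1},\dots,i_{\epsilon-1}$ are all non-relatives of $\kappa_{l}$ (using strong adjacency-freeness, which forces $\epsilon\leq\een-4$), so the sweep is clean throughout, no correction terms arise, and the final term survives precisely because $\kappa_{l}\in\{\kappa_{1},\dots,\kappa_{l}\}$.
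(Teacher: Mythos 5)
Your proposal is correct and follows essentially the same route as the paper's proof: reduce $\calL_{(r,j)}$ to the bigon $\psi_z e(s_z\cdot\bif)\psi_z$ via relation \ref{dot-salta}, sweep it leftward using relations \ref{cruce-pasa} and \ref{lazo} across non-relatives, and at each sister block $B_{(s,j)}$ with $s<r$ run the trio-relation split, discarding the correction terms exactly as in the paper's $A_1$, $A_2$, $A_{21}$, $A_{22}$ analysis. The only slight misstatement is that not every split-off summand dies by blob-impossibility: the dotless sister-bigon term (the analogue of $A_{21}$) vanishes by relation \ref{lazo} (a double crossing of sister strings is zero), which is precisely how the paper settles the per-period bookkeeping you flagged as the main obstacle.
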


\begin{proof}
First we aboard the case where $r=1.$ Note that for every $1\leq u<m_{(1,0)}$ we have $i_u\neq i_{m(1,0)},i_{m(1,0)}\pm 1.$ By relation \ref{dot-salta} we have
  \begin{equation*}
    \Y_{(1,0)}=y_ze(\bif)-A
  \end{equation*}
  where $z=m_{(1,0)}-1,$ $A=(\Psi_{[1:z]})^{\ast}e(\bj)\Psi_{[1:z]}$ and $\bj=s_{[1:z]}\cdot\bif$

  \begin{equation*}
\begin{tikzpicture}[xscale=0.4,yscale=0.5]
  \node at (-2,1.5) {$A=$};
  \draw[thick](1,0)--(1,3);
  \draw[thick](1,0)--(3,0);
  \draw[thick](3,0)--(3,3);
  \draw[thick](1,3)--(3,3);
  \node[below] at (2,-0.2) {$N$};
  \draw[thick,red](4,0)--(0,1);
  \draw[thick,red](0,2)--(0,1);
  \draw[thick,red](0,2)--(4,3);

  \draw[thick,red](4.3,0)--(6,0);
  \draw[thick,red](6,0)--(6,3);
  \draw[thick,red](4.3,3)--(6,3);
  \draw[thick,red](4,3.3)--(4,3);
  \node[below] at (5,-0.2) {${\color{red}B_{(1,0)}}$};
  \draw[thick](7,0)--(7,3);
  \draw[thick](9,0)--(9,3);
  \draw[thick](9,0)--(7,0);
  \draw[thick](9,3)--(7,3);
  \node[below] at (8,-0.2) {${B_{(1,1)}}$};
  \node[below] at (10,1.5) {$\cdots$};

\end{tikzpicture}
\end{equation*}

Since $z=m_{(1,0)}-1$ we have that $z\in N$ then $y_ze(\bif)=0$ (lemma \ref{lemma-yz-inN-die}). Then $\calL_{(1,0)}=-\Y_{(1,0)}=A.$

Similarly if $0<j\leq l-2,$ we can apply relation \ref{dot-salta} and  relation \ref{lazo} to obtain
\begin{equation}\label{eq1-lemma-diag-L}
  \Y_{(1,j)}=y_ze(\bif)-A
\end{equation}

where $z=m_{(1,j)}-1,$ $A=(\Psi_{[1:z]})^{\ast}e(\bj)\Psi_{[1:z]}$ and $\bj=s_{[1:z]}\cdot \bif.$

\begin{equation*}
\begin{tikzpicture}[xscale=0.4,yscale=1.5]
  \node at (-2,0.5) {$A=$};
  \draw[thick](1,-0.1)--(1,1.1);
  \draw[thick](1,-0.1)--(3,-0.1);
  \draw[thick](3,-0.1)--(3,1.1);
  \draw[thick](1,1.1)--(3,1.1);
  \node[below] at (2,-0.3) {$N$};
  \draw[thick](4,-0.1)--(4,1.1);
  \draw[thick](4,-0.1)--(6,-0.1);
  \draw[thick](6,-0.1)--(6,1.1);
  \draw[thick](4,1.1)--(6,1.1);
  \node[below] at (5,-0.3) {$B_{(1,0)}$};
  \draw[thick](7,-0.1)--(7,1.1);
  \draw[thick](9,-0.1)--(9,1.1);
  \draw[thick](9,-0.1)--(7,-0.1);
  \draw[thick](9,1.1)--(7,1.1);
  \node[below] at (8,-0.3) {$B_{(1,1)}$};
  \node at (11,0.5) {$\cdots$};
  \draw[thick](13,-0.1)--(13,1.1);
  \draw[thick](15,-0.1)--(15,1.1);
  \draw[thick](15,-0.1)--(13,-0.1);
  \draw[thick](15,1.1)--(13,1.1);
  \draw[thick,red](16,0)--(0,0.4);
  \draw[thick,red](16,0)--(16,-0.1);
  \draw[thick,red](16,1)--(16,1.1);
  \draw[thick,red](0,0.4)--(0,0.6);
  \draw[thick,red](0,0.6)--(16,1);
  \draw[thick,red](18,-0.1)--(18,1.1);
  \draw[thick,red](16.3,-0.1)--(18,-0.1);
  \draw[thick,red](16.3,1.1)--(18,1.1);
  \node[below] at (17,-0.3) {${\color{red}B_{(1,j)}}$};
  \node at (20,0.5) {$\cdots$};

\end{tikzpicture}
\end{equation*}
By lemma \ref{lemma-clean-Y} we have that $y_ze(\bif)=\Y_{(1,j-1)}$ then by definition \ref{def-L-elements} and reorganizing equation \ref{eq1-lemma-diag-L}, we conclude that $\calL_{(1,j)}=A,$ as desired.

If $r>1$ then by relations \ref{dot-salta} and \ref{lazo} we have:
\begin{equation*}
  \Y_{(r,j)}=y_ze(\bif)-A,
\end{equation*}
where $A=(\Psi_{[x:z]})^{\ast}e(\bj)\Psi_{[x:z]},$ $\bj=s_{[x:z]}\cdot\bif$ and $x=m_{(r-1,j)}+2.$

\begin{equation*}
\begin{tikzpicture}[xscale=0.5,yscale=0.65]
  \node at (-2,1){$A=$};
  \draw[thick](0,0)--(0,2);
  \draw[thick](0,0)--(2,0);
  \draw[thick](0,2)--(2,2);
  \draw[thick](2,0)--(2,2);
  \node[below]at (1,-1.3) {$N$};
  \node at (3,1){$\cdots$};
  \draw[thick](4,0)--(4,2);
  \draw[thick](4,0)--(6,0);
  \draw[thick](4,2)--(6,2);
  \draw[thick](6,0)--(6,2);
  \node[below]at (5,-1.3) {$B_{(r-1,j-1)}$};
  \draw[red,thick](7.5,0)--(7.5,2);
  \draw[red,thick](7.5,2)--(10.5,2);
  \draw[red,thick](7.5,0)--(10.5,0);
  \node[below]at (7.5,-0.3) {${\color{red}i_u}$};
  \node[below]at (9,-1.3) {${\color{red}B_{(r-1,j)}}$};
  \draw[red](8.5,0)--(8.5,2);
  \node[below]at (8.5,-0.3) {${\color{red}i_v}$};
  \draw[red](9.5,0)--(9.5,2);
  \draw[red,thick](10.5,0)--(10.5,2);
  \node at (11,1){$\cdots$};
  \draw[thick](13,0)--(13,2);
  \draw[thick](11.5,0)--(11.5,2);
  \draw[thick](11.5,0)--(13,0);
  \draw[thick](11.5,2)--(13,2);
  \draw[red,thick](14,0)--(9,0.8);
  \draw[red,thick](9,1.2)--(9,0.8);
  \draw[red,thick](9,1.2)--(14,2);
  \draw[red,thick](14,2.2)--(14,2);
  \draw[red,thick](14,-0.2)--(14,0);
  \draw[red,thick](16.5,0)--(16.5,2);
  \draw[red,thick](14.2,0)--(16.5,0);
  \draw[red,thick](14.2,2)--(16.5,2);
  \node at (17,1){$\cdots$};
  \node[below]at (9.5,-0.3) {${\color{red}i_x}$};
  \node[below]at (14.2,-0.3) {${\color{red}i_{z+1}}$};
  \node[below]at (15.5,-1.3) {${\color{red}B_{(r,j)}}$};

\end{tikzpicture}
\end{equation*}

  (here $u=m_{(r-1,j)},v=u+1$ then $i_u=i_{m_{(r,j)}}$ and $i_v=i_u-1$). If we apply relation \ref{trio} on the strings corresponding to $i_u,i_v,i_{z+1}$ we obtain $A=A_1-A_2,$ where

\begin{equation*}
\begin{tikzpicture}[xscale=0.5,yscale=0.65]
  \node at (-2,1){$A_1=$};
  \draw[thick](0,0)--(0,2);
  \draw[thick](0,0)--(2,0);
  \draw[thick](0,2)--(2,2);
  \draw[thick](2,0)--(2,2);
  \node[below]at (1,-1.3) {$N$};
  \node at (3,1){$\cdots$};
  \draw[thick](4,0)--(4,2);
  \draw[thick](4,0)--(6,0);
  \draw[thick](4,2)--(6,2);
  \draw[thick](6,0)--(6,2);
  \node[below]at (5,-1.3) {$B_{(r-1,j-1)}$};
  \draw[red,thick](7.5,2)--(10.5,2);
  \draw[red,thick](7.5,0)--(10.5,0);
  \node[below]at (7.5,-0.3) {${\color{red}i_u}$};
  \node[below]at (9,-1.3) {${\color{red}B_{(r-1,j)}}$};
  \draw[red](8.5,0)--(7.5,0.5);
  \draw[red](7.5,1.5)--(7.5,0.5);
  \draw[red](7.5,1.5)--(8.5,2);

  \node[below]at (8.5,-0.3) {${\color{red}i_v}$};
  \draw[red](9.5,0)--(9.5,2);
  \draw[red,thick](10.5,0)--(10.5,2);
  \node at (11,1){$\cdots$};
  \draw[thick](13,0)--(13,2);
  \draw[thick](11.5,0)--(11.5,2);
  \draw[thick](11.5,0)--(13,0);
  \draw[thick](11.5,2)--(13,2);

  \draw[red,thick](14,0)--(8.5,0.8);
  \draw[red,thick](9,1.2)--(9,0.8);
  \draw[red,thick](7.5,0)--(9,0.8);
  \draw[red,thick](9,1.2)--(14,2);

  \draw[red,thick](8.5,1.2)--(7.5,2);
  \draw[red,thick](8.5,1.2)--(8.5,0.8);
  \draw[red,thick](14,2.2)--(14,2);
  \draw[red,thick](14,-0.2)--(14,0);
  \draw[red,thick](16.5,0)--(16.5,2);
  \draw[red,thick](14.2,0)--(16.5,0);
  \draw[red,thick](14.2,2)--(16.5,2);
  \node at (17,1){$\cdots$};
  \node[below]at (9.5,-0.3) {${\color{red}i_x}$};
  \node[below]at (14.2,-0.3) {${\color{red}i_{z+1}}$};
  \node[below]at (15.5,-1.3) {${\color{red}B_{(r,j)}}$};

\end{tikzpicture}
\end{equation*}

and

\begin{equation*}
\begin{tikzpicture}[xscale=0.5,yscale=0.65]
  \node at (-2,1){$A_2=$};
  \draw[thick](0,0)--(0,2);
  \draw[thick](0,0)--(2,0);
  \draw[thick](0,2)--(2,2);
  \draw[thick](2,0)--(2,2);
  \node[below]at (1,-1.3) {$N$};
  \node at (3,1){$\cdots$};
  \draw[thick](4,0)--(4,2);
  \draw[thick](4,0)--(6,0);
  \draw[thick](4,2)--(6,2);
  \draw[thick](6,0)--(6,2);
  \node[below]at (5,-1.3) {$B_{(r-1,j-1)}$};
  \draw[red,thick](7.5,0)--(7.5,2);
  \draw[red,thick](7.5,2)--(10.5,2);
  \draw[red,thick](7.5,0)--(10.5,0);
  \node[below]at (7.5,-0.3) {${\color{red}i_u}$};
  \node[below]at (9,-1.3) {${\color{red}B_{(r-1,j)}}$};
  \draw[red](8.5,0)--(8.5,2);
  \node[below]at (8.5,-0.3) {${\color{red}i_v}$};
  \draw[red](9.5,0)--(9.5,2);
  \draw[red,thick](10.5,0)--(10.5,2);
  \node at (11,1){$\cdots$};
  \draw[thick](13,0)--(13,2);
  \draw[thick](11.5,0)--(11.5,2);
  \draw[thick](11.5,0)--(13,0);
  \draw[thick](11.5,2)--(13,2);
  \draw[red,thick](14,0)--(7,0.8);
  \draw[red,thick](7,1.2)--(7,0.8);
  \draw[red,thick](7,1.2)--(14,2);
  \draw[fill,red] (7,1) circle [radius=0.1];
  \draw[red,thick](14,2.2)--(14,2);
  \draw[red,thick](14,-0.2)--(14,0);
  \draw[red,thick](16.5,0)--(16.5,2);
  \draw[red,thick](14.2,0)--(16.5,0);
  \draw[red,thick](14.2,2)--(16.5,2);
  \node at (17,1){$\cdots$};
  \node[below]at (9.5,-0.3) {${\color{red}i_x}$};
  \node[below]at (14.2,-0.3) {${\color{red}i_{z+1}}$};
  \node[below]at (15.5,-1.3) {${\color{red}B_{(r,j)}}$};

\end{tikzpicture}
\end{equation*}

Then $A_1$ belongs to the ideal generated by $e(\bj),$ where $\bj=(i_1,\dots,i_{u-1},i_{u}-1,i_u,i_u,\dots)$ and by corollary \ref{coro-bif-j-not-possible}, we conclude that $A_1=0.$

On the other hand applying relation \ref{dot-salta}, lemma \ref{lemma-clean-Y} and relation \ref{lazo} in $A_2,$ we obtain $A_2=A_{21}-A_{22},$ where

\begin{equation*}
\begin{tikzpicture}[xscale=0.5,yscale=0.65]
  \node at (-2,1){$A_{21}=$};
  \draw[thick](0,0)--(0,2);
  \draw[thick](0,0)--(2,0);
  \draw[thick](0,2)--(2,2);
  \draw[thick](2,0)--(2,2);
  \node[below]at (1,-1.3) {$N$};
  \node at (3,1){$\cdots$};
  \draw[thick](4,0)--(4,2);
  \draw[thick](4,0)--(6,0);
  \draw[thick](4,2)--(6,2);
  \draw[thick](6,0)--(6,2);
  \node[below]at (5,-1.3) {$B_{(r-1,j-1)}$};
  \draw[red,thick](7.5,0)--(7.5,2);
  \draw[red,thick](7.5,2)--(10.5,2);
  \draw[red,thick](7.5,0)--(10.5,0);
  \node[below]at (7.5,-0.3) {${\color{red}i_u}$};
  \node[below]at (9,-1.3) {${\color{red}B_{(r-1,j)}}$};
  \draw[red](8.5,0)--(8.5,2);
  \node[below]at (8.5,-0.3) {${\color{red}i_v}$};
  \draw[red](9.5,0)--(9.5,2);
  \draw[red,thick](10.5,0)--(10.5,2);
  \node at (11,1){$\cdots$};
  \draw[thick](13,0)--(13,2);
  \draw[thick](11.5,0)--(11.5,2);
  \draw[thick](11.5,0)--(13,0);
  \draw[thick](11.5,2)--(13,2);
  \draw[red,thick](14,0)--(7,0.8);
  \draw[red,thick](7,1.2)--(7,0.8);
  \draw[red,thick](7,1.2)--(14,2);
  \draw[fill] (4,1) circle [radius=0.1];
  \draw[red,thick](14,2.2)--(14,2);
  \draw[red,thick](14,-0.2)--(14,0);
  \draw[red,thick](16.5,0)--(16.5,2);
  \draw[red,thick](14.2,0)--(16.5,0);
  \draw[red,thick](14.2,2)--(16.5,2);
  \node at (17,1){$\cdots$};
  \node[below]at (9.5,-0.3) {${\color{red}i_x}$};
  \node[below]at (14.2,-0.3) {${\color{red}i_{z+1}}$};
  \node[below]at (15.5,-1.3) {${\color{red}B_{(r,j)}}$};

\end{tikzpicture}
\end{equation*}

and

\begin{equation*}
\begin{tikzpicture}[xscale=0.5,yscale=0.65]
  \node at (-2,1){$A_{22}=$};
  \draw[thick](0,0)--(0,2);
  \draw[thick](0,0)--(2,0);
  \draw[thick](0,2)--(2,2);
  \draw[thick](2,0)--(2,2);
  \node[below]at (1,-1.3) {$N$};
  \node at (3,1){$\cdots$};
  \draw[thick](4,0)--(4,2);
  \draw[thick](4,0)--(6,0);
  \draw[thick](4,2)--(6,2);
  \draw[thick](6,0)--(6,2);
  \node[below]at (5,-1.3) {$B_{(r-1,j-1)}$};
  \draw[red,thick](7.5,0)--(7.5,2);
  \draw[red,thick](7.5,2)--(10.5,2);
  \draw[red,thick](7.5,0)--(10.5,0);
  \node[below]at (7.5,-0.3) {${\color{red}i_u}$};
  \node[below]at (9,-1.3) {${\color{red}B_{(r-1,j)}}$};
  \draw[red](8.5,0)--(8.5,2);
  \node[below]at (8.5,-0.3) {${\color{red}i_v}$};
  \draw[red](9.5,0)--(9.5,2);
  \draw[red,thick](10.5,0)--(10.5,2);
  \node at (11,1){$\cdots$};
  \draw[thick](13,0)--(13,2);
  \draw[thick](11.5,0)--(11.5,2);
  \draw[thick](11.5,0)--(13,0);
  \draw[thick](11.5,2)--(13,2);
  \draw[red,thick](14,0.1)--(3.5,0.8);
  \draw[red,thick](3.5,1.2)--(3.5,0.8);
  \draw[red,thick](3.5,1.2)--(14,1.9);
  \draw[red,thick](14,2.2)--(14,1.9);
  \draw[red,thick](14,-0.2)--(14,0.1);
  \draw[red,thick](16.5,0)--(16.5,2);
  \draw[red,thick](14.2,0)--(16.5,0);
  \draw[red,thick](14.2,2)--(16.5,2);
  \node at (17,1){$\cdots$};
  \node[below]at (9.5,-0.3) {${\color{red}i_x}$};
  \node[below]at (14.2,-0.3) {${\color{red}i_{z+1}}$};
  \node[below]at (15.5,-1.3) {${\color{red}B_{(r,j)}}$};

\end{tikzpicture}
\end{equation*}

By relation \ref{lazo} we conclude that $A_{21}=0$ since $i_u=i_{z+1}.$ Then
\begin{equation*}
  \Y_{(r,j)}=y_ze(\bif)-A_{22}.
\end{equation*}

Now by lemma \ref{lemma-clean-Y} we obtain

\begin{equation}\label{eq-despejar-L}
  \Y_{(r,j)}=\Y_{(r,j-1)}-A_{22}.
\end{equation}

Applying relation \ref{lazo} we can pull the string corresponding with $i_{z+1}$ in $A_{22}$ to the first position or at least to the next block $B_{(s,j)}$ $(1\leq s\leq r-1)$

\begin{equation*}
\begin{tikzpicture}[xscale=0.5,yscale=0.65]
  \node at (-2,1){$A_{22}=$};
  \draw[thick](0,0)--(0,2);
  \draw[thick](0,0)--(2,0);
  \draw[thick](0,2)--(2,2);
  \draw[thick](2,0)--(2,2);
  \node[below]at (1,-1.3) {$N$};
  \node at (3,1){$\cdots$};
  \draw[thick](4,0)--(4,2);
  \draw[thick](4,0)--(6,0);
  \draw[thick](4,2)--(6,2);
  \draw[thick](6,0)--(6,2);
  \draw[red,thick](7.5,0)--(7.5,2);
  \draw[red,thick](7.5,2)--(10.5,2);
  \draw[red,thick](7.5,0)--(10.5,0);
  \node[below]at (9,-1.3) {${\color{red}B_{(s,j)}}$};
  \draw[red](8.5,0)--(8.5,2);
  \draw[red](9.5,0)--(9.5,2);
  \draw[red,thick](10.5,0)--(10.5,2);
  \node at (11,1){$\cdots$};
  \draw[thick](13,0)--(13,2);
  \draw[thick](11.5,0)--(11.5,2);
  \draw[thick](11.5,0)--(13,0);
  \draw[thick](11.5,2)--(13,2);
  \draw[red,thick](14,0)--(9,0.8);
  \draw[red,thick](9,1.2)--(9,0.8);
  \draw[red,thick](9,1.2)--(14,2);
  \draw[red,thick](14,2.2)--(14,2);
  \draw[red,thick](14,-0.2)--(14,0);
  \draw[red,thick](16.5,0)--(16.5,2);
  \draw[red,thick](14.2,0)--(16.5,0);
  \draw[red,thick](14.2,2)--(16.5,2);
  \node at (17,1){$\cdots$};
  \node[below]at (14.2,-0.3) {${\color{red}i_{z+1}}$};
  \node[below]at (15.5,-1.3) {${\color{red}B_{(r,j)}}$};

\end{tikzpicture}
\end{equation*}
then we can apply recursively the same arguments as above and cross throughout this block:
\begin{equation*}
\begin{tikzpicture}[xscale=0.5,yscale=0.65]
  \node at (-2,1){$A_{22}=$};
  \draw[thick](0,0)--(0,2);
  \draw[thick](0,0)--(2,0);
  \draw[thick](0,2)--(2,2);
  \draw[thick](2,0)--(2,2);
  \node[below]at (1,-1.3) {$N$};
  \node at (3,1){$\cdots$};
  \draw[thick](4,0)--(4,2);
  \draw[thick](4,0)--(6,0);
  \draw[thick](4,2)--(6,2);
  \draw[thick](6,0)--(6,2);
  \draw[red,thick](7.5,0)--(7.5,2);
  \draw[red,thick](7.5,2)--(10.5,2);
  \draw[red,thick](7.5,0)--(10.5,0);
  \node[below]at (9,-1.3) {${\color{red}B_{(s,j)}}$};
  \draw[red](8.5,0)--(8.5,2);
  \draw[red](9.5,0)--(9.5,2);
  \draw[red,thick](10.5,0)--(10.5,2);
  \node at (11,1){$\cdots$};
  \draw[thick](13,0)--(13,2);
  \draw[thick](11.5,0)--(11.5,2);
  \draw[thick](11.5,0)--(13,0);
  \draw[thick](11.5,2)--(13,2);
  \draw[red,thick](14,0.1)--(3.5,0.8);
  \draw[red,thick](3.5,1.2)--(3.5,0.8);
  \draw[red,thick](3.5,1.2)--(14,1.9);
  \draw[red,thick](14,2.2)--(14,1.9);
  \draw[red,thick](14,-0.2)--(14,0.1);
  \draw[red,thick](16.5,0)--(16.5,2);
  \draw[red,thick](14.2,0)--(16.5,0);
  \draw[red,thick](14.2,2)--(16.5,2);
  \node at (17,1){$\cdots$};
  \node[below]at (14.2,-0.3) {${\color{red}i_{z+1}}$};
  \node[below]at (15.5,-1.3) {${\color{red}B_{(r,j)}}$};

\end{tikzpicture}
\end{equation*}

Since there are finite number of this type of blocks, in some point we will be able to pull the string to the first position. Therefore  $$A_{22}=(\Psi_{[1:z]})^{\ast}e(\bj)\Psi_{[1:z]}\quad (\bj=s_{[1:z]}\cdot \bif).$$

If we reorder equation \ref{eq-despejar-L} we obtain the desired result.
\end{proof}

\begin{lemma}\label{lemma-square-Y10-die}
  \begin{equation*}
    (\Y_{(1,0)})^{2}=0.
  \end{equation*}
\end{lemma}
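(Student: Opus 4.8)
The plan is to play the two available descriptions of $\Y_{(1,0)}$ against each other. On one hand $\Y_{(1,0)} = y_{m_{(1,0)}} e(\bif)$ by definition; on the other hand $\Y_{(1,0)} = -\calL_{(1,0)}$ (Definition \ref{def-L-elements}) together with the diagrammatic expression $\calL_{(1,0)} = (\Psi_{[1:z]})^{\ast} e(\bj) \Psi_{[1:z]}$ supplied by Lemma \ref{lemma-diag-L}, where $z = m_{(1,0)} - 1 = \epsilon$ and $\bj = s_{[1:z]} \cdot \bif$. First I would write
\begin{equation*}
 (\Y_{(1,0)})^2 = (-\calL_{(1,0)})\, (y_{m_{(1,0)}} e(\bif)) = -(\Psi_{[1:z]})^{\ast} e(\bj) \Psi_{[1:z]}\, y_{m_{(1,0)}} e(\bif),
\end{equation*}
so that the whole problem collapses to tracking a single dot $y_{m_{(1,0)}}$, sitting at the bottom on the strand that $\Psi_{[1:z]}$ drags from position $m_{(1,0)} = \epsilon+1$ up to position $1$.

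Next I would slide this dot upward along that strand through $\Psi_{[1:z]}$, using relations \ref{punto-arriba} and \ref{punto-abajo}, until it lands on strand $1$ just below $e(\bj)$; symbolically $\Psi_{[1:z]}\, y_{m_{(1,0)}} = y_1 \Psi_{[1:z]}$. The dragged strand has residue $i_{m_{(1,0)}} = \kappa_l$ (Lemma \ref{lemma-residues-mrj}), while the strands it crosses are exactly those of the block $N$, whose residues are $\kappa_1, \kappa_1 - 1, \dots, \kappa_1 - \epsilon + 1 = \kappa_l + 1$. Once the dot reaches strand $1$ I invoke relation \ref{dot-al-inicio}, which annihilates any diagram carrying a dot on its first strand, i.e. $y_1 e(\bj) = 0$. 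This gives
\begin{equation*}
 (\Y_{(1,0)})^2 = -(\Psi_{[1:z]})^{\ast}\, (y_1 e(\bj))\, \Psi_{[1:z]} = 0,
\end{equation*}
as claimed.

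The only delicate point — and the single place the argument could fail — is the dot slide: each time the dotted strand passes a crossing, relations \ref{punto-arriba}/\ref{punto-abajo} contribute a correction term $-\delta_{ij}$, and this term is nonzero precisely when the two crossing strands share a residue. I therefore must check that the dragged strand (residue $\kappa_l$) never crosses a strand of residue $\kappa_l$. This is immediate from the residue list of $N$ recorded above: strong adjacency-freeness forces $0 < \epsilon < \een$, so the $\epsilon$ residues $\kappa_1, \kappa_1 - 1, \dots, \kappa_l + 1$ are pairwise distinct modulo $\een$ and none of them equals $\kappa_l = \kappa_1 - \epsilon$. Hence every crossing in the slide is between distinct residues, all correction terms vanish, the dot travels freely to strand $1$, and relation \ref{dot-al-inicio} finishes the proof.
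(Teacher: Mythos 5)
Your proof is correct and follows essentially the same route as the paper's: both write $\Y_{(1,0)}=-\calL_{(1,0)}=-(\Psi_{[1:\epsilon]})^{\ast}e(\bj)\Psi_{[1:\epsilon]}$ via Lemma \ref{lemma-diag-L}, slide the extra dot along the dragged $\kappa_l$-strand using relation \ref{punto-abajo} until it becomes $y_1e(\bj)$, and conclude by relation \ref{dot-al-inicio}. Your explicit verification that no $\delta_{ij}$ correction terms arise (since the residues $\kappa_1,\kappa_1-1,\dots,\kappa_l+1$ of the block $N$ never equal $\kappa_l$, as $0<\epsilon<\een$) is a point the paper handles implicitly in its diagram, and is exactly the right check.
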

\begin{proof}
By definition \ref{def-L-elements} and lemma \ref{lemma-diag-L} we have
\begin{equation*}
-\Y_{(1,0)}=\calL_{(1,0)}=(\Psi_{[1:\epsilon]})^{\ast}e(\bj)\Psi_{[1:\epsilon]},
\end{equation*}
 where $\bj=s_{[1:\epsilon]}\cdot\bif,$ then
 \begin{equation*}
   (\Y_{(1,0)})^2=-\Y_{(1,0)}(\Psi_{[1:\epsilon]})^{\ast}e(\bj)\Psi_{[1:\epsilon]}.
 \end{equation*}

Finally by relations \ref{punto-abajo}, \ref{dot-al-inicio} we have $$(\Y_{(1,0)})^2=-(\Psi_{[1:z]})^{\ast}y_1e(\bj)\Psi_{[1:z]}=0$$

 \begin{equation*}
\begin{tikzpicture}[xscale=0.4,yscale=0.5]
  \node at (-3,1.5) {$(\Y_{(1,0)})^2=$};
  \draw[thick](1,0)--(1,3);
  \draw[thick](1,0)--(3,0);
  \draw[thick](3,0)--(3,3);
  \draw[thick](1,3)--(3,3);
  \node[below] at (2,-0.2) {$N$};

  \draw[thick,red](4,0)--(0,1);
  \draw[thick,red](0,2)--(0,1);
  \draw[thick,red](0,2)--(4,3);
  \draw[fill,red] (0,1.5) circle [radius=0.2];

  \draw[thick,red](4.3,0)--(6,0);
  \draw[thick,red](6,0)--(6,3);
  \draw[thick,red](4.3,3)--(6,3);
  \draw[thick,red](4,3.3)--(4,3);
  \node[below] at (5,-0.2) {${\color{red}B_{(1,0)}}$};
  \draw[thick](7,0)--(7,3);
  \draw[thick](9,0)--(9,3);
  \draw[thick](9,0)--(7,0);
  \draw[thick](9,3)--(7,3);
  \node[below] at (8,-0.2) {${B_{(1,1)}}$};
  \node[] at (10,1.5) {$\cdots$};
  \node[] at (12,1.5) {$=0$};

\end{tikzpicture}
\end{equation*}
\end{proof}

\begin{corollary}\label{coro-L10-die}
\begin{equation*}
 (\calL_{(1,0)})^2=0.
\end{equation*}
\end{corollary}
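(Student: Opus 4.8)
The plan is to observe that this corollary is an immediate consequence of the definition of $\calL_{(1,0)}$ together with the lemma just proved, so no new diagrammatic computation is required. The central point is that in Definition \ref{def-L-elements}, the element $\calL_{(1,0)}$ is set equal to $-\Y_{(1,0)}$ (this is the $(r,j)=(1,0)$ branch of the case distinction there). Hence squaring $\calL_{(1,0)}$ reduces, up to a harmless sign, to squaring $\Y_{(1,0)}$.

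Concretely, I would argue as follows. Starting from the identity $\calL_{(1,0)} = -\Y_{(1,0)}$ supplied by Definition \ref{def-L-elements}, one computes
\begin{equation*}
  (\calL_{(1,0)})^2 = (-\Y_{(1,0)})^2 = (\Y_{(1,0)})^2,
\end{equation*}
since the scalar $-1$ commutes with everything and $(-1)^2 = 1$. Lemma \ref{lemma-square-Y10-die} then gives $(\Y_{(1,0)})^2 = 0$, and substituting this into the chain of equalities yields $(\calL_{(1,0)})^2 = 0$, which is exactly the assertion.

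Because the whole content of the corollary lives inside the preceding Lemma \ref{lemma-square-Y10-die}, there is genuinely no obstacle to overcome here: the delicate work — unfolding $\calL_{(1,0)}$ via Lemma \ref{lemma-diag-L} into the diagram $(\Psi_{[1:\epsilon]})^{\ast} e(\bj)\Psi_{[1:\epsilon]}$, sliding a dot to the leftmost string through relations \ref{punto-abajo} and \ref{dot-al-inicio}, and killing it with $y_1 e(\bj) = 0$ — has already been carried out in that lemma. The corollary simply records the same fact phrased in terms of $\calL_{(1,0)}$ rather than $\Y_{(1,0)}$, which will be the convenient form for the fundamental relations developed in the remainder of section \ref{sec-fundamental-relations-for-D}.
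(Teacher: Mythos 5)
Your proof is correct and is exactly the paper's argument: the paper also derives the corollary directly from Definition \ref{def-L-elements} (which sets $\calL_{(1,0)}=-\Y_{(1,0)}$) and Lemma \ref{lemma-square-Y10-die}, merely without writing out the sign computation you made explicit.
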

\begin{proof}
  It follows directly from definition \ref{def-L-elements} and lemma \ref{lemma-square-Y10-die}.
\end{proof}

\begin{lemma}\label{lemma-square-Y1j}
Let $0<j\leq l-2,$ then
  \begin{equation*}
    (\Y_{(1,j)})^{2}=\Y_{(1,j-1)}\Y_{(1,j)}.
  \end{equation*}
\end{lemma}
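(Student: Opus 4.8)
The plan is to reduce the claimed identity to showing that $\Y_{(1,j)}\calL_{(1,j)}=0$, and then to establish the latter by the diagrammatic dot-sliding argument already used in the proof of Lemma~\ref{lemma-square-Y10-die}. By Definition~\ref{def-L-elements} we have $\calL_{(1,j)}=\Y_{(1,j-1)}-\Y_{(1,j)}$ for $j\neq 0$, so using that $\calD$ is commutative,
\begin{equation*}
  \Y_{(1,j)}\calL_{(1,j)}=\Y_{(1,j)}\Y_{(1,j-1)}-(\Y_{(1,j)})^2=\Y_{(1,j-1)}\Y_{(1,j)}-(\Y_{(1,j)})^2.
\end{equation*}
Hence the statement $(\Y_{(1,j)})^2=\Y_{(1,j-1)}\Y_{(1,j)}$ is equivalent to $\Y_{(1,j)}\calL_{(1,j)}=0$, which is what I would prove.

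Next I would invoke Lemma~\ref{lemma-diag-L}, which gives the diagrammatic form $\calL_{(1,j)}=(\Psi_{[1:z]})^{\ast}e(\bj)\Psi_{[1:z]}$ with $z=m_{(1,j)}-1$ and $\bj=s_{[1:z]}\cdot\bif$. Since $\Y_{(1,j)}=y_{m_{(1,j)}}e(\bif)$ and $m_{(1,j)}=z+1$, stacking $\Y_{(1,j)}$ on top yields
\begin{equation*}
  \Y_{(1,j)}\calL_{(1,j)}=y_{z+1}(\Psi_{[1:z]})^{\ast}e(\bj)\Psi_{[1:z]}.
\end{equation*}
In the diagram of $(\Psi_{[1:z]})^{\ast}$ the strand emanating from the top at position $z+1=m_{(1,j)}$ travels all the way down to position $1$, crossing exactly once each of the strands belonging to the blocks $N,B_{(1,0)},\dots,B_{(1,j-1)}$; this is precisely the ``long string'' depicted in the proof of Lemma~\ref{lemma-diag-L}.

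The heart of the argument is to slide the dot $y_{z+1}$ down along this strand by means of relations~\ref{punto-arriba} and~\ref{punto-abajo}. A correction term $-\delta_{ij}$ can only be produced when the dot passes a crossing of two \emph{sister} strands. The long string carries residue $i_{m_{(1,j)}}=\kappa_{l-j}$ (Lemma~\ref{lemma-residues-mrj}), and I would check that $\kappa_{l-j}$ does not occur among the residues of any block it crosses. Indeed, $\bif|_N$ ranges over $\{\kappa_1,\kappa_1-1,\dots,\kappa_l+1\}$ and, for $t\le l-3$, the block $B_{(1,t)}$ carries exactly the residues $\{\kappa_{l-t},\kappa_{l-t}-1,\dots,\kappa_{l-t-1}+1\}$; using $\hat{\kappa}_1<\hat{\kappa}_2<\dots<\hat{\kappa}_l<\hat{\kappa}_1+\een$ one sees that $\kappa_{l-j}$ lies in $B_{(1,t)}$ only when $t=j$, and never in $N$. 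Since every block crossed has index $t<j$, the long string meets no sister, the dot slides freely to position $1$, and the product becomes $(\Psi_{[1:z]})^{\ast}y_1e(\bj)\Psi_{[1:z]}$.

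Finally, relation~\ref{dot-al-inicio} gives $y_1e(\bj)=0$, whence $\Y_{(1,j)}\calL_{(1,j)}=0$ and the lemma follows. The main obstacle is the residue bookkeeping of the previous paragraph: one must verify carefully, from the strong adjacency-freeness of $\kappan$, that the residue $\kappa_{l-j}$ of the moving string is absent from every block $N,B_{(1,0)},\dots,B_{(1,j-1)}$ that it crosses, so that no $\delta_{ij}$ correction terms are generated. Should such terms appear, one would instead have to argue their vanishing through the blob-impossibility results (Corollaries~\ref{coro-bif-j-not-possible}--\ref{coro-bif-j-not-possible2}), exactly as in the proof of Lemma~\ref{lemma-clean-Y}.
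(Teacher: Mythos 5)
Your proposal is correct and follows essentially the same route as the paper's proof: reduce the identity to $\Y_{(1,j)}\calL_{(1,j)}=0$, use Lemma~\ref{lemma-diag-L} to write $\calL_{(1,j)}=(\Psi_{[1:z]})^{\ast}e(\bj)\Psi_{[1:z]}$, slide the dot to the first position via relation~\ref{punto-abajo}, and conclude with relation~\ref{dot-al-inicio}. The only difference is that you spell out the residue bookkeeping showing no $\delta_{ij}$ correction terms arise, which the paper leaves implicit in its diagram.
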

\begin{proof}
  By definition \ref{def-L-elements} and lemma \ref{lemma-diag-L} we have:
  \begin{equation*}
    \Y_{(1,j-1)}-\Y_{(1,j)}=\calL_{(1,j)}=(\Psi_{[1:z]})^{\ast}e(\bj)\Psi_{[1:z]},
  \end{equation*}
  where $z=m_{(1,j)}-1$ and $\bj=s_{[1:z]}\cdot \bif.$

  Therefore

  \begin{equation*}
    \Y_{(1,j-1)}\Y_{(1,j)}-(\Y_{(1,j)})^2=\Y_{(1,j)}\calL_{(1,j)}=(\Psi_{[1:z]})^{\ast}y_1e(\bj)\Psi_{[1:z]},
  \end{equation*}
  the last equation is obtained applying relation \ref{punto-abajo}

  \begin{equation*}
\begin{tikzpicture}[xscale=0.5,yscale=0.65]
  \node at (0,1){$\Y_{(1,j)}\calL_{(1,j)}=$};
  \draw[thick](4,0)--(4,2);
  \draw[fill,red] (3.5,1) circle [radius=0.1];
  \draw[thick](4,0)--(6,0);
  \draw[thick](4,2)--(6,2);
  \draw[thick](6,0)--(6,2);
  \draw[thick](7.5,0)--(7.5,2);
  \draw[thick](7.5,2)--(9.5,2);
  \draw[thick](7.5,0)--(9.5,0);
  \draw[thick](9.5,0)--(9.5,2);
  \node at (11,1){$\cdots$};
  \draw[thick](13,0)--(13,2);
  \draw[thick](11.5,0)--(11.5,2);
  \draw[thick](11.5,0)--(13,0);
  \draw[thick](11.5,2)--(13,2);
  \draw[red,thick](14,0.1)--(3.5,0.8);
  \draw[red,thick](3.5,1.2)--(3.5,0.8);
  \draw[red,thick](3.5,1.2)--(14,1.9);
  \draw[red,thick](14,2.2)--(14,1.9);
  \draw[red,thick](14,-0.2)--(14,0.1);
  \draw[red,thick](16.5,0)--(16.5,2);
  \draw[red,thick](14.2,0)--(16.5,0);
  \draw[red,thick](14.2,2)--(16.5,2);
  \node[below]at (15.5,-1.3) {${\color{red}B_{(1,j)}}$};

\end{tikzpicture}
\end{equation*}

  By relation \ref{dot-al-inicio} we obtain

  \begin{equation*}
    \Y_{(1,j-1)}\Y_{(1,j)}-(\Y_{(1,j)})^2=\Y_{(1,j)}\calL_{(1,j)}=0
  \end{equation*}
  and the desired result.
\end{proof}

\begin{corollary}\label{coro-square-L1j}
 \begin{equation*}
   (\calL_{(1,j)})^2=-\sum_{0\leq v\leq j-1}\calL_{(1,v)}\calL_{(1,j)}.
\end{equation*}
\end{corollary}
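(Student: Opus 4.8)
The plan is to reduce the statement to a purely formal computation in the commutative algebra $\calD$, using Lemma \ref{lemma-square-Y1j} and Corollary \ref{coro-telescopic-Y-in-L-2}. Throughout I assume $0<j\leq l-2$; the remaining case $j=0$ is exactly Corollary \ref{coro-L10-die}, where the sum on the right-hand side is empty and the claim reads $(\calL_{(1,0)})^2=0$.

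First I would start from Definition \ref{def-L-elements}, which gives $\calL_{(1,j)}=\Y_{(1,j-1)}-\Y_{(1,j)}$, and expand the square using commutativity of $\calD$:
$$(\calL_{(1,j)})^2=(\Y_{(1,j-1)})^2-2\,\Y_{(1,j-1)}\Y_{(1,j)}+(\Y_{(1,j)})^2.$$
Next I would substitute the identity $(\Y_{(1,j)})^2=\Y_{(1,j-1)}\Y_{(1,j)}$ of Lemma \ref{lemma-square-Y1j}. Two of the three terms then collapse, and factoring yields
$$(\calL_{(1,j)})^2=(\Y_{(1,j-1)})^2-\Y_{(1,j-1)}\Y_{(1,j)}=\Y_{(1,j-1)}\bigl(\Y_{(1,j-1)}-\Y_{(1,j)}\bigr)=\Y_{(1,j-1)}\calL_{(1,j)}.$$

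It then remains to express $\Y_{(1,j-1)}$ through the $\calL$'s. I would apply Corollary \ref{coro-telescopic-Y-in-L-2} with the pair $(r,j)$ replaced by $(1,j-1)$, obtaining $\Y_{(1,j-1)}=-\sum_{m_{(s,t)}\leq m_{(1,j-1)}}\calL_{(s,t)}$. The one point meriting a short check is that for the first row the blocks satisfying $m_{(s,t)}\leq m_{(1,j-1)}$ are precisely those with $s=1$ and $0\leq t\leq j-1$: the numbers $m_{(1,t)}$ increase with $t$ by \ref{def-m-numbers1}, while $m_{(1,l-2)}<m_{(1,0)}+\een=m_{(2,0)}$ by Lemma \ref{lemma-sum-of-bj} (since each $b_t>0$), so no block with $s\geq 2$ enters the sum. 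Hence $\Y_{(1,j-1)}=-\sum_{0\leq v\leq j-1}\calL_{(1,v)}$.

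Substituting this into the previous display and using commutativity once more gives
$$(\calL_{(1,j)})^2=\Y_{(1,j-1)}\calL_{(1,j)}=-\sum_{0\leq v\leq j-1}\calL_{(1,v)}\calL_{(1,j)},$$
which is the claimed identity. I do not expect any genuine obstacle here: once Lemma \ref{lemma-square-Y1j} and Corollary \ref{coro-telescopic-Y-in-L-2} are available, the argument is a two-line algebraic manipulation, and the only care needed is the bookkeeping of exactly which indices $(s,t)$ occur in the telescoping sum.
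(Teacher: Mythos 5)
Your proof is correct and follows exactly the route the paper intends: the paper's own proof is a one-line citation of Definition \ref{def-L-elements}, Corollary \ref{coro-telescopic-Y-in-L-2} and Lemma \ref{lemma-square-Y1j}, which are precisely the three ingredients you combine. Your expansion of the square, the telescoping identity $\Y_{(1,j-1)}=-\sum_{0\leq v\leq j-1}\calL_{(1,v)}$ (including the index check that no block with $s\geq 2$ enters), and the treatment of $j=0$ via Corollary \ref{coro-L10-die} are all accurate.
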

\begin{proof}
  It follows directly from definition \ref{def-L-elements}, corollary \ref{coro-telescopic-Y-in-L-2}, and lemma \ref{lemma-square-Y1j}.
\end{proof}



The following definition provide a convenient extension of elements $\calL_{(r,j)}.$
\begin{definition}\label{def-element-eraser-Lrj-left}
  Let $1 < r\leq k$ and $0\leq j\leq l-2,$ and $1\leq s<r.$   We define the elements $E^{(s)}(\calL_{(r,j)})$ as follows:
  \begin{equation*}
    E^{(s)}(\calL_{(r,j)})=(\psi_{1}\cdots\overbrace{\psi_{m_{(s,j)}}}\cdots\psi_{z})^{\ast}e(\bj)\Psi_{[1:z]}.
  \end{equation*}

  where $\overbrace{\psi_{m_{(s,j)}}}$ means that the factor $\psi_{m_{(s,j)}}$ was erased from the product $\Psi_{[1:z]}=\psi_1\cdots\psi_z.$

  Analogously if $1\leq s_1<s_2<r$ we define

  \begin{equation*}
    E^{(s_1,s_2)}(\calL_{(r,j)})=
    (\psi_1\cdots\overbrace{\psi_{m_{(s_1,j)}}}\cdots\overbrace{\psi_{m_{(s_2,j)}}}\cdots\psi_z)^{\ast}e(\bj)\Psi_{[1:z]},
  \end{equation*}

  and more generally we define (naturally) $E^{(s_1,s_2,\dots,s_g)}(\calL_{(r,j)})$ for any $1\leq s_1<s_2<\cdots<s_g<r.$
\end{definition}

  Diagrammatically, the elements $E^{(s_1,\dots,s_g)}\calL_{(r,j)}$ look like:

   \begin{equation*}
\begin{tikzpicture}[xscale=0.5,yscale=0.65]
  \node at (-2,1){$E^{(s)}(\calL_{(r,j)})=$};
  \node[below]at (5,-1.3) {$N$};
  \node at (6.9,1){$\cdots$};
  \draw[thick](4,0)--(4,2);
  \draw[thick](4,0)--(6,0);
  \draw[thick](4,2)--(6,2);
  \draw[thick](6,0)--(6,2);
  \draw[red,thick](7.5,0)--(7.5,1.5);
  \draw[red,thick](7.7,2)--(9.5,2);
  \draw[red,thick](7.7,0)--(9.5,0);
  \draw[red,thick](7.5,0)--(7.5,-0.1);
  \node[below]at (8.5,-1.3) {${\color{red}B_{(s,j)}}$};
  \draw[red,thick](9.5,0)--(9.5,2);
  \node at (10.5,1){$\cdots$};
  \draw[thick](13,0)--(13,2);
  \draw[thick](11.5,0)--(11.5,2);
  \draw[thick](11.5,0)--(13,0);
  \draw[thick](11.5,2)--(13,2);
  \draw[red,thick](14,0.1)--(3.5,0.8);
  \draw[red,thick](3.5,1.2)--(3.5,0.8);
  \draw[red,thick](3.5,1.2)--(7.5,1.8);
  \draw[red,thick](7.5,1.8)--(7.5,2.2);
  \draw[red,thick](14,2.2)--(14,1.9);
  \draw[red,thick](14,1.9)--(7.5,1.5);
  \draw[red,thick](14,-0.2)--(14,0.1);
  \draw[red,thick](16.5,0)--(16.5,2);
  \draw[red,thick](14.2,0)--(16.5,0);
  \draw[red,thick](14.2,2)--(16.5,2);
  \node at (17.5,1){$\cdots$};
  \node[below]at (15.5,-1.3) {${\color{red}B_{(r,j)}}$};

\end{tikzpicture}
\end{equation*}

\begin{equation*}
\begin{tikzpicture}[xscale=0.5,yscale=0.65]
  \node at (-4,1){$E^{(s_1,s_2)}(\calL_{(r,j)})=$};
  \draw[thick](0,0)--(0,2);
  \draw[thick](0,0)--(2,0);
  \draw[thick](0,2)--(2,2);
  \draw[thick](2,0)--(2,2);
  \node[below]at (1,-1.3) {$N$};
  \node at (2.5,1){$\cdots$};
  \node at (6.9,1){$\cdots$};
  \draw[thick,red](3.5,-0.1)--(3.5,1.5);
  \draw[thick,red](3.5,1.8)--(3.5,2.3);
  \draw[thick,red](3.7,0)--(5.5,0);
  \draw[thick,red](3.7,2)--(5.5,2);
  \draw[thick,red](5.5,0)--(5.5,2);
  \node[below]at (4.5,-1.3) {${\color{red}B_{(s_1,j)}}$};
  \draw[red,thick](7.5,0)--(7.5,1.5);
  \draw[red,thick](3.5,1.5)--(7.5,1.8);
  \draw[red,thick](7.7,2)--(9.5,2);
  \draw[red,thick](7.7,0)--(9.5,0);
  \draw[red,thick](7.5,0)--(7.5,-0.1);
  \node[below]at (8.5,-1.3) {${\color{red}B_{(s_2,j)}}$};
  \draw[red,thick](9.5,0)--(9.5,2);
  \node at (10.5,1){$\cdots$};
  \draw[thick](13,0)--(13,2);
  \draw[thick](11.5,0)--(11.5,2);
  \draw[thick](11.5,0)--(13,0);
  \draw[thick](11.5,2)--(13,2);
  \draw[red,thick](14,0.1)--(-0.5,0.8);
  \draw[red,thick](-0.5,1.2)--(-0.5,0.8);
  \draw[red,thick](-0.5,1.2)--(3.5,1.8);
  \draw[red,thick](7.5,1.8)--(7.5,2.3);
  \draw[red,thick](14,2.2)--(14,1.9);
  \draw[red,thick](14,1.9)--(7.5,1.5);
  \draw[red,thick](14,-0.2)--(14,0.1);
  \draw[red,thick](16.5,0)--(16.5,2);
  \draw[red,thick](14.2,0)--(16.5,0);
  \draw[red,thick](14.2,2)--(16.5,2);
  \node at (17.5,1){$\cdots$};
  \node[below]at (15.5,-1.3) {${\color{red}B_{(r,j)}}$};

\end{tikzpicture}
\end{equation*}
and so on.

\begin{lemma}\label{lemma-Y1jE1Lrj-die}
Let $1<s_2<\cdots <s_g<r\leq k$ and $0\leq j\leq l-2.$ Then
  \begin{equation*}
    \Y_{(1,j)}E^{(1,s_2,\dots,s_g)}(\calL_{(r,j)})=0.
  \end{equation*}
\end{lemma}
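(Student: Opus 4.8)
The plan is to slide the dot contributed by $\Y_{(1,j)}$ along a distinguished strand of $E^{(1,s_2,\dots,s_g)}(\calL_{(r,j)})$ until it reaches the first position, where relation \ref{dot-al-inicio} forces it to vanish. First I would unwind the diagrammatics. Writing $z=m_{(r,j)}-1$, definition \ref{def-element-eraser-Lrj-left} gives $E^{(1,s_2,\dots,s_g)}(\calL_{(r,j)})=(\psi_1\cdots\overbrace{\psi_{m_{(1,j)}}}\cdots\psi_z)^{\ast}e(\bj)\Psi_{[1:z]}$ (with the additional factors $\psi_{m_{(s_u,j)}}$ also deleted), where $\bj=s_{[1:z]}\cdot\bif$. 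In the bottom factor $\Psi_{[1:z]}$ the strand starting at position $m_{(r,j)}$ is dragged to the first position, so at the level of $e(\bj)$ it sits at position $1$ carrying residue $i_{m_{(r,j)}}=\kappa_{l-j}$ (lemma \ref{lemma-residues-mrj}). In the erased top factor this strand travels back to the right, but the deletion of $\psi_{m_{(1,j)}}$ stops it exactly at position $m_{(1,j)}$: every surviving crossing $\psi_p$ with $p>m_{(1,j)}$ acts strictly to its right and leaves it fixed. Hence this strand, which I will call the \emph{arch}, emerges at the top in position $m_{(1,j)}$, precisely where $\Y_{(1,j)}=y_{m_{(1,j)}}e(\bif)$ deposits its dot.

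Next I would slide this dot downwards along the arch, from the top position $m_{(1,j)}$ to the turning point at position $1$. On the way the dot only meets the crossings $\psi_{m_{(1,j)}-1},\dots,\psi_1$ of the top factor, at which the arch crosses the strands occupying positions $2,\dots,m_{(1,j)}$ at the level of $e(\bj)$; these carry residues $i_1,\dots,i_{m_{(1,j)}-1}$. By lemma \ref{lemma-residues-mrj} the residue $\kappa_{l-j}$ first occurs at index $m_{(1,j)}$, so none of $i_1,\dots,i_{m_{(1,j)}-1}$ equals $\kappa_{l-j}$; the arch therefore crosses no sister along this stretch. Consequently each application of relations \ref{punto-arriba} and \ref{punto-abajo} has vanishing Kronecker correction, and the dot passes through all of these crossings freely, without producing any error term.

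Finally, having brought the dot to position $1$ at the level of $e(\bj)$, the portion of the diagram around the dot is $y_1e(\bj)$, which is zero by relation \ref{dot-al-inicio}; by bilinearity of the product the whole element vanishes. I expect the main obstacle to lie in the first paragraph rather than the last: one must verify carefully that the additional deletions at $m_{(s_2,j)},\dots,m_{(s_g,j)}$, all of which sit at indices strictly greater than $m_{(1,j)}$ since $s_2,\dots,s_g>1$, neither alter the route of the arch from position $1$ up to $m_{(1,j)}$ nor the residues it meets there, so that the single deletion at $s_1=1$ is what governs the computation while the remaining erasures only rearrange strands strictly to the right and are irrelevant to the sliding argument.
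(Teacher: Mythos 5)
Your proof is correct and follows essentially the same route as the paper: the dot deposited by $\Y_{(1,j)}$ at the top of the arch strand of $E^{(1,s_2,\dots,s_g)}(\calL_{(r,j)})$ is transported via relations \ref{punto-arriba} and \ref{punto-abajo} down to the first position with vanishing Kronecker corrections (since by lemma \ref{lemma-residues-mrj} the arch crosses no sister on that stretch) and is then annihilated by relation \ref{dot-al-inicio}. The paper only writes out the case $E^{(1)}(\calL_{(r,j)})$ and declares the general case analogous, whereas you make explicit the one point that analogy rests on --- that the extra deletions at $m_{(s_2,j)},\dots,m_{(s_g,j)}$ lie strictly to the right of $m_{(1,j)}$ and so affect neither the arch's route nor the residues it meets.
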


\begin{proof}
First note that
 \begin{equation*}
 \begin{tikzpicture}[xscale=0.5,yscale=0.65]
  \node at (-2,1){$\Y_{(1,j)}E^{(1)}(\calL_{(r,j)})=$};
  \node[below]at (5,-1.3) {$N$};
  \node at (6.9,1){$\cdots$};
  \draw[thick](4,0)--(4,2);
  \draw[thick](4,0)--(6,0);
  \draw[thick](4,2)--(6,2);
  \draw[thick](6,0)--(6,2);
  \draw[red,thick](7.5,0)--(7.5,1.5);
  \draw[red,thick](7.7,2)--(9.5,2);
  \draw[red,thick](7.7,0)--(9.5,0);
  \draw[red,thick](7.5,0)--(7.5,-0.1);
  \node[below]at (8.5,-1.3) {${\color{red}B_{(1,j)}}$};
  \draw[red,thick](9.5,0)--(9.5,2);
  \node at (10.5,1){$\cdots$};
  \draw[thick](13,0)--(13,2);
  \draw[thick](11.5,0)--(11.5,2);
  \draw[thick](11.5,0)--(13,0);
  \draw[thick](11.5,2)--(13,2);
  \draw[red,thick](14,0.1)--(3.5,0.8);
  \draw[red,thick](3.5,1.2)--(3.5,0.8);
  \draw[red,thick](3.5,1.2)--(7.5,1.8);
  \draw[red,thick](7.5,1.8)--(7.5,2.3);
  \draw[fill,red] (7.5,2.1) circle [radius=0.1];
  \draw[red,thick](14,2.2)--(14,1.9);
  \draw[red,thick](14,1.9)--(7.5,1.5);
  \draw[red,thick](14,-0.2)--(14,0.1);
  \draw[red,thick](16.5,0)--(16.5,2);
  \draw[red,thick](14.2,0)--(16.5,0);
  \draw[red,thick](14.2,2)--(16.5,2);
  \node at (17.5,1){$\cdots$};
  \node[below]at (15.5,-1.3) {${\color{red}B_{(r,j)}}$};

\end{tikzpicture}
\end{equation*}
By definition of $m_{(1,j)},$ and by relation \ref{punto-abajo}, we can transport the dot freely to the left (first position) and then we can apply relation \ref{dot-al-inicio} to obtain  $$\Y_{(1,j)}E^{(1)}(\calL_{(r,j)})=0.$$ The general case is analogous.
\end{proof}

\begin{lemma}\label{lemma-PsEsLrj-die}
 Let $1<r\leq k$ and $0\leq j\leq l-2.$ For each $1\leq s <r$ we have
\begin{equation*}
\left(\prod_{1\leq u\leq s}{\Y_{(u,j)}}\right)E^{(s)}(\calL_{(r,j)})=0.
\end{equation*}
\end{lemma}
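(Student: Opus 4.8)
The plan is to argue by induction on $s$, keeping $r$ and $j$ fixed. The base case $s=1$ is precisely the case $g=1$ of Lemma \ref{lemma-Y1jE1Lrj-die}, which gives $\Y_{(1,j)}E^{(1)}(\calL_{(r,j)})=0$. So from now on I would assume $s>1$ and that the statement holds for every smaller erasure index with the same $r$ and $j$.

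For the inductive step I would first read off the diagram of $E^{(s)}(\calL_{(r,j)})$ from Definition \ref{def-element-eraser-Lrj-left}: the bottom half $\Psi_{[1:z]}$ routes the first strand of $B_{(r,j)}$ (whose residue is $i_{m_{(r,j)}}=\kappa_{l-j}$ by Lemma \ref{lemma-residues-mrj}) all the way to position $1$, while on the top half the deletion of $\psi_{m_{(s,j)}}$ forces this ``rainbow'' strand to terminate at the first strand of $B_{(s,j)}$. Thus a single strand of residue $\kappa_{l-j}$ joins the bottom of $B_{(r,j)}$ to the top of $B_{(s,j)}$. By Lemma \ref{lemma-clean-Y}, multiplying on the left by $\prod_{u\le s}\Y_{(u,j)}$ places a dot at the head of each block $B_{(1,j)},\dots,B_{(s,j)}$; in particular the factor $\Y_{(s,j)}$ is a dot sitting on the very top endpoint of this rainbow strand.

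The core of the argument is to slide the dot $\Y_{(s,j)}$ down the rainbow strand towards position $1$, using relations \ref{punto-arriba} and \ref{punto-abajo}. Since those relations produce a correction only when the two crossing strands are sisters, and since by Lemma \ref{lemma-residues-mrj} the only strands of residue $\kappa_{l-j}$ lying to the left of $B_{(s,j)}$ are the heads of the blocks $B_{(u,j)}$ with $u<s$, the dot passes freely through every other crossing (cousin and non-relative crossings contribute $\delta=0$). The main term, in which the dot reaches the first position, vanishes by relation \ref{dot-al-inicio}. Each correction term occurs at a sister-crossing with the head of some $B_{(u,j)}$ ($1\le u\le s-1$), where \ref{punto-abajo} replaces the crossing by two parallel strands; this cuts the rainbow exactly at $B_{(u,j)}$ and rebuilds the diagram of $E^{(u)}(\calL_{(r,j)})$, while consuming the sliding dot and leaving $\Y_{(1,j)},\dots,\Y_{(s-1,j)}$ in place at the heads of their blocks. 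Reading the resulting diagram as a product (dots on top, $E^{(u)}$ below) and using that dots commute, this correction equals, up to sign, $\big(\prod_{u<v\le s-1}\Y_{(v,j)}\big)\big(\prod_{1\le w\le u}\Y_{(w,j)}\big)E^{(u)}(\calL_{(r,j)})$, whose inner factor $\big(\prod_{w\le u}\Y_{(w,j)}\big)E^{(u)}(\calL_{(r,j)})$ is zero by the inductive hypothesis. Hence every correction term vanishes and the whole product is $0$. The case $j=l-2$ is treated identically, replacing $B_{(r,l-2)}$ and $m_{(r+1,0)}$ as in Lemma \ref{lemma-clean-Y}.

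I expect the main obstacle to be the diagrammatic bookkeeping in this step: verifying that the parallel-strand term produced by \ref{punto-abajo} at the head of $B_{(u,j)}$ is genuinely the diagram of $E^{(u)}(\calL_{(r,j)})$, with the surviving dots sitting as honest left multipliers and no stray decorations, so that the commuting-dots factorization above is valid. I would also confirm that no surviving term carries a $\kappa$-blob impossible idempotent; should one appear when an uncrossing creates three consecutive related residues, it is killed by Corollaries \ref{coro-bif-j-not-possible}, \ref{coro-bif-j-not-possible2} and \ref{coro-bif-j-not-possible3}, exactly as in the proofs of Lemmas \ref{lemma-clean-Y} and \ref{lemma-diag-L}.
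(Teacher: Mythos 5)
There is a genuine gap in your inductive step, and it lies exactly where you predicted the ``diagrammatic bookkeeping'' would be delicate. When the dot $\Y_{(s,j)}$ slides along the rainbow strand and passes the sister crossing at the head of $B_{(u,j)}$, the correction term produced by relation \ref{punto-abajo} is \emph{not} the diagram of $E^{(u)}(\calL_{(r,j)})$. Uncrossing that sister crossing splits the vertical strand at $m_{(u,j)}$ into two pieces: the rainbow now terminates at the top of position $m_{(u,j)}$, the lower piece of the old vertical strand continues up-right and terminates at the top of position $m_{(s,j)}$, and the strand from the bottom of $m_{(s,j)}$ still runs to the top of $m_{(r,j)}$. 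The underlying permutation is therefore a $3$-cycle on $\{m_{(u,j)},m_{(s,j)},m_{(r,j)}\}$, whereas $E^{(u)}(\calL_{(r,j)})$ is a transposition diagram; no KLR relation identifies the two. What the uncrossing actually produces is the two-erasure element $E^{(u,s)}(\calL_{(r,j)})$ of Definition \ref{def-element-eraser-Lrj-left}, and indeed the paper's proof records precisely $\Y_{(s,j)}E^{(s)}(\calL_{(r,j)})=-\sum_{1\leq u<s}E^{(u,s)}(\calL_{(r,j)})$. Since your inductive hypothesis only covers the single-erasure elements $E^{(u)}$, it cannot be applied to these correction terms, and your induction on $s$ does not close. (The rest of your sliding mechanics is fine: by Lemma \ref{lemma-residues-mrj} the only sister strands to the left are the heads of the $B_{(u,j)}$ with $u<s$, cousin and unrelated crossings contribute nothing, and the main term dies by relation \ref{dot-al-inicio}.)

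The paper closes the recursion differently: multiplying $E^{(u,s)}$ by the remaining dot $\Y_{(u,j)}$ produces triple-erasure elements $E^{(v,u,s)}$, and so on, so every branch eventually reaches a tuple whose first index is $1$; these are then killed after multiplication by $\Y_{(1,j)}$ using the full strength of Lemma \ref{lemma-Y1jE1Lrj-die}, which is stated for arbitrary tuples $E^{(1,s_2,\dots,s_g)}$ and not merely for $E^{(1)}$ --- your proposal invokes only the $g=1$ instance, which is exactly what is insufficient. A repaired version of your induction would have to strengthen the inductive statement to all multi-erasure elements, for instance to $\left(\prod_{1\leq w\leq s_1}\Y_{(w,j)}\right)E^{(s_1,\dots,s_g)}(\calL_{(r,j)})=0$ for every tuple $s_1<\dots<s_g<r$; at that point the argument essentially reproduces the paper's.
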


\begin{proof}
Let us denote $$P_s=\prod_{1\leq u\leq s}{\Y_{(u,j)}}.$$

If $s=1$ the result follows from lemma \ref{lemma-Y1jE1Lrj-die} (in this case $P_1=\Y_{(1,j)}.)$ If $s>1,$ note that

\begin{equation*}
\begin{tikzpicture}[xscale=0.5,yscale=0.65]
  \node at (-4,1){$\Y_{(s,j)}E^{(s)}(\calL_{(r,j)})=$};
  \draw[thick](0,0)--(0,2);
  \draw[thick](0,0)--(2,0);
  \draw[thick](0,2)--(2,2);
  \draw[thick](2,0)--(2,2);
  \node[below]at (1,-1.3) {$N$};
  \node at (2.5,1){$\cdots$};
  \node at (6.9,1){$\cdots$};
  \draw[thick,red](3.5,0)--(3.5,2);
  \draw[thick,red](3.5,0)--(5.5,0);
  \draw[thick,red](3.5,2)--(5.5,2);
  \draw[thick,red](5.5,0)--(5.5,2);
  \node[below]at (4.5,-1.3) {${\color{red}B_{(u,j)}}$};
  \draw[red,thick](7.5,0)--(7.5,1.5);
  \draw[red,thick](7.7,2)--(9.5,2);
  \draw[red,thick](7.7,0)--(9.5,0);
  \draw[red,thick](7.5,0)--(7.5,-0.1);
  \node[below]at (8.5,-1.3) {${\color{red}B_{(s,j)}}$};
  \draw[red,thick](9.5,0)--(9.5,2);
  \node at (10.5,1){$\cdots$};
  \draw[thick](13,0)--(13,2);
  \draw[thick](11.5,0)--(11.5,2);
  \draw[thick](11.5,0)--(13,0);
  \draw[thick](11.5,2)--(13,2);
  \draw[red,thick](14,0.1)--(-0.5,0.8);
  \draw[red,thick](-0.5,1.2)--(-0.5,0.8);
  \draw[red,thick](-0.5,1.2)--(7.5,1.8);
  \draw[red,thick](7.5,1.8)--(7.5,2.3);
  \draw[fill,red] (7.5,2.1) circle [radius=0.1];
  \draw[red,thick](14,2.2)--(14,1.9);
  \draw[red,thick](14,1.9)--(7.5,1.5);
  \draw[red,thick](14,-0.2)--(14,0.1);
  \draw[red,thick](16.5,0)--(16.5,2);
  \draw[red,thick](14.2,0)--(16.5,0);
  \draw[red,thick](14.2,2)--(16.5,2);
  \node at (17.5,1){$\cdots$};
  \node[below]at (15.5,-1.3) {${\color{red}B_{(r,j)}}$};

\end{tikzpicture}
\end{equation*}
  By relation \ref{punto-abajo} we can transport the dot from position $m_{(s,j)}$ to the first position throughout the corresponding string. Each time when the dot pass throughout a cross between sister strings, that is, strings corresponding to ${m_{(u,j)}}$ for $u<s,$ we have to subtract the element $E^{(u,s)}(\calL_{(r,j)}).$
Therefore and using relation \ref{dot-al-inicio}, we obtain
\begin{equation*}
  \Y_{(s,j)}E^{(s)}(\calL_{(r,j)})=-\sum_{1\leq u<s}{E^{(u,s)}}(\calL_{(r,j)}).
\end{equation*}

Analogously for each $1<u<s$ we have

\begin{equation*}
  \Y_{(u,j)}E^{(u,s)}(\calL_{(r,j)})=\pm\sum_{1\leq v<u}{E^{(v,u,s)}}(\calL_{(r,j)}).
\end{equation*}

Following inductively we can see that

\begin{equation*}
  \Y_{(u_1,j)}E^{(u_1,u_2,\dots,u_g,s)}(\calL_{(r,j)})=
  \pm\sum_{1\leq v<u_1}{E^{(v,u_1,u_2,\dots,u_g,s)}}(\calL_{(r,j)}).
\end{equation*}

Therefore we have

\begin{equation*}
  P_sE^{(s)}(\calL_{(r,j)})=\Y_{(1,j)}\sum_{g}\left(\sum_{1<u_2<\cdots <u_g <s}\mathcal{P}(\Y)_{(u_2,\dots,u_g)}E^{(1,u_2,\dots,u_g,s)}(\calL_{(r,j)})\right).
\end{equation*}
where in each case $\mathcal{P}(\Y)_{(u_2,\dots ,u_g)}$ is a polynomial in the variables $\Y_{(2,j)},\dots,\Y_{(s-1,j)}.$ By distributivity, commutativity and lemma \ref{lemma-Y1jE1Lrj-die} we obtain the desired result.
\end{proof}

\begin{lemma}\label{lemma-EsLrj-as-LsjLrj}
  Let $0 < r\leq k$ and $0\leq j\leq l-2,$ and $0\leq s<r.$ Then
  \begin{equation*}
    E^{(s)}(\calL_{(r,j)})=-\calL_{(s,j)}\calL_{(r,j)}.
  \end{equation*}
\end{lemma}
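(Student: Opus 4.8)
The plan is to compute the product $\calL_{(s,j)}\calL_{(r,j)}$ by placing $\calL_{(s,j)}$, in its dotted form, on top of the diagram of $\calL_{(r,j)}$ and then transporting its dots downward, exactly as in the proof of Lemma~\ref{lemma-PsEsLrj-die}. By Lemma~\ref{lema-calL-is-L} and Definition~\ref{def-L-general} we have $\calL_{(s,j)}=L_{m_{(s,j)}}e(\bif)=(y_{m_{(s,j)}-1}-y_{m_{(s,j)}})e(\bif)$, while Lemma~\ref{lemma-diag-L} gives $\calL_{(r,j)}=(\Psi_{[1:z]})^{\ast}e(\bj)\Psi_{[1:z]}$ with $z=m_{(r,j)}-1$ and $\bj=s_{[1:z]}\cdot\bif$. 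Hence $\calL_{(s,j)}\calL_{(r,j)}=(y_{m_{(s,j)}-1}-y_{m_{(s,j)}})\calL_{(r,j)}$, a diagram carrying one dot on each of the two strings that reach the top at positions $m_{(s,j)}-1$ and $m_{(s,j)}$.

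I would first record the residues. By Lemma~\ref{lemma-residues-mrj}, together with the fact that $\bif$ decreases by one along consecutive positions, the string reaching top position $m_{(s,j)}$ has residue $\kappa_{l-j}$ and the one reaching $m_{(s,j)}-1$ has residue $\kappa_{l-j}+1$; the long string dragged in from the block $B_{(r,j)}$ also has residue $\kappa_{l-j}$. Consequently, inside the top factor $(\Psi_{[1:z]})^{\ast}$ the $m_{(s,j)}$-string meets the long string at a crossing of \emph{sisters}, while the $(m_{(s,j)}-1)$-string meets it at a crossing of \emph{cousins}, and in each case this is the only crossing that string makes.

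Next I would slide each dot down along its string through that single crossing, using relation~\ref{punto-arriba} (equivalently~\ref{punto-abajo}). For the cousin dot $y_{m_{(s,j)}-1}$ the Kronecker term is zero, so it passes to the central idempotent unchanged. For the sister dot $y_{m_{(s,j)}}$ the relation contributes, besides the transported dot, the resolved diagram in which exactly the crossing $\psi_{m_{(s,j)}}$ has been erased from the top factor; by Definition~\ref{def-element-eraser-Lrj-left} this resolved diagram is precisely $E^{(s)}(\calL_{(r,j)})$. Keeping track of the signs one obtains $\calL_{(s,j)}\calL_{(r,j)}=X-E^{(s)}(\calL_{(r,j)})$, where $X=(\Psi_{[1:z]})^{\ast}(y_{m_{(s,j)}}-y_{m_{(s,j)}+1})e(\bj)\Psi_{[1:z]}$ is the leftover term with both dots now sitting at the central idempotent.

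The crux is to prove $X=0$, and this is where I expect the real work. The central strings at positions $m_{(s,j)}$ and $m_{(s,j)}+1$ carry the cousin residues $\kappa_{l-j}+1$ and $\kappa_{l-j}$, so the identity $(y_{m_{(s,j)}}-y_{m_{(s,j)}+1})e(\bj)=\psi_{m_{(s,j)}}e(s_{m_{(s,j)}}\cdot\bj)\psi_{m_{(s,j)}}$ coming from relation~\ref{lazo} (the same move used in Lemma~\ref{lemma-clean-Y}) rewrites $X$ with one extra crossing inserted in the middle. I would then push this crossing against the neighbouring crossings of the long sister string by means of relations~\ref{cruce-pasa} and~\ref{trio}, just as in the proofs of Lemmas~\ref{lemma-clean-Y} and~\ref{lemma-diag-L}, until an idempotent appears whose residue sequence exhibits one of the forbidden local patterns of Corollaries~\ref{coro-bif-j-not-possible}, \ref{coro-bif-j-not-possible2} and~\ref{coro-bif-j-not-possible3}; that idempotent vanishes, forcing $X=0$ and hence $\calL_{(s,j)}\calL_{(r,j)}=-E^{(s)}(\calL_{(r,j)})$. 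The delicate points are the sign bookkeeping during the dot transport and the explicit identification of the blob-impossible sequence responsible for $X=0$.
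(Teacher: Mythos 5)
Your proposal is correct and takes essentially the same route as the paper: the paper likewise transports the dots of $\calL_{(s,j)}$ through the top factor (picking up $E^{(s)}(\calL_{(r,j)})$ at the unique sister crossing and nothing at the cousin crossing), and its remaining term $A_2$ is exactly your $X$, rewritten via relation \ref{lazo} as the doubled crossing you describe. For the step you left as a plan, the paper's execution is to pull the doubled sister string leftward all the way to the second position, using corollary \ref{coro-bif-j-not-possible3} to kill the correction terms arising at each earlier block $B_{(u,j)}$, so that $X$ factors through an idempotent whose first two residues are equal, which is $\kappa$-blob impossible by example \ref{ex-blob-impossible}; your sign bookkeeping (including $(y_{m_{(s,j)}}-y_{m_{(s,j)}+1})e(\bj)=\psi_{m_{(s,j)}}e(s_{m_{(s,j)}}\cdot\bj)\psi_{m_{(s,j)}}$) agrees with the paper's.
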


\begin{proof}
Let $z=m_{(r,j)}-1,v=m_{(s,j)}+1$ and $\bj=\s_{[1:z]}\cdot\bif.$

  By relation \ref{punto-arriba} and lemma \ref{lemma-diag-L} we have that
  \begin{equation*}
    \Y_{(s,j)}\calL_{(r,j)}=(\Psi_{[1:z]})^{\ast}y_ve(\bj)\Psi_{[1:z]}+E^{(s)}(\calL_{(r,j)})
  \end{equation*}

  \begin{equation*}
\begin{tikzpicture}[xscale=0.5,yscale=0.65]
  \node at (0,1){$\Y_{(s,j)}\calL_{(r,j)}=$};
  \node[below]at (4.2,-1.3) {$N$};
  \node at (5.1,1){$\cdots$};
  \draw[thick](4,0)--(4,2);
  \draw[thick](4,0)--(4.5,0);
  \draw[thick](4,2)--(4.5,2);
  \draw[thick](4.5,0)--(4.5,2);
  \draw[thick](6,0)--(6,2);
  \draw[thick](6,0)--(7,0);
  \draw[thick](6,2)--(7,2);
  \draw[thick](7,0)--(7,2);
  \draw[red,thick](8,0)--(8,2.2);
  \draw[fill,red] (8,2) circle [radius=0.11];
  \draw[red,thick](8.25,2)--(9.5,2);
  \draw[red,thick](8.25,0)--(9.5,0);
  \node[below]at (8.5,-1.3) {${\color{red}B_{(s,j)}}$};
  \draw[red,thick](9.5,0)--(9.5,2);
  \node at (10.5,1){$\cdots$};
  \draw[thick](13,0)--(13,2);
  \draw[thick](12,0)--(12,2);
  \draw[thick](12,0)--(13,0);
  \draw[thick](12,2)--(13,2);
  \draw[red,thick](14,0.1)--(3.5,0.8);
  \draw[red,thick](3.5,1.2)--(3.5,0.8);
  \draw[red,thick](3.5,1.2)--(14,1.9);
  \draw[red,thick](14,2.2)--(14,1.9);
  \draw[red,thick](14,-0.2)--(14,0.1);
  \draw[red,thick](15.5,0)--(15.5,2);
  \draw[red,thick](14.2,0)--(15.5,0);
  \draw[red,thick](14.2,2)--(15.5,2);
  \node at (16.5,1){$\cdots$};
  \node[below]at (14.5,-1.3) {${\color{red}B_{(r,j)}}$};

\end{tikzpicture}
\end{equation*}

\begin{equation*}
\begin{tikzpicture}[xscale=0.5,yscale=0.65]
  \node at (0,1){$\Y_{(s,j)}\calL_{(r,j)}=$};
  \node[below]at (4.2,-1.3) {$N$};
  \node at (5.1,1){$\cdots$};
  \draw[thick](4,0)--(4,2);
  \draw[thick](4,0)--(4.5,0);
  \draw[thick](4,2)--(4.5,2);
  \draw[thick](4.5,0)--(4.5,2);
  \draw[thick](6,0)--(6,2);
  \draw[thick](6,0)--(7,0);
  \draw[thick](6,2)--(7,2);
  \draw[thick](7,0)--(7,2);
  \draw[red,thick](8,0)--(8,2.2);
  \draw[fill,red] (8,1) circle [radius=0.11];
  \draw[red,thick](8.25,2)--(9.5,2);
  \draw[red,thick](8.25,0)--(9.5,0);
  \node[below]at (8.5,-1.3) {${\color{red}B_{(s,j)}}$};
  \draw[red,thick](9.5,0)--(9.5,2);
  \node at (10.5,1){$\cdots$};
  \draw[thick](13,0)--(13,2);
  \draw[thick](12,0)--(12,2);
  \draw[thick](12,0)--(13,0);
  \draw[thick](12,2)--(13,2);
  \draw[red,thick](14,0.1)--(3.5,0.8);
  \draw[red,thick](3.5,1.2)--(3.5,0.8);
  \draw[red,thick](3.5,1.2)--(14,1.9);
  \draw[red,thick](14,2.2)--(14,1.9);
  \draw[red,thick](14,-0.2)--(14,0.1);
  \draw[red,thick](15.5,0)--(15.5,2);
  \draw[red,thick](14.2,0)--(15.5,0);
  \draw[red,thick](14.2,2)--(15.5,2);
  \node at (16.5,1){$\cdots$};
  \node[below]at (14.5,-1.3) {${\color{red}B_{(r,j)}}$};
  \node at (19.5,1){$+E^{(s)}(\calL_{(r,j)}).$};
\end{tikzpicture}
\end{equation*}

Let $A=(\Psi_{[1:z]})^{\ast}y_ve(\bj)\Psi_{[1:z]},$ then applying relation \ref{dot-salta} and relation \ref{lazo} we obtain a decomposition $A=A_1-A_2,$ where

\begin{equation*}
\begin{tikzpicture}[xscale=0.5,yscale=0.65]
  \node at (0,1){$A_1=$};
  \node[below]at (4.2,-1.3) {$N$};
  \node at (5.1,1){$\cdots$};
  \draw[thick](4,0)--(4,2);
  \draw[thick](4,0)--(4.5,0);
  \draw[thick](4,2)--(4.5,2);
  \draw[thick](4.5,0)--(4.5,2);
  \draw[thick](6,0)--(6,2);
  \draw[thick](6,0)--(7,0);
  \draw[thick](6,2)--(7,2);
  \draw[thick](7,0)--(7,2);
  \draw[red,thick](8,0)--(8,2);
  \draw[fill] (7,1) circle [radius=0.11];
  \draw[red,thick](8,2)--(9.5,2);
  \draw[red,thick](8,0)--(9.5,0);
  \node[below]at (8.5,-1.3) {${\color{red}B_{(s,j)}}$};
  \draw[red,thick](9.5,0)--(9.5,2);
  \node at (10.5,1){$\cdots$};
  \draw[thick](13,0)--(13,2);
  \draw[thick](12,0)--(12,2);
  \draw[thick](12,0)--(13,0);
  \draw[thick](12,2)--(13,2);
  \draw[red,thick](14,0.1)--(3.5,0.8);
  \draw[red,thick](3.5,1.2)--(3.5,0.8);
  \draw[red,thick](3.5,1.2)--(14,1.9);
  \draw[red,thick](14,2.2)--(14,1.9);
  \draw[red,thick](14,-0.2)--(14,0.1);
  \draw[red,thick](15.5,0)--(15.5,2);
  \draw[red,thick](14.2,0)--(15.5,0);
  \draw[red,thick](14.2,2)--(15.5,2);
  \node at (16.5,1){$\cdots$};
  \node[below]at (14.5,-1.3) {${\color{red}B_{(r,j)}}$};
\end{tikzpicture}
\end{equation*}

and
\begin{equation*}
\begin{tikzpicture}[xscale=0.5,yscale=0.65]
  \node at (0,1){$A_{2}=$};
  \node[below]at (4.2,-1.3) {$N$};
  \draw[thick](4,0)--(4,2);
  \draw[thick](4,0)--(4.5,0);
  \draw[thick](4,2)--(4.5,2);
  \draw[thick](4.5,0)--(4.5,2);
  \draw[thick](6,0)--(6,2);
  \draw[thick](6,0)--(7,0);
  \draw[thick](6,2)--(7,2);
  \draw[thick](7,0)--(7,2);
  \draw[red,thick](8,-0.1)--(8,0.7);
  \draw[red,thick](8,1.3)--(8,2);
  \draw[red,thick](8,0.7)--(5.5,0.8);
  \draw[red,thick](5.5,1.2)--(8,1.3);
  \draw[red,thick](5.5,1.2)--(5.5,0.8);
  \draw[red,thick](8.25,2)--(9.5,2);
  \draw[red,thick](8.25,0)--(9.5,0);
  \node[below]at (8.5,-1.3) {${\color{red}B_{(s,j)}}$};
  \draw[red,thick](9.5,0)--(9.5,2);
  \node at (10.5,1){$\cdots$};
  \draw[thick](13,0)--(13,2);
  \draw[thick](12,0)--(12,2);
  \draw[thick](12,0)--(13,0);
  \draw[thick](12,2)--(13,2);
  \draw[red,thick](14,0.1)--(3.5,0.8);
  \draw[red,thick](3.5,1.2)--(3.5,0.8);
  \draw[red,thick](3.5,1.2)--(14,1.9);
  \draw[red,thick](14,2.2)--(14,1.9);
  \draw[red,thick](14,-0.2)--(14,0.1);
  \draw[red,thick](15.5,0)--(15.5,2);
  \draw[red,thick](14.2,0)--(15.5,0);
  \draw[red,thick](14.2,2)--(15.5,2);
  \node at (16.5,1){$\cdots$};
  \node[below]at (14.5,-1.3) {${\color{red}B_{(r,j)}}$};
\end{tikzpicture}
\end{equation*}

Applying relation \ref{punto-arriba} and lemma \ref{lemma-clean-Y} we obtain $A_1=\Y_{(s,j-1)}\calL_{(r,j)}$

\begin{equation*}
\begin{tikzpicture}[xscale=0.5,yscale=0.65]
  \node at (0,1){$A_1=$};
  \node[below]at (4.2,-1.3) {$N$};
  \node at (5.1,1){$\cdots$};
  \draw[thick](4,0)--(4,2);
  \draw[thick](4,0)--(4.5,0);
  \draw[thick](4,2)--(4.5,2);
  \draw[thick](4.5,0)--(4.5,2);
  \draw[thick](6,0)--(6,2);
  \draw[thick](6,0)--(7,0);
  \draw[thick](6,2)--(7,2);
  \draw[thick](7,0)--(7,2);
  \draw[red,thick](8,0)--(8,2);
  \draw[fill] (6,1.8) circle [radius=0.11];
  \draw[red,thick](8,2)--(9.5,2);
  \draw[red,thick](8,0)--(9.5,0);
  \node[below]at (8.5,-1.3) {${\color{red}B_{(s,j)}}$};
  \draw[red,thick](9.5,0)--(9.5,2);
  \node at (10.5,1){$\cdots$};
  \draw[thick](13,0)--(13,2);
  \draw[thick](12,0)--(12,2);
  \draw[thick](12,0)--(13,0);
  \draw[thick](12,2)--(13,2);
  \draw[red,thick](14,0.1)--(3.5,0.8);
  \draw[red,thick](3.5,1.2)--(3.5,0.8);
  \draw[red,thick](3.5,1.2)--(14,1.9);
  \draw[red,thick](14,2.2)--(14,1.9);
  \draw[red,thick](14,-0.2)--(14,0.1);
  \draw[red,thick](15.5,0)--(15.5,2);
  \draw[red,thick](14.2,0)--(15.5,0);
  \draw[red,thick](14.2,2)--(15.5,2);
  \node at (16.5,1){$\cdots$};
  \node[below]at (14.5,-1.3) {${\color{red}B_{(r,j)}}$};
\end{tikzpicture}
\end{equation*}

On the other hand, by applying similar arguments as we used in the proof of lemma \ref{lemma-diag-L} and using corollary \ref{coro-bif-j-not-possible3}, we can pull the string corresponding to $i_{m_{(s,j)}}$ to the left to the second position:

\begin{equation*}
\begin{tikzpicture}[xscale=0.5,yscale=0.65]
  \node at (0,1){$A_{2}=$};
  \node[below]at (4.2,-1.3) {$N$};
  \draw[thick](4,0)--(4,2);
  \draw[thick](4,0)--(4.5,0);
  \draw[thick](4,2)--(4.5,2);
  \draw[thick](4.5,0)--(4.5,2);
  \draw[thick](6,0)--(6,2);
  \draw[thick](6,0)--(7,0);
  \draw[thick](6,2)--(7,2);
  \draw[thick](7,0)--(7,2);
  \draw[red,thick](8,-0.1)--(8,0.7);
  \draw[red,thick](8,1.3)--(8,2);
  \draw[red,thick](8,0.7)--(3.8,0.95);
  \draw[red,thick](3.8,1.05)--(8,1.3);
  \draw[red,thick](3.8,1.05)--(3.8,0.95);
  \draw[red,thick](8.25,2)--(9.5,2);
  \draw[red,thick](8.25,0)--(9.5,0);
  \node[below]at (8.5,-1.3) {${\color{red}B_{(s,j)}}$};
  \draw[red,thick](9.5,0)--(9.5,2);
  \node at (10.5,1){$\cdots$};
  \draw[thick](13,0)--(13,2);
  \draw[thick](12,0)--(12,2);
  \draw[thick](12,0)--(13,0);
  \draw[thick](12,2)--(13,2);
  \draw[red,thick](14,0.1)--(3.5,0.8);
  \draw[red,thick](3.5,1.2)--(3.5,0.8);
  \draw[red,thick](3.5,1.2)--(14,1.9);
  \draw[red,thick](14,2.2)--(14,1.9);
  \draw[red,thick](14,-0.2)--(14,0.1);
  \draw[red,thick](15.5,0)--(15.5,2);
  \draw[red,thick](14.2,0)--(15.5,0);
  \draw[red,thick](14.2,2)--(15.5,2);
  \node at (16.5,1){$\cdots$};
  \node[below]at (14.5,-1.3) {${\color{red}B_{(r,j)}}$};
\end{tikzpicture}
\end{equation*}
Therefore $A_2$ belongs to the ideal generated by some idempotent $e(\bjf)$ where $\bjf$ is a residue sequence with their first and second residues equal. As we mention in example \ref{ex-blob-impossible} this is a $\kappa-$blob impossible residue sequence, and then $A_2=0.$

Therefore we have
\begin{equation*}
  \Y_{(s,j)}\calL_{(r,j)}=\Y_{(s,j-1)}\calL_{(r,j)}+E^{(s)}(\calL_{(r,j)})
\end{equation*}
The desired result follows reorganising the last equation.

(Note that we have worked on the case where $j\neq 0.$ The case $j=0$ is analogous).
\end{proof}

\begin{lemma}\label{lemma-square-Yrj}
  Let $1<r\leq k$ and $0\leq j\leq l-2.$ Then we have:
  \begin{equation*}
    (\Y_{(r,j)})^{2}=\left\{\begin{array}{cc}
                              \Y_{(r,j-1)}\Y_{(r,j)}-\sum_{1\leq s<r}\calL_{(s,j)}\calL_{(r,j)} & \quad \textrm{if}\quad j\neq 0 \\
                              \quad & \quad \\
                              \Y_{(r-1,l-2)}\Y_{(r,0)}-\sum_{1\leq s<r}\calL_{(s,0)}\calL_{(r,0)} & \quad \textrm{if}\quad j= 0
                            \end{array}\right.
  \end{equation*}
\end{lemma}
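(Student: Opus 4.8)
The plan is to reduce the computation of $(\Y_{(r,j)})^{2}$ to the single product $\Y_{(r,j)}\calL_{(r,j)}$, exactly as in the proof of Lemma \ref{lemma-square-Y1j} for the case $r=1$, and then to evaluate that product diagrammatically. I treat the case $j\neq 0$ first; the case $j=0$ is entirely analogous, replacing everywhere $\Y_{(r,j-1)}$ by $\Y_{(r-1,l-2)}$ and $\calL_{(s,j)}$ by $\calL_{(s,0)}$, the only change being that the travelling strand now has residue $\kappa_{l}$ instead of $\kappa_{l-j}$.

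First, by Definition \ref{def-L-elements} we have $\calL_{(r,j)}=\Y_{(r,j-1)}-\Y_{(r,j)}$. Since $\calD$ is commutative, multiplying by $\Y_{(r,j)}$ and rearranging gives
\[
(\Y_{(r,j)})^{2}=\Y_{(r,j-1)}\Y_{(r,j)}-\Y_{(r,j)}\calL_{(r,j)},
\]
so it suffices to prove that $\Y_{(r,j)}\calL_{(r,j)}=\sum_{1\leq s<r}\calL_{(s,j)}\calL_{(r,j)}$.

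Second, I would use the diagrammatic form of $\calL_{(r,j)}$ from Lemma \ref{lemma-diag-L}, namely $\calL_{(r,j)}=(\Psi_{[1:z]})^{\ast}e(\bj)\Psi_{[1:z]}$ with $z=m_{(r,j)}-1$ and $\bj=s_{[1:z]}\cdot\bif$. The dot coming from $\Y_{(r,j)}=y_{m_{(r,j)}}e(\bif)$ sits at the top on the strand occupying position $m_{(r,j)}$, which is precisely the strand that the factor $(\Psi_{[1:z]})^{\ast}$ pulls across to the first position. I would then transport this dot to the first position by repeated use of relation \ref{punto-abajo}. By Lemma \ref{lemma-residues-mrj}(2) the only strands sharing the residue $\kappa_{l-j}$ of the travelling strand are those at positions $m_{(s,j)}$, and among the strands lying to its left these are exactly $m_{(1,j)},\dots,m_{(r-1,j)}$. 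Hence the dot crosses every non-sister strand freely (relation \ref{punto-abajo} contributes nothing when $i\neq j$), while each of the $r-1$ sister crossings contributes a correction term in which that crossing is straightened; by Definition \ref{def-element-eraser-Lrj-left} this straightened term is exactly $E^{(s)}(\calL_{(r,j)})$, and the sign produced is $-1$, just as in the proof of Lemma \ref{lemma-PsEsLrj-die}. Once the dot reaches the first strand it is annihilated by relation \ref{dot-al-inicio}, leaving
\[
\Y_{(r,j)}\calL_{(r,j)}=-\sum_{1\leq s<r}E^{(s)}(\calL_{(r,j)}).
\]

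Finally, I would substitute Lemma \ref{lemma-EsLrj-as-LsjLrj}, $E^{(s)}(\calL_{(r,j)})=-\calL_{(s,j)}\calL_{(r,j)}$, to obtain $\Y_{(r,j)}\calL_{(r,j)}=\sum_{1\leq s<r}\calL_{(s,j)}\calL_{(r,j)}$, and combine this with the first display to conclude. The main obstacle is the bookkeeping in the second step: one must be certain that, as the dot is dragged to the left, the only residue coincidences it meets are with the strands $m_{(s,j)}$ for $s<r$ (so that no unexpected correction terms arise) and that each such coincidence produces precisely one copy of $E^{(s)}(\calL_{(r,j)})$ with the correct sign. This is controlled entirely by the block description of $\bif$ in Lemma \ref{lemma-residues-mrj} and Corollary \ref{coro-tableaux-blocks-bifurcation}, together with the fact that relation \ref{punto-abajo} generates a correction only between sister strands; the argument runs in strict parallel to Lemmas \ref{lemma-PsEsLrj-die} and \ref{lemma-square-Y1j}, so no genuinely new estimate is needed.
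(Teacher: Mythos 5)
Your proposal is correct and takes essentially the same route as the paper's proof: reduce $(\Y_{(r,j)})^{2}$ to $\Y_{(r,j)}\calL_{(r,j)}$ via Definition \ref{def-L-elements}, write $\calL_{(r,j)}$ diagrammatically using Lemma \ref{lemma-diag-L}, drag the dot along the travelling strand with relation \ref{punto-abajo} picking up exactly one correction $-E^{(s)}(\calL_{(r,j)})$ at each sister crossing $m_{(s,j)}$ with $1\leq s<r$, kill the residual term $(\Psi_{[1:z]})^{\ast}y_1e(\bj)\Psi_{[1:z]}$ by relation \ref{dot-al-inicio}, and conclude with Lemma \ref{lemma-EsLrj-as-LsjLrj}. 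The bookkeeping concern you raise is resolved exactly as you indicate, since Lemma \ref{lemma-residues-mrj} shows the only strands to the left of $m_{(r,j)}$ sharing the residue $\kappa_{l-j}$ are those at positions $m_{(s,j)}$, which is precisely how the paper argues.
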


\begin{proof}
  We only prove the case $j\neq 0,$ the other case is analogous.
  By lemma \ref{lemma-diag-L} we have
  \begin{equation*}
    \Y_{(r,j)}\calL_{(r,j)}=\Y_{(r,j)}(\Psi_{[1:z]})^{\ast}e(\bj)\Psi_{[1,z]}
  \end{equation*}
  where $z$ and $\bj$ are as in the proof of the mentioned lemma:
  \begin{equation*}
\begin{tikzpicture}[xscale=0.5,yscale=0.65]
  \node at (0,1){$\Y_{(r,j)}\calL_{(r,j)}=$};
  \draw[thick](4,0)--(4,2);
  \draw[thick](4,0)--(6,0);
  \draw[thick](4,2)--(6,2);
  \draw[thick](6,0)--(6,2);
  \draw[thick,red](7.5,0)--(7.5,2);
  \draw[thick,red](7.5,2)--(9.5,2);
  \draw[thick,red](7.5,0)--(9.5,0);
  \node[below]at (9,-1.3) {${\color{red}B_{(s,j)}}$};
  \draw[thick,red](9.5,0)--(9.5,2);
  \node at (11,1){$\cdots$};
  \draw[thick](13,0)--(13,2);
  \draw[thick](11.5,0)--(11.5,2);
  \draw[thick](11.5,0)--(13,0);
  \draw[thick](11.5,2)--(13,2);
  \draw[red,thick](14,0.1)--(3.5,0.8);
  \draw[red,thick](3.5,1.2)--(3.5,0.8);
  \draw[red,thick](3.5,1.2)--(14,1.9);
  \draw[red,thick](14,2.3)--(14,1.9);
  \draw[red,thick](14,-0.2)--(14,0.1);
  \draw[fill,red] (14,2.1) circle [radius=0.1];
  \draw[red,thick](16.5,0)--(16.5,2);
  \draw[red,thick](14.2,0)--(16.5,0);
  \draw[red,thick](14.2,2)--(16.5,2);
  \node[below]at (15.5,-1.3) {${\color{red}B_{(r,j)}}$};

\end{tikzpicture}
\end{equation*}
   Using relation \ref{punto-abajo} we can transport the dot to the left throughout the string corresponding with $i_{m_{(r,j)}}$, to obtain the element $(\Psi_{[1:z]})^{\ast}y_1e(\bj)\Psi_{[1:z]},$ but each time we cross a string corresponding with $i_{m_{(s,j)}}$ we have to subtract the element $E^{(s)}(\calL_{(r,j)}).$ Therefore:
\begin{equation*}
  \Y_{(r,j)}\calL_{(r,j)}=(\Psi_{[1:z]})^{\ast}y_1e(\bj)\Psi_{[1:z]}-\sum_{1\leq s<r}E^{(s)}(\calL_{(r,j)}).
\end{equation*}

By relation \ref{dot-al-inicio} and lemma \ref{lemma-EsLrj-as-LsjLrj} we obtain
\begin{equation*}
  \Y_{(r,j)}\calL_{(r,j)}=\sum_{1\leq s<r}\calL_{(s,j)}\calL_{(r,j)}.
\end{equation*}

By definition $\calL_{(r,j)}=\Y_{(r,j-1)}-\Y_{(r,j)},$ then reorganizing the last equation we obtain the desired result.
\end{proof}

\begin{corollary}\label{coro-square-Lrj}
  Let $1<r\leq k$ and $0\leq j\leq l-2.$ Then we have:
  \begin{equation*}
    (\calL_{(r,j)})^{2}=\sum_{m_{(s,t)}\leq m_{(r,j)}}C_{(s,t)}\calL_{(s,t)}\calL_{(r,j)}
  \end{equation*}
  where
  \begin{equation*}
    C_{(s,t)}=\left\{\begin{array}{cc}
                -2 & \quad\textrm{if}\quad t=j \\
                \quad & \quad \\
                -1 & \quad\textrm{otherwise}
              \end{array}\right.
  \end{equation*}
\end{corollary}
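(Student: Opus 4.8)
The plan is to reduce the statement entirely to facts already in hand: the square formula for the $\Y$'s in Lemma \ref{lemma-square-Yrj}, the telescoping identity of Corollary \ref{coro-telescopic-Y-in-L-2}, the definition of the $\calL_{(r,j)}$ in Definition \ref{def-L-elements}, and the commutativity of $\calD$ noted at the start of section \ref{ch-gelfand}. I would treat the case $j\neq 0$ in detail; the case $j=0$ with $r>1$ is identical after replacing $\Y_{(r,j-1)}$ by $\Y_{(r-1,l-2)}$ and $m_{(r,j-1)}$ by $m_{(r-1,l-2)}$ throughout, since $m_{(r-1,l-2)}$ is the antecessor of $m_{(r,0)}$.

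First I would record the auxiliary identity
\begin{equation*}
  \Y_{(r,j)}\calL_{(r,j)}=\sum_{1\leq s<r}\calL_{(s,j)}\calL_{(r,j)}.
\end{equation*}
This follows by writing $\calL_{(r,j)}=\Y_{(r,j-1)}-\Y_{(r,j)}$ (Definition \ref{def-L-elements}), so that $\Y_{(r,j)}\calL_{(r,j)}=\Y_{(r,j)}\Y_{(r,j-1)}-(\Y_{(r,j)})^{2}$, and then substituting the value of $(\Y_{(r,j)})^{2}$ from Lemma \ref{lemma-square-Yrj}; the two occurrences of $\Y_{(r,j-1)}\Y_{(r,j)}$ cancel by commutativity. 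Next I would expand the square directly, using $\calL_{(r,j)}=\Y_{(r,j-1)}-\Y_{(r,j)}$ in one factor:
\begin{equation*}
  (\calL_{(r,j)})^{2}=\Y_{(r,j-1)}\calL_{(r,j)}-\Y_{(r,j)}\calL_{(r,j)}.
\end{equation*}
The second term is the auxiliary identity above. For the first term I would insert the telescoping expression $\Y_{(r,j-1)}=-\sum_{m_{(s,t)}\leq m_{(r,j-1)}}\calL_{(s,t)}$ from Corollary \ref{coro-telescopic-Y-in-L-2}, and use that $m_{(r,j-1)}$ is the antecessor of $m_{(r,j)}$, so that the range $m_{(s,t)}\leq m_{(r,j-1)}$ is the same as $m_{(s,t)}<m_{(r,j)}$. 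This gives
\begin{equation*}
  (\calL_{(r,j)})^{2}=-\sum_{m_{(s,t)}<m_{(r,j)}}\calL_{(s,t)}\calL_{(r,j)}-\sum_{1\leq s<r}\calL_{(s,j)}\calL_{(r,j)}.
\end{equation*}

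It remains to collect coefficients. Every block with $m_{(s,t)}<m_{(r,j)}$ contributes $-1$ from the first sum; a block contributes an additional $-1$ from the second sum exactly when it has the form $(s,j)$ with $s<r$, and for fixed second index $j$ the quantity $m_{(s,j)}$ is strictly increasing in $s$, so these are precisely the blocks with $t=j$ and $m_{(s,t)}<m_{(r,j)}$. Hence the coefficient of $\calL_{(s,t)}\calL_{(r,j)}$ is $-2$ when $t=j$ and $-1$ otherwise, which is exactly $C_{(s,t)}$. (The top block $(r,j)$ does not occur, so the right-hand side involves no self-reference; the stated range is read without the $(r,j)$ contribution, consistent with the $r=1$ instance recovering Corollary \ref{coro-square-L1j}, a useful sanity check.)

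The only genuinely delicate point is the index bookkeeping — recognising that $m_{(s,t)}\leq m_{(r,j-1)}$ coincides with $m_{(s,t)}<m_{(r,j)}$, and that the overlap of the two sums is exactly the set of type-$j$ blocks lying below $(r,j)$. Everything else is forced by Lemma \ref{lemma-square-Yrj}, Corollary \ref{coro-telescopic-Y-in-L-2} and commutativity, so I do not anticipate any serious obstacle beyond keeping the two summation ranges straight.
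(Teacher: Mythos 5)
Your proof is correct and takes essentially the same route as the paper's: both expand $(\calL_{(r,j)})^{2}$ using Definition \ref{def-L-elements} and commutativity, substitute the square formula of Lemma \ref{lemma-square-Yrj} to reach $(\calL_{(r,j)})^{2}=\Y_{(r,j-1)}\calL_{(r,j)}-\sum_{1\leq s<r}\calL_{(s,j)}\calL_{(r,j)}$, and conclude with the telescoping Corollary \ref{coro-telescopic-Y-in-L-2} and the same coefficient count; your auxiliary identity $\Y_{(r,j)}\calL_{(r,j)}=\sum_{1\leq s<r}\calL_{(s,j)}\calL_{(r,j)}$ is merely the paper's intermediate regrouping isolated as a separate step. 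Your parenthetical observation that the summation range in the statement must be read without the $(s,t)=(r,j)$ term is an accurate reading, consistent with the derivation and with the $r=1$ case of Corollary \ref{coro-square-L1j}.
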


\begin{proof}
  We only prove the case where $j\neq 0.$ The other case is analogous.

  By definition \ref{def-L-elements} and commutativity we have
  \begin{equation*}
    (\calL_{(r,j)})^2=(\Y_{(r,j-1)})^2-2\Y_{(r,j-1)}\Y_{(r,j)}+(\Y_{(r,j)})^2
  \end{equation*}
  Now by lemma \ref{lemma-square-Yrj} we have

  \begin{equation*}
    (\calL_{(r,j)})^2=(\Y_{(r,j-1)})^2-\Y_{(r,j-1)}\Y_{(r,j)}-\sum_{1\leq s<r}\calL_{(s,j)}\calL_{(r,j)}.
  \end{equation*}
  or equivalently

  \begin{equation*}
    (\calL_{(r,j)})^2=(\Y_{(r,j-1)})\calL_{(r,j)}-\sum_{1\leq s<r}\calL_{(s,j)}\calL_{(r,j)}.
  \end{equation*}
  The result follows applying corollary \ref{coro-telescopic-Y-in-L-2}.
\end{proof}

\begin{lemma}\label{lemma-Y-prod-reduction-1}
  Let $1<r\leq k$ and $0\leq j\leq l-2.$  Then we have
  \begin{equation*}
  \left(\prod_{1\leq s<r}\Y_{(s,j)}\right)(\Y_{(r,j)})^{2}=\left\{
  \begin{array}{cc}
    \left(\prod_{1\leq s< r}\Y_{(s,j)}\right)\Y_{(r,j-1)}\Y_{(r,j)} & \quad\textrm{if}\quad j\neq 0 \\
    \quad & \quad \\
    \left(\prod_{1\leq s< r}\Y_{(s,0)}\right)\Y_{(r-1,l-2)}\Y_{(r,0)} & \quad\textrm{if}\quad j= 0
  \end{array}
  \right.
  \end{equation*}
\end{lemma}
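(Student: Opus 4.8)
The plan is to reduce the claim to the squaring formula already proved in Lemma \ref{lemma-square-Yrj} and then kill the resulting error term using the vanishing statements established for the extended elements $E^{(s)}(\calL_{(r,j)})$. Throughout I work inside the commutative algebra $\calD$, so all factors commute freely.

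First I would treat the case $j\neq 0$. By Lemma \ref{lemma-square-Yrj} we have
$$(\Y_{(r,j)})^{2}=\Y_{(r,j-1)}\Y_{(r,j)}-\sum_{1\leq s<r}\calL_{(s,j)}\calL_{(r,j)}.$$
Multiplying on the left by $\prod_{1\leq s<r}\Y_{(s,j)}$, the first term on the right is exactly the desired right-hand side of the lemma, so it suffices to prove that the cross-term is annihilated, i.e.\ that
$$\left(\prod_{1\leq s'<r}\Y_{(s',j)}\right)\sum_{1\leq s<r}\calL_{(s,j)}\calL_{(r,j)}=0.$$
By linearity it is enough to show that each summand dies, that is, that $\left(\prod_{1\leq s'<r}\Y_{(s',j)}\right)\calL_{(s,j)}\calL_{(r,j)}=0$ for every fixed $s$ with $1\leq s<r$.

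Second, I would fix such an $s$ and observe the crucial bookkeeping point: since $s\leq r-1$, the whole subproduct $\prod_{1\leq u\leq s}\Y_{(u,j)}$ is contained in $\prod_{1\leq s'<r}\Y_{(s',j)}$. Using commutativity of $\calD$ I can split the prefactor as $\left(\prod_{s<s'<r}\Y_{(s',j)}\right)\left(\prod_{1\leq u\leq s}\Y_{(u,j)}\right)$ and isolate the inner subproduct. Rewriting $\calL_{(s,j)}\calL_{(r,j)}=-E^{(s)}(\calL_{(r,j)})$ via Lemma \ref{lemma-EsLrj-as-LsjLrj}, the bracketed factor becomes $\left(\prod_{1\leq u\leq s}\Y_{(u,j)}\right)E^{(s)}(\calL_{(r,j)})$, which is zero by Lemma \ref{lemma-PsEsLrj-die}. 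Hence every summand vanishes and the cross-term is $0$, giving the stated identity for $j\neq 0$.

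Finally, the case $j=0$ is handled identically: one starts from the $j=0$ branch of Lemma \ref{lemma-square-Yrj}, where $\Y_{(r,j-1)}$ is replaced by $\Y_{(r-1,l-2)}$, and the same factorization-and-vanishing argument annihilates $\sum_{1\leq s<r}\calL_{(s,0)}\calL_{(r,0)}$, since Lemmas \ref{lemma-EsLrj-as-LsjLrj} and \ref{lemma-PsEsLrj-die} hold uniformly for all $0\leq j\leq l-2$. I do not expect a genuine obstacle here, because the hard diagrammatic work already lives in Lemmas \ref{lemma-EsLrj-as-LsjLrj} and \ref{lemma-PsEsLrj-die}; the only real content is the observation that the prefactor $\prod_{1\leq s<r}\Y_{(s,j)}$ contains the subproduct $\prod_{1\leq u\leq s}\Y_{(u,j)}$ for each $s<r$, together with the free use of commutativity in $\calD$.
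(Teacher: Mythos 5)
Your proposal is correct and follows essentially the same route as the paper's proof: both reduce via Lemma \ref{lemma-square-Yrj}, convert each cross-term $\calL_{(s,j)}\calL_{(r,j)}$ into $-E^{(s)}(\calL_{(r,j)})$ by Lemma \ref{lemma-EsLrj-as-LsjLrj}, and annihilate it by noting that $\prod_{1\leq u\leq s}\Y_{(u,j)}$ divides the prefactor in the commutative algebra $\calD$, so Lemma \ref{lemma-PsEsLrj-die} applies. The paper likewise treats only $j\neq 0$ and declares the $j=0$ case analogous, exactly as you do.
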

\begin{proof}
  We only prove the case where $j\neq0,$ the other case is analogous. We denote $$P=\prod_{1\leq s<r}\Y_{(s,j)}.$$

  By lemma \ref{lemma-square-Yrj} and lemma \ref{lemma-EsLrj-as-LsjLrj} we have
  \begin{equation*}
    (\Y_{(r,j)})^{2}=\Y_{(r,j-1)}\Y_{(r,j)}+\sum_{1\leq s<r}{E^{(s)}}(\calL_{(r,j)})
  \end{equation*}
  therefore  we have
  \begin{equation*}
    P(\Y_{(r,j)})^{2}=P\Y_{(r,j-1)}\Y_{(r,j)}+\sum_{1\leq s<r}PE^{(s)}(\calL_{(r,j)})
  \end{equation*}

  The assertion of the lemma follows if we prove that $PE^{(s)}(\calL_{(r,j)})=0$ for each $1\leq s<r.$ Let $P_s$ as in lemma \ref{lemma-PsEsLrj-die}, then $P_s$ divides $P,$ and by lemma \ref{lemma-PsEsLrj-die}, $PE^{(s)}(\calL_{(r,j)})=0.$ Therefore we obtain the desired result.
\end{proof}

\begin{lemma}\label{lemma-Y-prod-reduction-2}
  Let $1\leq r\leq k$ and $0\leq j\leq l-2.$  Then we have
  \begin{equation*}
  \left(\prod_{m_{(s,t)}<m_{(r,j)}}\Y_{(s,t)}\right)(\Y_{(r,j)})^{2}=0.
  \end{equation*}
\end{lemma}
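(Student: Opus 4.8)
The plan is to argue by induction on the position of the block $B_{(r,j)}$ in the total order $\succ$, equivalently on the integer $m_{(r,j)}$. For the base case $(r,j)=(1,0)$ the product $\prod_{m_{(s,t)}<m_{(1,0)}}\Y_{(s,t)}$ is empty, since $m_{(1,0)}=\epsilon+1$ is the smallest of all the $m_{(s,t)}$; so the claim reduces to $(\Y_{(1,0)})^2=0$, which is exactly Lemma \ref{lemma-square-Y10-die}. Throughout I will use freely that $\calD$ is commutative, so that factors may be rearranged at will.

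For the inductive step, fix $(r,j)\neq(1,0)$ and write $(r,j)^-$ for its immediate predecessor in $\succ$; thus $(r,j)^-=(r,j-1)$ if $j\neq 0$ and $(r,j)^-=(r-1,l-2)$ if $j=0$. Set $P=\prod_{m_{(s,t)}<m_{(r,j)}}\Y_{(s,t)}$. Since every type-$j$ block in an earlier row satisfies $m_{(s,j)}<m_{(r,j)}$, I would factor $P=\left(\prod_{1\leq s<r}\Y_{(s,j)}\right)R$, where $R$ collects the remaining factors, and then apply Lemma \ref{lemma-Y-prod-reduction-1} when $r>1$ (respectively Lemma \ref{lemma-square-Y1j} when $r=1$, where the first product is empty) to obtain
\begin{equation*}
  P(\Y_{(r,j)})^2=R\left(\prod_{1\leq s<r}\Y_{(s,j)}\right)\Y_{(r,j)^-}\Y_{(r,j)}.
\end{equation*}

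The crucial observation is that the predecessor $(r,j)^-$ has type different from $j$: it is either $j-1$, or $l-2$, and $l-2\neq 0$ because $l>2$. Hence $\Y_{(r,j)^-}$ is one of the factors collected in $R$ rather than in $\prod_{1\leq s<r}\Y_{(s,j)}$, and writing $R=\Y_{(r,j)^-}R'$ produces a squared predecessor:
\begin{equation*}
  P(\Y_{(r,j)})^2=R'\left(\prod_{1\leq s<r}\Y_{(s,j)}\right)(\Y_{(r,j)^-})^2\,\Y_{(r,j)}.
\end{equation*}
Now $R'\prod_{1\leq s<r}\Y_{(s,j)}$ is precisely the product of all $\Y_{(s,t)}$ with $m_{(s,t)}<m_{(r,j)}$ and $(s,t)\neq(r,j)^-$, so it contains $\prod_{m_{(s,t)}<m_{(r,j)^-}}\Y_{(s,t)}$ as a subproduct. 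Grouping that subproduct with $(\Y_{(r,j)^-})^2$ and invoking the inductive hypothesis, namely $\left(\prod_{m_{(s,t)}<m_{(r,j)^-}}\Y_{(s,t)}\right)(\Y_{(r,j)^-})^2=0$, forces $P(\Y_{(r,j)})^2=0$ and closes the induction. The only genuine difficulty is purely bookkeeping: tracking exactly which factors fall into $R$ versus the type-$j$ product and verifying that the predecessor's type differs from $j$ so that its $\Y$ does indeed already appear in $P$; once these are pinned down, everything else is formal manipulation inside the commutative algebra $\calD$.
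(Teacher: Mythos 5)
Your proof is correct and follows essentially the same route as the paper: induction on $m_{(r,j)}$, with Lemma \ref{lemma-square-Y10-die} as base case and Lemmas \ref{lemma-square-Y1j} and \ref{lemma-Y-prod-reduction-1} used to convert $(\Y_{(r,j)})^2$ into $\Y_{(r,j)^-}\Y_{(r,j)}$, after which the factor $\Y_{(r,j)^-}$ already present in the product yields the squared predecessor and the inductive hypothesis applies. The only difference is cosmetic: you make explicit the bookkeeping (the factorization of $P$ and the check that the immediate predecessor has type $\neq j$, using $l>2$) and treat the cases $j\neq 0$ and $j=0$ uniformly via the predecessor notation, where the paper proves $j\neq 0$ and declares the other case analogous.
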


\begin{proof}
  We use induction on the set of numbers $m_{(r,j)}.$

  For $(r,j)=(1,0)$ we set $$\prod_{m_{(s,t)}<m_{(1,0)}}\Y_{(s,t)}=e(\bif).$$ In this case the assertion of the lemma follows directly from lemma \ref{lemma-square-Y10-die},   This is the base of our induction.

  For $(r,j)\neq (1,0)$ we use lemma \ref{lemma-square-Y1j} or lemma \ref{lemma-Y-prod-reduction-1} (depending on $r$) to obtain

  \begin{equation*}
    \left(\prod_{m_{(s,t)}<m_{(r,j)}}\Y_{(s,t)}\right)(\Y_{(r,j)})^{2}=
    \left(\prod_{m_{(s,t)}<m_{(r,j-1)}}\Y_{(s,t)}\right)(\Y_{(r,j-1)})^2\Y_{(r,j)}=0.
  \end{equation*}
  the last equation follows by induction. (We proved the case $j\neq0,$ the other case is analogous).
\end{proof}

\subsection{Vector space structure}\label{sec-vector-space-structure}

In this subsection we study the vector space structure of the algebra $\calD.$

\begin{lemma}\label{lemma-D-is-spenned-by-words}
  As a vector space, the algebra $\calD$ is generated by the set:
  $$W_0=\left\{\prod_{(r,j)}(\Y_{(r,j)})^{\alpha_{(r,j)}}:\alpha_{(r,j)}\in\{0,1\}\right\}.$$
  (The product $\prod_{(r,j)}$ is taken over all pairs $(r,j)$ with $1\leq r\leq k$ and $0\leq j\leq l-2.$)
\end{lemma}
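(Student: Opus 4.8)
The plan is to start from what is already available: $\calD$ is commutative and, by Corollary \ref{coro-Y-generate-B}, generated by the elements $\Y_{(r,j)}$. Hence $\calD$ is spanned as an $\F$-vector space by the unrestricted monomials $\prod_{(r,j)}(\Y_{(r,j)})^{\alpha_{(r,j)}}$ with $\alpha_{(r,j)}\in\mathbb{Z}_{\geq 0}$, and it suffices to prove that every such monomial lies in $\spa(W_0)$; that is, we must be able to lower any exponent $\geq 2$ to an exponent in $\{0,1\}$.

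The first step I would isolate is a \emph{normalized} form of the squaring relations. Ordering the generators $\Y_{(r,j)}$ by the total order $\succ$ on blocks, write them as $\Y_1,\dots,\Y_n$ with $n=k(l-1)$, in increasing order of the index $m_{(r,j)}$. Using Lemma \ref{lemma-square-Y10-die}, Lemma \ref{lemma-square-Y1j} and Lemma \ref{lemma-square-Yrj}, together with the expressions for $\calL_{(s,t)}$ in Definition \ref{def-L-elements}, I would verify that for each $i$ there is an identity $(\Y_i)^2=\Y_i\,P_i+Q_i$, where $P_i$ and $Q_i$ are polynomials in the strictly smaller generators $\Y_1,\dots,\Y_{i-1}$ only. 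For the index of $\Y_{(1,0)}$ this reads $(\Y_i)^2=0$; for $\Y_{(1,j)}$ it reads $\Y_{(1,j-1)}\Y_{(1,j)}$; and for $r>1$, $j\neq 0$ it follows by expanding each product $\calL_{(s,j)}\calL_{(r,j)}=(\Y_{(s,j-1)}-\Y_{(s,j)})(\Y_{(r,j-1)}-\Y_{(r,j)})$ and collecting the at most one factor of $\Y_{(r,j)}$, with the analogous computation for $j=0$ using the $j=0$ form of $\calL_{(s,0)}$ and $\calL_{(r,0)}$. Every resulting term either carries $\Y_{(r,j)}$ to the first power times a product of strictly smaller generators or carries no $\Y_{(r,j)}$ at all, because $s<r$ forces $m_{(s,j)}<m_{(r,j)}$ and $j-1<j$ forces $m_{(r,j-1)}<m_{(r,j)}$. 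This index bookkeeping, confirming that all surviving factors have $m$-index strictly below $m_{(r,j)}$, is the one place requiring care, and I regard it as the main obstacle.

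Granting the normalized relation, the reduction is a standard rewriting/termination argument. To a monomial $M=\prod_i(\Y_i)^{\alpha_i}$ I attach the exponent vector $(\alpha_n,\alpha_{n-1},\dots,\alpha_1)$ and order monomials lexicographically with the highest index most significant, a well-founded order on $\mathbb{Z}_{\geq 0}^{n}$. If some $\alpha_i\geq 2$, I choose the largest such $i$ and substitute $(\Y_i)^{\alpha_i}=(\Y_i)^{\alpha_i-1}P_i+(\Y_i)^{\alpha_i-2}Q_i$. Since $P_i,Q_i$ involve only $\Y_1,\dots,\Y_{i-1}$, every monomial produced keeps the exponents of $\Y_{i+1},\dots,\Y_n$ unchanged and strictly lowers the exponent of $\Y_i$; hence each new monomial is strictly smaller than $M$ in the lexicographic order, even though the exponents of the lower generators may grow.

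Finally I would conclude by well-founded induction, equivalently by the multiset extension of the lexicographic order: repeatedly applying the substitution rewrites any element of $\calD$ as a finite $\F$-linear combination of monomials all of whose exponents lie in $\{0,1\}$, that is, of elements of $W_0$. This shows that $W_0$ spans $\calD$ as a vector space, as claimed. Note that this argument yields spanning only; the linear independence of $W_0$, and hence the dimension count $|W_0|=2^{n}=|\std(\bif)|$, must be treated separately.
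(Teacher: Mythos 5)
Your proposal is correct and follows essentially the same route as the paper: reduce via Corollary \ref{coro-Y-generate-B} to unrestricted monomials, use Lemmas \ref{lemma-square-Y10-die}, \ref{lemma-square-Y1j} and \ref{lemma-square-Yrj} (with Definition \ref{def-L-elements}) to trade a square $(\Y_{(r,j)})^2$ for terms in which all new factors carry strictly smaller index $m_{(s,t)}$, and conclude by induction on a well-founded order on monomials. The only differences are cosmetic bookkeeping: you rewrite at the \emph{largest} offending index under the full lexicographic order, whereas the paper tracks the pair $(m(P),h(P))$ built from the \emph{smallest} exponent $\geq 2$ — both termination arguments are valid.
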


\begin{proof}
  By corollary \ref{coro-Y-generate-B} we have that $\mathcal{D}$ is generated as a vector space by the set
  $$W=\left\{\prod_{(r,j)}(\Y_{(r,j)})^{\alpha_{(r,j)}}:\alpha_{(r,j)}\geq 0\right\}$$
  (where $\prod_{(r,j)}(\Y_{(r,j)})^{0}$ is defined as $e(\bif)$).
  Therefore it is enough to prove that each element in $W$ can be written as a linear combination of elements in $W_0.$

  Let $$W_1=\left\{\prod_{(r,j)}(\Y_{(r,j)})^{\alpha_{(r,j)}}\in W:\alpha_{(s,t)}\geq 2\quad \textrm{for some}\quad (s,t)\right\},$$
  then $W=W_0\cup W_1.$

  For each element $P=\prod_{(r,j)}(\Y_{(r,j)})^{\alpha_{(r,j)}}\in W_1$ we define the number $$m(P)=\min\{m_{(s,t)}:\alpha_{(s,t)}\geq 2\},$$ and if $m(P)=m_{(s,t)}$ we denote $h(P)=\alpha_{(s,t)}.$

   We define a partial order on $W_1$ as follows:
  \begin{equation*}
    P_1\prec P_2\quad\textrm{if and only if}\quad \left(\left(m(P_1)<m(P_2)\right)\vee \left(m(P_1)=m(P_2)\wedge h(P_1)<h(P_2)\right)\right).
  \end{equation*}

  Let $P=\prod_{(r,j)}(\Y_{(r,j)})^{\alpha_{(r,j)}}\in W_1.$

  \begin{enumerate}
    \item If $m(P)=m_{(1,0)},$ then by lemma \ref{lemma-square-Y10-die} we have that $P=0$ and the assertion is trivial.
    \item  If $m(P)=m_{(s,t)}>m_{(1,0)}$ then by lemmas \ref{lemma-square-Y1j} and \ref{lemma-square-Yrj} we have that:

  \begin{equation}\label{eq1-lemma-V-is-spanned-by-words}
    P=\left(\prod_{m_{(r,j)}<m_{(s,t)}}(\Y_{(r,j)})^{\alpha_{(r,j)}}\right)H(\Y_{(s,t)})^{h(P)-2}\left(\prod_{m_{(r,j)}>m_{(s,t)}}(\Y_{(r,j)})^{\alpha_{(r,j)}}\right),
  \end{equation}
  where
  \begin{equation*}
    H=\Y_{(s,t-1)}\Y_{(s,t)}+(1-\delta_{s,1})\sum_{u<s}\calL_{(u,t)}\calL_{(s,t)}
  \end{equation*}
  (here $\delta_{s,1}$ is the usual Kronecker's delta function).

  Therefore reorganizing equation \ref{eq1-lemma-V-is-spanned-by-words} and using definition \ref{def-L-elements} we can see that $P$ can be written as a linear combination of elements $Q\in W_1$ such that $Q\prec P:$
  $$P=\sum_{Q\prec P}{C_{Q}Q}\quad (C_{Q}\in\F.)$$
  Applying induction on  $\prec$ we obtain that $P$ can be written as linear combination of elements in $W_0.$
  \end{enumerate}

  Therefore each element of $W$ can be written as a linear combination of elements in $W_0$ as desired.
\end{proof}

\begin{corollary}\label{coro-upperbound-for-dimD}
  \begin{equation*}
    \dim(\mathcal{D})\leq 2^{n},\quad \textrm{where}\quad n=k(l-1).
  \end{equation*}
\end{corollary}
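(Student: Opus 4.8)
The plan is to read off this bound directly from Lemma \ref{lemma-D-is-spenned-by-words}, which already does all the substantive work. That lemma produces an explicit spanning set $W_0$ for the vector space $\calD$, so the dimension of $\calD$ is at most the cardinality of $W_0$; the only thing left is to count $W_0$.

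First I would describe the index set over which the products in $W_0$ are taken. An element of $W_0$ is a monomial $\prod_{(r,j)}(\Y_{(r,j)})^{\alpha_{(r,j)}}$ determined by a choice of exponent $\alpha_{(r,j)} \in \{0,1\}$ for each pair $(r,j)$ with $1 \leq r \leq k$ and $0 \leq j \leq l-2$. This index set has exactly $k$ possible values for $r$ and $l-1$ possible values for $j$, hence $k(l-1) = n$ pairs in total. Since each of the $n$ exponents is chosen independently from the two-element set $\{0,1\}$, I get
\begin{equation*}
  |W_0| \leq 2^{n}.
\end{equation*}
(The inequality, rather than equality, already covers the possibility that distinct exponent patterns could yield the same element of $\calD$; distinctness is not needed for an upper bound.)

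Finally I would conclude as follows: by Lemma \ref{lemma-D-is-spenned-by-words} the set $W_0$ spans $\calD$ as an $\F$-vector space, and the dimension of a vector space is at most the cardinality of any spanning set, so $\dim(\calD) \leq |W_0| \leq 2^{n}$, with $n = k(l-1)$, as claimed. I do not anticipate any genuine obstacle here; the corollary is a bookkeeping consequence of the spanning result, and the single point worth checking is merely that the size of the index set for the pairs $(r,j)$ agrees with the definition $n = k(l-1)$, which is immediate from the ranges $1 \leq r \leq k$ and $0 \leq j \leq l-2$.
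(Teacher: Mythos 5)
Your proof is correct and follows exactly the paper's route: the paper also deduces this corollary directly from Lemma \ref{lemma-D-is-spenned-by-words}, since the spanning set $W_0$ is indexed by the $n=k(l-1)$ pairs $(r,j)$ with exponents in $\{0,1\}$, giving at most $2^n$ elements. Your explicit count of the index set is just the bookkeeping the paper leaves implicit.
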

\begin{proof}
  It follows directly from lemma \ref{lemma-D-is-spenned-by-words}.
\end{proof}

\begin{corollary}\label{coro-upperbound-for-degree-D}
 If $h$ is an homogeneous element in $\calD,$ then
 \begin{equation*}
   \deg(h)\leq 2n,\quad \textrm{where}\quad n=k(l-1).
 \end{equation*}
\end{corollary}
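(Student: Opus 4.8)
The plan is to read off the degree bound directly from the monomial spanning set of $\calD$ produced in Lemma~\ref{lemma-D-is-spenned-by-words}. First I would record that each generator $\Y_{(r,j)}=y_{m_{(r,j)}}e(\bif)$ is homogeneous of degree $2$: the degree function in equation \ref{Degree-function-v1} assigns $\deg(y_r)=2$ and $\deg(e(\bif))=0$, and these are the only ingredients of $\Y_{(r,j)}$. Consequently $\calD$, being generated by homogeneous elements, is a graded subalgebra of $\B$, so it makes sense to speak of its homogeneous components and of the degree of a homogeneous element.

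Next I would compute the degree of a typical element of the spanning set $W_0$. Any $b=\prod_{(r,j)}(\Y_{(r,j)})^{\alpha_{(r,j)}}\in W_0$ with $\alpha_{(r,j)}\in\{0,1\}$ is homogeneous, and since $\deg(\Y_{(r,j)})=2$ we get
\[
\deg(b)=\sum_{(r,j)}2\,\alpha_{(r,j)}=2\sum_{(r,j)}\alpha_{(r,j)}\le 2n,
\]
the inequality holding because the index $(r,j)$ ranges over the $n=k(l-1)$ pairs with $1\le r\le k$ and $0\le j\le l-2$, and each exponent is at most $1$. Thus every element of $W_0$ is homogeneous of degree at most $2n$.

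Finally I would conclude by homogeneity. By Lemma~\ref{lemma-D-is-spenned-by-words} the set $W_0$ spans $\calD$, so $\calD$ is spanned by homogeneous elements of degree at most $2n$; equivalently, its homogeneous component in each degree $d>2n$ vanishes. Hence any nonzero homogeneous $h\in\calD$ lies in a homogeneous component of degree $d\le 2n$, giving $\deg(h)\le 2n$ as claimed. There is essentially no obstacle here beyond the bookkeeping already carried out in Lemma~\ref{lemma-D-is-spenned-by-words}; the only point requiring a word of justification is that $W_0$ consists of homogeneous elements, which is immediate once one knows that each $\Y_{(r,j)}$ is homogeneous, so that passing to the top graded component of $h$ and bounding its degree by that of the spanning monomials is legitimate.
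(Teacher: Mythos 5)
Your proposal is correct and follows essentially the same route as the paper: both arguments invoke Lemma~\ref{lemma-D-is-spenned-by-words} to write $h$ as a combination of homogeneous elements of $W_0$ and then bound the degree of each spanning monomial by $2n$. Your only addition is to make explicit the bookkeeping (each $\Y_{(r,j)}$ has degree $2$, so a monomial with exponents in $\{0,1\}$ has degree $2\sum\alpha_{(r,j)}\leq 2n$), which the paper leaves implicit in the phrase ``by definition.''
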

\begin{proof}
  By lemma \ref{lemma-D-is-spenned-by-words}, $h$ can be written as a linear combination of elements in $W_0$ (all with the same degree). By definition, the maximal degree of an element in $W_0$ is equal to $2n.$
\end{proof}

\begin{corollary}\label{coro-upperbound-for-degree-D-2}
 If $h$ is an homogeneous element in $\calD,$ and
 \begin{equation*}
   \deg(h)= 2n,\quad \textrm{where}\quad n=k(l-1).
 \end{equation*}
 then
 \begin{equation*}
   h=a\prod_{(r,j)}\Y_{(r,j)},\quad(a\in\F).
 \end{equation*}
 where the product runs over all the pairs $(r,j).$
\end{corollary}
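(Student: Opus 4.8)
The plan is to leverage the grading. Since $\calD$ is a subalgebra of the graded algebra $\B(\bif)$ generated by the homogeneous elements $\Y_{(r,j)}=y_{m_{(r,j)}}e(\bif)$, each of degree $2$ by the degree function \eqref{Degree-function-v1}, the algebra $\calD$ is a graded subalgebra, and each spanning element $\prod_{(r,j)}(\Y_{(r,j)})^{\alpha_{(r,j)}}\in W_0$ from Lemma \ref{lemma-D-is-spenned-by-words} is homogeneous of degree $2\sum_{(r,j)}\alpha_{(r,j)}$.

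First I would note that, with each exponent $\alpha_{(r,j)}\in\{0,1\}$ ranging over the $n=k(l-1)$ pairs $(r,j)$, the degree $2\sum_{(r,j)}\alpha_{(r,j)}$ of an element of $W_0$ equals the maximal value $2n$ precisely when all $\alpha_{(r,j)}=1$. Hence the unique member of $W_0$ of degree $2n$ is the full product $P_0:=\prod_{(r,j)}\Y_{(r,j)}$, while every other element of $W_0$ has degree strictly below $2n$.

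The second and final step is to combine this with the homogeneity of $h$. Given $h\in\calD$ homogeneous with $\deg(h)=2n$, Lemma \ref{lemma-D-is-spenned-by-words} lets me write $h=\sum_{P\in W_0}c_P\,P$ with $c_P\in\F$. Projecting both sides onto the degree-$2n$ homogeneous component of $\calD$ (legitimate since $\calD$ is graded and each $P$ is homogeneous), the left-hand side is unchanged because $h$ is already homogeneous of degree $2n$, while on the right-hand side every summand with $\deg(P)<2n$ is annihilated. By the previous step the only surviving term is $P=P_0$, so $h=c_{P_0}P_0=a\prod_{(r,j)}\Y_{(r,j)}$ with $a=c_{P_0}\in\F$, as required.

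I do not anticipate a genuine obstacle: the content is entirely a degree count on the spanning set furnished by Lemma \ref{lemma-D-is-spenned-by-words}, combined with the general bound of Corollary \ref{coro-upperbound-for-degree-D}. The one subtlety worth flagging is that $W_0$ is only asserted to \emph{span} $\calD$, not to be a basis, so one cannot directly read off coefficients; the homogeneous-component projection is exactly what sidesteps this, and it is available precisely because every element of $W_0$ is homogeneous.
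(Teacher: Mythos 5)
Your proof is correct and follows essentially the same route as the paper, whose one-line argument likewise rests on the observation that $\prod_{(r,j)}\Y_{(r,j)}$ is the unique homogeneous element of $W_0$ of degree $2n$, combined (implicitly) with the spanning property from Lemma \ref{lemma-D-is-spenned-by-words}. Your explicit use of the projection onto the degree-$2n$ homogeneous component merely spells out the step the paper leaves tacit, and correctly handles the subtlety that $W_0$ is at this stage only known to span $\calD$.
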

\begin{proof}
  By definition, $\prod_{(r,j)}\Y_{(r,j)}$ is the unique homogeneous element of $W_0$ with degree equal to $2n.$
\end{proof}


\begin{definition}\label{def-Prj-Lrj-elements}
Let $1\leq r \leq k$ and $0\leq j \leq l-2.$ We define the element

\begin{equation*}
  \calP_{(r,j)}=\prod_{m_{(s,t)}\leq m_{(r,j)}}{\Y_{(s,t)}}.
\end{equation*}
\end{definition}

\begin{lemma}\label{lemma-Prj-equal-Qrj}
  For each pair $(r,j)$ with $1\leq r \leq k$ and $0\leq j \leq l-2.$ We have

\begin{equation*}
  \calP_{(r,j)}=\pm \prod_{m_{(s,t)}\leq m_{(r,j)}}{\calL_{(s,t)}}.
\end{equation*}
\end{lemma}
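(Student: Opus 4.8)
The plan is to argue by induction on the totally ordered set of numbers $M=\{m_{(r,j)}\}$ introduced in Definition \ref{def-notation-tableaux-blocks}, using only three ingredients that are already in place: the description of each $\calL_{(s,t)}$ as a difference of two \emph{consecutive} $\Y$'s (Definition \ref{def-L-elements}), the annihilation Lemma \ref{lemma-Y-prod-reduction-2}, and commutativity of $\calD$. For the base case $(r,j)=(1,0)$ one simply observes that $\calP_{(1,0)}=\Y_{(1,0)}$ while $\prod_{m_{(s,t)}\leq m_{(1,0)}}\calL_{(s,t)}=\calL_{(1,0)}=-\Y_{(1,0)}$, so the claimed identity holds with sign $-1$.

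For the inductive step I would let $(r',j')$ be the pair whose value $m_{(r',j')}$ is the immediate predecessor of $m_{(r,j)}$ in $M$; by Definition \ref{def-L-elements} this is $(r,j-1)$ when $j\neq 0$ and $(r-1,l-2)$ when $j=0$ and $r>1$, and in either case
\begin{equation*}
  \calL_{(r,j)}=\Y_{(r',j')}-\Y_{(r,j)}.
\end{equation*}
Factoring off the top term of the product and applying the inductive hypothesis $\prod_{m_{(s,t)}\leq m_{(r',j')}}\calL_{(s,t)}=\pm\calP_{(r',j')}$ gives
\begin{equation*}
  \prod_{m_{(s,t)}\leq m_{(r,j)}}\calL_{(s,t)}
  =\pm\,\calP_{(r',j')}\bigl(\Y_{(r',j')}-\Y_{(r,j)}\bigr)
  =\pm\bigl(\calP_{(r',j')}\Y_{(r',j')}-\calP_{(r',j')}\Y_{(r,j)}\bigr).
\end{equation*}

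The first summand vanishes: since $\Y_{(r',j')}$ is already the top factor of $\calP_{(r',j')}$, commutativity lets me rewrite $\calP_{(r',j')}\Y_{(r',j')}=\bigl(\prod_{m_{(s,t)}<m_{(r',j')}}\Y_{(s,t)}\bigr)(\Y_{(r',j')})^{2}$, which is zero by Lemma \ref{lemma-Y-prod-reduction-2}. The second summand is exactly $\calP_{(r,j)}$, because adjoining the single new factor $\Y_{(r,j)}$ to $\calP_{(r',j')}$ yields the product over all pairs with $m_{(s,t)}\leq m_{(r,j)}$. Hence $\prod_{m_{(s,t)}\leq m_{(r,j)}}\calL_{(s,t)}=\mp\calP_{(r,j)}$, which closes the induction.

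The argument is essentially formal once the telescoping identity $\calL_{(r,j)}=\Y_{(r',j')}-\Y_{(r,j)}$ is available; the only substantive input is Lemma \ref{lemma-Y-prod-reduction-2}, which is precisely what kills the single ``cross term'' produced when expanding the $\calL$'s. I expect the only delicate point to be bookkeeping of the predecessor relation across block boundaries (the transition $j=l-2\to j=0$, $r\to r+1$), but this is exactly the situation already encoded in Definition \ref{def-L-elements}, so no new verification is required. The accumulated sign is $(-1)^{N}$, where $N$ is the number of pairs $(s,t)$ with $m_{(s,t)}\leq m_{(r,j)}$, which is immaterial for the stated ``$\pm$''.
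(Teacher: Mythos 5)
Your proof is correct and follows essentially the same route as the paper: induction on the totally ordered set $\{m_{(r,j)}\}$ with base case $\calP_{(1,0)}=-\calL_{(1,0)}$, expanding the top factor via $\calL_{(r,j)}=\Y_{(r',j')}-\Y_{(r,j)}$ and killing the cross term $\calP_{(r',j')}\Y_{(r',j')}$ with Lemma \ref{lemma-Y-prod-reduction-2}. The only cosmetic differences are that you expand the $\calL$-product where the paper expands $\calP_{(r,j)}$, you treat the block transition $j=l-2\to j=0$ explicitly where the paper declares it analogous, and you pin down the sign as $(-1)^{N}$ — all fine.
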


\begin{proof}
Let us denote
\begin{equation*}
  \calQ_{(r,j)}=\prod_{m_{(s,t)}\leq m_{(r,j)}}{\calL_{(s,t)}}.
\end{equation*}

  We use induction on the set of numbers $m_{(r,j)}.$

  If $(r,j)=(1,0)$ the assertion follows directly from definition \ref{def-L-elements}, since
  \begin{equation*}
    \calP_{(1,0)}=\Y_{(1,0)}=-\calL_{(1,0)}.
  \end{equation*}

  If $(r,j)\neq (1,0)$ then by definition \ref{def-Prj-Lrj-elements} and corollary \ref{coro-telescopic-Y-in-L-2} we have \begin{equation*}
  \calP_{(r,j)}=\calP_{(r,j-1)}\Y_{(r,j)}=\calP_{(r,j-1)}\Y_{(r,j-1)}-\calP_{(r,j-1)}\calL_{(r,j)}.
\end{equation*}
By lemma \ref{lemma-Y-prod-reduction-2} we have $\calP_{(r,j-1)}\Y_{(r,j-1)}=0$ and by induction we obtain

\begin{equation*}
  \calP_{(r,j)}=\pm\mathcal{Q}_{(r,j-1)}\calL_{(r,j)}=\pm\mathcal{Q}_{(r,j)}
\end{equation*}
  as desired.

  (We only proved the case where $j\neq0.$ The other case is analogous).
\end{proof}

\begin{lemma}\label{lemma-Qrj-lin-indep}
  The set
  \begin{equation*}
   \left\{\calP_{(r,j)}:1\leq r \leq k,\quad 0\leq j \leq l-2\right\}
\end{equation*}
is a linearly independent subset of $\mathcal{D}.$
\end{lemma}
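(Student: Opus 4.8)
The plan is to recognise each $\calP_{(r,j)}$, up to sign, as a single element of the graded cellular basis of $\B(\bif)$ and then to invoke the linear independence of that basis. First I would use Lemma \ref{lemma-Prj-equal-Qrj} to replace the product of the $\Y$'s by the corresponding product of the $\calL$'s, writing
\begin{equation*}
  \calP_{(r,j)} = \pm \prod_{m_{(s,t)}\leq m_{(r,j)}} \calL_{(s,t)}.
\end{equation*}
Next, by Lemma \ref{lema-calL-is-L} each factor equals $\calL_{(s,t)} = L_{m_{(s,t)}}e(\bif)$; since the $L$'s lie in the commutative subalgebra generated by the dots and $e(\bif)$ is the identity of $\B(\bif)$, the product collapses to $e(\bif)\prod_{m_{(s,t)}\leq m_{(r,j)}} L_{m_{(s,t)}}$. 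By Lemma \ref{lemma-HvsH-reduction} this is precisely $\Psi_{\bT_{(r,j)},\bT_{(r,j)}}^{\blambda_{(r,j)}}$, so that
\begin{equation*}
  \calP_{(r,j)} = \pm\, \Psi_{\bT_{(r,j)},\bT_{(r,j)}}^{\blambda_{(r,j)}}.
\end{equation*}

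Second, I would argue that distinct pairs $(r,j)$ yield distinct cellular basis elements. The tableau attached to $(r,j)$ is $\bT_{(r,j)} = \bT\langle M_{m_{(r,j)}}\rangle$, and by the bijection $A\mapsto\bT\langle A\rangle$ of Corollary \ref{coro-cardinality-of-std-bif} (see Definition \ref{def-notation-tableaux-blocks}) different subsets $A\subseteq M$ give different tableaux in $\std(\bif)$. Since the sets $M_{m_{(r,j)}}$ have pairwise different cardinalities (namely the rank of $m_{(r,j)}$ in the total order $\succ$), the tableaux $\bT_{(r,j)}$ --- and hence the basis elements $\Psi_{\bT_{(r,j)},\bT_{(r,j)}}^{\blambda_{(r,j)}}$ --- are pairwise distinct. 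As these are genuine members of the graded cellular basis of Corollary \ref{coro-cellular-basis-truncated-2}, they are linearly independent, and therefore so is the family $\{\calP_{(r,j)}\}$, the signs being irrelevant.

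There is in fact no serious obstacle here, because all the computational work was already carried out in Lemmas \ref{lemma-HvsH-reduction} and \ref{lemma-Prj-equal-Qrj}; the only point requiring care is the verification that the associated basis elements are genuinely pairwise distinct and nonzero, which is exactly what the cellular-basis identification supplies. As an independent confirmation one may use the grading: each $\Y_{(s,t)}$ is homogeneous of degree $2$, so $\calP_{(r,j)}$ is homogeneous of degree $2\,|M_{m_{(r,j)}}|$, and these cardinalities run through $1,2,\dots,n$ with $n=k(l-1)$. Thus the $\calP_{(r,j)}$ are nonzero homogeneous elements of pairwise distinct degrees $2,4,\dots,2n$, which forces linear independence once their non-vanishing (again from the identification with a cellular basis element) is known.
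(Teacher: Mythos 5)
Your proof is correct and takes essentially the same route as the paper: the paper's proof is precisely a one-line combination of Lemmas \ref{lemma-Prj-equal-Qrj}, \ref{lema-calL-is-L} and \ref{lemma-HvsH-reduction}, identifying each $\calP_{(r,j)}$ up to sign with the cellular basis element $\Psi_{\bT_{(r,j)},\bT_{(r,j)}}^{\blambda_{(r,j)}}$, exactly as you do. Your explicit verification that the tableaux $\bT_{(r,j)}$ are pairwise distinct, and the supplementary observation that the $\calP_{(r,j)}$ are nonzero homogeneous elements of pairwise distinct degrees $2,4,\dots,2n$, are welcome elaborations of details the paper leaves implicit.
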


\begin{proof}
 Is a direct consequence of lemma \ref{lemma-HvsH-reduction}, lemma \ref{lema-calL-is-L} and lemma \ref{lemma-Prj-equal-Qrj}.
\end{proof}

\begin{corollary}\label{coro-Prj-neq-0}
  Let $1\leq r\leq k$ and $0\leq j \leq l-2.$ Then
  \begin{equation*}
    \calP_{(r,j)}\neq 0
  \end{equation*}
\end{corollary}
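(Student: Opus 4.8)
The plan is to read this off directly from the immediately preceding Lemma~\ref{lemma-Qrj-lin-indep}. That lemma asserts that the family $\{\calP_{(r,j)} : 1\leq r\leq k,\ 0\leq j\leq l-2\}$ is a linearly independent subset of $\calD$. Since $\calD$ is an $\F$-vector space, I would invoke the elementary fact that a linearly independent subset of any vector space over a field cannot contain the zero vector: if some $\calP_{(r,j)}$ were equal to $0$, then the relation $1\cdot \calP_{(r,j)} = 0$ would be a nontrivial linear dependence (the coefficient $1\in\F$ being nonzero), contradicting linear independence of the whole family. Hence each $\calP_{(r,j)}$ is nonzero, which is exactly the assertion of the corollary.

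Concretely, the single step I would carry out is: fix an arbitrary pair $(r,j)$ with $1\leq r\leq k$ and $0\leq j\leq l-2$, observe that $\calP_{(r,j)}$ is one of the members of the linearly independent set of Lemma~\ref{lemma-Qrj-lin-indep}, and conclude $\calP_{(r,j)}\neq 0$. No further computation is needed, since the genuine content—that the products $\calP_{(r,j)}$ (equivalently, up to sign, the products $\calQ_{(r,j)}=\prod_{m_{(s,t)}\leq m_{(r,j)}}\calL_{(s,t)}$ by Lemma~\ref{lemma-Prj-equal-Qrj}) are nonzero and in fact independent—has already been established through Lemma~\ref{lemma-HvsH-reduction}, Lemma~\ref{lema-calL-is-L}, and Lemma~\ref{lemma-Prj-equal-Qrj}.

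In this sense there is no real obstacle: the corollary is a formal consequence of linear independence, and all the work has been done upstream in proving Lemma~\ref{lemma-Qrj-lin-indep}. The only thing worth stating carefully is the reduction itself, namely that nonvanishing of every element is weaker than—and implied by—linear independence of the collection as a whole.

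\begin{proof}
  It follows directly from Lemma~\ref{lemma-Qrj-lin-indep}. Indeed, a linearly independent subset of the $\F$-vector space $\calD$ cannot contain the zero element, for otherwise $1\cdot \calP_{(r,j)}=0$ would be a nontrivial linear relation. Hence $\calP_{(r,j)}\neq 0$ for every pair $(r,j)$ with $1\leq r\leq k$ and $0\leq j\leq l-2$.
\end{proof}
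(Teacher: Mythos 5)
Your proposal is correct and matches the paper's proof exactly: the paper also deduces the corollary as a direct consequence of Lemma~\ref{lemma-Qrj-lin-indep}, and your elaboration (a linearly independent subset of a vector space cannot contain the zero vector) is just the standard fact the paper leaves implicit.
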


\begin{proof}
  Is a direct consequence of lemma \ref{lemma-Qrj-lin-indep}.
\end{proof}

\begin{theorem}\label{theo-monomial-basis}
The set
$$W_0=\left\{\prod_{(r,j)}(\Y_{(r,j)})^{\alpha_{(r,j)}}:\alpha_{(r,j)}\in\{0,1\}\right\}$$
is a basis for $\calD.$
\end{theorem}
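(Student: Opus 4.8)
The plan is to upgrade the spanning statement of Lemma~\ref{lemma-D-is-spenned-by-words} to a basis by proving linear independence of $W_0$. Since $W_0$ spans $\calD$ and $|W_0|=2^{n}$, Corollary~\ref{coro-upperbound-for-dimD} gives $\dim\calD\le 2^{n}$, so it suffices to show that the $2^{n}$ monomials of $W_0$ are linearly independent. Write $I=\{(r,j):1\le r\le k,\ 0\le j\le l-2\}$ for the index set, totally ordered by the values $m_{(r,j)}$, and for $S\subseteq I$ set $\Y^{S}=\prod_{(r,j)\in S}\Y_{(r,j)}$, so that $W_0=\{\Y^{S}:S\subseteq I\}$. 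Each $\Y^{S}$ is homogeneous of degree $2|S|$; hence monomials of different cardinality sit in different graded components of $\calD$, and it is enough to fix $0\le d\le n$ and prove that $\{\Y^{S}:|S|=d\}$ is linearly independent.

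To do this I would pair $\calD_{2d}$ against $\calD_{2(n-d)}$ inside the top graded piece. By Corollary~\ref{coro-upperbound-for-degree-D-2} the degree $2n$ component of $\calD$ is spanned by $P:=\prod_{(r,j)}\Y_{(r,j)}=\calP_{(k,l-2)}$, and $P\neq 0$ by Corollary~\ref{coro-Prj-neq-0}; thus for $|S|=d$ and $|T|=n-d$ there is a unique scalar $\mu_{S,T}\in\F$ with $\Y^{S}\Y^{T}=\mu_{S,T}\,P$. Writing $T=I\setminus S'$ with $|S'|=d$, any relation $\sum_{|S|=d}c_{S}\Y^{S}=0$, multiplied by $\Y^{I\setminus S'}$ and using $P\neq 0$, becomes $\sum_{S}c_{S}\,\mu_{S,I\setminus S'}=0$ for every $S'$. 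So independence in degree $2d$ follows once I show that the square matrix $M=(\mu_{S,\,I\setminus S'})_{|S|=|S'|=d}$ is invertible.

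Establishing invertibility of $M$ is the main point. In $\Y^{S}\Y^{I\setminus S'}$ the generator $\Y_{(r,j)}$ appears squared exactly for $(r,j)\in S\setminus S'$ and is absent exactly for $(r,j)\in S'\setminus S$; when $S=S'$ all exponents equal $1$, so $\mu_{S,I\setminus S}=1$. I would then totally order the size-$d$ subsets by declaring $S\prec S'$ when the smallest index (in the $m_{(r,j)}$-ordering) at which $S$ and $S'$ differ belongs to $S$. If $S\prec S'$, that first differing index $(r_0,j_0)$ lies in $S\setminus S'$ and so is squared, while by minimality every index below it lies in $S\cup(I\setminus S')$ and hence occurs with exponent $\ge 1$; by commutativity $\Y^{S}\Y^{I\setminus S'}$ then contains the factor $\bigl(\prod_{m_{(s,t)}<m_{(r_0,j_0)}}\Y_{(s,t)}\bigr)(\Y_{(r_0,j_0)})^{2}$, which is $0$ by Lemma~\ref{lemma-Y-prod-reduction-2}. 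Therefore $\mu_{S,I\setminus S'}=0$ whenever $S\prec S'$, so $M$ is triangular with unit diagonal, hence invertible. The delicate part is exactly this last bookkeeping---verifying that no index below $(r_0,j_0)$ is absent, so that Lemma~\ref{lemma-Y-prod-reduction-2} genuinely applies---after which linear independence holds in every degree and, together with Lemma~\ref{lemma-D-is-spenned-by-words}, shows that $W_0$ is a basis of $\calD$.
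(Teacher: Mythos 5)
Your proof is correct, and it runs on the same fuel as the paper's: spanning from Lemma~\ref{lemma-D-is-spenned-by-words}, multiplication by the complementary monomial, the fact that the degree-$2n$ component is spanned by $\calP_{(k,l-2)}\neq 0$ (Corollaries~\ref{coro-upperbound-for-degree-D-2} and~\ref{coro-Prj-neq-0}), and Lemma~\ref{lemma-Y-prod-reduction-2} to annihilate cross terms; the reduction to a fixed degree is legitimate since $\calD$ is a graded subspace of $\B(\bif)$ and the monomials in $W_0$ are homogeneous. The difference is in the bookkeeping, and it is a difference worth keeping. The paper runs a single induction over the order $P<Q$ defined by degree and then by $f(P)>f(Q)$, multiplying the putative relation by $\calF(Q)$ at each step; you instead prove that the pairing matrix $M=(\mu_{S,\,I\setminus S'})$ is unitriangular for the lexicographic order on characteristic vectors. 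Your finer order is not cosmetic: the paper's order leaves distinct monomials with $\deg P=\deg Q$ and $f(P)=f(Q)$ incomparable, and for such ties the cross term $P\calF(Q)$ need not vanish. For instance, with $P=\Y_{(1,0)}\Y_{(1,2)}$ and $Q=\Y_{(1,0)}\Y_{(1,1)}$, Lemma~\ref{lemma-square-Y1j} gives $(\Y_{(1,2)})^{2}=\Y_{(1,1)}\Y_{(1,2)}$, hence $P\calF(Q)=\Y_{(1,0)}(\Y_{(1,2)})^{2}\prod_{m_{(s,t)}>m_{(1,2)}}\Y_{(s,t)}=\calP_{(k,l-2)}\neq 0$, so the paper's step ``$C_{Q}Q\calF(Q)=0$'' silently drops surviving terms. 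In your formulation no such claim is needed: vanishing is required only when the first index at which $S$ and $S'$ differ lies in $S$, where it appears with exponent $2$ while, since $S$ and $S'$ agree below that index, every lower index appears with exponent exactly $1$ --- precisely the hypothesis of Lemma~\ref{lemma-Y-prod-reduction-2}; entries on the other side of the diagonal (your ``Case B'', which includes the example above) are harmless lower-triangular entries, and your order, being lexicographic on strings, is indeed total. So your ``delicate bookkeeping'' closes cleanly, and your argument is not only correct but, as organized, repairs a small gap in the paper's own induction.
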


\begin{proof}
  By lemma \ref{lemma-D-is-spenned-by-words} it is enough to prove  that the set $W_0$ is linearly independent.

  Following analogous arguments as in \cite{Esp-Pl} we define for each element $P=\prod_{(r,j)}(\Y_{(r,j)})^{\alpha_{(r,j)}}\in W_0$ the number
  $$f(P)=\min\{m_{(r,j)}:\alpha_{(r,j)}=1\}$$ and the element
  $$\calF(P)=\prod_{(r,j)}(\Y_{(r,j)})^{\bar{\alpha}_{(r,j)}}\in W_0$$
  where $\bar{\alpha}_{(r,j)}=1-\alpha_{(r,j)}.$
  It is clear that
  \begin{equation*}
    \deg(\calF(P))=2n-\deg(P).
  \end{equation*}

  and
  \begin{equation*}
    P\calF(P)=\calP_{(k,l-2)}.
  \end{equation*}

  For $P,Q\in W_0,$ we define
  \begin{equation*}
    P<Q\quad \textrm{if and only if}\quad ((\deg(P)<\deg(Q))\vee (\deg(P)=\deg(Q)\wedge f(P)>f(Q)).
  \end{equation*}

  Note that under this order relation, $W_0$ has a minimum and a maximum, the elements $$e(\bif)=\prod_{(r,j)}(\Y_{(r,j)})^{0}\quad\textrm{and}\quad \calP_{(k,l-2)}$$ respectively.

   Now, consider the following equation

  \begin{equation}\label{eq-W0-is-basis-1}
    \sum_{P\in W_0}C_P P=0
  \end{equation}
  where $C_P\in \F$ are certain scalars.

  \begin{enumerate}
   \item  If $P\in W_0$ with $P\neq e(\bif),$ and the element $h_P=\calP_{(k,l-2)}P$ is different to $0,$ then it is an homogeneous element with   $$\deg(h_P)=2n+\deg(P),$$
       but this contradicts corollary \ref{coro-upperbound-for-degree-D-2}. Therefore $h_P=0$ whenever $P\neq e(\bif).$
             
             If we multiply by $\calP_{(k,l-2)}$ in equation \ref{eq-W0-is-basis-1} we obtain
                $$C_{e(\bif)}\calP_{(k,l-2)}=0,$$
                and by corollary \ref{coro-Prj-neq-0} we conclude that $C_{e(\bif)}=0.$
    \item Fix an element $Q\in W_0$ and assume that the coefficients $C_P$ in equation \ref{eq-W0-is-basis-1} are equal to zero, for each element $P\in W_0$ such that $P<Q.$

        Then equation \ref{eq-W0-is-basis-1} can be written as

        \begin{equation}\label{eq-W0-is-basis-2}
        \sum_{P\geq Q}C_P P=0.
        \end{equation}

        If $P\in W_0$ is such that $\deg(P)>\deg(Q),$ then the element $P\calF(Q)$ is equal to zero or it is an homogeneous element of $\calD,$ such that
        \begin{equation*}
          \deg(P\calF(Q))>2n.
        \end{equation*}
        By lemma \ref{coro-upperbound-for-degree-D-2} we have
        \begin{equation*}
          P\calF(Q)=0.
        \end{equation*}
        On the other hand if $P\in W_0$ is such that $\deg(P)=\deg(Q),$ but $f(P)=m_{(r,j)}<f(Q),$ then the element $P\calF(Q)$ is divisible by
        \begin{equation*}
          H=\calP_{(r,j)}\Y_{(r,j)}=\left(\prod_{m_{(s,t)}<m_{(r,j)}}\Y_{(s,t)}\right)\left(\Y_{(r,j)}\right)^2,
        \end{equation*}
        but by lemma \ref{lemma-Y-prod-reduction-2}, $H=0.$ Therefore if we multiply by $\calF(Q)$ in equation \ref{eq-W0-is-basis-2} we obtain

        \begin{equation*}
          C_{Q}Q\calF(Q)=0.
        \end{equation*}
        Since $Q\calF(Q)=\calP_{(k,l-2)}\neq 0,$ we conclude that $C_Q=0.$
  \end{enumerate}
  By the last analysis and induction on the order $<$ in $W_0,$ we conclude that the set $W_0$ is linearly independent.
\end{proof}

\begin{corollary}\label{coro-dimD}
  \begin{equation*}
    \dim(\mathcal{D})= 2^{n},\quad \textrm{where}\quad n=k(l-1).
  \end{equation*}
\end{corollary}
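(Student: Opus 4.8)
The plan is to read off the dimension directly from the basis established in Theorem \ref{theo-monomial-basis}. Since that theorem asserts that $W_0$ is a basis for $\calD$, we have $\dim(\calD) = |W_0|$, so the entire task reduces to counting the elements of $W_0$.

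First I would count the indexing pairs. An element of $W_0$ is a product $\prod_{(r,j)}(\Y_{(r,j)})^{\alpha_{(r,j)}}$ where the product ranges over all pairs $(r,j)$ with $1 \leq r \leq k$ and $0 \leq j \leq l-2$. There are exactly $k$ admissible values of $r$ and exactly $l-1$ admissible values of $j$ (namely $j \in \{0,1,\dots,l-2\}$), so the number of such pairs is $k(l-1) = n$. This is consistent with the count in Corollary \ref{coro-cardinality-of-std-bif}, where the same number $n = k(l-1)$ arises as the cardinality of the indexing set $M$.

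Next I would count the tuples of exponents. Each element of $W_0$ is uniquely determined by the choice of exponents $(\alpha_{(r,j)})_{(r,j)}$, where each $\alpha_{(r,j)}$ ranges independently over the two-element set $\{0,1\}$. Hence $|W_0| = 2^{n}$, and combining this with Theorem \ref{theo-monomial-basis} gives $\dim(\calD) = 2^{n}$ with $n = k(l-1)$, as claimed.

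There is no real obstacle here: the statement is a purely enumerative consequence of the basis theorem, and the only point requiring care is the bookkeeping that distinct exponent tuples yield distinct (and linearly independent) basis elements, which is precisely the content already secured by Theorem \ref{theo-monomial-basis}. I would note in passing that this matches the upper bound $\dim(\calD) \leq 2^{n}$ from Corollary \ref{coro-upperbound-for-dimD}, so the corollary also confirms that the spanning set $W_0$ of Lemma \ref{lemma-D-is-spenned-by-words} is in fact a minimal one.
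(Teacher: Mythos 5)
Your proposal is correct and matches the paper exactly: the paper states Corollary \ref{coro-dimD} as an immediate consequence of Theorem \ref{theo-monomial-basis}, since the basis $W_0$ is indexed by the $2^{n}$ exponent tuples $(\alpha_{(r,j)})\in\{0,1\}^{n}$ over the $n=k(l-1)$ pairs $(r,j)$. Your remark that linear independence is what guarantees distinct tuples give distinct basis elements is precisely the content already secured by the theorem, so nothing is missing.
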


\subsection{Optimal presentation}\label{sec-optimal-presentation}

In this section we obtain an \emph{optimal presentation} of $\calD$ by generators and relations. We understand the concept of \emph{optimal presentation}, as a presentation of the algebra but with a minimal set of generators and relations.

\begin{definition}\label{def-optimal-D}
  We define the algebra $\bcalD=\bcalD_{k,l}(\F)$ as the commutative $\F-$algebra, generated by
  $$\left\{\bcalY_{(r,j)}:1\leq r\leq k;\quad 0\leq j\leq l-2\right\}$$
  subject to the following relations:
  \begin{enumerate}
    \item\begin{equation}\label{rel-square-bY10-die}
           (\bcalY_{(1,0)})^2=0.
         \end{equation}
    \item \begin{equation}\label{rel-square-bY1j}
           (\bcalY_{(1,j)})^2=\bcalY_{(1,j-1)}\bcalY_{(1,j)},\quad \textrm{if}\quad j\neq 0.
         \end{equation}
    \item If $r>1$
    \begin{equation}\label{rel-square-bYrj}
           (\bcalY_{(r,j)})^2=
           \left\{ \begin{array}{cc}
             \bcalY_{(r,j-1)}\bcalY_{(r,j)}-\sum_{1\leq s<r}{\bcalL_{(s,j)}\bcalL_{(r,j)}} & \quad \textrm{if}\quad j\neq 0 \\
               \quad & \quad \\
             \bcalY_{(r-1,l-2)}\bcalY_{(r,0)}-\sum_{1\leq s<r}{\bcalL_{(s,0)}\bcalL_{(r,0)}} & \quad \textrm{if}\quad j= 0
              \end{array}\right.
         \end{equation}

    where
    \begin{equation}\label{def-bcalL-elements}
      \bcalL_{(r,j)}=\left\{ \begin{array}{cc}
                               -\bcalY_{(r,0)} & \quad \textrm{if}\quad r=1\wedge j= 0 \\
                               \quad & \quad \\
                               \bcalY_{(r,j-1)}-\bcalY_{(r,j)} & \quad \textrm{if}\quad j\neq 0 \\
                               \quad & \quad \\
                              \bcalY_{(r-1,l-2)}-\bcalY_{(r,0)} & \quad \textrm{if}\quad r>1\wedge j= 0
                             \end{array}\right.
    \end{equation}
  \end{enumerate}
\end{definition}

\begin{lemma}\label{lemma-span-bcalD}
As a vector space, $\bcalD$ is generated by the set
  \begin{equation*}
    \boldsymbol{W}_0=\left\{\prod_{(r,j)}\bcalY_{(r,j)} ^{\alpha_{(r,j)}}:\alpha_{(r,j)}\in\{0,1\}\right\}
  \end{equation*}
  (the product runs over all pairs $(r,j)$).
\end{lemma}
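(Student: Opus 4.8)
The plan is to mirror, almost verbatim, the argument used to prove Lemma \ref{lemma-D-is-spenned-by-words}, now working inside the abstract algebra $\bcalD$ and invoking the defining relations \ref{rel-square-bY10-die}, \ref{rel-square-bY1j} and \ref{rel-square-bYrj} in place of the (there, proved) identities of Lemmas \ref{lemma-square-Y10-die}, \ref{lemma-square-Y1j} and \ref{lemma-square-Yrj}. Since $\bcalD$ is commutative and generated by the $\bcalY_{(r,j)}$, it is spanned as a vector space by the set $\boldsymbol{W}$ of all monomials $\prod_{(r,j)}\bcalY_{(r,j)}^{\alpha_{(r,j)}}$ with $\alpha_{(r,j)}\geq 0$; hence it suffices to show that every element of $\boldsymbol{W}$ is an $\F$-linear combination of elements of $\boldsymbol{W}_0$. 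Writing $\boldsymbol{W}_1$ for the monomials having some exponent $\geq 2$, we have $\boldsymbol{W}=\boldsymbol{W}_0\cup\boldsymbol{W}_1$, so only the monomials in $\boldsymbol{W}_1$ require reduction.

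First I would equip $\boldsymbol{W}_1$ with the same partial order $\prec$ used before: for $P\in\boldsymbol{W}_1$ set $m(P)=\min\{m_{(s,t)}:\alpha_{(s,t)}\geq 2\}$ and, writing $m(P)=m_{(s,t)}$, set $h(P)=\alpha_{(s,t)}$; then declare $P_1\prec P_2$ if $m(P_1)<m(P_2)$, or $m(P_1)=m(P_2)$ and $h(P_1)<h(P_2)$. Because the values $m_{(s,t)}$ form a finite set and each $h(P)$ is an integer $\geq 2$, this lexicographic order is well-founded. Given $P\in\boldsymbol{W}_1$ with $m(P)=m_{(s,t)}$: if $(s,t)=(1,0)$ then $P$ contains the factor $(\bcalY_{(1,0)})^2$, so $P=0$ by relation \ref{rel-square-bY10-die}; otherwise I would factor out one occurrence of $(\bcalY_{(s,t)})^2$ and substitute for it using \ref{rel-square-bY1j} (if $s=1$) or \ref{rel-square-bYrj} (if $s>1$), exactly as in equation \ref{eq1-lemma-V-is-spanned-by-words}.

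The central point to verify is that after this substitution $P$ becomes an $\F$-linear combination of monomials $Q$, each of which either lies in $\boldsymbol{W}_0$ or satisfies $Q\prec P$. This holds because the right-hand side of each square relation, after expanding $\bcalL_{(s,t)}$ and $\bcalL_{(r,t)}$ via \ref{def-bcalL-elements}, is an $\F$-linear combination of products of at most two generators, all of index $\leq m_{(s,t)}$, and in each such product $\bcalY_{(s,t)}$ occurs to the first power at most. Consequently every resulting monomial $Q$ carries $\bcalY_{(s,t)}$ to a power at most $h(P)-1$: if multiplication by the newly introduced lower-index factors creates a square at an index strictly below $m_{(s,t)}$, then $m(Q)<m(P)$; whereas if no such square is created, then $m(Q)=m_{(s,t)}$ with $h(Q)\leq h(P)-1<h(P)$. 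In either case $Q\prec P$ (or $Q\in\boldsymbol{W}_0$).

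Finally, by Noetherian induction along $\prec$ every $P\in\boldsymbol{W}_1$ reduces to a linear combination of elements of $\boldsymbol{W}_0$, which gives the claim. The only delicate point is the termination bookkeeping of the previous paragraph, namely confirming that each substitution lowers either $m(P)$ or $h(P)$ and never raises the power of $\bcalY_{(s,t)}$; once this is checked, the remainder is a direct transcription of the proof of Lemma \ref{lemma-D-is-spenned-by-words}, the three structural identities now being available by fiat as the defining relations of $\bcalD$.
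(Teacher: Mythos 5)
Your proposal is correct and coincides with the paper's proof, which consists precisely of the one-line remark that the argument of Lemma \ref{lemma-D-is-spenned-by-words} carries over verbatim with Lemmas \ref{lemma-square-Y10-die}, \ref{lemma-square-Y1j} and \ref{lemma-square-Yrj} replaced by the defining relations \ref{rel-square-bY10-die}, \ref{rel-square-bY1j} and \ref{rel-square-bYrj}; you simply spell out that transcription. One small caveat, inherited from the paper's own Lemma \ref{lemma-D-is-spenned-by-words}: when $h(P)=2$ and $P$ carries another square at an index above $m(P)$, the resulting monomial $Q$ can have $m(Q)>m(P)$, so your claim ``$m(Q)=m_{(s,t)}$'' (and hence $Q\prec P$) can fail; this is harmless, since each substitution preserves total degree and strictly decreases the exponent vector in the lexicographic order read from the largest index $m_{(r,j)}$ downward, which still guarantees termination.
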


\begin{proof}
  The proof is totally analogous to the proof of lemma \ref{lemma-D-is-spenned-by-words}, but we have to replace in the arguments, lemmas \ref{lemma-square-Y10-die}, \ref{lemma-square-Y1j} and \ref{lemma-square-Yrj} by relations \ref{rel-square-bY10-die},\ref{rel-square-bY1j} and \ref{rel-square-bYrj} respectively.
\end{proof}

\begin{corollary}\label{coro-upperbound-for-dim-bcalD}
  \begin{equation*}
    \dim(\bcalD)\leq 2^{n}\quad \textrm{where}\quad n=k(l-1).
  \end{equation*}
\end{corollary}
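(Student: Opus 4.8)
The plan is to read this off directly from the spanning result Lemma \ref{lemma-span-bcalD}, exactly as Corollary \ref{coro-upperbound-for-dimD} was deduced from Lemma \ref{lemma-D-is-spenned-by-words} on the concrete side. First I would invoke Lemma \ref{lemma-span-bcalD} to conclude that the set
$$\boldsymbol{W}_0=\left\{\prod_{(r,j)}\bcalY_{(r,j)}^{\alpha_{(r,j)}}:\alpha_{(r,j)}\in\{0,1\}\right\}$$
generates $\bcalD$ as an $\F$-vector space. Since any spanning set bounds the dimension from above, it then suffices to bound the cardinality $|\boldsymbol{W}_0|$.

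Next I would simply count. The product defining an element of $\boldsymbol{W}_0$ ranges over all pairs $(r,j)$ with $1\leq r\leq k$ and $0\leq j\leq l-2$; there are $k$ admissible values of $r$ and $(l-2)-0+1=l-1$ admissible values of $j$, hence exactly $k(l-1)=n$ such pairs. Each element of $\boldsymbol{W}_0$ is determined by a choice of exponent $\alpha_{(r,j)}\in\{0,1\}$ for each of these $n$ pairs, so there are at most $2^{n}$ distinct products, i.e. $|\boldsymbol{W}_0|\leq 2^{n}$. Combining this with the spanning statement gives
$$\dim(\bcalD)\leq |\boldsymbol{W}_0|\leq 2^{n},$$
as claimed.

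There is essentially no obstacle here: the entire content has already been absorbed into Lemma \ref{lemma-span-bcalD}, whose proof was in turn reduced to the concrete computations of Lemmas \ref{lemma-square-Y10-die}, \ref{lemma-square-Y1j} and \ref{lemma-square-Yrj} replaced by the defining relations \ref{rel-square-bY10-die}, \ref{rel-square-bY1j} and \ref{rel-square-bYrj}. The corollary is a purely combinatorial counting consequence and requires no further work. I would note in passing that the matching lower bound (and hence equality, paralleling Corollary \ref{coro-dimD}) is expected to come later from exhibiting a surjection $\bcalD\twoheadrightarrow\calD$ together with $\dim(\calD)=2^{n}$; but that is not needed for the present inequality.
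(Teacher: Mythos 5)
Your proposal is correct and matches the paper's reasoning exactly: the paper states this corollary without further proof precisely because it is immediate from Lemma \ref{lemma-span-bcalD}, with the same count of $k(l-1)=n$ pairs $(r,j)$ and hence at most $2^{n}$ spanning monomials, mirroring how Corollary \ref{coro-upperbound-for-dimD} follows from Lemma \ref{lemma-D-is-spenned-by-words}. Your closing remark about the matching lower bound is also accurate, since the paper obtains equality via the epimorphism $\Gamma:\bcalD\rightarrow\calD$ in Theorem \ref{theo-isomorphims-calD-bcalD} together with Corollary \ref{coro-dimD}.
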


\begin{theorem}\label{theo-isomorphims-calD-bcalD}
  The algebras $\bcalD$ and $\calD$ are isomorphic.
\end{theorem}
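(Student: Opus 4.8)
The plan is to build a surjective homomorphism of $\F$-algebras $\phi\colon\bcalD\to\calD$ sending $\bcalY_{(r,j)}\mapsto\Y_{(r,j)}$ for all $1\le r\le k$ and $0\le j\le l-2$, and then to force it to be an isomorphism by a dimension count. First I would check that this assignment is well defined, i.e.\ that the images $\Y_{(r,j)}$ satisfy in $\calD$ the defining relations \ref{rel-square-bY10-die}, \ref{rel-square-bY1j} and \ref{rel-square-bYrj} of $\bcalD$. The crucial observation is that the elements $\bcalL_{(r,j)}$ of definition \ref{def-bcalL-elements} are given by exactly the same recursive formula as the elements $\calL_{(r,j)}$ of definition \ref{def-L-elements}; hence, once $\phi$ is declared on generators, it automatically sends $\bcalL_{(r,j)}$ to $\calL_{(r,j)}$. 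Under this identification the three relations defining $\bcalD$ become precisely the identities established in lemma \ref{lemma-square-Y10-die}, lemma \ref{lemma-square-Y1j} and lemma \ref{lemma-square-Yrj}, all of which hold in $\calD$. Since $\calD$ is commutative, $\phi$ is compatible with commutativity as well, so $\phi$ is a genuine homomorphism of commutative $\F$-algebras.

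Next I would close the argument by comparing dimensions. Surjectivity of $\phi$ is immediate: by corollary \ref{coro-Y-generate-B} the algebra $\calD$ is generated by the $\Y_{(r,j)}$, all of which lie in the image of $\phi$. On the one hand, corollary \ref{coro-dimD} gives $\dim_{\F}\calD=2^{n}$ with $n=k(l-1)$; on the other hand corollary \ref{coro-upperbound-for-dim-bcalD} gives $\dim_{\F}\bcalD\le 2^{n}$. Surjectivity forces $\dim_{\F}\bcalD\ge\dim_{\F}\calD=2^{n}$, so in fact $\dim_{\F}\bcalD=2^{n}=\dim_{\F}\calD$. A surjective linear map between finite-dimensional vector spaces of equal dimension is a bijection, whence $\phi$ is an isomorphism of $\F$-algebras, as claimed.

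The construction is essentially formal once the preparatory results of sections \ref{sec-fundamental-relations-for-D} and \ref{sec-vector-space-structure} are in place, so there is no serious obstacle. The only point requiring genuine care is the verification that $\phi(\bcalL_{(r,j)})=\calL_{(r,j)}$, that is, that the recursive definitions \ref{def-bcalL-elements} and \ref{def-L-elements} match term by term across all three cases $(r,j)=(1,0)$, $j\neq 0$, and $r>1\wedge j=0$; this matching is exactly what converts the abstract relation \ref{rel-square-bYrj} into the concrete identity of lemma \ref{lemma-square-Yrj}, and hence what makes $\phi$ well defined.
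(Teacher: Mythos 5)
Your proposal is correct and follows essentially the same route as the paper: the paper's proof also defines the map $\bcalY_{(r,j)}\mapsto\Y_{(r,j)}$, invokes lemmas \ref{lemma-square-Y10-die}, \ref{lemma-square-Y1j} and \ref{lemma-square-Yrj} to obtain an epimorphism $\Gamma:\bcalD\rightarrow\calD$, and concludes by the dimension comparison of corollaries \ref{coro-dimD} and \ref{coro-upperbound-for-dim-bcalD}. Your additional remark that the recursions defining $\bcalL_{(r,j)}$ and $\calL_{(r,j)}$ match term by term is the correct (implicit) well-definedness check, so nothing is missing.
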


\begin{proof}
  By lemmas \ref{lemma-square-Y10-die}, \ref{lemma-square-Y1j} and \ref{lemma-square-Yrj} there is an epimorphism of algebras $\Gamma:\bcalD\rightarrow \calD$ given by the assignment $\bcalY_{(r,j)}\mapsto\Y_{(r,j)}.$
   In particular $\dim(\bcalD)\geq \dim(\calD).$
  By corollaries \ref{coro-dimD} and \ref{coro-upperbound-for-dim-bcalD} we conclude that $\Gamma$ is indeed an isomorphism of algebras.
\end{proof}

\begin{corollary}\label{coro-alternative-optimal-presentation}
  The algebra $\bcalD$ is isomorphic to the algebra $\mathbb{D}$ generated by
  $$\{\mathbb{L}_{(r,j)}:1\leq r\leq k;\quad 0\leq j \leq l-2 \}$$
  subject to the following relations:
  \begin{enumerate}
    \item \begin{equation}
            (\mathbb{L}_{(1,0)})^2=0.
          \end{equation}
    \item \begin{equation}
            (\mathbb{L}_{(r,j)})^2=\sum_{m_{(s,t)\leq m_{(r,j)}}}C_{(s,t)}\mathbb{L}_{(s,t)}\mathbb{L}_{(r,j)}
          \end{equation}
    where:
    \begin{equation}
      C_{(s,t)}=\left\{\begin{array}{cc}
                       -2 & \quad\textrm{if}\quad t=j \\
                       \quad & \quad \\
                       -1 & \quad \textrm{otherwhise}
                     \end{array}\right.
    \end{equation}
  \end{enumerate}
\end{corollary}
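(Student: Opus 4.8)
The plan is to deduce the result from the already-established isomorphism $\bcalD\cong\calD$ (Theorem \ref{theo-isomorphims-calD-bcalD}) by comparing the two presentations through mutually inverse changes of generators, and then to pin down the isomorphism by a dimension count. Throughout I read $\mathbb{D}$ as the \emph{commutative} $\F$-algebra with the listed generators and relations, and I abbreviate $n=k(l-1)$. Note first that $\dim\bcalD=2^{n}$, since $\bcalD\cong\calD$ by Theorem \ref{theo-isomorphims-calD-bcalD} and $\dim\calD=2^{n}$ by Corollary \ref{coro-dimD}.

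First I would build a homomorphism $\Theta\colon\mathbb{D}\to\bcalD$ sending $\mathbb{L}_{(r,j)}\mapsto\bcalL_{(r,j)}$, where $\bcalL_{(r,j)}\in\bcalD$ is the element defined in \eqref{def-bcalL-elements}. To see $\Theta$ is well defined I would transport the defining relations of $\mathbb{D}$ into $\bcalD$: the isomorphism $\Gamma\colon\bcalD\xrightarrow{\sim}\calD$ of Theorem \ref{theo-isomorphims-calD-bcalD} carries $\bcalL_{(r,j)}$ to $\calL_{(r,j)}$ (both are given by the same formula in the respective $\bcalY$'s and $\Y$'s), and Corollaries \ref{coro-L10-die}, \ref{coro-square-L1j} and \ref{coro-square-Lrj} assert that the $\calL_{(r,j)}$ satisfy exactly the defining relations of $\mathbb{D}$; since $\Gamma$ is injective these relations pull back verbatim to the $\bcalL_{(r,j)}$. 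The map $\Theta$ is surjective because the $\bcalL_{(r,j)}$ generate $\bcalD$: inverting \eqref{def-bcalL-elements} by a telescoping sum (the $\bcalD$-analogue of Corollary \ref{coro-telescopic-Y-in-L-2}) gives $\bcalY_{(r,j)}=-\sum_{m_{(s,t)}\leq m_{(r,j)}}\bcalL_{(s,t)}$, so every generator $\bcalY_{(r,j)}$ lies in the image. In particular $\dim\mathbb{D}\geq\dim\bcalD=2^{n}$.

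Next I would construct a surjection in the opposite direction, $\Xi\colon\bcalD\to\mathbb{D}$, sending $\bcalY_{(r,j)}\mapsto\mathbb{Y}_{(r,j)}:=-\sum_{m_{(s,t)}\leq m_{(r,j)}}\mathbb{L}_{(s,t)}$, which also satisfies $\Xi(\bcalL_{(r,j)})=\mathbb{L}_{(r,j)}$. For this I must check that the $\mathbb{Y}_{(r,j)}$ obey relations \eqref{rel-square-bY10-die}, \eqref{rel-square-bY1j} and \eqref{rel-square-bYrj} inside $\mathbb{D}$; this simply reverses the computations proving Corollaries \ref{coro-L10-die}, \ref{coro-square-L1j} and \ref{coro-square-Lrj}, and uses only the relations of $\mathbb{D}$. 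The single identity driving all cases is $\mathbb{Y}_{(r,j)}\mathbb{L}_{(r,j)}=\sum_{1\leq s<r}\mathbb{L}_{(s,j)}\mathbb{L}_{(r,j)}$, obtained by writing $\mathbb{Y}_{(r,j)}\mathbb{L}_{(r,j)}=-\sum_{m_{(s,t)}<m_{(r,j)}}\mathbb{L}_{(s,t)}\mathbb{L}_{(r,j)}-(\mathbb{L}_{(r,j)})^{2}$, substituting relation (2) of $\mathbb{D}$ (read with the sum over $m_{(s,t)}<m_{(r,j)}$, as in Corollary \ref{coro-square-Lrj}), and using that $1+C_{(s,t)}=0$ for $t\neq j$ while $1+C_{(s,t)}=-1$ for $t=j$. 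Feeding this into the commutative identity $(\mathbb{Y}_{(r,j)})^{2}-\mathbb{Y}_{(r,j-1)}\mathbb{Y}_{(r,j)}=-\mathbb{Y}_{(r,j)}\mathbb{L}_{(r,j)}$ yields \eqref{rel-square-bYrj}, while the cases $(r,j)=(1,0)$ and $r=1$ collapse to \eqref{rel-square-bY10-die} and \eqref{rel-square-bY1j}. Thus $\Xi$ is well defined, and it is surjective because the $\mathbb{Y}_{(r,j)}$ generate $\mathbb{D}$ (again by telescoping, each $\mathbb{L}_{(r,j)}$ is a difference of two $\mathbb{Y}$'s). Hence $\dim\mathbb{D}\leq\dim\bcalD$.

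Finally I would combine the two bounds: since $\dim\mathbb{D}\geq\dim\bcalD$ and $\dim\mathbb{D}\leq\dim\bcalD$, both algebras are finite-dimensional of the same dimension $2^{n}$, and a surjection of finite-dimensional algebras of equal dimension is an isomorphism; thus $\Theta$ (equivalently $\Xi$) is an isomorphism $\mathbb{D}\cong\bcalD$. The main obstacle is precisely the verification in the third paragraph that the $\mathbb{Y}_{(r,j)}$ satisfy the $\bcalY$-relations: this is where the exact coefficients $C_{(s,t)}$ ($-2$ when $t=j$, $-1$ otherwise) must be matched against the telescoping of the $\mathbb{L}$'s, and it is the only genuinely computational point, everything else being formal or quoted from earlier results. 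An equally valid alternative, avoiding $\Xi$, would be to obtain $\dim\mathbb{D}\leq 2^{n}$ directly by a spanning argument mirroring Lemma \ref{lemma-span-bcalD}, reducing every monomial in the $\mathbb{L}_{(r,j)}$ to a square-free one via relation (2) of $\mathbb{D}$ under the reverse-lexicographic order on exponent vectors, and then invoking surjectivity of $\Theta$ to force equality of dimensions.
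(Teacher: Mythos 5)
Your proposal is correct and is essentially the paper's own argument in expanded form: the paper's proof is exactly the assignment $\mathbb{L}_{(r,j)}\mapsto\bcalL_{(r,j)}$ backed by Theorem \ref{theo-isomorphims-calD-bcalD}, Lemma \ref{lemma-L-generate-D}, Corollary \ref{coro-telescopic-Y-in-L-2} and Corollaries \ref{coro-square-L1j}, \ref{coro-square-Lrj}, and your maps $\Theta$ and $\Xi$ together with the final dimension count simply make explicit the two-way change of generators that this citation-only proof leaves implicit. You were also right to read relation (2) of $\mathbb{D}$ with the strict sum over $m_{(s,t)}<m_{(r,j)}$: taken literally, the $\leq$ in the statement (also present in Corollary \ref{coro-square-Lrj}) would include the term $C_{(r,j)}(\mathbb{L}_{(r,j)})^{2}=-2(\mathbb{L}_{(r,j)})^{2}$, and then the relations would fail to hold for the $\bcalL_{(r,j)}$ — they would amount to $2(\calL_{(r,j)})^{2}=0$ in $\calD$, false already for $(r,j)=(1,1)$ since $(\calL_{(1,1)})^{2}=-\calL_{(1,0)}\calL_{(1,1)}=\mp\calP_{(1,1)}\neq 0$ by Lemma \ref{lemma-Prj-equal-Qrj} and Corollary \ref{coro-Prj-neq-0} — so the strict reading is the one actually derived in the proof of Corollary \ref{coro-square-Lrj}, and with it your key identity $\mathbb{Y}_{(r,j)}\mathbb{L}_{(r,j)}=\sum_{1\leq s<r}\mathbb{L}_{(s,j)}\mathbb{L}_{(r,j)}$ goes through as claimed.
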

\begin{proof}
  Theorem \ref{theo-isomorphims-calD-bcalD}, lemma \ref{lemma-L-generate-D}, corollary \ref{coro-telescopic-Y-in-L-2}, corollary \ref{coro-square-L1j} and corollary \ref{coro-square-Lrj} implies that the assignment $\mathbb{L}_{(r,j)}\mapsto \bcalL_{(r,j)}$
  defines an isomorphism of algebras between $\mathbb{D}$ and $\bcalD.$
\end{proof}

\subsection{Direct generalizations and final comments}\label{sec-direct-general}

  Throughout this article we have assumed that $m=\epsilon+k\een.$  If we take an arbitrary $m>\epsilon,$ say $m=\epsilon+k\een+\delta,$  for some $0< \delta < \een,$ then we can consider the algebra $\bcalD$ as a subalgebra of $\mathcal{B}_{\boldsymbol{m}}(\bif),$ where $\boldsymbol{m}=\epsilon+(k+1)\een,$ and $\bif$ is understood as the fundamental vertical sequence of length $\boldsymbol{m}.$ In the next corollary we obtain a presentation for the Gelfand-Tsetlin subalgebra of  $\mathcal{B}_{\boldsymbol{m}}(\bif),$ based on the one given in definition \ref{def-optimal-D}:

  \begin{corollary}\label{coro-subalgebra-Da}
  Let $1\leq a\leq \boldsymbol{m}$ and define $\calD_{\leq a}$ the subalgebra of $\mathcal{B}_{\boldsymbol{m}}(\bif)$ generated by the set:
  $$\{y_je(\bif):1\leq j\leq a\}$$
  then $\calD_{\leq a}$ is isomorphic to the subalgebra $\bcalD_{\leq a}$ of $\bcalD=\bcalD_{(k+1),l}(\F),$  generated by the set
  $$\{\bcalY_{(r,j)}:m_{(r,j)}\leq a \}.$$
  In particular
  \begin{equation*}
    \dim(\bcalD_{\leq a})=2^{n}\quad \textrm{where}\quad n=|\{(r,j):m_{(r,j)}\leq a\}|.
  \end{equation*}
\end{corollary}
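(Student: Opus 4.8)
The plan is to reduce everything to the two structure theorems already established, namely the isomorphism $\bcalD \cong \calD$ of Theorem \ref{theo-isomorphims-calD-bcalD} and the monomial basis of Theorem \ref{theo-monomial-basis}, both applied with $k+1$ in place of $k$ (so that $\boldsymbol{m} = \epsilon + (k+1)\een$ carries the standard block decomposition and the GT subalgebra $\calD$ of $\mathcal{B}_{\boldsymbol{m}}(\bif)$ is covered by the theorem). First I would pin down the generating set of $\calD_{\leq a}$. For each $j$ with $1 \leq j \leq a$, Lemma \ref{lemma-yz-inN-die} gives $y_j e(\bif) = 0$ if $j \in N$, while Lemma \ref{lemma-clean-Y} gives $y_j e(\bif) = \Y_{(r,j')}$ whenever $j \in B(r,j')$; in the latter case $m_{(r,j')} \leq j \leq a$. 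Conversely, taking $j = m_{(r,j)}$ (the top of the block) realizes each $\Y_{(r,j)}$ with $m_{(r,j)} \leq a$ as one of the defining generators. Hence $\calD_{\leq a}$ is precisely the subalgebra of $\calD$ generated by $\{\Y_{(r,j)} : m_{(r,j)} \leq a\}$.

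Next I would transport the isomorphism. Let $\Gamma : \bcalD \to \calD$ be the isomorphism of Theorem \ref{theo-isomorphims-calD-bcalD}, determined by $\bcalY_{(r,j)} \mapsto \Y_{(r,j)}$. Since $\bcalD_{\leq a}$ is by definition generated by $\{\bcalY_{(r,j)} : m_{(r,j)} \leq a\}$, its image $\Gamma(\bcalD_{\leq a})$ is the subalgebra generated by $\{\Y_{(r,j)} : m_{(r,j)} \leq a\}$, which is exactly $\calD_{\leq a}$ by the previous paragraph. The restriction $\Gamma|_{\bcalD_{\leq a}}$ is then injective (being the restriction of an injection) and surjective onto $\calD_{\leq a}$, hence an isomorphism $\bcalD_{\leq a} \cong \calD_{\leq a}$.

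It remains to compute the dimension, and here I would argue with the same monomials as in the global case, restricted to the relevant index set. Write $n = |\{(r,j) : m_{(r,j)} \leq a\}|$. For the upper bound I would rerun the reduction of Lemma \ref{lemma-D-is-spenned-by-words}: the quadratic relations of Lemmas \ref{lemma-square-Y10-die}, \ref{lemma-square-Y1j} and \ref{lemma-square-Yrj} rewrite each $(\Y_{(r,j)})^2$ as a polynomial in the $\Y_{(s,t)}$ (through the $\calL_{(s,t)} = \Y_{(s,t-1)} - \Y_{(s,t)}$) with all indices satisfying $m_{(s,t)} \leq m_{(r,j)}$. Thus $\{\Y_{(r,j)} : m_{(r,j)} \leq a\}$ is closed under these reductions, and $\calD_{\leq a}$ is spanned by the $2^n$ square-free monomials $\prod_{m_{(r,j)} \leq a}(\Y_{(r,j)})^{\alpha_{(r,j)}}$ with $\alpha_{(r,j)} \in \{0,1\}$. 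For the lower bound I observe that these monomials are exactly those elements of the basis $W_0$ of Theorem \ref{theo-monomial-basis} whose exponents vanish on every pair with $m_{(r,j)} > a$; being a subset of a basis, they are linearly independent. Hence they form a basis of $\calD_{\leq a}$ and $\dim \calD_{\leq a} = 2^n$, so $\dim \bcalD_{\leq a} = 2^n$ as well.

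The only genuinely new point, and thus the step I would be most careful about, is the closure of the quadratic reductions under the constraint $m_{(s,t)} \leq m_{(r,j)}$; once this is checked, the spanning argument localizes to the sub-generating set without modification, and the linear independence is inherited for free from the already-established basis $W_0$. Everything else is a formal consequence of Theorems \ref{theo-isomorphims-calD-bcalD} and \ref{theo-monomial-basis}.
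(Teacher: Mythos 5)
Your proof is correct and is essentially the paper's argument: the paper disposes of this corollary in one line by invoking Theorem \ref{theo-isomorphims-calD-bcalD} (applied with $k+1$ in place of $k$), and what you wrote is precisely the unpacking of that citation — identification of the generators of $\calD_{\leq a}$ via Lemmas \ref{lemma-yz-inN-die} and \ref{lemma-clean-Y}, restriction of the isomorphism $\Gamma$, and localization of the basis $W_0$ of Theorem \ref{theo-monomial-basis} to the index set $\{(r,j):m_{(r,j)}\leq a\}$. Your explicit check that the quadratic rewriting rules of Lemmas \ref{lemma-square-Y10-die}, \ref{lemma-square-Y1j} and \ref{lemma-square-Yrj} only produce indices $(s,t)$ with $m_{(s,t)}\leq m_{(r,j)}$ is exactly the detail the paper leaves implicit, and it is verified correctly.
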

\begin{proof}
  It follows directly from theorem \ref{theo-isomorphims-calD-bcalD} .
\end{proof}

\begin{remark}
  In corollary \ref{coro-subalgebra-Da}, if $a<m_{(1,0)}$ then the set $\{\bcalY_{(r,j)}:m_{(r,j)}\leq a \}$ is empty. In this case we understood the subalgebra $\bcalD_{\leq a}$ as the one-dimensional subalgebra generated by the identity element of $\bcalD.$
\end{remark}

From this point to the end of this article, we assume $m$ is an arbitrary positive number.

  All of our analysis in this article were made using the fundamental vertical sequence $\bif.$ We would like to know more on how is the structure of the Gelfand-Tsetlin subalgebra of $\B(\bi)$ for other residue sequences $\bi.$ For this article, we only consider a family of residue sequences not too different to $\bif:$

  Let us call \emph{vertical residue sequence}, to any residue sequence $\bjf$ of the form
\begin{equation}\label{quasi-fund-res-seq1}
 \bjf=(j_1,\dots,j_m),\quad \textrm{where}\quad j_i=\kappa_t-i+1 \mod \een.
\end{equation}
  for some fixed $t\in\{1,\dots,l\}.$

 In particular the fundamental vertical sequence $\bif$ is a vertical residue sequence.

  If $\bjf$ is as in equation \ref{quasi-fund-res-seq1}, then $\bjf=\bi^{\bT},$ where $\bT=\bT^{\bmu}$ and $\bmu$ is a multipartition of $m$ given by
  $$\bmu=(1^{(0)},\dots,1^{(0)},1^{(m)},1^{(0)},\dots,1^{(0)}), $$
  that is, $[\bmu]$ is a \emph{vertical} Young diagram. In particular, a vertical residue sequence is a $\kappa-$blob possible residue sequence.

  \begin{lemma}
    If $\bjf$ is a vertical residue sequence, then the Gelfand-Tsetlin subalgebra of $\B(\bjf),$ is an algebra $\calD$ isomorphic to the one corresponding to the fundamental vertical sequence $\bif.$
  \end{lemma}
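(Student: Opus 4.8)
The plan is to prove the statement by \emph{transport of structure}: the fundamental vertical sequence $\bif$ is precisely the case $t=1$ of a vertical sequence, and the entire development of Sections \ref{ch-fundamental-idemp-trunc} and \ref{ch-gelfand} used the distinguished charge $\kappa_1$ only through facts that were established for an \emph{arbitrary} $t\in\{1,\dots,l\}$. Concretely, for a vertical sequence $\bjf$ built from $\kappa_t$ I would first reconstruct the block decomposition of Section \ref{sec-fund-resid-sec} with $\kappa_t$ playing the role of $\kappa_1$: define the analogues $\epsilon^{(t)}$, $b_j^{(t)}$ and $m_{(r,j)}^{(t)}$ by reading off the cyclic gaps between the charges when the descent is started at $\hat{\kappa}_t$, together with the corresponding blocks $N^{(t)}$ and $B_{(r,j)}^{(t)}$. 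The $\kappa$-blob possibility of $\bjf$ is already recorded in the excerpt (it equals $\bi^{\bT^{\bmu}}$ for the vertical multipartition $\bmu$), so $\B(\bjf)$ and its graded cellular basis (Corollary \ref{coro-cellular-basis-truncated}) are available, and Lemma \ref{lemma-tableux-blocks} with Corollary \ref{coro-tableaux-blocks-bifurcation} describe $\std(\bjf)$ exactly as in the case $t=1$.

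The point that makes the argument go through essentially verbatim is that Corollaries \ref{coro-bif-j-not-possible}, \ref{coro-bif-j-not-possible2} and \ref{coro-bif-j-not-possible3} — the combinatorial engine behind every dot-cleaning and relation computation of Section \ref{ch-gelfand} — were already stated and proved for general $t$, not only for $t=1$. Granting these, I would verify in order that Lemma \ref{lemma-yz-inN-die} (dots on $N^{(t)}$ vanish), Lemma \ref{lemma-clean-Y} (dots inside a block collapse to a single generator $\Y_{(r,j)}$), and then Lemmas \ref{lemma-square-Y10-die}, \ref{lemma-square-Y1j} and \ref{lemma-square-Yrj} reproduce, word for word, the same relations; the only change throughout is the substitution of $\bif$ by $\bjf$ and of $\kappa_1$ by $\kappa_t$ in the diagrams. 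This produces a commutative algebra generated by the $\Y_{(r,j)}$ subject to exactly the relations of Definition \ref{def-optimal-D}.

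With these relations established, I would conclude using the presentation results already proved. The Gelfand-Tsetlin subalgebra of $\B(\bjf)$ is an epimorphic image of the abstract algebra $\bcalD$, and the monomial-basis argument of Theorem \ref{theo-monomial-basis} (valid because it rests only on the same relations) forces the dimensions to agree, so Theorem \ref{theo-isomorphims-calD-bcalD} yields the isomorphism with the fundamental-vertical-type algebra $\bcalD$ attached to $\bjf$. For a general $m$ not of the form $\epsilon^{(t)}+k\een$ I would phrase the conclusion through Corollary \ref{coro-subalgebra-Da}, realizing the algebra as the truncation $\bcalD_{\leq a}$ determined by the branching positions $m_{(r,j)}^{(t)}\leq m$; this is the object meant by ``the algebra corresponding to the fundamental vertical sequence''.

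The main obstacle will be the bookkeeping of the block combinatorics under the change of starting charge: one must check that descending from $\hat{\kappa}_t$ produces the same local bifurcation pattern (two addable nodes of a repeated residue, single addable nodes otherwise) that Corollary \ref{coro-tableaux-blocks-bifurcation} gives for $t=1$, and that the resulting special positions feed the relations of Definition \ref{def-optimal-D} with the correct indices. A cleaner conceptual route, which I would mention, is to regard the passage from $\bif$ to $\bjf$ as a cyclic relabeling of the components of $\boldsymbol{\kappa}$ together with a shift of the chosen representatives $\hat{\kappa}_i$ (sending $\kappa_t$ to the new first charge); since the generalized blob algebra $\B$ depends on $\boldsymbol{\kappa}$ only through the \emph{set} $\{\kappa_1,\dots,\kappa_l\}$, the algebra $\B$ itself is unchanged and $\bjf$ becomes literally the fundamental vertical sequence of the relabeled datum. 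The delicate point there is to choose the representatives so that strong adjacency-freeness — in particular the wrap-around condition (iii) of Definition \ref{def-new-strong-adj-free} — is preserved by the relabeling; the direct transport-of-structure argument above avoids this issue entirely, because it keeps the same strongly adjacency-free $\boldsymbol{\kappa}$ and only invokes its already-proved general-$t$ consequences.
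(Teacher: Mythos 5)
Your proposal is correct and follows essentially the same route as the paper, whose proof is precisely the transport-of-structure observation that every argument made for $\bif$ has a verbatim analogue for $\bjf$ (with the numbers $\epsilon$, $b_j$, $m_{(r,j)}$ adjusted but the presentation of $\calD$ unchanged); you merely spell out the details, correctly noting that the key combinatorial inputs (Corollaries \ref{coro-bif-j-not-possible}, \ref{coro-bif-j-not-possible2} and \ref{coro-bif-j-not-possible3}) were already proved for arbitrary $t$. Your side remark on the cyclic-relabeling alternative and its adjacency-freeness subtlety is a sensible addition, but the main argument coincides with the paper's.
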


  \begin{proof}
    Is not difficult to see that each argument used along this article with respect to $\bif$, has an analogous version for $\bjf.$
     (note that the numbers $\epsilon,b_j$  and $m_{(r,j)}$ will change in that case, but the presentation of $\calD$ will be the same).
  \end{proof}

\begin{example}
   If $\een=13,l=4,m=29$ $\kappa=(0,4,6,10)$ and we take the fundamental vertical sequence

   $$\bif=(0,12,11,10,9,8,7,6,5,4,3,2,1,0,12,11,10,9,8,7,6,5,4,3,2,1,0,12,11),$$
   then we have

    \begin{equation*}
\left\{
\begin{array}{ccc}
  \epsilon&=\hat{\kappa}_1-\hat{\kappa}_4+\een &=3 \\
  \quad & \quad &\quad\\
   b_{0}&=\hat{\kappa}_{4}-\hat{\kappa}_{3} & =4\\
   \quad & \quad&\quad \\
   b_{1}&=\hat{\kappa}_{3}-\hat{\kappa}_{2} & =2\\
   \quad & \quad&\quad \\
  b_{2}&=\hat{\kappa}_2-\hat{\kappa}_4+\een & =7
\end{array}
\right.
\end{equation*}

   and

   \begin{equation*}
\left\{
\begin{array}{ccc}
  m_{(1,0)}&=\epsilon+1 &=4 \\
  \quad & \quad &\quad\\
   m_{(1,1)}&=m_{(1,0)}+b_0 & =8\\
   \quad & \quad&\quad \\
   m_{(1,2)}&=m_{(1,1)}+b_1& =10\\
   \quad & \quad&\quad \\
   m_{(2,0)}&=m_{(1,0)}+\een& =17\\
   \quad & \quad&\quad \\
   m_{(2,1)}&=m_{(1,1)}+\een& =21\\
  \quad & \quad&\quad \\
   m_{(2,2)}&=m_{(1,2)}+\een& =23
\end{array}
\right.
\end{equation*}

   On the other hand, if we take
   $$\bjf=(6,5,4,3,2,1,0,12,11,10,9,8,7,6,5,4,3,2,1,0,12,11,10,9,8,7,6,5,4),$$ instead of $\bif,$ then the definition of numbers $\epsilon$ and $b_j$ is not the same as for $\bif.$ In this case it is given by

   \begin{equation*}
\left\{
\begin{array}{ccc}
  \epsilon&=\hat{\kappa}_3-\hat{\kappa}_2 &=2 \\
  \quad & \quad &\quad\\
   b_{0}&=\hat{\kappa}_{2}-\hat{\kappa}_{1} & =4\\
   \quad & \quad&\quad \\
   b_{1}&=\hat{\kappa}_{1}-\hat{\kappa}_{4}+\een & =3\\
   \quad & \quad&\quad \\
  b_{2}&=\hat{\kappa}_4-\hat{\kappa}_2 & =6
\end{array}
\right.
\end{equation*}

The numbers $m_{(r,j)}$ will change too. In this case are given by

\begin{equation*}
\left\{
\begin{array}{ccc}
  m_{(1,0)}&=\epsilon+1 &=3 \\
  \quad & \quad &\quad\\
   m_{(1,1)}&=m_{(1,0)}+b_0 & =7\\
   \quad & \quad&\quad \\
   m_{(1,2)}&=m_{(1,1)}+b_1& =10\\
   \quad & \quad&\quad \\
   m_{(2,0)}&=m_{(1,0)}+\een& =16\\
   \quad & \quad&\quad \\
   m_{(2,1)}&=m_{(1,1)}+\een& =20\\
  \quad & \quad&\quad \\
   m_{(2,2)}&=m_{(1,2)}+\een& =23
\end{array}
\right.
\end{equation*}

Anyway, one can check that the Gelfand-Tsetlin subalbegra of $\B(\bif)$ and the Gelfand-Tsetlin subalgebra of $\B(\bjf)$ are both isomorphic to the algebra $\bcalD=\bcalD_{2,4}(\F)$ described in definition \ref{def-optimal-D}.
\end{example}

  On the other extreme, we have the \emph{maximal residue sequence}, $\bi^{\max},$  that is the residue sequence corresponding to the maximal tableau whose shape is $\bmu^{\max},$ the maximal (with respect to $\rhd$) $l-$multipartition of $m.$ Since \cite{Lobos-Ryom-Hansen}, we know that  $\B(\bi^{\max})$ is a one-dimensional algebra (generated by $e(\bi^{\max})$), therefore any subalgebra (in particular the Gelfand-Tsetlin subalgebra) of  $\B(\bi^{\max})$ is $\B(\bi^{\max})$ itself.

\medskip

Finally we say that a residue sequence $\bi$ is \emph{quasi-vertical} if it has the form:
\begin{equation}\label{eq-quasi-vertical-res-seq}
  \bi=\bi^{\max}|_{h}\otimes \bj,
\end{equation}
where $h=r_0l<m,$ for some $r_0>0$ and $\bj=(j_1,\dots,j_g)\quad(g=m-h),$ is given by
\begin{equation}\label{eq-quasi-vertical-res-seq-2}
  j_k=\kappa_t-r_0-k+1\mod \een.
\end{equation}
for some fixed $t\in\{1,\dots,l\}.$

If $\bi$ is a quasi-vertical residue sequence, then $\bi=\bi^{\bT},$ where $\bT=\bT^{\bmu}$ and
$$\bmu=(1^{(r_0)},\dots,1^{(r_0)},1^{(r_0+g)},1^{(r_0)},\dots,1^{(r_0)}),$$ then a quasi-vertical residue sequence is $\kappa-$blob possible.

\begin{example}\label{ex-quasi-vertical}
If $\een=13,l=4$ $\kappa=(0,4,6,10),$ $m=18.$

Let $\bi=(0,4,6,10,12,3,5,9,11,2,4,8,3,2,1,0,12,11),$
then $\bi$ is a quasi-vertical residue sequence. In fact
\begin{equation}
  \bi=\bi^{\max}|_{12}\otimes(3,2,1,0,12,11)=\bi^{\bT}
\end{equation}
where

\begin{equation*}
  \includegraphics[scale=0.27]{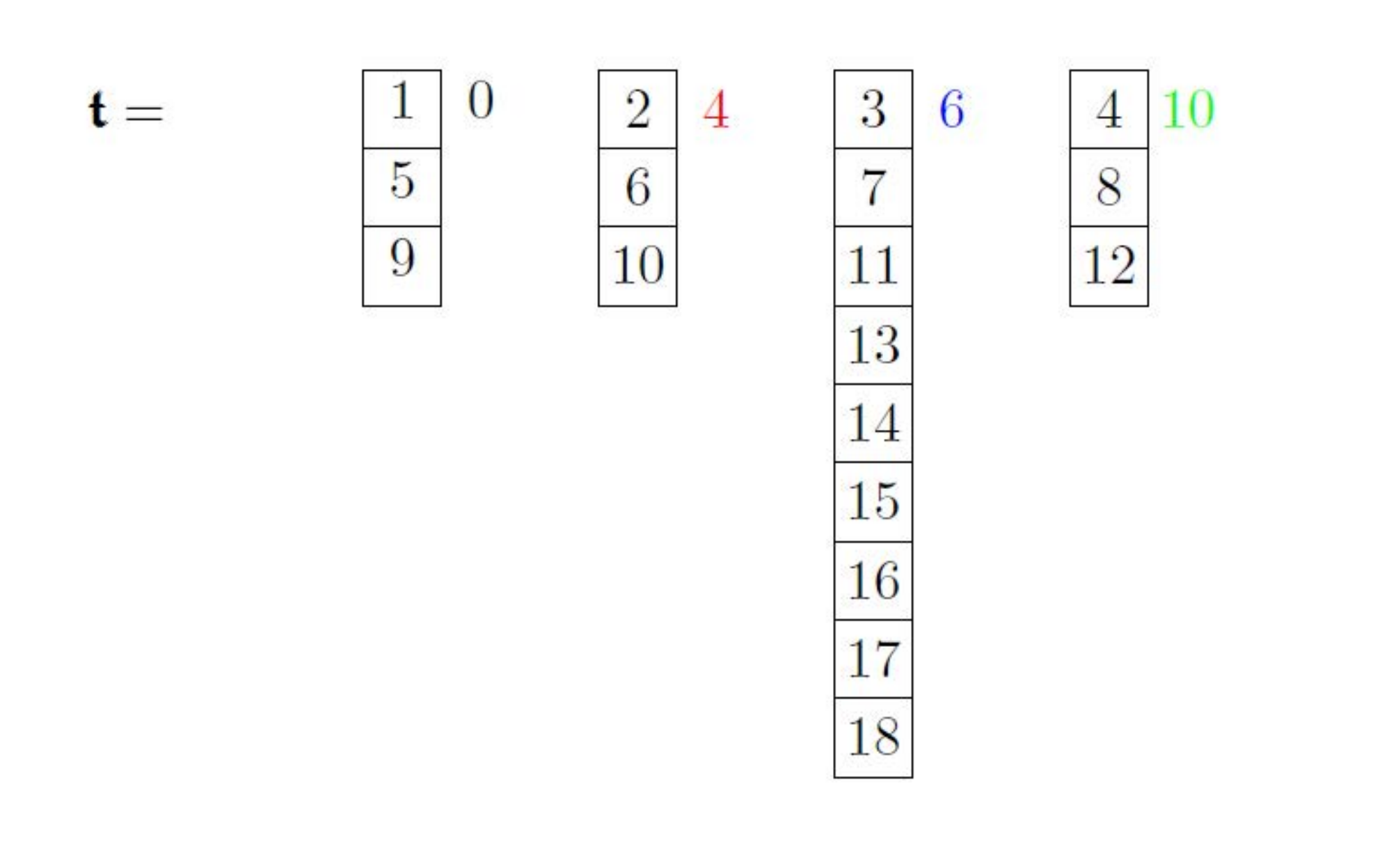}
\end{equation*}

  that is, $\bT=\bT^{\bmu}$ for $\bmu=(1^{(3)},1^{(3)},1^{(9)},1^{(3)}).$
\end{example}

Given a strong adjacency-free multicharge $\kappan=(\kappan_1,\dots,\kappan_l)$ and $r_0>0,$ we define
\begin{equation}
  \kappan-r_0=(\kappan_1-r_0,\dots,\kappan_l-r_0).
\end{equation}
Note that $\kappan-r_0$ is also a strong adjacency-free multicharge.

\begin{lemma}\label{lemma-quasi-vert-case}
Let $\bi=\bi^{\max}|_h\otimes \bj$ be a quasi-vertical residue sequence of length $m$ (as in equation \ref{eq-quasi-vertical-res-seq}). Let $h=r_0l$ and $g=m-h.$ Let $\bnu=\kappan-r_0.$
Let $\mathcal{U}=\mathcal{B}_{l,m}^{\F}(\kappan,\een)$ and $\mathcal{V}= \mathcal{B}_{l,g}^{\F}(\bnu,\een).$
Let $\mathcal{U}(\bi)$ (resp.$\mathcal{V}(\bj)$) the idempotent truncation of $\mathcal{U}$ with respect to the residue sequence $\bi$ (resp. of $\mathcal{V}$ with respect to $\bj.)$
Then the Gelfand-Tsetlin subalgebras of $\mathcal{U}(\bi)$ and $\mathcal{V}(\bj)$ are isomorphic.
\end{lemma}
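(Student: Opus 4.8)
The plan is to show that both Gelfand--Tsetlin subalgebras are isomorphic to one and the same model algebra $\bcalD$ of Definition \ref{def-optimal-D}, and then to conclude via Theorem \ref{theo-isomorphims-calD-bcalD}. First I would observe that $\bj$ is itself a vertical residue sequence for the shifted multicharge $\bnu=\kappan-r_0$: indeed $j_k=\kappa_t-r_0-k+1=\bnu_t-k+1\ (\mathrm{mod}\ \een)$, and $\bnu$ is again strongly adjacency-free. Hence the preceding lemma on vertical residue sequences, together with Corollary \ref{coro-subalgebra-Da} to accommodate a length $g=m-h$ which need not be of the form $\epsilon+k\een$, identifies the Gelfand--Tsetlin subalgebra of $\mathcal{V}(\bj)$ with $\bcalD=\bcalD_{k',l}(\F)$, where $k'$ is read off from $g$ and $\bnu$. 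It therefore remains to identify the Gelfand--Tsetlin subalgebra of $\mathcal{U}(\bi)$ with the same $\bcalD$.

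The combinatorial heart is a bijection $\std(\bi)\leftrightarrow\std(\bj)$ obtained by stripping a rigid prefix. Since $h=r_0l$, the restriction $\bi^{\max}|_h$ is exactly the maximal residue sequence of length $h$, whose shape is the rectangular one-column multipartition $(1^{(r_0)},\dots,1^{(r_0)})$; as this shape is $\rhd$-maximal, $\bi^{\max}|_h$ is realised by a unique standard tableau and admits no bifurcations. After the prefix the addable nodes sit in row $r_0+1$ of each component, with residues $\res(r_0+1,1,c)=\kappa_c-r_0=\bnu_c$, which are precisely the addable residues of the empty diagram for the multicharge $\bnu$. Consequently the row shift $r\mapsto r-r_0$ sets up a bijection $\std_{\mathcal{U}}(\bi)\leftrightarrow\std_{\mathcal{V}}(\bj)$ that respects the bifurcation structure, so the bifurcation positions of $\bi$ are exactly $h+m_{(r,j)}$ with the $m_{(r,j)}$ attached to $\bnu$; in particular the number of bifurcations is the same $n=k'(l-1)$ and $|\std(\bi)|=2^{n}$.

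Next I would show that the prefix contributes no generators, i.e. $y_je(\bi)=0$ for $1\le j\le h$. The cleanest route is to note that the first $h$ strands form a copy of the maximal idempotent truncation $\mathcal{B}^{\F}_{l,h}(\kappan,\een)(\bi^{\max}|_h)$, which by the same reasoning as for $\bi^{\max}$ is one-dimensional and concentrated in degree $0$, so every dot vanishes there; appending the $g$ spectator strands labelled by $\bj$ on the right is a homomorphism that respects the local relations and the left-boundary relation \ref{mal-inicio}, and so transports this vanishing to $y_je(\bi)=0$ in $\mathcal{U}(\bi)$. Alternatively one replays Lemma \ref{lemma-yz-inN-die} by sliding each dot to the first position through relations \ref{dot-salta} and \ref{lazo}, all correction terms being $\kappa$-blob impossible and the terminal term dying by relation \ref{dot-al-inicio}.

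Finally, for $j>h$ I would replay the reductions of Section \ref{ch-gelfand} verbatim for $\bi$: the cleaning of dots within a block (Lemma \ref{lemma-clean-Y}), the diagrammatic form of $\calL_{(r,j)}$ (Lemma \ref{lemma-diag-L}), and the squaring relations (Lemmas \ref{lemma-square-Y1j} and \ref{lemma-square-Yrj}). Every reduction there rests only on differences of consecutive residues and on $\kappa$-blob (im)possibility; the former are unchanged by the global shift, while the latter is governed by Corollaries \ref{coro-bif-j-not-possible}, \ref{coro-bif-j-not-possible2} and \ref{coro-bif-j-not-possible3}, which hold for the strongly adjacency-free $\kappan$. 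The rigid prefix behaves as an absorbing wall on the left: whenever a tail string is pulled leftward across the prefix it produces only configurations in which a node is inserted into the already-maximal block, and these are blob-impossible and hence vanish. This yields the generators $\Y_{(r,j)}=y_{h+m_{(r,j)}}e(\bi)$ together with exactly the relations \ref{rel-square-bY10-die}--\ref{rel-square-bYrj}, so the Gelfand--Tsetlin subalgebra of $\mathcal{U}(\bi)$ is also isomorphic to $\bcalD$, and the two subalgebras are isomorphic. The main obstacle is this last step: carefully verifying that every leftward crossing into the rigid prefix, and every dot-slide inside it, produces only blob-impossible (hence vanishing) corrections, so that the reductions terminate exactly as in the $\bnu$-case; this boundary bookkeeping is the only place where the analysis genuinely differs from Section \ref{ch-gelfand}.
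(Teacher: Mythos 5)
Your proposal is correct and takes essentially the same approach as the paper: the paper's proof consists of exactly your three ingredients --- the vanishing $y_ke(\bi)=0$ on the rigid prefix (for $1\leq k\leq h+1$, cited from \cite{Lobos-Ryom-Hansen}), and the $\kappa$-blob impossibility of $\bi^{\max}|_{h}\otimes j$ for $j\notin\{\kappa_1-r_0,\dots,\kappa_l-r_0\}$ and of $\bi^{\max}|_{h}\otimes(j_1,j_1+1)$, which are precisely your ``absorbing wall'' boundary relations playing the role of relations \ref{mal-inicio} and \ref{otro-mal-inicio} for the shifted multicharge --- after which it concludes that the first $h$ strands can be forgotten and the analysis of section \ref{ch-gelfand} runs in $\mathcal{V}(\bj)$. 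Your write-up simply makes explicit what the paper leaves implicit (that $\bj$ is vertical for the still strongly adjacency-free $\bnu=\kappan-r_0$, the prefix-stripping tableau bijection, and the verbatim replay of the reduction lemmas), so there is nothing to correct.
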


The lemma basically asserts that the quasi-vertical case, reduces to a vertical case.

\begin{proof}
  Note that:

  \begin{enumerate}
    \item \label{eq-yk-eimax-die-2}
    $      y_ke(\bi)=0,\quad\textrm{for all}\quad 1\leq k\leq h+1.$ (see \cite{Lobos-Ryom-Hansen})
    \item\label{eq-mal-inicio-adapted} If $\bjf=\bi^{\max}|_{h}\otimes j,$ where $j\notin\{\kappa_1-r_0,\dots,\kappa_l-r_0\},$ then
    $e(\bjf)=0.$ (since $\bjf$ is $\kappa-$blob impossible).
    \item \label{eq-otro-mal-inicio-adapted} If $\bjf=\bi^{\max}|_{h}\otimes (j_1,j_2),$ where $j_1\in\{\kappa_1-r_0,\dots,\kappa_l-r_0\},$ and $j_2=j_1+1,$ then $e(\bjf)=0.$ (since $\bjf$ is $\kappa-$blob impossible).
  \end{enumerate}
  Conditions \ref{eq-yk-eimax-die-2},\ref{eq-mal-inicio-adapted} and \ref{eq-otro-mal-inicio-adapted} imply that we can forget the first $h$ components of $\bi$ and work on $\mathcal{V}(\bj)$ instead of $\mathcal{U}(\bi),$ obtaining the same presentations for the corresponding Gelfand-Tsetlin subalgebras.
\end{proof}

\medskip
Following notation coming from \cite{Lobos-Ryom-Hansen},  given two (different) residue sequences $\bi,\bj\in\II^{m},$  we write $ \bi \overset{k}{\sim} \bj $ if $ \bi = s_k \bj $ where $ i_k \neq i_{k+1}  \pm 1$
and we let $ \sim $ be the equivalence relation on $ \II^m  $ generated by all the $ \overset{k}{\sim}$'s. Is not difficult to see that if $\bi$ is a $\kappa-$blob possible residue sequence and $\bi\sim\bj,$ then $\bj$ is also a $\kappa-$blob possible residue sequence. Moreover we have:

\begin{lemma}\label{lemma-isom-equiv-idemp-trunc}
  Given two $\kappa-$blob possible residue sequences $\bi,\bj\in\II^{m}$ such as $\bi\sim\bj$ then the corresponding idempotent truncations $\B(\bi)$ and $\B(\bj)$ are isomorphic.
\end{lemma}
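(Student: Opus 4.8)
The plan is to reduce the statement to a single elementary move and then to exhibit the isomorphism explicitly by conjugation with a crossing. Since $\sim$ is by definition the equivalence relation generated by the elementary relations $\overset{k}{\sim}$, there is a finite chain $\bi=\bi^{(0)},\bi^{(1)},\dots,\bi^{(N)}=\bj$ in which consecutive terms satisfy $\bi^{(t)}\overset{k_t}{\sim}\bi^{(t+1)}$ for some index $k_t$. Because each elementary move preserves $\kappa$-blob possibility (as recalled just before the lemma), every intermediate $\bi^{(t)}$ is $\kappa$-blob possible, so each $e(\bi^{(t)})$ is nonzero and each $\B(\bi^{(t)})$ is a genuine unital algebra. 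Hence it suffices to produce an isomorphism $\B(\bi^{(t)})\cong\B(\bi^{(t+1)})$ for a single elementary move (the trivial move $\bi^{(t)}=\bi^{(t+1)}$ being handled by the identity) and then to compose.

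So I would fix $k$ and two $\kappa$-blob possible sequences with $\bi=s_k\bj$, $i_k\neq i_{k+1}\pm 1$, and $\bi\neq\bj$; the latter forces $i_k\neq i_{k+1}$, so the strands in positions $k,k+1$ are \emph{not relatives}, and equivalently $j_k=i_{k+1}$, $j_{k+1}=i_k$ are not relatives. The key local input is relation \ref{lazo}: since $|j_k-j_{k+1}|>1$ we are in the case $\beta=1,\gamma=0$, giving $\psi_k^2 e(\bj)=e(\bj)$ and likewise $\psi_k^2 e(\bi)=e(\bi)$. Using the idempotent-sliding identities $\psi_k e(\bj)=e(\bi)\psi_k$ and $e(\bj)\psi_k=\psi_k e(\bi)$ coming from the diagrammatic definition, I would set
$$\Phi:\B(\bj)\longrightarrow\B(\bi),\ x\longmapsto \psi_k\,x\,\psi_k,\qquad \Psi:\B(\bi)\longrightarrow\B(\bj),\ y\longmapsto \psi_k\,y\,\psi_k .$$
For $x=e(\bj)xe(\bj)$ one gets $\psi_k x\psi_k=e(\bi)\psi_k x\psi_k e(\bi)\in\B(\bi)$, so $\Phi$ (and symmetrically $\Psi$) is well defined.

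The substance of the argument is that $\Phi$ is a unital algebra homomorphism. Unitality reads $\Phi(e(\bj))=\psi_k e(\bj)\psi_k=e(\bi)\psi_k^2=e(\bi)$. For multiplicativity I would first note $e(\bj)\psi_k^2=e(\bj)$ (expand $e(\bj)\psi_k=\psi_k e(\bi)$ and apply relation \ref{lazo}), hence $x\psi_k^2=x$ for $x\in e(\bj)\B e(\bj)$, so that $\Phi(x)\Phi(x')=\psi_k x\psi_k^2 x'\psi_k=\psi_k xx'\psi_k=\Phi(xx')$. The same collapse gives $\Psi\bigl(\Phi(x)\bigr)=\psi_k^2 x\psi_k^2=x$ and $\Phi\bigl(\Psi(y)\bigr)=y$, so $\Phi$ and $\Psi$ are mutually inverse and $\B(\bi)\cong\B(\bj)$. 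Composing along the chain then yields $\B(\bi)\cong\B(\bj)$ in general.

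The hard part will be exactly the multiplicativity step: the naive conjugation $x\mapsto\psi_k x\psi_k$ is multiplicative only because the inner $\psi_k^2$ collapses to the idempotent, and this collapse is available precisely under the non-relative hypothesis built into $\overset{k}{\sim}$. If the two strands were cousins, relation \ref{lazo} would instead contribute $y$-terms (the $\gamma$ part), and $\Phi$ would no longer be a ring map; this is why the elementary relation is defined with the constraint $i_k\neq i_{k+1}\pm1$. Everything else—the reduction to one move, well-definedness, unitality, and the construction of the two-sided inverse—is routine once this single local identity is in hand.
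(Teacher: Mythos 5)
Your proposal is correct and takes essentially the same route as the paper: both arguments are conjugation by crossings (the paper conjugates by the full word $W=\psi_{k_1}\cdots\psi_{k_n}$ at once, you one elementary move $\overset{k}{\sim}$ at a time and then compose), and both rest on exactly the collapse $\psi_k^2\,e(\bj)=e(\bj)$ from relation \ref{lazo} for non-relative strands, which yields well-definedness, multiplicativity, and the two-sided inverse $a\mapsto \psi_k a\psi_k$. Your factorization through a chain of elementary moves is a harmless (if anything, slightly tidier) reorganization of the paper's argument rather than a different method.
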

\begin{proof}
  By definition of $\sim,$ there is an element $w=s_{k_{1}}\cdots s_{k_n}$ (reduced expression) in $\Si $ such that $\bj=w\cdot\bi$ with no permutation between \emph{relatives} residues. If we write, $W=\psi_{s_{k_{1}}}\cdots \psi_{s_{k_{l}}}$ then the assignment
  \begin{equation*}
    a\mapsto W^{\ast}a W,\quad (a\in\B(\bi))
  \end{equation*}
  defines an homomorphism of algebras $\Delta:\B(\bi)\rightarrow\B(\bj).$ In fact we have:
  \begin{enumerate}
    \item By relation \ref{lazo}, $W^{\ast}e(\bi)W=e(\bj)$ since there are no crosses between relatives strings in the diagrammatic representation of $W^{\ast}e(\bi)W.$
    \item If $a_1,a_2\in\B(\bi),$ then by distributivity $$W^{\ast}(a_1+a_2) W=W^{\ast}a_1 W+W^{\ast}a_2 W.$$
    \item If $a_1,a_2\in\B(\bi),$ then by relation \ref{lazo} $$W^{\ast}a_1WW^{\ast}a_2W=W^{\ast}(a_1a_2) W.$$
  \end{enumerate}
  Moreover, relation \ref{lazo} implies that the assignment
  \begin{equation*}
    a\mapsto W a W^{\ast},\quad (a\in\B(\bj))
  \end{equation*}
  defines an inverse for $\Delta.$ Therefore the algebras  $\B(\bi),\B(\bj)$ are isomorphic.
\end{proof}

\begin{corollary}
  Given two $\kappa-$blob possible residue sequences $\bi,\bj\in\II^{m}$ such as $\bi\sim\bj$ then the corresponding Gelfand-Tsetlin subalgebras of $\B(\bi)$ and $\B(\bj)$ are isomorphic.
\end{corollary}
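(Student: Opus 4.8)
The plan is to leverage the isomorphism $\Delta\colon\B(\bi)\to\B(\bj)$ already constructed in Lemma \ref{lemma-isom-equiv-idemp-trunc}, and to show that it restricts to an isomorphism between the respective Gelfand-Tsetlin subalgebras. Recall that the Gelfand-Tsetlin subalgebra of $\B(\bi)$ is (by the analogue of corollary \ref{coro-Y-generate-B}) the subalgebra generated by $\{y_je(\bi):1\leq j\leq m\}$, and likewise for $\B(\bj)$. Since $\Delta$ is already known to be an isomorphism of algebras with inverse $a\mapsto WaW^{\ast}$, it suffices to verify that $\Delta$ carries the distinguished set of dot generators of $\B(\bi)$ into the subalgebra generated by the dot generators of $\B(\bj)$, and that $\Delta^{-1}$ does the symmetric thing.

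First I would fix a reduced expression $w=s_{k_1}\cdots s_{k_n}$ with $\bj=w\cdot\bi$ realizing $\bi\sim\bj$, so that by definition of $\sim$ \emph{every} elementary crossing occurring in $W=\psi_{k_1}\cdots\psi_{k_n}$ is a crossing between two non-relative strings; in particular each such crossing involves labels $i,i'$ with $i\neq i'$. I would then compute $\Delta(y_je(\bi))=W^{\ast}\,y_je(\bi)\,W$ by transporting the single dot through the crosses of $W$ one at a time. The crucial point is that relations \ref{punto-arriba} and \ref{punto-abajo} move a dot past a crossing at the cost of a term $\delta_{ii'}$, which vanishes for crossings between non-relatives. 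Consequently the dot passes freely through all the crosses of $W$, producing no correction terms, and lands on the string prescribed by the permutation $w$. This yields $\Delta(y_je(\bi))=y_{w(j)}e(\bj)$ (with the appropriate bookkeeping of the index under $w$), which is precisely one of the dot generators of the Gelfand-Tsetlin subalgebra of $\B(\bj)$.

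Since $\Delta$ is an algebra homomorphism, it then maps every monomial in the $y_je(\bi)$ to a monomial in the $y_ke(\bj)$, so $\Delta$ sends the Gelfand-Tsetlin subalgebra of $\B(\bi)$ into that of $\B(\bj)$. Running the identical argument with $\Delta^{-1}(a)=WaW^{\ast}$ (again every crossing is between non-relatives, so relations \ref{punto-arriba} and \ref{punto-abajo} carry dots through without error terms) shows that $\Delta^{-1}$ maps the Gelfand-Tsetlin subalgebra of $\B(\bj)$ into that of $\B(\bi)$. Hence the restriction of $\Delta$ is a bijective algebra homomorphism between the two Gelfand-Tsetlin subalgebras, which is the assertion.

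I expect the only genuinely delicate step to be the clean dot-transport computation: one must be sure that across the \emph{entire} word $W$, and not merely for a single cross, no $\delta_{ij}$ correction ever survives. The definition of $\overset{k}{\sim}$ (permuting only when $i_k\neq i_{k+1}\pm1$, hence in particular $i_k\neq i_{k+1}$) is exactly what guarantees this, so the obstacle is really only notational: tracking the moving index of the dot through the successive simple transpositions so as to identify the final position as $w(j)$. Everything else follows formally from Lemma \ref{lemma-isom-equiv-idemp-trunc}.
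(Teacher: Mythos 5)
Your proposal is correct and follows exactly the route the paper intends: the paper states this corollary without proof as an immediate consequence of Lemma \ref{lemma-isom-equiv-idemp-trunc}, and your dot-transport argument (conjugation by $W$, with all $\delta_{ii'}$ correction terms in relations \ref{punto-arriba} and \ref{punto-abajo} vanishing because every crossing in $W$ is between non-relative strings) is precisely the implicit justification that $\Delta$ carries the generators $y_je(\bi)$ bijectively onto the generators $y_{k}e(\bj)$, hence restricts to an isomorphism of the Gelfand-Tsetlin subalgebras. The only quibble is your parenthetical claiming $i_k\neq i_{k+1}$ follows from $i_k\neq i_{k+1}\pm 1$; it actually follows from the sequences being distinct (equivalently, from the crossings being between non-relatives), which you had already invoked correctly.
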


With the last corollary, we have covered a larger family of idempotent truncations of $\B$ where the Gelfand-Tsetlin subalgebra is basically given by definition \ref{def-optimal-D}. Is not difficult to see that if $\bj$ is  vertical residue sequence, then the class of $\bj$ under $\sim$ is indeed composed only by $\bj.$ The case quasi-vertical provide more examples of idempotent truncations with the \emph{same} Gelfand-Tsetlin subalgebra.

\medskip

The study of Gelfand-Tsetlin subalgebras of idempotent truncations produced by residue sequences neither vertical nor quasi-vertical nor equivalent (under $\sim$) to a quasi-vertical residue sequence, has been left open for future work.

\medskip
An interesting phenomenon that we can observe in this article is a relation between the dimension of the Gelfand-Tsetlin subalgebra $\calD$ of $\B(\bi)$ and the number of (one-column) standard tableaux with residue sequence equal to $\bi.$ One can easily check that for any vertical or quasi-vertical residue sequence $\bi,$ we have:
  \begin{equation}\label{eq-dimD-is-card-std}
    \dim(\calD)=|\std(\bi)|
  \end{equation}
(see for example corollary \ref{coro-cardinality-of-std-bif} and  corollary \ref{coro-dimD} for the fundamental case $\bi=\bif.$)

This relation between dimension of Gelfand-Tsetlin subalgebras and number of standard tableaux is not new. One can see this phenomenon in the work of Murphy on semisimple Hecke algebras of type $A$ (see \cite{Murphy3}) or in the work of Mathas on semisimple Ariki-Koike algebras (\cite{MatCoef}). Moreover, we know from the work of Mathas (see \cite{Mat-So}), that if $A$ is any cellular algebra with cell datum $(\Lambda,T,C)$ and with a family of \emph{separating} Jucys-Murphy elements (therefore $A$ is semisimple), then the Gelfand-Tsetlin subalgebra (subalgebra generated by the JM-elements) of $A$  admits a basis composed by a complete set of orthogonal primitive idempotents parameterized by the set $T(\Lambda)$ (see \cite{Mat-So} for notation). In this article we obtained not semisimple examples where the relation still works.

This opens the following question, that we expect, to be answered in future works:
\medskip

Given $\bi\in \II^{m},$ an arbitrary $\kappa-$blob possible residue sequence:
\begin{description}
  \item[Q:]  Is the dimension of the Gelfand-Tsetlin subalgebra of $\B(\bi)$ equal to the number of (one-column) standard tableaux with residue sequence $\bi$?
\end{description}




\sc
diego.lobos@pucv.cl, Pontificia Universidad Cat\'olica de Valpara\'iso, Chile.

\end{document}